\newtheorem{thm}{Theorem}[section]
\newtheorem{cor}[thm]{Corollary}
\newtheorem{lem}[thm]{Lemma}
\newtheorem{prop}[thm]{Proposition}
\theoremstyle{definition}
\newtheorem{defn}[thm]{Definition}
\theoremstyle{remark}
\newtheorem{rem}[thm]{Remark}
\newtheorem{eg}[thm]{Example}
\numberwithin{equation}{section}
\newcommand{\C}{\mathbb{C}}      
\newcommand{\Z}{\mathbb{Z}}      
\newcommand{\Sp}{\operatorname{Spec}}
\newcommand{\A}{\mathbb{A}}
\newcommand{\R}{\mathbb{R}}
\newcommand{\Q}{\mathbb{Q}}
\newcommand{\Ou}{\mathcal{O}}
\newcommand{\Pj}{\mathbb{P}}
\newcommand{\E}{\mathcal{E}}
\newcommand{\F}{\mathcal{F}}
\newcommand{\G}{\mathcal{G}}
\newcommand{\K}{\mathcal{K}}
\newcommand{\fA}{\mathfrak{A}}
\newcommand{\fY}{\mathfrak{Y}}
\newcommand{\fC}{\mathfrak{C}}
\newcommand{\fX}{\mathfrak{X}}
\newcommand{\I}{\mathcal{I}}
\newcommand{\J}{\mathcal{J}}
\newcommand{\cX}{\mathcal{X}}
\newcommand{\cY}{\mathcal{Y}}
\newcommand{\cD}{\mathcal{D}}
\newcommand{\cV}{\mathcal{V}}
\newcommand{\cM}{\mathcal{M}}
\newcommand{\cN}{\mathcal{N}}
\newcommand{\cH}{\mathcal{H}}
\newcommand{\cZ}{\mathcal{Z}}
\newcommand{\cA}{\mathcal{A}}
\newcommand{\uW}{\underline{W}}
\newcommand{\uD}{\underline{D}}
\newcommand{\uY}{\underline{Y}}
\newcommand{\uX}{\underline{X}}
\newcommand{\uDelta}{\underline{\Delta}}
\newcommand{\uH}{\underline{H}}
\newcommand{\bI}{\mathbb{I}}
\newcommand{\pr}{\operatorname{pr}}
\newcommand{\tr}{\operatorname{tr}}
\newcommand{\Bl}{\operatorname{Bl}}
\newcommand{\Id}{\operatorname{Id}}
\newcommand{\coker}{\operatorname{coker}}
\newcommand{\Aut}{\operatorname{Aut}}
\newcommand{\Quot}{\operatorname{Quot}}
\newcommand{\Hilb}{\operatorname{Hilb}}
\newcommand{\Supp}{\operatorname{Supp}}
\newcommand{\Err}{\operatorname{Err}}
\newcommand{\Ext}{\operatorname{Ext}}
\newcommand{\Hom}{\operatorname{Hom}}
\newcommand{\fQuot}{\mathfrak{Quot}}
\newcommand{\fHilb}{\mathfrak{Hilb}}
\newcommand{\rar}{\rightarrow}
\newcommand{\xrar}{\xrightarrow}
\newcommand{\tf}{\mathrm{tf}}
\newcommand{\vir}{\operatorname{vir}}
\newcommand{\ch}{\widetilde{ch}}
\newcommand{\mr}{\operatorname{mr}}
\newcommand{\orb}{\mathrm{orb}}
\newcommand{\age}{\mathrm{age}}
\newcommand{\ev}{\operatorname{ev}}
\newcommand{\pt}{\operatorname{pt}}
\title{Relative Orbifold Donaldson--Thomas Theory and the Degeneration Formula}
\author{Zijun Zhou}
\email{zzhou@math.columbia.edu}
\address{Zijun Zhou, Department of Mathematics, Columbia University, New York, NY 10027, USA}
\keywords{Relative Donaldson--Thomas invariants, Deligne--Mumford stacks, degeneration formula.}
\begin{document}

\begin{abstract}

We generalize the notion of expanded degenerations and pairs for a simple degeneration or smooth pair to the case of smooth Deligne-Mumford stacks. We then define stable quotients on the classifying stacks of expanded degenerations and pairs and prove the properness of their moduli's. On 3-dimensional smooth projective DM stacks this leads to a definition of relative Donaldson-Thomas invariants and the associated degeneration formula.

\end{abstract}

\maketitle

\vspace*{6pt}\tableofcontents  



\section{Introduction}

\subsection{Background and motivation}

Let $X$ be a smooth projective 3-fold. Motivated by many as a higher-dimensional gauge theory and introduced by R. Thomas \cite{Th} as a holomorphic analogue of the Casson invariant, Donaldson--Thomas theory counts ideal sheaves of curves on $X$ in certain fixed topological classes.

A coherent sheaf $\I$ can be realized as an ideal sheaf of a 1-dimensional subscheme $Z\subset X$ if and only if it is torsion-free of rank 1 with trivial determinant, which means that DT theory can be viewed either as a sheaf counting theory or a curve counting theory. The essential connection of DT theory to other curve counting theories was first established by the work of MNOP \cite{MNOP,MNOP2}, known as the Gromov--Witten/Donaldson--Thomas correspondence. They conjectured and proved in special cases that the generating functions of GW and DT theory can be equated to each other after a change of variable. They adopted the localization technique in the toric setting and developed the theory of DT topological vertex.

GW/DT correspondence has also been proved for local curves \cite{BP,OP}. This is the first non-toric case one can actually do computations. The crucial tool is the DT degeneration formula, motivated from the degeneration formula in GW theory and developed by J. Li and B. Wu \cite{LW}. With the degeneration formula in the simple degeneration case, one can ``split" $X$ into two simpler spaces $Y_-$ and $Y_+$, and express the DT invariant of $X$ in terms of the relative theory of $Y_\pm$ with respect to the divisor $D$.

The goal of this paper is to generalize the relative Donaldson--Thomas theory and the degeneration formula to 3-dimensional smooth projective \emph{orbifolds}, which in algebraic settings, refers to Deligne--Mumford stacks. One important reason why people care about orbifolds is Ruan's crepant resolution conjecture \cite{Ruan,BG}: the GW or DT invariants of a 3-orbifold satisfying the hard Lefschetz condition should be equivalent to those of its crepant resolution (if exists), up to change of variables and analytic continuations. Orbifold GW theory and its degeneration formula have already appeared in \cite{AGV,AF}.

On the DT side, the orbifold topological vertex technique was developed in \cite{BCY}, for toric Calabi--Yau 3-orbifold.  Orbifold DT theory is also treated in \cite{GT}, for projective CY 3-orbifolds. In both cases the CY condition gives a symmetric obstruction theory and defines a Behrend's function $\nu$, and the DT invariants are defined as weighted Euler characteristic with respect to $\nu$. The GW/DT correspondence for the orbifold topological vertex with transversal $A_n$-singularities was proved in \cite{Zong,RZ,RZ2,Ross}, and the crepant resolution conjecture for DT vertex with transversal $A_n$-singularities was proved in \cite{Ross2}.

We are particularly interested in the following picture, which indicates the relationship between various theories involved, and will be pursued in the future work.
$$\xymatrix{
& \text{QH}(\Hilb(\cA_n)) \ar@{-}[dl] \ar@{-}[dr] \ar@{-}'[d][dd] & \\
\text{GW}(\cA_n\times \Pj^1) \ar@{-}[rr] \ar@{-}[dd] & & \text{DT}(\cA_n\times \Pj^1) \ar@{-}[dd] \\
& \text{QH}(\Hilb([\C^2/\Z_{n+1}])) \ar@{-}[dl] \ar@{-}[dr] & \\
\text{GW}([\C^2/\Z_{n+1}]\times \Pj^1) \ar@{-}[rr] & & \text{DT}([\C^2/\Z_{n+1}]\times \Pj^1).}$$
The upper triangle in the diagram is established in \cite{Mau,MO,MO2}, and the vertical lines stand for crepant transformation correspondences. We hope to establish the lower triangle to complete the whole diagram.

\subsection{Outline}

For simplicity we work over the field of complex numbers $\C$. We always use $A_*$, $A^*$ to denote appropriate cohomology and homology theory over $\Q$, which could be Chow groups, Borel-Moore, etc. K-theory will be either topological or algebraic K-theory, over $\Q$.

In this paper we consider a smooth projective Deligne--Mumford stack $W$, and define the absolute DT invariants with descendants and insertions, generalizing the DT invariants in the scheme case. For a smooth divisor $D\subset W$, we also define the relative DT invariants of $W$ with respect to $D$. We follow the approach of introducing a perfect obstruction theory in each case, \emph{without} the CY assumption.

For simplicity we assume that the divisor $D$ is \emph{connected}. The theory can be generalized easily for disconnected $D$.

We treat the case of the simple degeneration in this paper. Let $\pi: X\rar C$ be a projective family of smooth Deligne--Mumford stacks of relative dimension 3. By a simple degeneration, we roughly mean that $\pi$ is in the form of a simple normal crossing near some point $0\in C$, with singular fiber $X_0= Y_-\cup_D Y_+$ splitting into two pieces. Let $X_c$ be a smooth fiber.

Let $F_1 K(X)$ be the subgroup of $K(X)_\Q$ generated by the structure sheaves of 1-dimensional closed substacks. Given $P\in K(X)$, let $\cM^P$ be the stack parameterizing 1-dimensional closed substacks on expanded degenerations
$$X_0[k]:=Y_-\cup_D \Delta_1 \cup_D \cdots \cup_D \Delta_k \cup_D Y_+,$$
with topological datum $P$. Similarly, given $\theta_\pm\in K(Y_\pm)$, and $\theta_0\in K(\Hilb(D))$, let $\cN_\pm^{\theta_\pm, \theta_0}$ be the stack parameterizing 1-dimensional closed substacks on expanded pairs
$$Y[l]:=Y\cup_D \Delta_1 \cup_D \cdots \cup_D \Delta_l,$$
with topological datum $(\theta_\pm, \theta_0)$. Let $\cM^\theta\subset \cM^P$ be the open and closed substack associated with the relative data $\theta$. We will construct perfect obstruction theories on these stacks.

Let $i_c: \{c\} \hookrightarrow C$ be the inclusion of a point. One has the following two Cartesian diagrams,
$$\xymatrix{
\Hilb_{X_c}^P \ar[r] \ar[d] & \cM^P \ar[d] & \cM^\theta \ar@{^(->}[r]^-{\iota_\theta} & \fQuot^{\Ou_X,P}_{\fX_0/\fC_0} \ar@{^(->}[r] \ar[d] & \cM^P \ar[d] \\
\{c\} \ar@{^(->}[r] & C, & &  \{0\} \ar@{^(->}[r] & C,
}$$
where the map $\cM^P\rar C$ is the composition $\cM^P\rar \fC^P\rar C$. Our main theorem is the following.

\begin{thm}[(Degeneration formula -- cycle version)]
  \begin{equation*} 
  i_c^! [\cM^P]^{\vir} = [\Hilb_{X_c}^P]^{\vir},
  \end{equation*}
  \begin{equation*} 
  i_0^! [\cM^P]^{\vir} = \sum_{\theta\in \Lambda_P^{spl}} \iota_{\theta*} \Delta^! \left( [\cN_-^{\theta_-,\theta_0}]^{\vir} \times [\cN_+^{\theta_+,\theta_0}]^{\vir} \right),
  \end{equation*}
  where the classes in the second row are viewed in $0\times_{\A^1} \cM^P$.
\end{thm}

Here $\Delta$ is the diagonal map in the following diagram.
$$\xymatrix{
\cM^\theta & \cN_-^{\theta_-,\theta_0} \times_{\Hilb_D^{\theta_0}} \cN_+^{\theta_+,\theta_0} \ar[l]^-{\Phi_\theta}_-\cong \ar[r] \ar[d] & \cN_-^{\theta_-,\theta_0} \times \cN_+^{\theta_+,\theta_0} \ar[d] \\
 & \Hilb_D^{\theta_0} \ar[r]^-\Delta & \Hilb_D^{\theta_0} \times \Hilb_D^{\theta_0}.
 }$$
 
Using virtual classes one can define the corresponding numerical DT invariants $\left\langle \prod \tau_{k_i}(\gamma_i) \right\rangle_{X_c}^P$, $\left\langle \prod \tau_{k_i}(\gamma_{i, \pm}) \middle| C \right\rangle_{Y_\pm,D}^{\theta_\pm}$, where $\gamma_i$'s are descendent insertions and $C$ is the relative insertion. Let $\{ C_k \}$ be a basis for the cohomology of $\Hilb(D)$, and $g^{kl}$ be the inverse matrix for the Poincar\'e paring under this basis. We have the following numerical version of the degeneration formula.

\begin{thm}[(Degeneration formula -- numerical version)]
  Given $P\in F_1 K(X_c)$, assume that $\gamma_{i, \pm}$ are disjoint with $D$. We have
  $$\left\langle \prod_{i=1}^r \tau_{k_i}(\gamma_i) \right\rangle_{X_c}^P =
  \sum_{\substack{\theta_-+\theta_+ - P_0 = P, \\
                S\subset \{1,\cdots,r\}, k,l}}
  \left\langle \prod_{i\in S} \tau_{k_i}(\gamma_{i, -}) \middle| C_k \right\rangle_{Y_-,D}^{\theta_-} g^{kl} \left\langle \prod_{i\not\in S} \tau_{k_i}(\gamma_{i, +}) \middle| C_l \right\rangle_{Y_+,D}^{\theta_+}, $$
  where $\theta_\pm \in F_1 K(Y_\pm)$ range over all configurations that satisfy $\theta_-+\theta_+ - P_0 = P$.
\end{thm}

$F_1^{\mr} K(X)$ be the \emph{multi-regular} subgroup, generated by multi-regular stacky curves, i.e. those whose associated representation of the stablizer group at the generical point is a multiple of the regular representation. In the multi-regular case, consider the image $(\beta, \varepsilon)$ of $P$ in $F_1^{\mr} K(X_c)/F_0(X_c) \oplus K_0(X_c)$ as the topological datum.

\begin{thm}[(Degeneration formula -- numerical version for multi-regular case)]
  Given $\beta\in F_1^{\mr}K(X_c)/F_0 K(X_c)$, assume that $\gamma_{i,\pm}$ are disjoint with $D$. We have
  $$\left\langle \prod_{i=1}^r \tau_{k_i}(\gamma_i) \right\rangle_{X_c}^{\beta,\varepsilon} =
  \sum_{\substack{\beta_-+\beta_+=\beta, \\
                \varepsilon_-+\varepsilon_+= \varepsilon+m, \\
                S\subset \{1,\cdots,r\}, k,l}}
  \left\langle \prod_{i\in S} \tau_{k_i}(\gamma_{i,-}) \middle| C_k \right\rangle_{Y_-,D}^{\beta_-,\varepsilon_-} g^{kl} \left\langle \prod_{i\not\in S} \tau_{k_i}(\gamma_{i,+}) \middle| C_l \right\rangle_{Y_+,D}^{\beta_+,\varepsilon_+}, $$
  where $\beta_-\in F_1^{\mr}K(Y_-)/F_0 K(Y_-)$, $\beta_-\in F_1^{\mr}K(Y_+)/F_0 K(Y_+)$ range over all curve classes that coincide on $D$ and satisfy $\beta_-+\beta_+=\beta$.
\end{thm}

The theorem can be restated in the form of generating functions, obtained by summing over $\varepsilon$.

\begin{cor}
Given $\beta\in F_1^{\mr}K(X_c)/F_0 K(X_c)$, assume that $\gamma_{i,\pm}$ are disjoint with $D$. Then,
  \begin{eqnarray*}
  Z_\beta \left( X_c;q \ \middle| \ \prod_{i=1}^r \tau_{k_i}(\gamma_i) \right) &=& \sum_{\substack{\beta_- + \beta_+ =\beta \\
                            S\subset \{1,\cdots, r\},k,l }}
  \frac{g^{kl}}{q^m} Z_{\beta_-,C_k} \left( Y_-,D;q \ \middle| \ \prod_{i\in S} \tau_{k_i}(\gamma_{i,-}) \right) \\
  && \cdot Z_{\beta_+,C_l} \left( Y_+,D;q \ \middle| \ \prod_{i\not\in S} \tau_{k_i}(\gamma_{i,+}) \right).
  \end{eqnarray*}
\end{cor}

The paper is organized as follows. In Section 2 we generalize J. Li's construction of expanded degenerations and pairs to the orbifold case, and in Section 3 we discuss the concept of stable quotients. Section 3,4,5 aim to define and prove the properness of the moduli stacks of stable quotiens on the classifying stacks of expanded degenerations and pairs. Finally in Section 6,7 and 8 we specialize to the case of 3-orbifolds, construct perfect obstruction theories on such stacks and prove the degeneration formula, for both cycle version and numerical version.

\subsection{Acknowledgements}

I would like to express sincere gratitude to my advisor Chiu-Chu Melissa Liu, who led me into this exciting area and provided plenty of help and guidance. Her encouragement and patience are really important to the author. Special thanks go to Jun Li and Baosen Wu, for their preceding work and helpful discussions. The author would also like to thank Davesh Maulik, Jason Starr and Zhengyu Zong for useful discussions.

\section{Stacks of expanded degenerations and pairs}

In this section we construct the stacks of expanded degenerations and expanded pairs, which serve as the target spaces for relative and degeneration of curve-counting theories. There are several approaches to this construction. The first algebraic-geometric approach is due to J. Li \cite{Li01}, where the author explicitly constructs standard families of expanded degenerations and pairs, and then forms the stacks as limits of them. Various equivalent approaches are well summarized in the paper \cite{ACFW}, which works with algebraic stacks and proves the independence of the resulting stacks on the original target.

We will mainly adopt J. Li's explicit process. In fact, the following are just direct generalizations of his method to the case of Deligne--Mumford stacks. We will define the notions of expanded degenerations and pairs, build standard families of expanded degenerations and pairs, and then construct the general stacks using the standard models as coverings. Our basic setting is as follows.

\begin{defn}
  Let $Y$ be a separated Deligne--Mumford stack of finite type over $\C$, and $D\subset Y$ be a locally smooth connected effective divisor. By locally smooth we mean that \'etale locally near a point on $D$, the pair $(Y,D)$ is a pair of a smooth stack and a smooth divisor. We call such $(Y,D)$ a \emph{locally smooth pair}. We call it a \emph{smooth pair} if both $Y$ and $D$ are smooth.
\end{defn}

\begin{defn}
  Let $X$ be a separated Deligne--Mumford stack of finite type over $\C$, and $(C,0)$ be a smooth pointed curve over $\C$. A \emph{locally simple degeneration} is a flat morphism $\pi:X\rightarrow C$, with central fiber $X_0=Y_-\cup_D Y_+$, where $Y_\pm \subset X_0$ are closed substacks, $(Y_\pm,D)$ are locally smooth pairs, and they intersect transversally as $Y_- \cap Y_+ = D$.

  By $Y_-\cup_D Y_+$ we mean the pushout in the 2-category of algebraic stacks in the sense of \cite{AGV}. We denote the divisor $D$ in $Y_\pm$ by $D_\pm$.

  By \emph{transversal} intersection, we mean the following. For any point $p$ in the divisor $D$, there are \'etale neighborhoods $V$ of $0\in C$ and $U$ of $p\in X$, such that the following diagram is commutative, with all horizontal maps \'etale.
$$\xymatrix{
 \Sp \frac{\C[x,y,t]}{(xy-t)} \times\A^n \ar[d]^\pi & U \ar[d]^{\pi|_U} \ar[l]_-{\text{\'et}} \ar[r]^-{\text{\'et}} & X \ar[d]^\pi \\
 \Sp \C[t] & V \ar[l]_-{\text{\'et}} \ar[r]^-{\text{\'et}} & C.
 }$$
In other words, there is a common \'etale neighborhood $V$ of $\Sp \C[t]$ and $C$, and a common \'etale neighborhood $U$ of $\Sp \frac{\C[x,y,t]}{(xy-t)} \times\A^n$ and $X$.

We call such data a \emph{simple degeneration} if furthermore $(Y_\pm,D_\pm)$ are smooth pairs, and for any $c\in C$, $c\neq 0$, the fiber $X_c:=\pi^{-1}(c)$ is smooth.

We call such $U$, together the restricted family $\pi_U$, a \emph{standard local model}.
\end{defn}

Let $N_-:=N_{D/Y_-}$ and $N_+:=N_{D/Y_+}$. Check on local coordinates and one can easily find that $N_-\otimes N_+ \cong \Ou_D$.

\begin{rem}
By Artin's algebraic approximation theorem, our definition of transversality is equivalent to that in the language of formal neighborhoods, as in \cite{Li01}.
\end{rem}

\begin{rem}
In the study of degenerations, we are only interested in local behaviors around the singular divisor. For convenience, we can always replace the curve $C$ by an \'etale neighborhood around $0\in C$, and just assume $C=\A^1$.
\end{rem}

\subsection{Expanded degenerations and pairs}

For the central fiber $X_0=Y_-\cup_D Y_+$, consider the $\Pj^1$-bundle over $D$,
$$\Delta:=\Pj_D(\Ou_D\oplus N_+) \cong \Pj_D(N_- \oplus \Ou_D),$$
with two sections $D_-$ and $D_+$ corresponding to the 0-sections of the two expressions, where $N_{D_-/\Delta}\cong N_+$ and $N_{D_+/\Delta}\cong N_-$. $\Delta$ will often be called a ``bubble component".

For an integer $k\geq 0$, take $k$ copies of $\Delta$, indexed by $\Delta_1,\cdots,\Delta_k$, and insert them between $\Delta_0:=Y_-$ and $\Delta_{k+1}:=Y_+$. We have
$$X_0[k]:=Y_-\cup_D \Delta_1 \cup_D \cdots \cup_D \Delta_k \cup_D Y_+,$$
where $D_-\subset \Delta_i$ is glued to $D_+\subset \Delta_{i+1}$, $i=0,\cdots,k$.

$X_0[k]$ is a Deligne--Mumford stack of nodal singularities, with $k+2$ irreducible components. Following \cite{AF}, we call $X_0[k]$ an \emph{expanded degeneration of length $k$} with respect to $X_0$. We index the divisors in $X_0[k]$ in order by $D_0,\cdots,D_k$.

For the expanded pair, consider a locally smooth pair $(Y,D)$. In practice, the pair we mostly care about would be $(Y_\pm,D_\pm)$. Denote by $N$ the normal bundle $N_{D/Y}$. In the same manner, consider the $\Pj^1$-bundle
$$\Delta:=\Pj_D(\Ou_D\oplus N),$$
and for an integer $l\geq 0$, let
$$Y[l]:=Y\cup_D \Delta_1 \cup_D \cdots \cup_D \Delta_l,$$
where the gluing is proceeded in the same manner.

Denote by $D[l]$ the last divisor $D_-\subset \Delta_l$ (in cases where $l$ is implicit we might also use $D[\infty]$). $(Y[l],D[l])$ forms a new locally smooth pair. We call $(Y[l],D[l])$ the \emph{expanded pair of length $l$} with respect to $(Y,D)$. We index the divisors in $(Y[l],D[l])$ by $D_0,\cdots,D_{l-1},D_l=D[l]$.

\begin{rem}\label{action_fiber}
For $X_0[k]$ (resp. $Y[l]$) above, we can introduce a natural $(\C^\ast)^k$ (resp. $(\C^\ast)^l$) -action: the $i$-th factor of $(\C^\ast)^k$ (resp. $(\C^\ast)^l$) acts on $\Delta_i$ fiberwise, and trivially on $Y_\pm$ (resp. $Y$).
\end{rem}

\subsection{Expanded degeneration of standard local model}

Now we describe the process of building standard family of expanded degenerations. First look at the baby case,
$$\pi: X= \Sp \frac{\C[x,y,t]}{(xy-t)}  \rightarrow \A^1 =\Sp\C[t],$$
with central fiber $X_0= \Sp \frac{\C[x,y]}{(xy)} $ and singular divisor $D=\pt$.
where $Y_-$ is given by the equation $(y=0)$. Consider the base change
$$m:\A^2 \rightarrow \A^1,\qquad (t_0,t_1)\mapsto t_0 t_1.$$
The family becomes
$$\pi: X\times_{\A^1} \A^2 =  \Sp \frac{\C[x,y,t_0,t_1]}{(xy-t_0t_1)}  \rightarrow \A^2= \Sp \C [t_0,t_1],$$
with smooth generic fibers, and fibers over the axes $t_0t_1=0$ are isomorphic to $X_0$.

Blow up $X\times_{\A^1} \A^2$ along the singular divisor $D\times_{\A^1} 0$, and denote it by $\widetilde{X(1)}$. It is the proper transform of the degree 2 hypersurface $(xy-t_0t_1=0)\subset \A^4$ in $\Bl_0 \A^4=\Ou_{\Pj^3}(-1) \subset \A^4\times \Pj^3$. We have
$$\widetilde{X(1)} \cong  \Ou_{\Pj^1\times\Pj^1}(-1,-1), $$
where $\Pj^1\times \Pj^1 \subset \Pj^3$ is given by the Segre embedding.

The next step is to contract one factor of the $\Pj^1\times\Pj^1$. For details one can look up in \cite{Li01}, where calculations are given in explicit coordinates. The contraction map and resulting space are
$$p: \widetilde{X(1)} \rar X(1), \qquad X(1):=\Ou_{\Pj^1}(-1)^{\oplus2},$$
with the associated projection
$$\pi: X(1)\rar \A^2.$$
$X(1)$ obtained this way is a resolution of singularities of $X\times_{\A^1} \A^2$. This is the first standard expanded degeneration family for the local model $X\rightarrow \A^1$. The properties of this family will be summarized in the next subsection.

\begin{rem}
In the contraction $\Ou_{\Pj^1\times\Pj^1}(-1,-1) \rightarrow \Ou_{\Pj^1}(-1)^{\oplus2}$, the choice of the $\Pj^1$ factor to contract is not canonical. Here we adopt the convention that $x$ is the coordinate on $Y_-$ and $y$ is the coordinate on $Y_+$ respectively, and we choose to contract the first $\Pj^1$-factor. The global criterion for this choice is: the restriction of resulting family $\pi: X(1)\rightarrow \A^2$ to the coordinate line $L_0=(t_1=0)$ is a smoothing of the divisor $D_0\subset X(1)\times_{\A^2} 0 \cong X[1]$.

Different choices of contracted factors give different resolutions. Our baby case is actually the simplest example of Atiyah's flop.
\end{rem}

\begin{rem}
For those familiar with toric geometry, the relationship between $X\times_{\A^1} \A^2$, $\widetilde{X(1)}$ and $X(1)$ can be understood as follows. Let $e_i$, $1\leq i\leq 3$ be the standard basis in $\R^3$. $X\times_{\A^1} \A^2$ is the toric variety associated to the cone $\Sigma$ spanned by $\{e_1, e_2, e_3, e_1+e_3-e_2\}$. $\widetilde{X(1)}$ is obtained by adding the vector $e_1+e_3$ and refining $\Sigma$; $X(1)$ is obtained by simply adding the 2-dimensional face spanned by $e_1$ and $e_3$ to $\Sigma$. One can easily see that $X(1)$ is a resolution of $X\times_{\A^1} \A^2$, but not the unique choice.
\end{rem}

\subsection{Standard families of expanded degenerations and pairs -- gluing}

For general locally simple degenerations $\pi:X\rightarrow \A^1$, we glue the local models together.

\'Etale locally around a point $p\in D$, the family $\pi:X\rightarrow \A^1$ is of the standard form, i.e. there is an \'etale neighborhood $V_p$ of $0\in \A^1$, and a common \'etale neighborhood $p\in U_p$ of $X\times_{\A^1} V_p$ and $\left( \Sp \frac{\C[x,y,t,\vec{z}]}{(xy-t)} \right) \times_{\Sp\C[t]} V_p$, where $\vec{z}=(z_1,\cdots,z_n)$, $n=\dim D$. Denote this local family by $\pi_p: U_p \rightarrow V_p$. Let $U_p(1)$ be the restriction on $U_p$ of the standard local model defined in the previous subsection.

\'Etale locally around a point $p\not\in D$, we can find an \'etale neighborhood $U_p$ of $p$, with $U_p \cap D= \emptyset$. In this case we just define $U_p(1):=U_p\times_{\A^1} \A^2$, which is already smooth.

Now $\{U_p\mid p\in X\}$ form an \'etale covering $U=\coprod_p U_p \rightarrow X$, consisting of standard local models. We have the underlying relation
$$\xymatrix{
R:=U\times_X U \ar@<.4ex>[r] \ar@<-.4ex>[r] & U \ar[r] & X.
}$$
where we denote the two projection maps by $q_1$ and $q_2$, and the inverse map switching the two factors by $i:R\rightarrow R$.

Now we define $p:U(1)\rightarrow U$ and $\pi: U(1)\rightarrow \A^2$, just as the disjoint union of all $U_p(1)$. Let $R(1):=R\times_{q_1,U}U(1)$, then we have the relation
$$\xymatrix{
R(1) \ar@<.4ex>[r] \ar@<-.4ex>[r] & U(1),
}$$
where the upper arrow is $\pr_2:R(1)=R\times_{q_1,U}U(1) \rightarrow U(1)$, and the lower arrow is $\pr_2\circ i$.

It is easy to check that $R(1)\rightrightarrows U(1)$ satisfies the axioms of a groupoid scheme in the sense of \cite{LMB}, and the map $R(1)\rightarrow U(1)\times U(1)$ is separated and quasi-compact. Thus by Proposition 4.3.1 of \cite{LMB}, the stack-theoretic quotient $X(1):=\left[ R(1)\rightrightarrows U(1) \right]$ defines a Deligne--Mumford stack of finite type. The maps $p:U(1)\rightarrow U$, $\pi:U(1)\rightarrow \A^2$ also glue globally on the stack.

\begin{defn}
Define $X(1):=\left[ R(1)\rightrightarrows U(1) \right]$, the associated stack of the groupoid scheme. We have the projection $p:X(1)\rightarrow X$, and the family map $\pi:X(1)\rightarrow \A^2$, giving the commutative diagram
\begin{equation} \label{sq1}
\xymatrix{
X(1) \ar[r]^p \ar[d]_\pi & X \ar[d]^\pi \\
\A^2 \ar[r]^m & \A^1.
}\end{equation}
We call $\pi:X(1)\rightarrow \A^2$ the \emph{standard family of length-$1$ expanded degenerations}, with respect to $X\rightarrow \A^1$.
\end{defn}

Note that if we compose $\pi:X(1)\rightarrow \A^2$ with the second projection $\pr_1: \A^2\rar \A^1$, $(t_0,t_1)\mapsto t_1$, the resulting $\pi_1: X(1)\rar \A^1$ is still a locally simple degeneration. This can be checked on the local model in explicit coordinates.

Now from $X(k)$ we proceed by induction to construct $X(k+1)$. Suppose that we already have $\pi: X(k)\rightarrow \A^{k+1}$, and if projected to the last factor, the composite $\pi_k: X(k)\rightarrow \A^{k+1} \xrightarrow{\pr_k} \A^1$ is a locally simple degeneration. Applying the $k=1$ procedure, we obtain the 2-dimensional family
$$\pi_k(1): X(k)(1)\rightarrow \A^2,$$
and we take $X(k+1)=X(k)(1)$ to be the $(k+1)$-th space.

It remains to define the family map. Consider the projection to the first $k$ factors $\pi_k^c: X(k)\xrar{\pi} \A^{k+1} \xrightarrow{\pr_{1,\cdots,k}} \A^k$, and take the composite
$$p(k)\circ \pi_k^c: X(k+1)= X(k)(1)\rightarrow X(k)\rightarrow \A^k.$$
Combine the two maps
$$\pi=(\pi_k(1), p(k)\circ \pi_k^c): X(k+1)\rightarrow \A^k\times\A^2 \cong \A^{k+2}.$$
This is the standard length-($k+1$) expanded degeneration family.

\begin{defn}
We have constructed $X(k)$ with projection $p:X(k)\rightarrow X$ and the family map $\pi:X(k)\rightarrow \A^{k+1}$, giving the commutative diagram
\begin{equation} \label{sqk}
\xymatrix{
X(k) \ar[r]^p \ar[d]_\pi & X \ar[d]^\pi \\
\A^{k+1} \ar[r]^m & \A^1,
}\end{equation}
where the map $m:\A^{k+1} \rightarrow \A^1$ is the multiplication $(t_0,\cdots,t_k) \mapsto t_0t_1\cdots t_k$.

We call $\pi:X(k)\rightarrow \A^{k+1}$ the \emph{standard family of length-$k$ expanded degenerations}, with respect to $X\rightarrow \A^1$.
\end{defn}

Now let's construct the standard families of expanded pairs. There are two equivalent ways of doing this. Let $(Y,D)$ be a locally smooth pair.

\textbf{Approach 1.} Consider $Y(1):=\Bl_{D\times 0}(Y\times \A^1)$, the blow-up of $Y\times \A^1$ at the closed substack $D\times 0$. We have the projection $\pi: Y(1)\rar \A^1$, which is a locally simple degeneration, with central fiber $Y\cup_D \Delta$. Let $D(1)$ be the proper transform of $D\times \A^1$, which is still isomorphic to $D\times \A^1$.

Apply the $X(k)$-construction as above with respect to this locally simple degeneration, and let $Y(k):=Y(1)(k-1)_{\deg}$, where we put the subscript ``$\deg$" to indicate it's the same construction as above, rather than the ``$Y(k)$" construction we are defining for locally smooth pairs. Here we choose $Y$ to be the ``--" piece and $\Delta$ to be the ``+" piece. We have the projection $\pi: Y(k)\rar \A^k$, and contraction map $p: Y(k)\rar Y(1)\rar Y$, which is the composition of the contraction in the degeneration case and the blow-up. Let $D(k)\subset Y(k)$ be the base change of $D(1)$ in the diagram (\ref{sqk}), which is isomorphic to $D\times \A^k$ since $D(1)$ does not intersect the singular divisor in the central fiber. Then $(Y(k),D(k))$ forms a locally smooth pair.

\begin{defn}
We have constructed the family $\pi:Y(k)\rightarrow \A^k$, with contraction map $p:Y(k)\rightarrow Y$, and the locally smooth divisor $D(k)\subset Y(k)$. We call $\pi: Y(k)\rightarrow \A^k$ the \emph{standard family of length-$k$ expanded pairs}, with respect to $(Y,D)$. $D(k)\subset Y(k)$ is called the \emph{distinguished divisor}.
\end{defn}

\textbf{Approach 2} (successive blow-up construction). Again we let $Y(1):=\Bl_{D\times 0}(Y\times \A^1)$. Assume that we already have $(Y(k),D(k))$, which is a locally smooth pair. Define $Y(k+1):= \Bl_{D(k)\times 0} (Y(k)\times \A^1)$. The family map $\pi: Y(k+1)\rar \A^{k+1}$ is defined obviously. The following proposition says that these two approaches are actually equivalent.

\begin{prop} \label{succ_bl}
  Let $(Y(k),D(k))$ and $\pi: Y(k)\rar \A^k$ be defined as in Approach 1. There is an isomorphism $Y(k)\cong \Bl_{D(k-1)\times 0}(Y(k-1)\times \A^1)$, making the following diagram commutative.
  $$\xymatrix{
  Y(k) \ar[rr]^-\sim \ar[d]_\pi && \Bl_{D(k-1)\times 0}(Y(k-1)\times \A^1) \ar[d]^{(\pi, \Id_{\A^1})} \\
  \A^k \ar[rr]^-\Id && \A^{k-1} \times \A^1.
  }$$
\end{prop}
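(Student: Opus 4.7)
My plan is to proceed by induction on $k$. The base case $k = 1$ is immediate from the definitions: both approaches define $Y(1) = \Bl_{D \times 0}(Y \times \A^1)$, taking the conventions $Y(0) = Y$ and $D(0) = D$.

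For the inductive step, by the iterative nature of Approach 1 I have $Y(k) = Y(k-1)(1)_{\deg}$, which means applying the $X(1)$ construction to the locally simple degeneration $\pi_{k-1}: Y(k-1) \to \A^1$ given by the last-factor projection. I need to compare this with $\Bl_{D(k-1) \times 0}(Y(k-1) \times \A^1)$. Since both constructions commute with \'etale base change on $Y(k-1)$, it suffices to verify the isomorphism in the standard \'etale local chart, then glue via descent.

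In such a chart, $\pi_{k-1}$ takes the form $\Sp \C[x, y, t, \vec{z}]/(xy - t) \to \Sp \C[t]$, and the $X(1)$ construction yields the small resolution of the ODP $\Sp \C[x, y, t_0, t_1, \vec{z}]/(xy - t_0 t_1)$: blow up the singular locus $\{x = y = t_0 = t_1 = 0\}$ and contract one factor of the resulting quadric $\Pj^1 \times \Pj^1$. The classical fact I would invoke is that this small resolution coincides with the blow-up of a Weil divisor on the ODP, namely $V(y, t_1)$ or $V(x, t_0)$, depending on which $\Pj^1$ factor is contracted. To close the comparison, I would identify $D(k-1)$ \'etale-locally with precisely this Weil divisor, using the $L_0$-smoothing convention laid out in the remark following the baby case; this identification converts $\Bl_{D(k-1) \times 0}(Y(k-1) \times \A^1)$ into the same small resolution.

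The main technical obstacle is tracking the convention consistently through the induction: verifying that $D(k-1)$ (defined either as the proper transform of $D \subset Y$ in Approach 1, or as the proper transform of $D(k-2) \times \A^1$ under the previous blow-up of Approach 2) always corresponds to the correct Weil divisor in every local chart, so that the contraction choice is globally coherent. Once this matching is secured, the chart-wise isomorphisms glue by \'etale descent to the global isomorphism $Y(k) \cong \Bl_{D(k-1) \times 0}(Y(k-1) \times \A^1)$, and commutativity of the displayed diagram over $\A^k$ follows by identifying the two natural $\A^k$-bases via the multiplication map $m$ applied to the last factor.
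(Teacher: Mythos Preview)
The paper states this proposition without proof, so there is no argument of the paper's to compare against. Your inductive outline is natural, but there is a genuine gap at the step ``identify $D(k-1)$ \'etale-locally with precisely this Weil divisor.'' The Weil divisor $V(x,t_0)$ you invoke lives inside the singular space $Y(k-1)\times_{\A^1,m}\A^2$, whereas $D(k-1)\times 0$ lives inside the \emph{smooth} space $Y(k-1)\times\A^1$; these are different ambient spaces. Moreover, $D(k-1)$ sits at the far end of the bubble from the nodal locus of $\pi_{k-1}$ (concretely, the node of $\pi_{k-1}$ is $D(k-2)$ sitting inside $Y(k-1)|_{t_{k-1}=0}$, while $D(k-1)$ restricts to the opposite section of the last $\Delta$). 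So in your node chart $\Sp\C[x,y,t,\vec z]/(xy-t)$ the center $D(k-1)\times 0$ is absent, Approach~2 performs no modification there, and you are left comparing an affine scheme with a small resolution containing a $\Pj^1$; they cannot match over that chart.

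The underlying problem is that your \'etale descent is on $Y(k-1)$, but the two constructions carry \emph{different} structure maps back to $Y(k-1)$: Approach~1's contraction induces $(t_1,\ldots,t_k)\mapsto(t_1,\ldots,t_{k-2},t_{k-1}t_k)$ on the base, while Approach~2's blow-down induces $(t_1,\ldots,t_k)\mapsto(t_1,\ldots,t_{k-1})$ --- exactly the discrepancy the paper flags immediately after the proposition. Since the isomorphism being sought does not intertwine these two maps, pulling back to an \'etale chart of $Y(k-1)$ via each side's own map produces genuinely different open pieces of $Y(k)$, and the comparison cannot be made chart-by-chart on $Y(k-1)$. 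A route that does work is to descend instead on the original pair $(Y,D)$, reducing to the toric local model $(\A^n,\A^{n-1})$; there one can compute both fans and see they literally coincide (e.g.\ for $n=1$, $k=2$ both give the fan with rays $e_1,e_2,e_3,e_1+e_2,e_1+e_3$ and the same three maximal cones). The resulting isomorphism reshuffles charts: one small-resolution chart of Approach~1 matches the unmodified node chart of Approach~2, the other matches one of the blow-up charts near $D(k-1)$.
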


The difference of the two definitions lies in the contraction $p: Y(k)\rar Y(k-1)$. If we look at this contraction on the base, the first one is given by $(t_1,\cdots,t_k) \mapsto (t_1,\cdots,t_{k-2},t_{k-1}t_k)$ and the other is $(t_1,\cdots,t_k) \mapsto (t_1,\cdots, t_{k-1})$. In other words, the contraction $Y(k)\cong \Bl_{D(k-1)\times 0}(Y(k-1)\times \A^1) \rar Y(k-1)$ is different from the contraction $p$ in Approach 1.

The successive blow-up definition, although easier to describe and appearing more natural, appears less compatible with the $X(k)$ construction. One can see that later via the different $(\C^*)^k$-actions.

Both approaches appear in J. Li's work on degeneration theories, and both will be used during the proof of the properness of Quot-stacks.

\begin{rem}
  Before stating the properties of the standard families, we introduce the following notation. Denote by $(Y(k)^\circ, \pi^\circ)$ the ``reverse" standard family of expanded pairs. Let $Y(k)^\circ$ be the same as $Y(k)$, but $\pi^\circ:= r\circ \pi: Y(k)^\circ \rightarrow \A^k$, where $r:\A^k\rightarrow \A^k$ is the ``order-reversing" map $(t_1,\cdots,t_k)\mapsto (t_k,\cdots,t_1)$. For example, $Y(1)^\circ = \Bl_{0\times D}(\A^1\times Y)$, and for larger $k$ the construction is conducted via base change from the left.
\end{rem}

\subsection{Properties of standard families}

Let's fix some notations on $\A^{k+1}$ for later use.

\begin{enumerate}[1)]
\setlength{\parskip}{1ex}

\item There is a $(\C^\ast)^k$-action on the base $\A^{k+1}$. For $\lambda=(\lambda_1,\cdots,\lambda_k)\in (\C^\ast)^k$, $t=(t_0,\cdots,t_k) \in \A^{k+1}$, let
    \begin{equation} \label{action_Ak_X}
    \lambda \cdot t := \left( \lambda_1 t_0, \frac{\lambda_2}{\lambda_1} t_1, \cdots, \frac{\lambda_k}{\lambda_{k-1}} t_{k-1}, \lambda_k^{-1} t_k \right).
    \end{equation}
    Under this action (and trivial action on $\A^1$), and the multiplicative group homomorphism $(\C^\ast)^k \rightarrow \C^\ast$, $(\lambda_1,\cdots,\lambda_k) \mapsto \lambda_1\cdots\lambda_k$, the multiplication map $m:\A^{k+1} \rightarrow \A^1$ is equivariant.

\item For a subset $I=\{i_0,\cdots,i_l\} \subset \{0,\cdots,k\}$ with $|I|=l+1\leq k+1$ and $i_0<\cdots<i_l$. Let $\tau_I:\A^{l+1}\rightarrow \A^{k+1}$ be the embedding given by
    $$t_i=\left\{
    \begin{aligned}
    &t_{i_p},  && \text{ if $i=i_p$ for some $p$}, \\
    &1,  && \text{ if $i\neq i_p$ for any $p$ }.
    \end{aligned}\right.$$

\item Again for a subset $I\subset \{0,\cdots,k\}$. Let
    $$U_I:=\{(t_0,\cdots,t_k)\mid t_i\neq 0,\ \forall i\not\in I\},$$
    which is a product of $\A^1$'s and $\mathbb{G}_m$'s. We have the natural open immersion $\tilde{\tau}_I: U_I \rightarrow \A^{k+1}$.

    For two subsets $I,I'\subset \{0,\cdots,k\}$ as above with $|I|=|I'|$, we have the natural isomorphism given by reordering the coordinates
    $$\tilde{\tau}_{I,I'}: U_I \xrightarrow{\sim} U_{I'}.$$

\item For $0\leq i\leq k$, denote by $\pr_i:\A^{k+1}\rightarrow \A^1$ the $i$-th projection $(t_0,\cdots,t_k)\mapsto t_i$.
\end{enumerate}

Now we state the properties of standard families of expanded degenerations $\pi:X(k)\rightarrow \A^{k+1}$. They can be proved by induction.

\begin{prop}
\begin{enumerate}[1)]

\setlength{\parskip}{1ex}

\item \label{dim_Xk} $X(k)$ is a separated Deligne--Mumford stack of finite type over $\C$, of dimension $n+k+1$, where $n=\dim X_0$. Moreover, if $\pi:X\rar \A^1$ is proper, the family map $\pi:X(k)\rightarrow \A^{k+1}$ is proper; if $X$ is smooth, $X(k)$ is smooth.

\item \label{fiber_Xk} For $t=(t_0,\cdots,t_k)\in \A^{k+1}$, if $t_0 \cdots t_k\neq 0$, the fiber of the family over $t$ is isomorphic to the generic fibers $X_c,\ c\neq 0$ in the original family $\pi:X\rightarrow \A^1$.

    Let $I\subset \{0,\cdots,k\}$ be a subset with $1\leq|I|=l+1\leq k+1$. If $t_i=0$, $\forall i\in I$ and $t_j\neq 0$, $\forall j\not\in I$, then the fiber over $t$ is isomorphic to $X_0[l]$.

    In particular, the fiber over $0\in\A^{k+1}$ is isomorphic to the length-$k$ expanded degeneration $X_0[k]$.

\item \label{action_Xk} There is a $(\C^\ast)^k$-action on $X(k)$, making the diagram (\ref{sqk}) $(\C^\ast)^k$-equivariant. This action gives isomorphisms of fibers in the same strata described as above. The induced action of the stabilizer on a fiber isomorphic to $X_0[l]$  is the same as described in Remark \ref{action_fiber}.

\item \label{dis_action_Xk} There are also discrete symmetries in $X(k)$, away from the central fiber. For two subsets $I,I'\subset \{0,\cdots,k\}$ with $|I|=|I'|$, the natural isomorphisms $\tilde{\tau}_{I,I'}: U_I \xrightarrow{\sim} U_{I'}$ induce isomorphisms on the families
    $$\tilde{\tau}_{I,I',X}: X(k)\big|_{U_I} \xrightarrow{\sim} X(k)\big|_{U_{I'}},$$
    extending those given by $(\C^\ast)^k$-actions on smooth fibers.

Moreover, restricted to the embedding $\tau_I:\A^{l+1}\rightarrow \A^{k+1}$, we have $X(k)\times_{\A^{k+1},\tau_I} \A^{l+1} \cong X(l)$, and under the $(\C^\ast)^{k-l}$-``translations" $U_I\cong \A^{l+1} \times (\C^\ast)^{k-l}$, we have the identification
    $$X(k)\big|_{U_I}\cong X(l)\times (\C^\ast)^{k-l}.$$
\end{enumerate}
\end{prop}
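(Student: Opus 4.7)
The plan is to prove all four properties by induction on $k$, using the inductive construction $X(k+1)=X(k)(1)$ from the previous subsection. The base case $k=0$ is trivial: $X(0)=X$ with $\pi:X\rar\A^1$ the given locally simple degeneration. The engine of the induction is the $k=1$ procedure applied to the local model $\Sp\C[x,y,t]/(xy-t)$, whose explicit description $X(1)\cong\Ou_{\Pj^1}(-1)^{\oplus 2}$ makes each property verifiable by direct computation in coordinates; these statements then globalize via the groupoid quotient $X(1)=[R(1)\rightrightarrows U(1)]$.

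For properties 1) and 2), I would first run the dimension count: the base change $X(k)\times_{\A^1}\A^2$ along the last-factor projection has dimension $\dim X(k)+1=n+k+2$, and the subsequent blow-up along a smooth codimension-$2$ center followed by contraction preserves dimension, yielding $\dim X(k+1)=n+k+2$. The properties of being DM, separated, and of finite type descend through the groupoid quotient from the standard local models, where they are immediate from explicit equations. Properness is preserved by base change, blow-up, and contraction, starting from the assumed properness of $\pi:X\rar\A^1$. Smoothness reduces to the baby case, where $\Ou_{\Pj^1}(-1)^{\oplus 2}$ is visibly smooth. For the fiber analysis in 2), I would work \'etale-locally via the diagram (\ref{sqk}): for $t$ with all $t_i\neq 0$ the fiber is a base change of a smooth fiber $X_c$; for $t$ with $t_i=0$ exactly for $i\in I$, $|I|=l+1$, the fiber arises from the blow-up/contraction procedure restricted to the corresponding stratum, and inductively this inserts one new copy of $\Delta$ along the singular locus after contracting the correct $\Pj^1$-factor, producing $X_0[l]$.

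For properties 3) and 4), I would build the $(\C^\ast)^k$-action inductively. On the baby model, $\lambda\cdot(x,y,t_0,t_1)=(x,y,\lambda t_0,\lambda^{-1}t_1)$ lifts through the blow-up of the invariant center $D\times 0$ and descends through the equivariant contraction. The inductive step combines this $\C^\ast$ with the existing $(\C^\ast)^{k-1}$-action so as to match the formula (\ref{action_Ak_X}); equivariance of the diagram (\ref{sqk}) follows because the product of the scaling factors in (\ref{action_Ak_X}) equals $1$. The stabilizer action on a fiber $X_0[l]$ then agrees with Remark \ref{action_fiber} by direct local computation. The discrete symmetries $\tilde\tau_{I,I',X}$ come from permuting the nonzero $t_j$'s in a standard local model, which over $U_I$ only sees the coordinates indexed by $I$; the identification $X(k)|_{U_I}\cong X(l)\times(\C^\ast)^{k-l}$ then factors through base change along $\tau_I:\A^{l+1}\rar\A^{k+1}$, combined with the $(\C^\ast)^{k-l}$ translations.

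The main obstacle is the inductive step for properties 3) and 4): the formula (\ref{action_Ak_X}) is not the direct product of a $(\C^\ast)^{k-1}$-action with a new $\C^\ast$ factor, so the two actions must be carefully merged through each blow-up/contraction, and compatibility between the new $\C^\ast$ acting on the last two base coordinates and the old action on the first $k$ coordinates must be verified inductively. Showing that the discrete symmetries $\tilde\tau_{I,I',X}$ are well-defined globally, rather than only on the atlas of local models, requires tracking how the groupoid data $R(k)\rightrightarrows U(k)$ interact with coordinate permutations that may mix components of the central fiber $X_0[k]$.
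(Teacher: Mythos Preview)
Your proposal is correct and follows exactly the approach the paper takes: the paper states only that these properties ``can be proved by induction'' and gives no further argument, so your sketch is already more detailed than the paper's own treatment. Regarding your main obstacle (gluing the $(\C^\ast)^k$-action globally), the paper addresses this in the remark immediately following the proposition: the key observation is that the action on $X(1)$ is trivial on the $X$ factor and only nontrivial on the extra $\A^1$ factor, so any \'etale cover of $X$ yields a $\C^\ast$-equivariant cover of $X(1)$, and the action descends through the groupoid quotient without difficulty.
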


\begin{rem}
Here $(t_0, \cdots, t_k)\in \A^{k+1}$ means a closed point $\A^{k+1}$. But all definitions and descriptions can be easily generalized to an arbitrary point $\Sp K \to \A^{k+1}$. In this case the statement $t_i\neq 0$ or $t_i=1$ means that the image of $t_i \in \C[t_0, \cdots, t_k]$ in $K$ is $\neq0$ or $=1$.
\end{rem}

\begin{rem}
  We say something about the $(\C^*)^k$-action. It's easy to define the action on local models. To glue the actions on local models together, say for $X(1)$, the key observation is that the action is actually trivial on the $X$ factor and only nontrivial on the extra $\A^1$ factor. Hence starting from an \'etale covering of $X$, one can get a natural $\C^*$-equivariant covering of $X(1)$, which easily descends to a $\C^*$-action on $X(1)$.
\end{rem}

For $\pi: Y(k)\rar \A^k$, again we need some notations on $\A^k$. Here we use $(t_1,\cdots,t_k)$ for coordinates of the base $\A^k$. We adopt almost the same notations $\tau_I$, $\tilde{\tau}_I$, and $\tilde\tau_{I,I'}$, except that the indices range from $1$ to $k$ instead of $0$ to $k$.

The only difference is the group action. The group $(\C^\ast)^k$ (instead of $(\C^*)^{k-1}$) acts on the base $\A^k$, but differently for the two different constructions. Consider $\lambda=(\lambda_1,\cdots,\lambda_k)\in (\C^\ast)^k$, $t=(t_1,\cdots,t_k) \in \A^k$.

For Approach 1, the action is
\begin{equation} \label{action_Ak_Y}
\lambda \cdot t := \left( \lambda_1 t_1, \frac{\lambda_2}{\lambda_1} t_2, \cdots, \frac{\lambda_k}{\lambda_{k-1}} t_k \right),
\end{equation}
whereas for the successive blow-up construction (Approach 2), the action is
$$\lambda \cdot t:= (\lambda_1 t_1, \lambda_2 t_2, \cdots, \lambda_k t_k).$$

The standard families of expanded pairs have the following properties, some of which follows from the properties of $X(k)$ and the other can be easily proved by induction.

\begin{prop}
\begin{enumerate}[1)]
\setlength{\parskip}{1ex}

\item $Y(k)$ is a separated Deligne--Mumford stack of finite type over $\C$, of dimension $n+k$, where $n=\dim Y$. If $Y$ is proper, then the map $\pi$ is proper. Moreover, if $(Y,D)$ is a smooth pair, so is $(Y(k),D(k))$.

    $D(k)\cong D\times\A^k$, and the restriction of $p$ and $\pi$ on $D(k)$ can be identified with the two projections to $D$ and $\A^k$ respectively.

\item \label{fiber_Yk} For $t=(t_1,\cdots,t_k)\in \A^{k}$, if $t_1 \cdots t_k\neq 0$, then the fiber of the pair over $t$ is isomorphic to the original pair $(Y,D)$.

    Let $I\subset \{1,\cdots,k\}$ be a subset with $1\leq|I|=l\leq k$. If $t_i=0$, $\forall i\in I$ and $t_j\neq 0$, $\forall j\not\in I$, then the fiber over $t$ is isomorphic to $(Y[l],D[l])$.

    In particular, the fiber over $0\in\A^k$ is isomorphic to the length-$k$ expanded pair $(Y[k],D[k])$.

\item \label{action_Yk} There is a $(\C^\ast)^k$-action on $Y(k)$, in both constructive approaches, compatible to the corresponding actions on $\A^k$. This action gives isomorphisms of fibers in the same strata as described above. In particular, the induced action of the stabilizer on the a fiber isomorphic to $Y[l]$ is the same as described in Remark \ref{action_fiber}.

\item \label{dis_action_Yk} There are also discrete symmetries in $Y(k)$, away from the central fiber. For two subsets $I,I'\subset \{1,\cdots,k\}$ with $|I|=|I'|=l$, the natural isomorphisms $\tilde{\tau}_{I,I'}: U_I \xrightarrow{\sim} U_{I'}$ induce isomorphisms on the families
    $$\tilde{\tau}_{I,I',Y}: Y(k)\big|_{U_I} \xrightarrow{\sim} Y(k)\big|_{U_{I'}},$$
    extending those given by $(\C^\ast)^k$-actions on smooth fibers.

    The discrete actions restricted on $D(k)\cong D\times \A^k$ are just a reordering of the coordinates of the $\A^k$ factors.

    Moreover, if restricted to the embedding $\tau_I:\A^{l}\rightarrow \A^{k}$ we have $Y(k)\times_{\A^{k},\tau_I} \A^{l} \cong Y(l)$, and under the $(\C^\ast)^{k-l}$-``translations" $U_I\cong \A^{l} \times (\C^\ast)^{k-l}$, we have the identification
    $$Y(k)\big|_{U_I}\cong Y(l)\times (\C^\ast)^{k-l}.$$
\end{enumerate}
\end{prop}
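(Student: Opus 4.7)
The proof is by induction on $k$, combining both constructive approaches. Approach 2 (successive blow-ups) is best suited to the structural statements in (1), while Approach 1 is the natural vehicle for transferring the fiber, action, and discrete-symmetry statements (2)--(4) from the already established properties of $X(k-1)$ applied to the locally simple degeneration $Y(1)\to\A^1$, whose central fiber is $Y\cup_D\Delta$.

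For the base case $k=1$, we have $Y(1)=\Bl_{D\times 0}(Y\times\A^1)$. Since $D$ is a locally smooth Cartier divisor in $Y$, the substack $D\times 0\subset Y\times\A^1$ is the transverse intersection of two locally smooth Cartier divisors, hence is locally smooth of codimension two. Standard properties of blow-ups along locally smooth centers of separated DM stacks give separatedness, finite type, dimension $n+1$, preservation of smoothness, and properness when $Y$ is proper. Because $D\times 0$ is Cartier inside $D\times\A^1$, its proper transform equals $D\times\A^1$, which identifies $D(1)\cong D\times\A^1$ with $p,\pi$ restricting to the two projections. The generic fiber is $Y$ (blow-up is an isomorphism away from the center); the central fiber is $Y\cup_D\Delta$ with $\Delta=\Pj(N\oplus\Ou_D)$ identified as the projectivized normal bundle; the $\C^\ast$-scaling on $\A^1$ fixes the center and lifts uniquely; and the discrete symmetries are trivial since $|I|=|I'|\leq 1$ forces $I=I'$.

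For the inductive step of (1), use Approach 2: $Y(k+1)=\Bl_{D(k)\times 0}(Y(k)\times\A^1)$. By the inductive hypothesis $D(k)\cong D\times\A^k$ is locally smooth of codimension one in $Y(k)$, so $D(k)\times 0$ is locally smooth of codimension two in $Y(k)\times\A^1$. Standard blow-up theory promotes separatedness, DM, finite type, smoothness, and properness to $Y(k+1)$, with dimension $n+k+1$; and since $D(k)\times 0$ is Cartier inside $D(k)\times\A^1$, its proper transform is unchanged, giving $D(k+1)\cong D\times\A^{k+1}$ with the expected behavior of $p,\pi$.

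For the inductive step of (2)--(4), use Approach 1: $Y(k)=Y(1)(k-1)_{\deg}$ arises from the $X(k-1)$-construction applied to the locally simple degeneration $Y(1)\to\A^1$. The already-proved proposition on $X(k-1)$ then directly supplies: generic fibers isomorphic to $Y$; fibers over depth-$l$ strata isomorphic to length-$l$ expanded degenerations of $Y\cup_D\Delta$, which unwind to $Y\cup_D\Delta_1\cup_D\cdots\cup_D\Delta_l=Y[l]$, with the distinguished divisor tracked as the $D_-$-section of the last $\Delta$-factor, matching $D[l]$; a $(\C^\ast)^{k-1}$-action from the $X(k-1)$-construction, combined with the additional $\C^\ast$-scaling on the $\A^1$-base of $Y(1)$ (which commutes with the degeneration construction because the center $D\times 0$ is fixed by it), producing the $(\C^\ast)^k$-action with the base formula (\ref{action_Ak_Y}); and discrete symmetries $\tilde\tau_{I,I',Y}$ transferred from those of $X(k-1)$, along with the identifications $Y(k)\times_{\A^k,\tau_I}\A^l\cong Y(l)$ and $Y(k)|_{U_I}\cong Y(l)\times(\C^\ast)^{k-l}$. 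The restriction to $D(k)\cong D\times\A^k$ reduces to coordinate reordering on the $\A^k$-factor because $D(k)$ is the gluing divisor in the last $\Delta$-component of each fiber $Y[l]$, whose position is permuted accordingly.

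The main obstacle is the bookkeeping needed to reconcile Approaches 1 and 2: properties (1) and (2)--(4) are proved by different inductive mechanisms, so we rely on Proposition \ref{succ_bl} to ensure that statements about $D(k)$, dimension, and the $(\C^\ast)^k$-action agree under both descriptions. A secondary subtlety is that the two approaches yield genuinely different $(\C^\ast)^k$-actions on $\A^k$ (ratio-type versus diagonal), so the action formula must be verified in the approach under which it was built, and then transported across via Proposition \ref{succ_bl}. Once these compatibilities are in place, the remaining steps are standard blow-up computations and \'etale-local descent of groupoid actions.
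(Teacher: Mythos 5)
Your proposal is correct and takes essentially the same route the paper intends (the paper only sketches it): properties (2)--(4) are transferred from the proposition on $X(k)$ applied to the locally simple degeneration $Y(1)\rar\A^1$ with central fiber $Y\cup_D\Delta$, while (1) and the extra $\C^*$-factor come from the blow-up description, with Proposition \ref{succ_bl} reconciling the two constructions. The only blemish is a harmless off-by-one in your wording: a depth-$l$ stratum carries the fiber $(Y\cup_D\Delta)[l-1]$, i.e.\ a length-$(l-1)$ expanded degeneration of $Y\cup_D\Delta$, which is indeed $Y[l]$ as you conclude.
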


\begin{rem}
  For the $(\C^*)^k$-action we have one more factor than the $X(k)$ case. By construction the action on $X(k)$ gives a $(\C^*)^{k-1}$-action on $Y(k)$. The last $\C^*$ comes form the original action in $Y(1)=\Bl_{D\times 0}(Y\times \A^1)$. One can see again that an arbitrary \'etale covering of $Y$ would give us a $\C^*$-equivariant covering of $Y(1)$, and hence $(\C^*)^k$-equivariant covering of $Y(k)$, which makes the action available. In the successive blow-up construction the action is more obvious -- the blow-up process brings one $\C^*$-factor each time.
\end{rem}

Standard families of expanded degenerations and pairs are closely related to each other. Consider a locally simple degeneration $\pi: X\rar \A^1$, with central fiber $X_0\cong Y_-\cup_D Y_+$.

1) (Restriction to hyperplanes) Let $H_i\subset \A^{k+1}$ be the coordinate hyperplane defined by $t_i=0$, $0\leq i\leq k$. The restriction of the family $\pi: X(k)\rightarrow \A^{k+1}$ to $H_i$ is a ``smoothing" of all divisors except $D_i$, in the following sense.

    \begin{prop} \label{Hi_Xk}
    Composing $\pi$ with the projection $\pr_i: \A^{k+1} \rightarrow \A^1$ to the factor $t_i$, we obtain $\pi_i: X(k)\rightarrow \A^1$. Then $\pi_i$ is a locally simple degeneration.

    Denote the singular divisor by $D_i(k)$, and the central fiber decomposition by $X(k)\times_{\pi_i,\A^1} 0=: X(k)_-^i \cup_{D_i(k)} X(k)_+^i$. Then one has
    $$D_i(k)\cong \A^i \times D\times \A^{k-i},\quad X(k)_-^i\cong Y_-(i) \times \A^{k-i}, \quad X(k)_+^i \cong \A^i \times Y_+(k-i)^\circ.$$
    Moreover, the restriction of the map $\pi:X(k)\rightarrow \A^{k+1}$ on these two components are given in the following diagrams
    $$\xymatrix{
     & X(k)_-^i \ar[dl]_{\pi_{0,\cdots,i-1}} \ar[d]_{\pi_i} \ar[dr]^{\Id_{\A^{k-i}}} & & & X(k)_+^i \ar[dl]_{\Id_{\A^i}} \ar[d]_{\pi_i} \ar[dr]^{\pi_{i+1,\cdots,k}^\circ} & \\
     \A^i & 0 & \A^{k-i} & \A^i & 0 & \A^{k-i}
     }$$
    where the map from $Y_-(i)$, $Y_+(k-i)^\circ$ to the corresponding bases are just the maps of standard families of expanded pairs.

    The gluing is along $D_i(k) \cong D_-(i)\times \A^{k-i} \cong \A^i\times D \times \A^{k-i} \cong \A^i \times D_+(k-i)^\circ$, where the order of copies of $\A^1$ here reflects the map $\pi|_D$; and we have $D_i(k)\cap D_j(k) =\emptyset$ for $i\neq j$.

    All statements above are compatible with the corresponding $(\C^*)^k$-actions.
    \end{prop}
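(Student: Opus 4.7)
The proof is by induction on $k$, with the base case $k = 0$ being the tautological decomposition $X_0 = Y_- \cup_D Y_+$. Because $X(k)$ is constructed by \'etale gluing via the groupoid $R(k) \rightrightarrows U(k)$ from standard local models, the assertion reduces to a direct computation on the standard local model, together with a check that every structure in sight (the hyperplane $H_i$, the two components, the divisor $D_i(k)$, the torus action) is preserved by the \'etale gluing.

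First I would verify the claim on the standard local model, where $X(k)$ admits an explicit toric description: from the cone for $X \times_{\A^1} \A^{k+1}$ one obtains the smooth toric variety $X(k)$ by the iterated refine-and-contract procedure described in the previous subsections. In this toric picture there are exactly $k+1$ interior rays $v_0, \ldots, v_k$ producing the central-fiber divisors $D_0, \ldots, D_k$, and the function $t_i$ corresponds to pairing with $v_i$. Setting $t_i = 0$ cuts out the union of toric subvarieties whose cones contain $v_i$, and the combinatorics of the fan splits this locus into two closed toric subvarieties meeting transversally along the divisor corresponding to $v_i$. One then reads off that the ``$-$'' piece is the toric model of $Y_-(i) \times \A^{k-i}$, the extra affine factors coming from $t_{i+1}, \ldots, t_k$ acting trivially on this side, and the ``$+$'' piece is $\A^i \times Y_+(k-i)^\circ$, glued along $\A^i \times D \times \A^{k-i}$. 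Flatness of $\pi_i$ and transversality of its central fiber are visible in these coordinates, as is the disjointness $D_i(k) \cap D_j(k) = \emptyset$ for $i \neq j$.

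Next I would globalize. The identifications of the local step are functorial in the local model data $(U_p, V_p)$ covering $X$, so they assemble via the same \'etale presentation $[R(k) \rightrightarrows U(k)]$ into the global isomorphisms $X(k)_-^i \cong Y_-(i) \times \A^{k-i}$ and $X(k)_+^i \cong \A^i \times Y_+(k-i)^\circ$. The key point is that each local identification uses only the data $D \cap U_p$ and the two sides of $X_0 \cap U_p$, so the \'etale transition maps of $X(k)$ restrict to the expected transition maps of the factored targets. This gives the gluing description along $D_i(k)$ and the commutativity of the two diagrams in the statement.

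Finally, compatibility with the $(\C^\ast)^k$-action follows from a local check: the action \eqref{action_Ak_X} restricted to $t_i = 0$ factors, under the natural homomorphism $(\C^\ast)^k \to (\C^\ast)^i \times (\C^\ast)^{k-i}$, as the action \eqref{action_Ak_Y} on the $t_0, \ldots, t_{i-1}$ coordinates (driving $Y_-(i)$) and its order-reversed counterpart on $t_{i+1}, \ldots, t_k$ (driving $Y_+(k-i)^\circ$), with each side receiving trivial actions from the complementary torus. The main technical obstacle I anticipate is orientation bookkeeping: the appearance of $Y_+(k-i)^\circ$ rather than $Y_+(k-i)$ is forced by the convention, recorded in the remark after the baby-case contraction, that one contracts the first $\Pj^1$-factor at each step, which, transferred to the $Y_+$-side, reverses the order of its parameters. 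Tracking this orientation carefully through the iterative construction is the most delicate part of the verification.
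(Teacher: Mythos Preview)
Your proposal is correct and matches the paper's approach: the paper does not give a detailed proof of this proposition, only remarking that the properties of $X(k)$ ``can be proved by induction'' and referring to the explicit toric picture for the local model in the remark following the baby case. Your outline---induction on $k$, reduction via the \'etale presentation to the standard local model, a direct toric/coordinate check there, and then globalization---is exactly the intended verification, including your care with the orientation bookkeeping that forces $Y_+(k-i)^\circ$.
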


2) (Restriction to lines) Let $L_i\subset \A^{k+1}$ be the coordinate line corresponding to $t_i$, $0\leq i\leq k$. The restriction of the family $\pi: X(k)\rightarrow \A^{k+1}$ to $L_i$ is a ``smoothing" of the divisor $D_i$, in the following sense.

    \begin{prop} \label{Li_Xk}
    $\pi|_{L_i}: X(k) \big|_{L_i}\rar L_i\cong \A^1$ is a locally simple degeneration, with central fiber decomposition $X_0[k]= Y_-[i]\cup_{D_i} Y_+[k-i]$, and generic fiber isomorphic to $X_0[k-1]$.

    Moreover, we have a description of the total space:

    $X(k) \big|_{L_0}$ is $Y_-(1)\cup_{D_-(1)} (\A^1\times Y_+[k-1])$;

    $X(k) \big|_{L_i}$ for $1\leq i\leq k-1$ is
    $$Y_-[i](1)\cup_{D_-[i](1)} (\A^1\times Y_+[k-i-1]) \ \cong \ (Y_-[i-1]\times \A^1) \cup_{D_-[i-1]\times \A^1} Y_+[k-i](1)^\circ;$$

    $X(k)\big|_{L_k}$ is $(Y_-[k-1]\times \A^1) \cup_{D_-[k-1]\times \A^1} Y_+(1)^\circ$.
    \end{prop}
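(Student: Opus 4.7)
The approach is to induct on $k$, combining an explicit base case computation with the recursive construction $X(k+1) = X(k)(1)$, and to reduce the global identifications to standard local models via the groupoid presentation $R(k)\rightrightarrows U(k)$. The restriction $\pi|_{L_i}$ is a base change of the family map along $L_i \hookrightarrow \A^{k+1}$, so it is automatically compatible with the groupoid presentation, and it suffices to verify the claimed isomorphisms on local models and check descent.

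First I would establish the locally simple degeneration claim. Since $L_i = \bigcap_{j\neq i} H_j$ sits inside every coordinate hyperplane $H_j$ with $j\neq i$, the restriction $\pi|_{L_i}$ can be realized as the restriction of the locally simple degeneration $\pi_j: X(k) \to \A^1$ (for any $j\neq i$) of Proposition \ref{Hi_Xk} to the smooth curve $L_i$, which meets the origin transversally to the singular divisor $D_j(k)$. Transversality preserves the locally simple degeneration structure. The generic fiber over $L_i\setminus \{0\}$ is $X_0[k-1]$ and the central fiber is $X_0[k]$ by Proposition \ref{fiber_Xk}. Because $L_i$ smooths exactly the single node indexed by $i$, the central fiber decomposition must read $Y_-[i]\cup_{D_i} Y_+[k-i]$.

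For the total space identification I would induct on $k$. The base case $k=1$ reduces, via the groupoid presentation, to an explicit coordinate computation on the standard local model $X(1)\cong \Ou_{\Pj^1}(-1)^{\oplus 2}\to \A^2$: setting $t_1=0$ contracts only over the $Y_-$-piece of the construction, producing $Y_-(1)\cup_{D_-(1)} (\A^1\times Y_+)$, while $t_0 = 0$ is symmetric. For the inductive step, use $X(k+1) = X(k)(1)$, so that on $\A^{k+2}$ the coordinate $t_k$ is replaced by $s_0 s_1$ on the new $\A^2$ factor. For $L_i \subset \A^{k+2}$ with $0 \leq i\leq k-1$, $L_i$ lies in the locus $\{s_0 = s_1 = 0\}$; by the base case this inserts one extra $\Delta$-bubble along $D_k$, and combining with the inductive identification for $X(k)|_{L_i}$ produces the claimed description. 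For $i = k$ and $i = k+1$, the line $L_i$ lives in the new $(s_0, s_1)$-plane crossed with the origin of $\A^k$, and the base case applied to the central pair of $X(k)$ yields the identification. The discrete symmetries and $(\C^*)^k$-equivariance from Proposition \ref{action_Xk} reduce the remaining cases to these representatives. The two equivalent descriptions in the range $1\leq i\leq k-1$ are then identified using Proposition \ref{succ_bl}, which interchanges the successive-blow-up and multi-base-change constructions of expanded pairs, applied symmetrically to the $Y_-$ and $Y_+$ sides.

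The main anticipated obstacle is the careful bookkeeping of distinguished divisors, orientations, and reversal conventions. Under the inductive $k=1$ base change the newly produced bubble inherits two marked divisors $D_\pm$ which must be correctly identified with the smoothed node $D_i$, and the appearance of $Y_+(k-i)^\circ$ rather than $Y_+(k-i)$ reflects the opposite ordering of base coordinates in Approach 1 versus Approach 2 for standard families of expanded pairs. Tracking these identifications consistently across the $(\C^*)^k$-symmetries, and ensuring the two descriptions in the middle range glue to the same total space over $L_i$, is the most delicate part of the argument.
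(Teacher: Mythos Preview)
The paper does not give an explicit proof of this proposition; it is stated as a structural consequence of the inductive construction $X(k+1)=X(k)(1)$, in the same spirit as the other listed properties of the standard families, which the paper says ``can be proved by induction.'' Your approach --- induction on $k$ using the recursive definition together with an explicit base case computation on the local model $X(1)\cong \Ou_{\Pj^1}(-1)^{\oplus 2}$, descending via the groupoid presentation --- is exactly the intended one, and the use of Proposition~\ref{succ_bl} to reconcile the two descriptions in the middle range is appropriate.

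One imprecision worth flagging: your argument for the locally simple degeneration claim invokes $\pi_j$ for $j\neq i$, but $\pr_j|_{L_i}$ is the constant map to $0$, so $\pi_j$ restricted to $X(k)|_{L_i}$ is constant and carries no information about the family over $L_i$. The relevant projection is $\pi_i$: since $\pr_i|_{L_i}:L_i\to\A^1$ is an isomorphism, $\pi|_{L_i}$ is identified with the restriction of $\pi_i$ to the closed substack $X(k)|_{L_i}\subset X(k)$. To verify this is a locally simple degeneration, check directly on local models: near $D_i$ the chart is $\Sp\C[x,y,\vec z,t_0,\ldots,t_k]/(xy-t_i)$, and setting $t_j=0$ for $j\neq i$ yields the standard model $\Sp\C[x,y,\vec z,t_i]/(xy-t_i)$; near $D_j$ for $j\neq i$ one obtains a constant family of nodal spaces over $L_i$, which is allowed since the definition of locally simple degeneration only demands local smoothness of the pair near the distinguished divisor $D_i$.
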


\subsection{Stacks of expanded degenerations and pairs}

Now we can define the stacks parameterizing expanded degenerations and pairs. We start with the relative case.

Let $(Y,D)$ be a locally smooth pair. We have constructed the standard family $\pi:Y(k)\rightarrow \A^k$, with $(\C^*)^k$-action on both $Y(k)$ and the base, making the family maps equivariant. We also have isomorphisms on open substacks $\tilde{\tau}_{I,I',Y}: Y(k) \big|_{U_I} \xrightarrow{\sim} Y(k) \big|_{U_{I'}}$, compatible with the isomorphisms $\tilde{\tau}_{I,I'}: U_I\xrightarrow{\sim} U_{I'}$, where $I,I'\subset \{1,\cdots,k\}$ have the same number of elements $|I|=|I'|=l$.

These discrete symmetries do not form a group action on $Y(k)$ or $\A^k$, but they give \'etale equivalence relations. We rephrase the discrete symmetries as the relations
\begin{equation}\label{R_II'}
\xymatrix{
R_{I,I',\A^k}:=\A^l\times (\C^*)^{k-l} \ar@<.4ex>[r] \ar@<-.4ex>[r] & \A^k,}
\end{equation}
$$\xymatrix{
R_{I,I',Y(k)}:=Y(l)\times (\C^*)^{k-l} \ar@<.4ex>[r] \ar@<-.4ex>[r] & Y(k),
}$$
where the maps are given by open immersions $\A^l\times (\C^*)^{k-l} \xrar{\sim} U_I \hookrightarrow \A^k$ and $\A^l\times (\C^*)^{k-l} \xrar{\sim} U_{I'} \hookrightarrow \A^k$ respectively, and similar for $Y(k)$.

The \'etale equivalence relation on the whole space is just the union
$$R_{d,\A^k}:=\coprod_{1\leq |I|=|I'|\leq k} R_{I,I',\A^k}, \quad R_{d,Y(k)}:=\coprod_{1\leq |I|=|I'|\leq k} R_{I,I',Y(k)},$$
where ``$d$" refers to ``discrete".

In other words, we have $R_{d,Y(k)}=\pi^* R_{d,\A^k}$ for $\pi: Y(k)\rar \A^k$, or the following \emph{Cartesian} diagram,
$$\xymatrix{
R_{d,Y(k)} \ar[r] \ar[d] & Y(k)\times Y(k) \ar[d] \\
R_{d,\A^k} \ar[r] & \A^k\times \A^k.
}$$
Let $S_k$ be the symmetric group acting on $\A^k$ by permuting the coordinates. The discrete relation on $\A^k$ is a sub-relation of the $S_k$-action
$$\xymatrix{
R_{d,\A^k} \ar@{^{(}->}[r] & S_k\times \A^k \ar@<.4ex>[r] \ar@<-.4ex>[r] & \A^k.
}$$
Hence the two maps $R_{d,\A^k}\rightrightarrows \A^k$ are open immersions followed by a disjoint trivial covering, which is quasi-affine and \'etale.

Now let's describe the combined equivalence relation. $S_k\subset GL(k)$ acts on $(\C^*)^k$ by conjugation and we can form the semidirect product $(\C^*)^k \rtimes S_k$. We have the following smooth equivalence relation $\sim$ generated by $(\C^*)^k$-action and discrete symmetries, which is quasi-compact and separated,
$$\xymatrix{
R_{\sim, \A^k}:= (\C^*)^k\times R_{d,\A^k} \ar@{^{(}->}[r] & (\C^*)^k \rtimes S_k \times \A^k \ar@<.4ex>[r] \ar@<-.4ex>[r] & \A^k.
}$$
We also have the similar relation $R_{\sim,Y(k)}$ on $Y(k)$ and they form a \emph{non-Cartesian} diagram
$$\xymatrix{
R_{\sim,Y(k)}= (\C^*)^k\times R_{d,Y(k)} \ar@<.4ex>[r] \ar@<-.4ex>[r] \ar@<4.5ex>[d] & Y(k) \ar[d] \\
R_{\sim,\A^k}= (\C^*)^k\times R_{d,\A^k} \ar@<.4ex>[r] \ar@<-.4ex>[r] & \A^k.
}$$
$R_{\sim, \A^k}$ and $R_{\sim, Y(k)}$ are the equivalence relations generated by the $(\C^*)^k$-action and discrete symmetries.

Consider the quotients, Artin stacks $[\A^k/R_{\sim,\A^k}]$ of dimension 0 and $[Y(k)/R_{\sim,Y(k)}]$ of dimension $(\dim Y-k)$, with induced 1-morphism $\pi: [Y(k)/R_{\sim,Y(k)}] \rar [\A^k/R_{\sim,\A^k}]$.

Now consider the embeddings $\tau_I: \A^l\hookrightarrow \A^k$ and $\tau_{I,Y}: Y(l)\hookrightarrow Y(k)$, which are compatible with the equivalence relation. Thus we have embeddings of Artin stacks $[\A^l/R_{\sim,\A^l}] \rar [\A^k/R_{\sim,\A^k}]$ and $[Y(l)/R_{\sim,Y(l)}]\rar [Y(k)/R_{\sim,Y(k)}]$. In fact they are open immersions. To see this one can use the property $Y(k)|_{U_I} \cong Y(l)\times (\C^*)^{k-l}$ and take the smooth covers $\A^l\times (\C^*)^{k-l} \rar \A^l$ and $Y(l) \times (\C^*)^{k-l} \rar Y(l)$. Now it is clear that these immersions form inductive systems, leading to the following definition.

\begin{defn}
Define $\fA:=\varinjlim\ [\A^k/R_{\sim,\A^k}]$ to be the \emph{stack of expanded pairs}, with respect to $(Y,D)$, and let $\fY:=\varinjlim\ [Y(k)/R_{\sim,Y(k)}]$ be the \emph{universal family of expanded pairs}. There is a family map $\pi: \fY \rar \fA$, which is of Deligne--Mumford type and is proper.
\end{defn}

We interpret the definition in the categorical sense. For a fixed map $\xi_0: S \rar \A^k$, pulling back the standard family $Y(k)$, one obtains $\pi: \cY_S:= \xi_0^*Y(k) \rar S$, a $\emph{family of expanded pairs}$ over $(S,\xi_0)$. The equivalence relation acts on $\xi_0$ by acting on the target, and for maps related by this relation we get isomorphic families of expanded pairs. In this way one can think of a map $\xi_0$ as a family of expanded pairs over $S$, up to the action of the equivalence relation.

Given a morphism of schemes $f: T\rar S$, and a map $\xi: S\rar \A^k$, one can take the composite $\xi\circ f: T\rar \A^k$. The corresponding family of expanded pairs over $T$ is just given by $\cY_T = f^*\cY_S$. By the following lemma we see that it is unique up to unique isomorphism.

\begin{lem} \label{1-rigid}
  Suppose one has the following 2-commutative diagram
  $$\xymatrix{
  \cY_T \ar[r]^F \ar[d] & \cY_S \ar[d] \\
  T \ar[r]^f & S.
  }$$
  Then the 1-morphism of stacks $F$ is representable. As a consequence, $F$ has no nontrivial 2-isomorphisms.
\end{lem}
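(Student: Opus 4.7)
My plan is to exploit the definition $\cY_T = f^*\cY_S = T\times_S \cY_S$, which presents $\cY_T$ as a 2-fiber product. Under this identification, the given 2-commutative square is canonically 2-Cartesian, with second projection $\pi_2\colon T\times_S \cY_S \to \cY_S$. By the universal property of the 2-fiber product, any 1-morphism $F$ making the diagram 2-commute is 2-isomorphic to $\pi_2$ via a 2-isomorphism uniquely determined by the 2-commutativity datum. Since representability is invariant under 2-isomorphism, it then suffices to show $\pi_2$ is representable.

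Now $\pi_2$ is the base change of $f\colon T\to S$ along the structure morphism $\cY_S \to S$. As a morphism of schemes, $f$ is trivially representable (every fiber product of schemes over a scheme is a scheme), and representability of 1-morphisms of algebraic stacks is stable under arbitrary base change. This yields representability of $\pi_2$, hence of $F$.

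For the consequence about 2-isomorphisms, I would argue as follows. Any 2-automorphism $\alpha\colon F \Rightarrow F$ compatible with the 2-commutativity datum of the square corresponds, via the universal property of $T\times_S \cY_S$, to a 2-automorphism of $\mathrm{id}_{\cY_T}$ whose image under the other projection $p\colon \cY_T \to T$ must be a 2-automorphism of $p$. Since $T$ is a scheme, every 2-morphism with target $T$ is the identity, so this first component is trivial. Combined with the faithfulness implied by representability of $F$, the full 2-automorphism $\alpha$ is forced to be trivial. The argument is essentially formal; the only genuine inputs are that $T$ is a scheme and that $\cY_T$ was defined as a pullback. I do not foresee any substantial obstacle.
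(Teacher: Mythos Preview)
Your proposal has a genuine gap. The claim that ``any 1-morphism $F$ making the diagram 2-commute is 2-isomorphic to $\pi_2$'' does not follow from the universal property of the 2-fibre product: that property characterises maps \emph{into} $T\times_S\cY_S$, not maps out of it. From the pair $(\pi_1,F)$ one does obtain a morphism $\Psi\colon\cY_T\to\cY_T$ over $T$ with $\pi_2\circ\Psi\cong F$, but nothing forces $\Psi$ to be the identity. For a concrete counterexample, take $T=S$ a point and $\cY_T=\cY_S=Y[1]$; then $\pi_2$ is the identity, yet any nontrivial $\lambda\in\C^*$ acting fibrewise on the bubble $\Delta$ gives an automorphism $F_\lambda$ of $Y[1]$ which fits into the square but is not 2-isomorphic to the identity. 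So $F$ need not be a base change of $f$, and your reduction collapses.

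The paper handles arbitrary $F$ by a different mechanism: it invokes the criterion (Lemma~4.4.3 of Abramovich--Vistoli) that a 1-morphism of Deligne--Mumford stacks is representable precisely when it induces monomorphisms on automorphism groups of geometric points. On fibres, $F$ restricts to a map of expanded pairs $Y[k]\to Y[l]$, and any such map is a composition of bubble contractions and embeddings, each visibly representable; hence the induced maps on isotropy are injective. Your argument does correctly show that the canonical projection $\pi_2$ is representable, which would suffice for the immediate application (uniqueness of the pullback up to unique isomorphism), but not for the lemma as stated and proved.
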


\begin{proof}
  By Lemma 4.4.3 of \cite{AV}, it suffices to prove that the homomorphisms between isotropy groups of geometric points are monomorphisms. Since maps between fibers are just maps between expanded pairs $Y[k]\rar Y[l]$, it suffices to prove for those maps. But any map $Y[k]\rar Y[l]$ is a successive composition of contractions of bubbles and embeddings, which is obviously representable.
\end{proof}

Let $S$ be a scheme. An object $\bar\xi\in \fA(S)$ is a compatible system of objects $\xi_k\in [\A^k/R_{\sim,\A^k}](S)$. By construction, an object $\xi\in [\A^k/R_{\sim,\A^k}](S)$ is given by a ``descent datum" over $S$, i.e. a map $\xi: S_\xi \rar \A^k$, where $S_\xi=\coprod S_i\rar S$ is a surjective \'etale covering of $S$ which satisfies the descent compatibility on overlaps; in other words, we have a map $r_\xi: R_\xi:=S_\xi\times_S S_\xi \rar R_{\sim,\A^k}$ compatible with the groupoid structure. By the interpretation above, one can view this as a family of expanded pairs over $S_\xi$.

The 2-isomorphisms are as follows. Given another $\xi': S_{\xi'}\rar \A^{k'}$ representing the same object, we can embed $\A^k$ and $\A^{k'}$ into a larger base $\A^{k''}$ for $k''\geq k,k'$, and pass to the refinement $S_{\xi\xi'}:=S_\xi\times_S S_{\xi'}$. Then for sufficiently large $k''$, the two resulting maps $S_{\xi\xi'}\rar \A^{k''}$ give the same object on the $k''$ level. In other words, they factor through the relation $R_{\sim,\A^{k''}}\rar \A^{k''}\times \A^{k''}$.

For a map of schemes $f: T\rar S$, passing to the \'etale cover, we have $f_\xi: T_\xi:= T\times_S S_\xi \rar S_\xi$. Then the 1-arrow is defined by the composition $\xi\circ f$.

In particular, for a fixed object $\bar\xi\in \fA(S)$, the stabilizer group of this object is as follows. Take a representative $\xi: S_\xi\rar \A^k$. The stabilizer group of this representative is the group scheme $\Aut_\sim(\xi,k)$ over $S_\xi$ in the following Cartesian diagram,
$$\xymatrix{
\Aut_\sim(\xi,k) \ar[r]\ar[d] & S_\xi \ar[r]\ar[d]^{\Delta\circ \xi} & S \\
R_{\sim,\A^k} \ar[r] & \A^k\times \A^k &.
}$$
Note that this group scheme actually does not depend on $k$ for large $k$, and by descent theory of affine morphisms it glues to a group scheme over $S$. We denote it by $\Aut_\sim(\bar\xi)$, which is independent of $S_\xi$ and $k$ for large $k$. As a result, one can see that $\fA$ is actually a 0-dimensional Artin stack.

Now let's apply the same procedure to a locally simple degeneration $\pi:X\rar \A^1$. We have constructed the standard family $\pi: X(k)\rar \A^{k+1}$, again with a compatible $(\C^*)^k$-actions on $X(k)$ and the base. We introduce smooth equivalence relations on $X(k)$ and $\A^{k+1}$ in the same manner. For $k\geq 0$, $[\A^{k+1}/R_{\sim,\A^{k+1}}]$ and $[X(k)/R_{\sim,X(k)}]$ also form inductive systems.

\begin{defn}
  Define $\fC:= \varinjlim \ [\A^{k+1}/R_{\sim,\A^{k+1}}]$ to be the \emph{stack of expanded degenerations}, with respect to the locally simple degeneration $\pi:X\rar \A^1$, and let $\fX:= \varinjlim \ [X(k)/R_{\sim,X(k)}]$ be the \emph{universal family of expanded degenerations}. There is a family map $\pi: \fX\rar \fC$, of Deligne--Mumford type and proper, making the following diagram commute (but not Cartesian):
  $$\xymatrix{
  \fX \ar[r]^p \ar[d]_\pi & X \ar[d]^\pi \\
  \fC \ar[r]^p & \A^1.
  }$$
\end{defn}

Both $\fC$ and $\fX$ can be viewed as Artin stacks over $\A^1$. $\fC$ is 1-dimensional and $\fX$ is of dimension $\dim X$.

Given an $\A^1$-map $\xi: S\rar \A^{k+1}$, a \emph{family of expanded degenerations} $\pi: \cX_S\rar S$ over $(S,\xi)$ is obtained by pull back of the standard family. Given a morphism of $\A^1$-schemes $f: (T,\eta) \rar (S,\xi)$, the pull-back 1-morphism is just given by $f^*\cX_S = \cX_T$, unique up to 2-isomorphisms.

For an $\A^1$-scheme $S$, an object $\bar\xi \in \fC(S)$ is represented by an $\A^1$-map $\xi: S_\xi\rar \A^{k+1}$, where $S_\xi=\coprod S_i$ is a surjective \'etale covering of $S$. Different representatives of the same object are related by embedding into a common larger base $\A^{k'+1}$ and the equivalence relation over that base.

\begin{rem}
  Given a family of expanded pairs (resp. degenerations) $\pi: \cY\rar S$ (resp. $\cX\rar S$), each fiber of the family is an expanded pair (resp. degeneration) with respect to $(Y,D)$ (resp. $\pi:X\rar \A^1$). Given a base change $f: (T,\eta) \rar (S,\xi)$, the induced map between fibers are given by maps of the corresponding expanded pairs (resp. degenerations). In this way we see that the definition makes sense to parameterize all expanded pairs (resp. degenerations).
\end{rem}

\section{Admissible sheaves and stable quotients}

In this section we introduce the notion of admissible sheaves, the correct objects we need to consider for the relative Donaldson--Thomas Theory. Again, our definitions are direct generalizations of J. Li and B. Wu \cite{LW}.

Let $W$ be a separated Deligne--Mumford stack of finite type, and $Z\subset W$ be a closed substack. Let $\F$ be a coherent sheaf on $W$.

\begin{defn}
  $\F$ is said to be \emph{normal} to $Z$ if $\text{Tor}_1^{\Ou_W} (\F,\Ou_Z)=0$. Moreover, we say that $\F$ is \emph{normal} to $Z$ \emph{at a point} $p\in Z$, if there is an \'etale neighborhood $i: U_p \rar W$ of $p$, such that $i^*\F$ is normal to $Z\times_W U_p$.
\end{defn}

According to the following lemma, most properties of the normality of coherent sheaves can be directly generalized to Deligne--Mumford stacks.

\begin{lem}
  $\F$ is normal to $Z$ if and only if it is normal to $Z$ at every point $p\in Z$. In other words, normality is a local property in the \'etale topology.
\end{lem}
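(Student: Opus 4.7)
The plan is to reduce to the elementary fact that $\text{Tor}_1^{\Ou_W}(\F,\Ou_Z)$ is a coherent sheaf on $W$ whose vanishing can be tested étale locally, exploiting that étale morphisms between DM stacks are flat. The forward implication is essentially formal: if $i:U_p\rar W$ is any étale neighborhood, then by flatness of $i$ one has a natural isomorphism
\begin{equation*}
i^*\text{Tor}_1^{\Ou_W}(\F,\Ou_Z)\cong \text{Tor}_1^{\Ou_{U_p}}(i^*\F,\Ou_{Z\times_W U_p}),
\end{equation*}
because tensoring with $\Ou_{U_p}$ over $i^{-1}\Ou_W$ is exact and commutes with the formation of $\text{Tor}$ from any flat resolution of $\F$ (or dually of $\Ou_Z$). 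Hence if the left-hand side is zero, so is the right-hand side, which gives normality of $i^*\F$ to $Z\times_W U_p$.

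For the converse, let $\cT:=\text{Tor}_1^{\Ou_W}(\F,\Ou_Z)$, which is a coherent $\Ou_W$-module supported set-theoretically on $Z$. I would argue that vanishing of $\cT$ is étale-local on $W$: by the definition of a DM stack, pick a smooth (in fact étale) presentation $U\rar W$ by a scheme; since smooth morphisms are faithfully flat, $\cT=0$ if and only if its pullback to $U$ is zero, and on a scheme, vanishing of a coherent sheaf is Zariski-local and hence étale-local at stalks. Combining this with the isomorphism above, to show $\cT=0$ it suffices to produce for every geometric point $p\in W$ an étale neighborhood over which the pullback of $\cT$ vanishes. For $p\not\in Z$, the sheaf $\cT$ is already zero in the open complement $W\setminus Z$. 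For $p\in Z$, the hypothesis supplies exactly such a neighborhood $i:U_p\rar W$, and by the displayed identification the pullback of $\cT$ to $U_p$ vanishes.

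Putting these together yields both directions. There is essentially no obstacle beyond bookkeeping; the only point that merits a careful sentence is the base-change identity for $\text{Tor}_1$ under étale (hence flat) morphisms of DM stacks, which in turn reduces via étale presentations to the corresponding statement for schemes, where it is classical. In particular the argument does not require any hypothesis on $Z$ beyond being a closed substack, and the coherence (and hence the local-to-global principle) of $\cT$ uses only that $\F$ is coherent and $W$ is locally Noetherian.
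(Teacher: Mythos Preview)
Your proof is correct and follows essentially the same idea as the paper's. The paper's one-line argument rephrases the condition $\text{Tor}_1^{\Ou_W}(\F,\Ou_Z)=0$ as injectivity of the natural map $\I\otimes\F\rar\F$ (via the long exact sequence from $0\rar\I\rar\Ou_W\rar\Ou_Z\rar 0$), and then simply observes that injectivity of a morphism of sheaves is an \'etale-local property; this sidesteps the explicit flat base-change identity for $\text{Tor}$ that you invoke, but the two formulations are equivalent and the underlying mechanism is identical.
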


\begin{proof}
  Let $\I$ be the ideal sheaf of $Z$. $\F$ is normal to $Z$ if and only if the map $\I\otimes \F \rar \F$ is injective, which is a local property in the \'etale topology.
\end{proof}

\begin{rem}
  From the definition we note that normality can be checked on stalks or completion over local rings.
\end{rem}

In the following we will consider two cases we mostly care about, the relative case and degeneration case.

\subsection{Relative case}

This is the case where $Y$ is a separated Deligne--Mumford stack of finite type and $D\subset Y$ is an effective Cartier divisor. Take an affine \'etale neighborhood $U=\Sp A$ of $p\in D$, where $D|_U$ is defined by some nonzero-divisor $f\in A$. Then there is a map $U\rar \A^1=\Sp \C[y]$, given by $y\mapsto f$. A coherent sheaf $\F$ on $U$ is represented by an $A$-module $M$. The following lemma gives a local description of normality in this case, which is a restatement of the definition by the flatness criteria in commutative algebra.

\begin{lem} \label{lem-local-normal}
  The followings are equivalent.
  \begin{enumerate}[1)]
  \setlength{\parskip}{1ex}

  \item $\F|_U$ is normal to $D|_U$;

  \item The map $M\xrar{\times f} M$ is injective;

  \item $M$ is flat over $\A^1$ at the point $0\in \A^1$, i.e. the stalk of $M$ at $0\in \A^1$ is flat over the local ring $\Ou_{\A^1,0}=\C[y]_{(y)}$.
  \end{enumerate}
\end{lem}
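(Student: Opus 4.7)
The plan is to establish the equivalences in two stages: (1) $\Leftrightarrow$ (2) by a direct $\text{Tor}$ computation, and (2) $\Leftrightarrow$ (3) by exploiting that $\Ou_{\A^1,0}$ is a discrete valuation ring.

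For (1) $\Leftrightarrow$ (2), I would observe that because $f \in A$ is a non-zero-divisor, the sequence
$$0 \rar \Ou_U \xrar{\times f} \Ou_U \rar \Ou_{D|_U} \rar 0$$
is a length-one free resolution of $\Ou_{D|_U}$. Tensoring with $\F|_U$ and taking cohomology identifies $\text{Tor}_1^{\Ou_U}(\F|_U,\Ou_{D|_U})$ with $\ker(M \xrar{\times f} M)$. Hence normality of $\F|_U$ along $D|_U$ is exactly the injectivity of multiplication by $f$ on $M$.

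For (2) $\Leftrightarrow$ (3), I would use that $\Ou_{\A^1,0} = \C[y]_{(y)}$ is a DVR with uniformizer $y$, over which flatness is equivalent to torsion-freeness, which in turn is equivalent to injectivity of multiplication by the uniformizer. Under the algebra map $\C[y] \rar A$, $y \mapsto f$, this translates (3) into the statement that $\times f$ is injective on the localization $M_{(y)}$. To bridge to (2), I would set $N := \ker(M \xrar{\times f} M)$ and argue as follows. Since $fN = 0$, the module $N$ is supported on $V(f) = D|_U$, and every $h \in \C[y] \setminus (y)$ acts on $N$ as the nonzero scalar $h(0) \in \C^*$; hence the canonical map $N \rar N_{(y)}$ is an isomorphism and $N = 0$ if and only if $N_{(y)} = 0$. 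Combined with exactness of localization, which identifies $N_{(y)}$ with $\ker(M_{(y)} \xrar{\times f} M_{(y)})$, this completes (2) $\Leftrightarrow$ (3).

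The only subtlety worth flagging is this final bridging step: one must verify that the torsion kernel is supported on $V(f)$ so that the global injectivity condition (2) really coincides with the stalk-level condition extracted from (3). Once the length-one resolution and the DVR reduction are in hand, the rest of the argument is essentially formal.
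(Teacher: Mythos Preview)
Your proof is correct. The paper states this lemma without proof, treating it as an elementary local-algebra fact; your argument via the two-term free resolution for (1) $\Leftrightarrow$ (2) and the DVR characterization of flatness for (2) $\Leftrightarrow$ (3), together with the observation that the $f$-torsion submodule is already local at $(y)$, fills in exactly what the paper omits.
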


Let's look closer into the coherent sheaf $\F$ and analyze its normality. Let $\I\subset \Ou_Y$ be the ideal sheaf of $i:D\hookrightarrow Y$. Define $\F_\I:=i^! \F$ to be the maximal subsheaf of $\F$ supported on $D$. More precisely, locally in an affine open $U$, for $\I(U)=I$,
$$\F_\I(U):=\{ m\in M \mid \text{ann}(m) \supset I^k, \text{ for some } k\in \Z_+\}.$$
One can easily check by descent theory that this definition defines a global coherent sheaf on $Y$. We call $\F_\I$ the torsion subsheaf of $\F$ along $D$.

Another way to define the torsion subsheaf is to take $\F_\I(U)=M_I=\ker(M\xrar{\times f} M)$, which is the torsion part of $M$ as a $\C[y]$-module. In this viewpoint the torsion free quotient is $$\F^\tf:=\F/\F_\I=\coker(M\xrar{\times y} M),$$
and we have the short exact sequence
$$\xymatrix{
0 \ar[r] & \F_\I \ar[r] & \F \ar[r] & \F^\tf \ar[r] & 0.
}$$
The following proposition is a direct result of Lemma \ref{lem-local-normal}.

\begin{prop}
  $\F$ is normal to $D$ at $p\in D$ if and only if $(\F_\I)_p=0$, where the subscript $p$ stands for the stalk in the \'etale topology. In particular, $\F$ is normal to $D$ if and only if $\F_\I=0$.
\end{prop}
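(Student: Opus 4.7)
The plan is to reduce the whole statement to a local algebra computation in an étale chart, and then run a one-line induction on the order of $f$-torsion.

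First, the previous lemma (normality is étale-local) together with the fact that the formation of $\F_\I$ is compatible with étale base change allows me to replace $Y$ by an affine étale neighborhood $U = \Sp A$ of $p$ on which $D$ is cut out by a single nonzero divisor $f \in A$, corresponding to the map $U \to \A^1$, $y \mapsto f$. Write $M$ for the $A$-module representing $i^*\F$. By the local characterization of normality established just above, $i^*\F$ is normal to $D|_U$ iff $M \xrar{\times f} M$ is injective; passing to stalks, normality at $p$ is the injectivity of $M_p \xrar{\times f} M_p$. Also the description of $\F_\I$ as $\{m \in M \mid f^k m = 0 \text{ for some } k\}$ localizes at $p$ in the obvious way.

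For the ``if'' direction, I will observe that $\ker(M \xrar{\times f} M) = \{m \mid fm = 0\}$ is visibly contained in $\F_\I(U)$, since any such $m$ is annihilated by $I = (f)$. Taking stalks, $(\F_\I)_p = 0$ forces $\ker(M_p \xrar{\times f} M_p) = 0$, giving normality at $p$.

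For the ``only if'' direction, assume $M_p \xrar{\times f} M_p$ is injective. I will show by induction on $k \geq 0$ that $f^k m = 0$ in $M_p$ implies $m = 0$: the base case is trivial, and for $k \geq 1$, $f \cdot (f^{k-1} m) = 0$ gives $f^{k-1} m = 0$ by the injectivity hypothesis, after which the inductive hypothesis concludes. Hence every element of $(\F_\I)_p$ vanishes, i.e.\ $(\F_\I)_p = 0$. The second (``in particular'') sentence then follows from the previous lemma together with the fact that $\F_\I$ is supported on $D$, so it is zero iff all its stalks at points of $D$ are zero.

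The only step that merits care is the étale-local compatibility of the formation of $\F_\I$, i.e.\ that $(\F_\I)|_U = (\F|_U)_{\I|_U}$ for an étale neighborhood $U \to Y$; this follows because $\I$ pulls back to the ideal of $D \times_Y U$ and the annihilator condition defining $\F_\I$ is preserved by flat base change. Everything else is routine localization and the one-line induction above.
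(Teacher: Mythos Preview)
Your proof is correct. The paper does not give an explicit proof of this proposition, treating it as an immediate consequence of the preceding local characterization (the equivalence with injectivity of $M \xrar{\times f} M$) and the definition of $\F_\I$; your argument is exactly the routine verification the reader is expected to supply, carried out carefully.
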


The following propositions tell us that normality is an open condition.

\begin{prop} \label{open_Y}
  Let $S$ be a scheme, and $\F$ be a coherent sheaf on $Y\times S$ which is flat over $S$. Then the set $\{s\in S\mid \F|_s:=\F\otimes_{\Ou_S} k(s) \text{ is normal to } D\times s \}$ is open in $S$.
\end{prop}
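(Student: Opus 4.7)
The plan is to translate the normality condition into a flatness condition, so as to apply the classical openness of the flat locus together with the fibral flatness criterion.

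First, since normality is an étale-local property on $Y$ (by the lemma just stated), I would work étale-locally and assume $Y=\Sp A$ with $D=V(f)$ for a non-zero-divisor $f\in A$. The function $f$ yields a morphism $g: Y\rar \A^1=\Sp\C[t]$, $t\mapsto f$. By the local characterization of normality just proved (part 3 of the preceding lemma), $\F|_s$ is normal to $D\times s$ if and only if, for every geometric point $y\in D$, the stalk $(\F|_s)_y$ is flat over the DVR $\Ou_{\A^1,0}$; equivalently, $\F|_s$ is flat over $\A^1$ at every point of $D$ (elsewhere the map to $\A^1$ lands in the field $\C$ and flatness is automatic).

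Next, I would invoke the pointwise \emph{fibral flatness criterion} (EGA IV, 11.3.10). Given the composition
$$Y\times S \xrar{g\times \Id} \A^1 \times S \rar S,$$
and the hypothesis that $\F$ is flat over $S$, the criterion asserts that $\F$ is flat over $\A^1\times S$ at $(y,s)$ if and only if $\F|_s$ is flat over $\A^1\times s$ at $y$. Combining with the previous step gives the key reformulation: $\F|_s$ is normal to $D\times s$ \emph{if and only if} $\F$ is flat over $\A^1\times S$ at every point of $D\times \{s\}$.

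From here, let $Z\subset Y\times S$ be the non-flat locus of $\F$ over $\A^1\times S$; by the classical openness of the flat locus (EGA IV, 11.3.1) this is closed, and clearly $Z\subset \Supp(\F)$. Thus the set in question equals
$$\{s\in S \mid (D\times\{s\})\cap Z = \emptyset\} = S\setminus p\bigl(Z\cap (D\times S)\bigr),$$
where $p:Y\times S\rar S$ is the projection. In the intended moduli-theoretic context $\F$ has proper support over $S$ (or equivalently one may assume $D$ proper), so $p|_{Z\cap(D\times S)}$ is proper and the image is closed, giving the desired open subset. Gluing over an étale cover of $Y$ completes the argument.

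The main obstacle is the translation step: identifying normality to $D$ with flatness over $\A^1$ at points of $D$, so that the fibral flatness criterion can convert a fiberwise statement into a global flatness statement whose non-flat locus is closed. Once this reformulation is in place, the rest is standard openness of flat loci together with properness used to push the closed ``bad locus'' down to $S$.
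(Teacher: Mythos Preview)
Your core strategy coincides with the paper's: both reduce normality to flatness over $\A^1$ via the local characterization, then invoke the fibral flatness criterion (EGA IV, 11.3.10) to pass between fiberwise flatness of $\F|_s$ over $\A^1$ and flatness of $\F$ over $\A^1\times S$ at points of $D\times\{s\}$.

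The divergence is only in the final topological step, and there your argument acquires an extra hypothesis. You form the closed non-flat locus $Z\subset Y\times S$ and push it down by $p$, which requires $D$ proper over $\C$ (or $\Supp\F$ proper over $S$) to conclude that $p(Z\cap(D\times S))$ is closed. The proposition as stated assumes only that $Y$ is separated of finite type, so $D$ need not be proper, and your argument as written does not cover this generality. The paper avoids the issue: given $s$ with $\F|_s$ normal to $D$, openness of the flat locus yields for each $p\in D$ neighborhoods $U_p\times V_p$ on which $\F$ is flat over $\A^1\times V_p$; since $D$ is merely \emph{quasi-compact} (automatic here), finitely many $U_i$ cover $D$, and $V:=\bigcap V_i$ is an open neighborhood of $s$ contained in the normal locus. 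This finite-cover argument replaces your properness step.

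In the projective setting where the paper eventually applies this, your hypothesis holds and both arguments go through; but to match the stated generality you should swap the properness-and-image argument for the quasi-compactness-and-cover argument.
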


In order to prove this proposition, we use the fiberwise criterion for flatness, which is a consequence of Theorem 11.3.10 of \cite{EGA3} (also see Theorem 36.16.2 of \cite[\href{http://stacks.math.columbia.edu/tag/039C}{Tag 039C}]{stacks-project}).

\begin{lem}[(Fiberwise criterion for flatness of coherent sheaves)]
  Let $S,X,Y$ be locally noetherian schemes, and $g:X\rar S$, $h: Y\rar S$ be two morphisms of schemes. Let $f: X\rar Y$ be an $S$-morphism, and $\F$ be a coherent sheaf on $X$. $x\in X$, $y=f(x)$, $s=h(y)=g(x)$. Assume that the stalk $\F_x\neq 0$. Then the followings are equivalent:
  \begin{enumerate}[1)]
  \setlength{\parskip}{1ex}
    \item $\F$ is flat over $S$ at $x$, and $\F|_s$ is flat over $Y_s$ at $x\in X_s$;
    \item $Y$ is flat over $S$ at $y$, and $\F$ is flat over $Y$ at $x$.
  \end{enumerate}
\end{lem}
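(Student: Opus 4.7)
The plan is to reduce the statement to local commutative algebra and then apply the local criterion for flatness. Setting $A := \Ou_{S,s}$, $B := \Ou_{Y,y}$, $C := \Ou_{X,x}$, and $M := \F_x$ (a nonzero finitely generated $C$-module, hence also a $B$-module and an $A$-module through the structure maps), the assertion reduces to the following algebraic equivalence: ``$M$ is $A$-flat and $M/\m_A M$ is flat over $B/\m_A B$'' is equivalent to ``$B$ is $A$-flat and $M$ is $B$-flat''.

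The direction (2)$\Rightarrow$(1) is the easy one: transitivity of flatness gives $A$-flatness of $M$, and the fiber flatness follows from flat base change applied to the identification $M/\m_A M = M\otimes_B (B/\m_A B)$.

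For the nontrivial direction (1)$\Rightarrow$(2), I would apply the local criterion for flatness, which for a finitely generated module $N$ over a noetherian local ring $(R,\m)$ reduces flatness to the vanishing of $\text{Tor}^R_1(N, R/\m)$. So it suffices to prove $\text{Tor}^A_1(B, \kappa(s)) = 0$ and $\text{Tor}^B_1(M, \kappa(y)) = 0$. The key input is the change-of-rings spectral sequence $E^2_{p,q} = \text{Tor}^B_p(M, \text{Tor}^A_q(B, N)) \Rightarrow \text{Tor}^A_{p+q}(M, N)$. Since $M$ is $A$-flat, the abutment is concentrated in degree zero, so edge-map analysis yields $M \otimes_B \text{Tor}^A_1(B, N) = 0$. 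Specializing to $N = \kappa(s)$ and combining with the nonvanishing of $M/\m_A M$ (a Nakayama consequence of $M \neq 0$) together with the assumed fiber flatness of $M/\m_A M$ over $B/\m_A B$, one concludes $\text{Tor}^A_1(B, \kappa(s)) = 0$. The $B$-flatness of $M$ then follows by a parallel argument, now that $B$ is known to be $A$-flat, applied to $\text{Tor}^B_1(M, \kappa(y))$.

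The main obstacle is the Nakayama-type descent from $M \otimes_B T = 0$ to $T = 0$ for $T = \text{Tor}^A_1(B, \kappa(s))$; this requires $M$ to detect nonvanishing of finitely generated $B$-modules supported near $y$, which is exactly where the coherence of $\F$ and the hypothesis $\F_x \neq 0$ enter. Rather than grind through the Tor bookkeeping, the cleanest route is to invoke \cite[Thm.~11.3.10]{EGA3} directly, since the outline above is precisely its content.
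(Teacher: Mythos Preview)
The paper does not prove this lemma at all; it simply introduces it as ``a consequence of Theorem 11.3.10 of \cite{EGA3}'' and uses it as a black box. Your proposal ends at exactly the same place, invoking \cite[Thm.~11.3.10]{EGA3}, so the two agree. The commutative-algebra sketch you give beforehand is additional content the paper omits; it is a reasonable outline, though note that the change-of-rings spectral sequence in the form you wrote requires care to set up (one typically argues via the local criterion in the form of Matsumura's Theorem 22.3 together with an induction on the length of $A/\m_A^n$ rather than a direct spectral-sequence edge argument), which is presumably why you, like the paper, ultimately defer to EGA.
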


\begin{proof}[Proof for Proposition \ref{open_Y}]
  Suppose for $s\in S$, $\F|_s$ is normal to $D$. Then $\forall p\in D$, $\F|_s$ is normal to $D$ in some affine \'etale neighborhood $U_p$ of $p$. Consider the local diagram
  $$\xymatrix{
  U_p\times S \ar[rr] \ar[dr] & & \A^1 \times S \ar[dl] \\
  & S, &
  }$$
where $U_p \to \A^1$ is given as above.

  We know that $\F|_s$, as a sheaf in $U_p\times \{s\}$, is flat over $\A^1\times \{s\}$ at $(p,s)$. By the fiberwise criterion, $\F$ is flat over $\A^1\times S$ at this point; in other words, there exist affine \'etale neighborhoods of $p\in Y$, which we still call $U_p$, and $V_p$ of $s\in S$ such that on $U_p\times V_p \subset Y\times S$, $\F$ is flat over $\Sp \Ou_{\A^1,0}\times V_p$. Since $D$ is quasi-compact, we can take a finite subcover $U_i\times V_i$, $1\leq i\leq N$ such that $\{U_i\}$ cover $D$, and take $V:=V_1\times_S \times \cdots \times_S V_N$. Then we have $V\subset \{s\in S\mid \F|_s \text{ is normal to } D \}$, which proves the proposition.
\end{proof}

\subsection{Degeneration case}

In this case we consider $X_0= Y_-\cup_D Y_+$, where the two separated finite-type Deligne--Mumford stacks $Y_\pm$ transversally intersect along an effective Cartier divisor, and $X_0$ is the glueing along $D$. By transversality we mean that \'etale locally around $p\in D$ we have a common affine neighborhood $U=\Sp A$ of the following,
$$\xymatrix{
T\times \Sp \C[x,y]/(xy) & \ar[l]_-{\text{\'et}} U \ar[r]^-{\text{\'et}} & X_0,
}$$
where $T$ is a scheme.

Hence we have a map $U\rar \Sp \C[x,y]/(xy)$. The ring can be rewritten as $\C[x,y]/(xy)\cong \C[x]\times_\C \C[y] = \{ (f,g)\in \C[x]\times \C[y] \mid f(0)=g(0) \}$.

A coherent sheaf $\F$ on $U$ is represented by an $A$-module $M$. We have the following lemma.

\begin{lem} \label{lem_normality_X}
  The followings are equivalent.
  \begin{enumerate}[1)]
    \setlength{\parskip}{1ex}

    \item $M$ is flat over the local ring $(\C[x,y]/(xy))_{(x,y)}$;

    \item $M/yM$ is flat over $\C[x]_{(x)}$, and $M/xM$ is flat over $\C[y]_{(y)}$;

    \item $\F|_{U\cap Y_\pm}$ are normal to $D|_U\subset U\cap Y_\pm$;

    \item $\F_U$ is normal to $D|_U$.

    In particular, 1)-4) implies that
    \item $M\cong (M/yM)\times_{M/(x,y)M} (M/xM)$.
  \end{enumerate}

\end{lem}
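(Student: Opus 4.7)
The plan is to establish the equivalence of 1)--4) locally at the node, and derive 5) along the way, by exploiting the fiber-product presentation $R := \C[x,y]/(xy) \cong \C[x] \times_\C \C[y]$. Two of the equivalences are immediate: the restrictions $U \cap Y_-$ and $U \cap Y_+$ are cut out by $y$ and $x$ respectively, and $D \cap U$ by $(x,y)$, so applying the previous relative-case lemma to each of the pairs $(Y_-,D)$ and $(Y_+,D)$ translates condition 3) into condition 2). Similarly, 1) $\Leftrightarrow$ 4) follows from the local criterion for flatness, since 4) is exactly the vanishing of $\text{Tor}_1^R(M, R/(x,y)) = \text{Tor}_1^R(M, \C)$ at the closed point, and $M_{(x,y)}$ is finitely generated over the Noetherian local ring $R_{(x,y)}$ with residue field $\C$.

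For 1) $\Rightarrow$ 2), I would use the base-change observation that for any $\C[x] = R/(y)$-module $N$ there is a natural isomorphism $N \otimes_R M \cong N \otimes_{\C[x]} (M/yM)$ (since $y$ acts as zero on $N$). Exactness of $-\otimes_R M$ then forces exactness of $-\otimes_{\C[x]} (M/yM)$, yielding $\C[x]$-flatness of $M/yM$; the other case is symmetric.

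The main step is 2) $\Rightarrow$ 1) and 5). I would tensor the Milnor short exact sequence
\[ 0 \to R \to R/(x) \oplus R/(y) \to \C \to 0 \]
with $M$, producing the long exact sequence
\[ \text{Tor}_1^R(M, R/(x)) \oplus \text{Tor}_1^R(M, R/(y)) \to \text{Tor}_1^R(M, \C) \to M \xrightarrow{\phi} M/xM \oplus M/yM \to M/(x,y)M \to 0, \]
where $\phi(m) = (m+xM,\, m+yM)$. Using the periodic free resolution $\cdots \to R \xrightarrow{y} R \xrightarrow{x} R \to R/(x) \to 0$, one computes $\text{Tor}_1^R(M, R/(x)) = \ker(x: M \to M)/yM$. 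Hypothesis 2), applied through the standard fact that flatness over a DVR equals torsion-freeness, says $x$ acts injectively on $M/yM$, i.e., $\ker(x) \subseteq yM$, and combined with the trivial reverse inclusion from $xy = 0$ this gives $\text{Tor}_1^R(M, R/(x)) = 0$; symmetrically the other Tor vanishes. The sequence then identifies $\text{Tor}_1^R(M, \C) = \ker \phi = xM \cap yM$. To finish, I would argue $xM \cap yM = 0$ by a short chase: if $m = ym''$ lies in $xM$, then $ym'' \equiv 0$ in $M/xM$, whence injectivity of $y$ on $M/xM$ forces $m'' \in xM$, say $m'' = xm'''$, and hence $m = yxm''' = 0$. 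This gives $\text{Tor}_1^R(M, \C) = 0$ (hence 1) via 4) and the local criterion) and collapses the long exact sequence to exactly the short exact sequence of 5).

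The main obstacle is this final step, specifically the intersection identity $xM \cap yM = 0$: this is where both halves of hypothesis 2) are genuinely needed together, and where the gluing nature of the nodal singularity manifests algebraically. Everything else is either bookkeeping with the previous lemma or a standard change-of-rings argument.
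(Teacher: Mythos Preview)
Your proof is correct, and the core element chase showing $xM \cap yM = 0$ under hypothesis 2) is exactly what the paper does. The packaging differs slightly: the paper proves $2)\Rightarrow 5)$ first by directly verifying that the map $\varphi: M \to (M/yM)\times_{M/(x,y)M}(M/xM)$ is injective (via the same chase you give), and then uses the fiber-product decomposition of $M$ from 5) together with $(x,y)\cong (x)\times_\C (y)$ to reduce the injectivity of $(x,y)\otimes M \to M$ to the two flat components, obtaining $2)\Rightarrow 1)$. You instead route everything through the long exact Tor sequence of the Milnor presentation $0\to R\to R/(x)\oplus R/(y)\to \C\to 0$ and the periodic free resolution of $R/(x)$, which yields both $\text{Tor}_1^R(M,\C)=0$ and statement 5) in one stroke. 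Your approach is a bit more systematically homological and makes the role of the nodal structure (via the periodic resolution) more explicit; the paper's is slightly more elementary and avoids writing down any resolutions. Either way the substance is the same computation.
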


\begin{proof}
  2)$\Leftrightarrow$3) is the consequence of the relative case. 1)$\Leftrightarrow$4) is true since 4) is equivalent to the injectivity of the map $(x,y)\otimes M \rar M$, which is equivalent to 1) since $(x,y)$ is the maximal ideal, and the map is obviously injective when localized at other maximal ideals. 1)$\Rightarrow$2) is the base change property of flatness.

  Let's prove 2)$\Rightarrow$ 5). Suppose 2) holds. The obvious map $\varphi: M \rar (M/yM)\times_{M/(x,y)M} (M/xM)$ is easily seen to be surjective. In fact, given $(\bar m_1, \bar m_2)$ in the target, one can choose $m_1, m_2 \in M$ in the preimages of $\bar m_1$, $\bar m_2$ respectively. By construction, there exist $a, b\in M$ such that $m_1-m_2 = ax+by$. Then $m_1-by = m_2+ax \in M$ is in the preimage of $(\bar m_1, \bar m_2)$.

For the injectivity, $\varphi$ is identity at points $(x,y)\neq (0,0)$, thus it remains to check its injectivity at $(0,0)$, or equivalently, over the local ring at $(0,0)$. If for some $m\in M_{(x,y)}$, $\varphi(m)=0$, then there are some $m_1,m_2\in M_{(x,y)}$, such that $m=xm_1=ym_2$. Then by 2), $xm_1=ym_2$ implies that $m_1\in yM_{(x,y)}$ and $m_2\in xM_{(x,y)}$, which implies that $m=0$. Thus $\varphi$ is injective and 5) is proved.

  Finally let's prove 2)$\Rightarrow$1). Just need to check the injectivity of the map $(x,y)\otimes M_{(x,y)}\rar M_{(x,y)}$. We can view $(x,y)$ as $(x)\times_\C (y)$, the maximal ideal of $(\C[x,y]/(xy))_{(x,y)} \cong \C[x]_{(x)}\times_\C \C[y]_{(y)}$. By 2)$\Rightarrow$5), $M\cong (M/yM)\times_{M/(x,y)M} (M/xM)$; thus it suffices to check the injectivity on the two components, which reduces to the flatness stated in 2).
\end{proof}

\begin{cor} \label{normality_split}
  $\F$ is normal to $D$ if and only if $\F|_{Y_\pm}$ are normal to $D\subset Y_\pm$ respectively. Moreover, in this case $\F\cong \ker (\F|_{Y_-} \oplus \F|_{Y_+} \rar \F|_D)$.
\end{cor}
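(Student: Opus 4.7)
The plan is to reduce both claims to the preceding lemma by working étale locally around points of $D$. Since normality is a local property in the étale topology (by the earlier normality lemma), and since away from $D$ the substacks $Y_-$ and $Y_+$ give an étale open cover of $X_0 \setminus D$ on which the statement is trivial, it suffices to check the equivalence and the kernel description in an étale neighborhood of each point $p \in D$.

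So first I would fix such a point $p \in D$ and choose a common étale neighborhood $U = \Sp A$ fitting into the transversality diagram
$$\xymatrix{ T \times \Sp \C[x,y]/(xy) & \ar[l]_-{\text{\'et}} U \ar[r]^-{\text{\'et}} & X_0, }$$
and identify $\F|_U$ with an $A$-module $M$, with $Y_\pm \cap U$ cut out by $y$ and $x$ respectively, and $D \cap U$ cut out by $(x,y)$. Then the equivalence of normality on the two sides is precisely the equivalence of conditions 3) and 4) in Lemma \ref{lem_normality_X}: normality of $\F|_{Y_+}$ to $D$ at $p$ amounts to flatness of $M/yM$ over $\C[x]_{(x)}$, and similarly for $\F|_{Y_-}$, which together are condition 2); and condition 2) is equivalent to condition 4), normality of $\F|_U$ to $D|_U$.

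For the second claim, once normality holds I would invoke implication 2) $\Rightarrow$ 5) of Lemma \ref{lem_normality_X} to get the local identification
$$M \;\cong\; (M/yM) \times_{M/(x,y)M} (M/xM).$$
This fiber product is exactly the kernel of the difference map $(M/yM) \oplus (M/xM) \to M/(x,y)M$, so translating back into sheaf language on $U$ gives $\F|_U \cong \ker(\F|_{Y_-\cap U} \oplus \F|_{Y_+\cap U} \to \F|_{D\cap U})$.

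The only thing that needs a word is passing from this local statement to a global isomorphism on $X_0$. The map $\F \to \ker(\F|_{Y_-} \oplus \F|_{Y_+} \to \F|_D)$ is defined globally (from the two restriction maps and the fact that they agree after further restriction to $D$), so it suffices to check it is an isomorphism étale locally; formation of the restrictions $\F|_{Y_\pm}$ and $\F|_D$ commutes with étale base change, so the local computation above gives isomorphism after pullback to the étale cover $\{U_p\}$ of $X_0$, hence globally. I do not expect a serious obstacle; the content is really bookkeeping on top of Lemma \ref{lem_normality_X}, the mildest subtlety being the étale descent of the fiber product decomposition.
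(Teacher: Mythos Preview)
Your proposal is correct and takes exactly the paper's approach: the corollary is stated without proof in the paper precisely because both claims are immediate étale-local consequences of Lemma~\ref{lem_normality_X}, and you have spelled out that deduction accurately (modulo a harmless swap of the labels $Y_+$ and $Y_-$ in one sentence). The global-to-local reduction and the descent of the kernel description are routine, as you note.
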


Again one can consider the maximal torsion subsheaf supported on $D$, denoted by $\F_\J$, where $\J\subset \Ou_{X_0}$ is the ideal sheaf of $D\subset X_0$. Let $\I_\pm$ be the ideal sheaves of $D\subset Y_\pm$, and $\F_{\I_\pm}$ be the corresponding torsion subsheaves defined in the relative case. We have the short exact sequence
$$\xymatrix{
0\ar[r] & \F_\J \ar[r] & \F \ar[r] & \F^\tf \ar[r] & 0.
}$$
Now the case is a little more complicated. $\F^\tf$ in general is not normal to $D$, but it satisfies the splitting property as in 5) of Lemma \ref{lem_normality_X}.

\begin{lem} \label{J_split}
  If $\F_\J=0$, then $\F\cong \ker (\F|_{Y_-} \oplus \F|_{Y_+} \rar \F|_D)$.
\end{lem}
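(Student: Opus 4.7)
I would first reduce the claim to an étale-local check on $X_0$: away from $D$, the restriction of $\F$ lives on only one of $Y_\pm$ and the assertion is tautological, so the real content sits at points $p \in D$. By transversality, around such a $p$ one has an étale neighborhood $U = \Sp A$ together with a morphism $U \to \Sp \C[x,y]/(xy)$ (through the auxiliary smooth factor $T$, which is inert in what follows); under this map $Y_\pm|_U$ are cut out by $(y)$ and $(x)$, $D|_U$ by $\J|_U = (x,y)A$, and, crucially, $xy = 0$ in $A$. Writing $M$ for the $A$-module associated to $\F|_U$, the lemma reduces to showing that the canonical map
\[
\varphi: M \longrightarrow (M/yM) \times_{M/(x,y)M} (M/xM)
\]
is an isomorphism.

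For surjectivity I would essentially repeat the argument already used in the proof of Lemma~\ref{lem_normality_X}(5), which needs no hypothesis on $\F_\J$: given $(a,b)$ with $a \in M/yM$, $b \in M/xM$ having the same image in $M/(x,y)M$, lift to $\tilde a, \tilde b \in M$, write $\tilde a - \tilde b = xu + yv \in (x,y)M = xM + yM$, and take $m := \tilde a - yv = \tilde b + xu$; then $\varphi(m) = (a,b)$.

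Injectivity is where the hypothesis $\F_\J = 0$ enters. If $m \in \ker\varphi$, then $m \in xM \cap yM$, so $m = y m_1 = x m_2$ for some $m_1, m_2 \in M$, and the nodal relation $xy = 0$ in $A$ immediately gives $x m = xy m_1 = 0$ and $y m = xy m_2 = 0$. Hence $(x,y)\cdot m = 0$, i.e., $\J \cdot m = 0$, which exhibits $m$ as a local section of the torsion subsheaf $\F_\J$; the hypothesis then forces $m = 0$. The only subtle point worth flagging is that this argument uses only $\F_\J = 0$ rather than the stronger normality of Lemma~\ref{lem_normality_X}: the nodal relation $xy = 0$ alone already annihilates $xm$ and $ym$, so no finer flatness information on the two branches is needed, which is exactly what allows the splitting conclusion to persist under the weaker hypothesis.
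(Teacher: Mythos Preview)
Your proof is correct and follows essentially the same approach as the paper: reduce to the local model, note surjectivity of $\varphi$ (which the paper simply cites from the earlier lemma), and for injectivity use that $m = y m_1 = x m_2$ together with the nodal relation $xy=0$ forces $(x,y)\cdot m = 0$, whence $m \in M_J = 0$. Your write-up is more explicit than the paper's (which compresses the injectivity step to a single sentence), and your closing remark correctly isolates why the weaker hypothesis $\F_\J=0$ suffices here in contrast to the normality-based argument in Lemma~\ref{lem_normality_X}.
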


\begin{proof}
  Locally the map $\varphi: M\rar (M/yM) \times_{M/(x,y)M} (M/xM)$ is surjective, where $M, x, y$ is the same as in Lemma \ref{lem_normality_X}. Take $v\in M$ in the kernel. By definition there is $m,n\in M$ such that $v=xm=yn$. Thus we have $(x,y)\cdot v=0$, which implies $v\in M_J=0$.
\end{proof}

To determine whether $\F$ is normal to $D$ we look at the quotient $\F^\tf$. It is normal to $D$ if and only if $\F^\tf|_{Y_\pm}$ is normal to $D$. Thus we have the following.

\begin{cor}
  $\F$ is normal to $D$ if and only if $\F_\J= (\F^\tf|_{Y_-})_{\I_-}= (\F^\tf|_{Y_+})_{\I_+}= 0$.
\end{cor}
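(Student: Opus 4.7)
The plan is to reduce everything to the \'etale-local picture treated in Lemma~\ref{lem_normality_X}. Working in an affine \'etale chart $U=\Sp A$ around a point $p\in D$ equipped with the map to $\Sp\C[x,y]/(xy)$, I represent $\F$ by an $A$-module $M$; here $\J$ corresponds locally to the maximal ideal $(x,y)$ and $\I_\pm$ to $(y)$, $(x)$ respectively. Since both $\F_\J$ and normality are local in the \'etale topology, it suffices to check the stated equivalence in this chart.

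For the forward direction, assume $\F$ is normal to $D$. By Lemma~\ref{lem_normality_X} (equivalence $4\Leftrightarrow 1$), $M_{(x,y)}$ is flat over $(\C[x,y]/(xy))_{(x,y)}$, and condition 5 of the same lemma supplies the splitting $M\cong (M/yM)\times_{M/(x,y)M}(M/xM)$. If $m\in M$ is annihilated by some power $(x,y)^k$, then its image $\bar m\in M/yM$ satisfies $x^k\bar m=0$; since $M/yM$ is flat and therefore torsion-free over the DVR $\C[x]_{(x)}$, this forces $\bar m=0$, and the symmetric argument in $M/xM$ kills the other projection, so $m=0$ by the fiber product description. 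Hence $\F_\J=0$ locally, and globally by \'etale descent. With $\F_\J=0$ one has $\F^\tf=\F$, so Lemma~\ref{lem_normality_X} ($4\Rightarrow 3$) gives that $\F|_{Y_\pm}$ are normal to $D\subset Y_\pm$, which by the relative-case characterization (normality is equivalent to vanishing of the torsion subsheaf along the divisor) translates into $(\F^\tf|_{Y_\pm})_{\I_\pm}=0$.

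For the backward direction, $\F_\J=0$ forces $\F^\tf=\F$, so the assumed vanishing is just $(\F|_{Y_\pm})_{\I_\pm}=0$. By the relative-case proposition this means each $\F|_{Y_\pm}$ is normal to $D\subset Y_\pm$, which is precisely condition 3 of Lemma~\ref{lem_normality_X}; invoking that lemma ($3\Rightarrow 4$) yields normality of $\F$ to $D$. The only nontrivial step in the whole argument is extracting $\F_\J=0$ from flatness, and this is the main (modest) obstacle; it is handled by the fiber-product description, reducing the vanishing to torsion-freeness of modules over the two DVRs $\C[x]_{(x)}$ and $\C[y]_{(y)}$.
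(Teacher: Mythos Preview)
Your proof is correct and follows essentially the same route as the paper, which leaves the argument implicit in the sentence preceding the corollary (``It is normal to $D$ if and only if $\F^\tf|_{Y_\pm}$ is normal to $D$''). The only step the paper does not spell out is precisely the one you flag and handle: that normality of $\F$ forces $\F_\J=0$, which you deduce cleanly from the fiber-product description in Lemma~\ref{lem_normality_X}(5) together with torsion-freeness of $M/yM$ and $M/xM$ over the two DVRs coming from condition~(2). After that, both directions reduce immediately to Corollary~\ref{normality_split} and the relative-case characterization $(\F|_{Y_\pm})_{\I_\pm}=0$, exactly as you do.
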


Combining Corollary \ref{normality_split} with Proposition \ref{open_Y} we also have the following.

\begin{cor} \label{open_X}
  Let $S$ be a scheme, and $\F$ be a coherent sheaf on $X_0\times S$ which is flat over $S$. Then the set $\{s\in S\mid \F|_s:=\F\otimes_{\Ou_S} k(s) \text{ is normal to } D\times s \}$ is open in $S$.
\end{cor}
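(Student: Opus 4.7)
The approach is to mimic the proof of Proposition \ref{open_Y}, replacing the smooth local model $\A^1$ for $D\subset Y$ with the nodal local model $\Sp\C[x,y]/(xy)$ for $D\subset X_0$. Fix $s_0\in S$ with $\F|_{s_0}$ normal to $D\times s_0$; the goal is to find a Zariski open neighborhood $V$ of $s_0$ such that $\F|_s$ is normal to $D\times s$ for every $s\in V$.

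The key translation is provided by the equivalence 1)$\iff$4) of Lemma \ref{lem_normality_X}: normality of a coherent sheaf to $D$ at a point $p\in D$ is equivalent to flatness over the local ring $(\C[x,y]/(xy))_{(x,y)}$ of the node. Since normality is a local property in the \'etale topology, it suffices to produce, for each $p\in D$, an open neighborhood of $s_0$ over which $\F|_s$ is normal at $p$, and then intersect a finite collection using the quasi-compactness of $D$ (a closed substack of the finite-type $X_0$).

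Work \'etale-locally around $p\in D$ and $s_0\in S$: a common \'etale neighborhood $U_p\to T\times \Sp\C[x,y]/(xy)$ given by transversality realizes $\F|_{U_p\times S}$ as a module $M$ over $(\C[x,y]/(xy))\otimes\mathcal{O}_T\otimes R$ that is flat over $R:=\mathcal{O}_{S,s_0}$. The assumption on $s_0$ says $M_{s_0}:=M\otimes_R k(s_0)$ is flat over $\Sp\C[x,y]/(xy)$ at the origin. The fiberwise flatness criterion stated above requires only local Noetherianness of its target, so it applies to $\Sp((\C[x,y]/(xy))\otimes R)\to\Sp R$ and upgrades the hypothesis to flatness of $M$ over $(\C[x,y]/(xy))\otimes R$ at $(0,0,s_0)$. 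Since flatness is an open condition in this setting, it persists on a neighborhood $U_p'\times V_p$ of $(p,s_0)$. Restricting to fibers, for any $s\in V_p$ the sheaf $\F|_s$ is flat over the node at each point of $U_p'\cap D$, and in particular normal to $D$ at $p$.

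Covering $D$ by finitely many such $U_{p_i}'$ and setting $V:=\bigcap_i V_{p_i}$ yields the required neighborhood. The principal technical point is invoking the fiberwise criterion for the non-smooth target $\Sp\C[x,y]/(xy)$, but no smoothness hypothesis is required there, so the argument goes through exactly as in Proposition \ref{open_Y}. As suggested by the hint, one can alternatively split the problem via Corollary \ref{normality_split} into two instances of Proposition \ref{open_Y} applied to $(Y_\pm,D)$; doing so requires first verifying that $\F|_{Y_\pm\times S}$ is flat over $S$ near $s_0$, which itself reduces to the same fiberwise-criterion argument followed by base change along $(\C[x,y]/(xy))\otimes R\to \C[x]\otimes R$ (respectively $\C[y]\otimes R$) and transitivity of flatness through the free $R$-algebras $\C[x]\otimes R$ and $\C[y]\otimes R$.
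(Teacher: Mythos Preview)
Your proof is correct. The paper's own argument is the one-line ``combining Corollary~\ref{normality_split} with Proposition~\ref{open_Y}'', i.e.\ exactly your alternative route: split the normality condition on $X_0$ into the two conditions on $Y_\pm$ and apply the relative case twice. Your main approach is a genuine variant: rather than splitting, you rerun the proof of Proposition~\ref{open_Y} with the nodal local model $\Sp\C[x,y]/(xy)$ in place of $\A^1$, invoking the equivalence 1)$\Leftrightarrow$4) of Lemma~\ref{lem_normality_X} and the fiberwise flatness criterion (which, as you observe, needs no smoothness of the target). This is slightly more direct, since it bypasses the check that $\F|_{Y_\pm\times S}$ is flat over $S$ near $s_0$---a point the paper's one-liner suppresses but which you correctly flag and resolve. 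Either way the content is the same fiberwise-criterion argument; your version just packages it in one step instead of two.
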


\subsection{Admissible sheaves}

Consider a locally smooth pair $(Y,D)$ for the relative case and a locally simple degeneration $\pi:X\rar \A^1$, $X_0=Y_-\cup_D Y_+$ for the degeneration case. We make the following definition.

\begin{defn}
  A coherent sheaf $\F$ on $Y[k]$ (resp. $X_0[k]$) is called \emph{admissible} if it is normal to each $D_i\subset Y[k]$ (resp. $D_i\subset X_0[k]$), $0\leq i\leq k$.

  Let $\cY_S\rar S$ (resp. $\cX_S\rar S$) be a family of expanded pairs (resp. degenerations) and $\F$ be a coherent sheaf on $\cY_S$ (resp. $\cX_S$), flat over $S$. We call $\F$ \emph{admissible} if for every point $s\in S$, the fiber $\F|_s$ is admissible on the fiber $\cY_{S,s}$ (resp. $\cX_{S,s}$).
\end{defn}

\begin{rem}
   The difference of the definition of admissibility on $X_0[k]$ and $Y[k]$ is that we include the distinguished divisor $D_k$ in the relative case.
\end{rem}

We would like to prove that admissibility is an open condition. Before that, we need a criterion for normality on a locally simple degeneration.

\begin{lem} \label{normality_simple}
  Consider the standard simple degeneration $\pi:X=\Sp \C[x,y,t]/(xy-t) \rar \A^1=\Sp\C[t]$, with singular divisor of the central fiber $D=\Sp \C$. Let $M$ be a coherent sheaf on $X$, flat over $\A^1$. Then $M$ is normal to $D\subset X$ if and only if $M|_0$ is normal to $D\subset X_0$.
\end{lem}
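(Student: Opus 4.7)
The strategy is straightforward once one notices that the total space is already smooth: eliminating $t=xy$ gives $X \cong \A^2 = \Sp\,\C[x,y]$, with the projection to $\A^1$ being $(x,y)\mapsto xy$. Setting $A = \C[x,y]$, both $\Ou_{X_0} = A/(t)$ and $\Ou_D = A/(x,y)$ are quotients of $A$, and $\Ou_D$ is naturally an $\Ou_{X_0}$-module since $t = xy \in (x,y)$. By definition, the two normality conditions translate to $\text{Tor}_1^A(M,\Ou_D) = 0$ and $\text{Tor}_1^{\Ou_{X_0}}(M|_0,\Ou_D) = 0$ respectively, so my plan is to identify these two Tor groups via change of rings along the surjection $A \twoheadrightarrow \Ou_{X_0}$.

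The first step is to establish Tor-independence of $M$ and $\Ou_{X_0}$ over $A$. Since $t=xy$ is a non-zero-divisor in $A$, the sequence $0 \to A \xrightarrow{\cdot t} A \to \Ou_{X_0} \to 0$ is a length-one free $A$-resolution of $\Ou_{X_0}$, and tensoring with $M$ gives the complex $M \xrightarrow{\cdot t} M$. Flatness of $M$ over $\C[t]$ makes $t$ act injectively on $M$, so $\text{Tor}_i^A(M,\Ou_{X_0}) = 0$ for all $i\geq 1$; equivalently, $M \otimes_A^{\bL} \Ou_{X_0} = M|_0$ is concentrated in degree zero.

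With this vanishing in hand, associativity of the derived tensor product yields
\[
M \otimes_A^{\bL} \Ou_D \;=\; \bigl(M \otimes_A^{\bL} \Ou_{X_0}\bigr) \otimes_{\Ou_{X_0}}^{\bL} \Ou_D \;=\; M|_0 \otimes_{\Ou_{X_0}}^{\bL} \Ou_D,
\]
and reading off $H_1$ on both sides produces the canonical isomorphism $\text{Tor}_1^A(M,\Ou_D) \cong \text{Tor}_1^{\Ou_{X_0}}(M|_0,\Ou_D)$, from which the ``if and only if'' follows immediately.

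There is no real obstacle here; the proof is a two-line application of the flatness hypothesis plus the change-of-rings identity. If one wishes to avoid derived categories, the identical conclusion is delivered by the first-quadrant change-of-rings spectral sequence $E_2^{p,q} = \text{Tor}_p^{\Ou_{X_0}}\bigl(\text{Tor}_q^A(M,\Ou_{X_0}),\,\Ou_D\bigr) \Rightarrow \text{Tor}_{p+q}^A(M,\Ou_D)$, which by the vanishing established above collapses onto the row $q=0$.
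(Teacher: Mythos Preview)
Your proof is correct. The paper's own proof is a single citation to the Slicing Criterion for flatness (Eisenbud, Corollary~6.9): using the earlier lemmas, normality to $D$ in $X$ (resp.\ in $X_0$) is equivalent to flatness of $M$ (resp.\ $M|_0$) over the local ring of $X$ (resp.\ $X_0$) at the origin, and the slicing criterion says these two flatness statements are equivalent once $t$ is a non-zero-divisor on $M$. Under the hood that criterion is proved by exactly your change-of-rings Tor computation, so the mathematical content is identical; you have simply unpacked the black box and worked directly with the $\text{Tor}_1$ groups in the definition of normality rather than passing through the flatness reformulation.
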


\begin{proof}
  This is an immediate consequence of the Slicing Criterion for flatness, which can be found in Corollary 6.9 of \cite{Ei}.
\end{proof}

\begin{prop} \label{open_XY_S}
  Let $\cY_S$ (resp. $\cX_S$) be a family of expanded pairs (resp. degenerations). Let $\F$ be a coherent sheaf on $\cY_S$, flat over $S$. Then the set $\{s\in S \mid \F|_s \text{ is admissible} \}$ is open in $S$.
\end{prop}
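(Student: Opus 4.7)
The plan is to reduce to the already-established openness statements, Proposition \ref{open_Y} for the relative case and Corollary \ref{open_X} for the degeneration case, applied locally along each divisor of the special fiber. Since openness is \'etale local on $S$, I will first refine $S$ so that the family is obtained as a pullback $\cY_S=\xi^*Y(k)$ (resp.\ $\cX_S=\xi^*X(k)$) of the standard family via a single map $\xi:S\to\A^k$ (resp.\ $\xi:S\to\A^{k+1}$).

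Next, I fix $s_0\in S$ at which $\F|_{s_0}$ is admissible. The fiber $\cY_{S,s_0}$ is isomorphic to $Y[l]$ (resp.\ $X_0[l]$), with $l$ determined by the number of vanishing coordinates of $\xi(s_0)$, carrying finitely many divisors $D_0,\dots,D_l$. For $s$ in a neighborhood of $s_0$ the divisor set of $\cY_{S,s}$ is a subset of $\{D_0,\dots,D_l\}$: a divisor disappears precisely when its defining coordinate becomes non-zero, and no new ones can appear. It therefore suffices to produce, for each $j$, an \'etale neighborhood $V_j\subset S$ of $s_0$ on which $\F|_s$ is normal to $D_j$ whenever $D_j$ persists in $\cY_{S,s}$; the intersection $V=\bigcap_j V_j$ will then be the desired neighborhood.

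Two types of divisors must be handled separately. The distinguished divisor $D_l$, present only in the expanded pair case, extends to a relative divisor $\cD_l\subset\cY_S$ that is \'etale locally $D\times S$ with $\cY_S$ smooth across it; I obtain $V_l$ by the argument of Proposition \ref{open_Y} --- the fiberwise criterion for flatness applied to the local map $\cY_S\to\A^1\times S$ given by a local equation of $\cD_l$, together with quasi-compactness of $D$. For a non-distinguished $D_j$, the family is \'etale locally of the standard simple-degeneration form $\Sp\C[x,y,\vec z]/(xy-f)\to S$, where $f\in\Ou_S$ is the pullback of the coordinate $t_i$ on $\A^k$ (resp.\ $\A^{k+1}$) that vanishes at $\xi(s_0)$ and cuts out $D_j$. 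On the open locus $\{f\neq 0\}\subset S$ the fiber is smooth near $D_j$ so normality is vacuous; on the closed subscheme $Z=V(f)\subset S$ the restricted family is \'etale locally $(\Sp\C[x,y,\vec z]/(xy))\times Z$ with $\F|_Z$ flat over $Z$, so Corollary \ref{open_X} applied on each chart --- combined with a finite patching over the quasi-compact $D_j$ --- yields an open $V_j'\subset Z$ containing $s_0$. Any open $V_j\subset S$ with $V_j\cap Z=V_j'$ then has the required property.

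The main technical point, which I expect to be essentially bookkeeping, is verifying that the proofs of Proposition \ref{open_Y} and Corollary \ref{open_X} --- stated for the global models $Y\times S$ and $X_0\times S$ --- transfer to my \'etale local charts on $\cY_S$ and $\cX_S$. This relies on the standard simple-degeneration shape of the local models guaranteed by the construction of $Y(k)$ and $X(k)$, and on the fiberwise criterion for flatness, both of which are already available. Once this is in place, the openness of admissibility follows uniformly in both the relative and degeneration cases, the only difference being the extra treatment of the distinguished divisor.
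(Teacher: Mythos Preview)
Your proposal is correct and follows essentially the same approach as the paper: reduce to the standard family, treat each divisor separately via its local simple-degeneration model, invoke Proposition~\ref{open_Y} for the distinguished divisor and Corollary~\ref{open_X} for the nodal ones, and use that the complement of each coordinate hyperplane contributes nothing. The only organizational difference is that the paper inserts Lemma~\ref{normality_simple} to convert fiberwise normality at a nodal divisor into normality to the total-space divisor $D_i(k)$ before applying Corollary~\ref{open_X} on the hyperplane $H_i$, whereas you restrict $\F$ directly to the closed locus $Z=V(f)$ and apply Corollary~\ref{open_X} there; both routes are equivalent.
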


\begin{proof}
For standard families $Y(k)\to \A^k$, it suffices to prove the points $s\in \A^k$ over which $\F|_s$ is normal to all $D_i\in \cY_s$ form an open subset. By Lemma \ref{normality_simple}, those are exactly points $s\in \A^k$ where $\F$ is normal to all $D_i(k)$ at every $p\in \cY_s$. From the properties of $Y(k)$ we know that after projection to the $i$-th coordinate, $1\leq i\leq k$, the 1-dimensional family $\pi_i:= \pr_i\circ \pi: Y(k)\rar \A^1$ is also a locally simple degeneration. In other words, locally around $D_i(k)$, $Y(k)$ has a common \'etale neighborhood with the local model $D_i(k)\times \Sp \C[x,y,t_i]/(xy-t_i) \cong D_i\times \A^{k-1} \times \Sp \C[x,y,t_i]/(xy-t_i)$. At points where $t_i\neq 0$, the normality of $\F$ is trivial. Moreover, by Corollary \ref{open_X}, the points $s\in H_i=(t_i=0)$ where $\F$ is not normal to $D_i$ form a closed subset. Thus it is still closed in $\A^k$, whose complement is open. Similar proof for the distinguished divisor $D(k)$. For general families $\cY_S$ over $S$, the proof works by pulling everything back to $S$.
\end{proof}

\subsection{Stable quotients}

Now we come to the stability condition, and the notion of stable quotients, following the convention of J. Li and B. Wu \cite{LW}.

In this subsection, let $(Y,D)$ and $\pi:X\rar \A^1$ be as above, we have the contractions $p: Y[k]\rar Y$ and $p: X_0[k]\rar X_0$, contracting the extra bubbled components. Let $\cV$ be a vector bundle of finite rank on $Y$ (resp. $X$).

Consider quotient sheaves of the form $\phi: p^*\cV \rar \F$ on $Y[k]$ (resp. $X_0[k]$). For two quotients $\phi_1: p^*\cV \rar\F_1$ and $\phi_2: p^*\cV \rar\F_2$, an equivalence between them is defined as a pair $(\sigma, \psi)$, where $\sigma: Y[k]\rar Y[k]$ (resp. $X_0[k]\rar X_0[k]$) is an isomorphism \emph{induced from the $(\C^*)^k$-action}, and $\psi: \F_1\xrar{\sim} \sigma^*\F_2$ is an isomorphism of coherent sheaves, making the following diagram commute,
$$\xymatrix{
p^*\cV \ar[rr]^{\phi_1} \ar[d]_\Id && \F_1 \ar[d]^\psi \\
\sigma^* p^*\cV= p^*\cV \ar[rr]^(.6){\sigma^* \phi_2} && \sigma^* \F_2.
}$$

Let $\Aut(\phi)$ be the group of autoequivalences of a fixed quotient $\phi:p^*\cV \rar \F$. There is a map $\Aut(\phi)\rar (\C^*)^k$, $(\sigma,\psi) \mapsto \sigma$ forgetting the second component. This is an injection, since for a fixed quotient $\sigma=1$ will just identify $p^*\cV$ and induce $\Id$ on $\F$. As a result, $\Aut(\phi)\subset (\C^*)^k$ is a subgroup.

\begin{rem}
  An equivalent definition of the equivalence of quotients is to consider the kernels of the quotients, say $0\rar \mathcal{K}_i \rar p^*\cV \rar \F_i \rar 0$, and define an equivalence as an isomorphism of the two kernels as subsheaves of $p^*\cV$. It is clear from this definition that $\Aut(\phi)\subset (\C^*)^k$ is a subgroup.
\end{rem}

\begin{defn}
  Let $\phi:p^*\cV\rar \F$ be a quotient sheaf on $Y[k]$ (resp. $X_0[k]$). $\phi$ is called \emph{stable} if $\F$ is admissible, and $\Aut(\phi)$ is finite.
\end{defn}

For standard families $Y(k)$ and $X(k)$, we have contraction maps $p:Y(k)\rar Y$ and $p:X(k)\rar X$, which pass to general families.

\begin{defn}
  Let $\cY_S\rar S$ (resp. $\cX_S\rar S$) be a family of expanded pairs (resp. degenerations), with contraction map $p: \cY_S\rar Y$ (resp. $p:\cX_S\rar X$) and $\phi: p^*\cV\rar \F$ be a quotient sheaf on $\cY_S$ (resp. $\cX_S$), with $\F$ flat over $S$. Then $\phi$ is called \emph{stable} if for every point $s\in S$, the fiber $\F|_s$ is stable on the fiber $\cY_{S,s}$ (resp. $\cX_{S,s}$).
\end{defn}

The $(\C^*)^k$-actions on $Y(k)$ and $\A^k$ induce a $(\C^*)^k$-action on the Quot-space $\Quot_{Y(k)/\A^k}^{p^*\cV}$, which is a separated algebraic space locally of finite type by \cite{OS}. For a fixed object $(\xi,\phi)$ represented by $\xi: S\rar \A^k$ and an $S$-flat quotient $\phi$ on $\cY_S$, the stabilizer is given by
\begin{equation}\label{Aut_C_S_phi}
  \xymatrix{
  \Aut_{*,S}(\xi,\phi,k) \ar[r] \ar[d] & S \ar[d]^-{\Delta\circ (\xi,\phi)} \\
  (\C^*)^k \times \Quot_{Y(k)/\A^k}^{p^*\cV} \ar[r] & \Quot_{Y(k)/\A^k}^{p^*\cV} \times \Quot_{Y(k)/\A^k}^{p^*\cV}.
  }
\end{equation}
We see that $\Aut_{*,S}(\xi,\phi,k)\hookrightarrow (\C^*)^k\times S$ is a subgroup scheme over $S$. Thus it is quasi-compact and separated over $S$, which means that the action is quasi-compact and separated.

We also have the open condition property.

\begin{prop} \label{open_stable}
  Let $\cY_S\rar S$ (resp. $\cX_S\rar S$) be a family of expanded pairs (resp. degenerations), with contraction map $p: \cY_S\rar Y$ (resp. $p:\cX_S\rar X$) and $\phi: p^*\cV\rar \F$ be a quotient sheaf on $\cY_S$ (resp. $\cX_S$), with $\F$ flat over $S$. The set $\{s\in S\mid \phi_s:=p^*\cV \rar \F|_s \text{ is stable } \}$ is open in $S$.
\end{prop}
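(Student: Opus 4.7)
The stability of $\phi_s$ consists of two conditions: (a) $\F|_s$ is admissible on the fiber, and (b) the subgroup $\Aut(\phi_s) \subset (\C^*)^k$ is finite. Condition (a) already defines an open locus in $S$ by Proposition~\ref{open_XY_S}, so the task reduces to proving the locus where (b) holds is open. The argument is identical in the relative and degeneration cases, so I will write it for families of expanded pairs; after passing to an \'etale cover of $S$ (where openness can be checked) I may assume the family comes from an honest morphism $\xi: S \rar \A^k$, so that the diagram~\eqref{Aut_C_S_phi} is available.

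The strategy is to realize the pointwise stabilizer $\Aut(\phi_s)$ as the fiber of a group scheme of finite type over $S$ and then invoke upper semicontinuity of fiber dimension. By the Cartesian square~\eqref{Aut_C_S_phi}, the universal stabilizer $\Aut_{*,S}(\xi,\phi,k)$ is the pullback of $\Delta \circ (\xi,\phi): S \rar \Quot^{p^*\cV}_{Y(k)/\A^k} \times \Quot^{p^*\cV}_{Y(k)/\A^k}$ along the action-and-projection map from $(\C^*)^k \times \Quot^{p^*\cV}_{Y(k)/\A^k}$. Since the Quot space is a separated algebraic space by \cite{OS}, its diagonal is a closed immersion, so $\Aut_{*,S}(\xi,\phi,k)$ is cut out as a closed subscheme of $(\C^*)^k \times S$. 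In particular it is a group scheme of finite type over $S$, and its geometric fiber over $s \in S$ is precisely the closed subgroup $\Aut(\phi_s) \subset (\C^*)^k$.

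A closed subgroup scheme of the torus $(\C^*)^k$ is finite if and only if it is zero-dimensional. By Chevalley's upper semicontinuity of fiber dimension applied to the finite-type morphism $\Aut_{*,S}(\xi,\phi,k) \rar S$, the locus
$$\{s \in S : \dim \Aut(\phi_s) \leq 0\}$$
is open in $S$. Intersecting with the open set furnished by (a) gives the desired open subset, proving the proposition.

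The main point to be careful about is that $\Aut_{*,S}(\xi,\phi,k)$ really is of finite type over $S$, i.e.\ that the subgroup-scheme structure is realized as a \emph{closed} subscheme of $(\C^*)^k \times S$ rather than merely a locally closed one; this is exactly what the separatedness of the Quot space buys us. Once this is in hand, the semicontinuity step is routine, and the reduction of the degeneration case to the same picture is automatic from the parallel setup on $X(k)$.
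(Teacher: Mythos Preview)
Your proposal is correct and follows essentially the same approach as the paper: reduce to showing finiteness of the stabilizer is open, realize the stabilizers as fibers of the group scheme $\Aut_{*,S}(\xi,\phi,k)\subset(\C^*)^k\times S$ from diagram~\eqref{Aut_C_S_phi}, and invoke openness of the quasi-finite locus. The paper phrases the last step as ``the set where $\Aut_{*,S}(\xi,\phi,k)\rar S$ is quasi-finite is open'' while you phrase it via Chevalley semicontinuity of fiber dimension; these are the same fact, and in either formulation one tacitly uses the identity section $e:S\rar\Aut_{*,S}(\xi,\phi,k)$ to transport the open condition from the total space down to $S$.
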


\begin{proof}
  By Proposition \ref{open_XY_S}, it suffices to prove the finiteness of the autoequivalence group is an open condition. We take the relative case for example; the degeneration case is similar.

  Now $(\C^*)^k$ acts on $\Quot_{Y(k)/\A^k}^{p^*\cV}$ by diagram (\ref{Aut_C_S_phi}). Given an object $(\xi,\phi)\in \Quot_{Y(k)/\A^k}^{p^*\cV} (S)$, the stabilizer group scheme of the action is just $\Aut_{*,S}(\xi,\phi,k)$. Thus the set $\{s\in S\mid \phi_s:=p^*\cV \rar \F|_s \text{ is stable } \}$ would be the set where $\Aut_{*,S}(\xi,\phi,k)\rar S$ is quasi-finite, which is open.
\end{proof}

We still need some concepts for later discussions of sheaves. The following definitions can be found in \cite{Lieb} and \cite{Ni}.

\begin{defn}
  Let $\F$ be a coherent sheaf on a Deligne--Mumford stack $W$. The \emph{support} of $\F$ is the closed substack defined by the ideal sheaf $0\rar \I \rar \Ou_W \rar \mathscr{E}nd_{\Ou_W}(\F)$. The \emph{dimension} of $\F$ is the dimension of its support. $\F$ is called \emph{pure}, if any proper subsheaf of it has the same dimension as $\F$.
\end{defn}

\begin{defn}
  Let $T_i(\F)$ be the maximal subsheaf of $\F$ of dimension $\leq i$. Then we have the \emph{torsion filtration}
  $$0\subset T_0(\F)\subset \cdots \subset T_d(\F)=\F,$$
  where $d=\dim \F$ and each factor $T_i(\F)/T_{i-1}(\F)$ is 0 or pure of dimension $i$.
\end{defn}

\begin{prop} \label{adm_supp}
  Let $\F$ be a coherent sheaf on a locally smooth pair $(Y,D)$. Then $\F$ is normal to $D$ if and only if no irreducible components of any nonempty $\Supp T_i(\F)$ lie in $D$.
\end{prop}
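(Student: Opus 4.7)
My plan is to reduce the statement to a commutative-algebra assertion via the étale-local criterion for normality (Lemma~3.5) and then use the language of associated primes to compare the two conditions. Since both normality and the statement about irreducible components of $\Supp T_i(\F)$ are étale-local on $Y$, I may assume $Y = \Sp A$ is affine, that $D$ is cut out by a nonzerodivisor $f\in A$, and that $\F$ corresponds to a finitely generated $A$-module $M$. By Lemma~3.5, $\F$ is normal to $D$ if and only if multiplication by $f$ is injective on $M$, which by the standard characterization is equivalent to $f\notin \mathfrak{p}$ for every $\mathfrak{p}\in \mathrm{Ass}(M)$, i.e.\ to the condition that no $V(\mathfrak{p})$ with $\mathfrak{p}\in \mathrm{Ass}(M)$ is contained in $D$.

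The main technical step is then to identify the associated primes appearing in each torsion piece. I will prove the equality
\[
\mathrm{Ass}(T_i(\F)) = \{\mathfrak{p}\in \mathrm{Ass}(\F) : \dim A/\mathfrak{p}\le i\}.
\]
The inclusion $\subseteq$ is immediate from $T_i(\F)\subset \F$ together with $\dim T_i(\F)\le i$. For $\supseteq$, given $\mathfrak{p}\in \mathrm{Ass}(\F)$ with $\dim A/\mathfrak{p}\le i$, pick $m\in \F$ with $\mathrm{ann}(m) = \mathfrak{p}$; then $Am\cong A/\mathfrak{p}$ is of dimension $\le i$, so $m\in T_i(\F)$, giving $\mathfrak{p}\in \mathrm{Ass}(T_i(\F))$.

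Next I will observe that every $\mathfrak{p}\in \mathrm{Ass}(\F)$ occurs as a minimal element of $\mathrm{Ass}(T_d(\F))$ where $d=\dim A/\mathfrak{p}$: indeed any $\mathfrak{q}\in \mathrm{Ass}(T_d(\F))$ strictly contained in $\mathfrak{p}$ would satisfy $\dim A/\mathfrak{q} > d$, contradicting the description above. Hence $V(\mathfrak{p})$ is an irreducible component of $\Supp T_d(\F)$. Conversely, every irreducible component of $\Supp T_i(\F)$ is $V(\mathfrak{p})$ for some minimal $\mathfrak{p}\in \mathrm{Ass}(T_i(\F))\subset \mathrm{Ass}(\F)$. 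Therefore the family of irreducible components of the supports $\Supp T_i(\F)$, as $i$ varies, coincides with $\{V(\mathfrak{p}) : \mathfrak{p}\in \mathrm{Ass}(\F)\}$.

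Combining these two reductions, ``no irreducible component of any $\Supp T_i(\F)$ lies in $D$'' translates to ``$f\notin \mathfrak{p}$ for every $\mathfrak{p}\in \mathrm{Ass}(\F)$'', which is exactly normality of $\F$ to $D$. The only real obstacle is the passage to the stack setting, but it is handled at the outset by invoking the étale-local nature of normality already proved in the excerpt; the torsion filtration is compatible with étale pullback, so the local-to-global conclusion is automatic.
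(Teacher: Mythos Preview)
Your proof is correct and takes a genuinely different (and arguably cleaner) route from the paper. The paper argues each direction separately by contradiction: for the forward direction it observes that an irreducible component of $\Supp T_i(\F)$ contained in $D$ yields an element killed by a power of the local equation $f$, hence $\F_\I\neq 0$; for the converse it notes that if $\F_\I\neq 0$ and $d'=\dim\F_\I$, then a $d'$-dimensional component of $\Supp\F_\I\subset D$ must be a component of $\Supp T_{d'}(\F)$ by a dimension count. Your approach instead identifies the collection of irreducible components of all $\Supp T_i(\F)$ with $\{V(\mathfrak{p}):\mathfrak{p}\in\mathrm{Ass}(M)\}$ and then invokes the standard fact that $f$ is a nonzerodivisor on $M$ if and only if $f$ avoids every associated prime. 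This buys you a single unified equivalence rather than two contrapositives, and it makes the role of embedded primes (which the paper's argument handles only implicitly) completely transparent. The paper's argument is shorter and more geometric in flavor, but yours is more robust and would adapt immediately if one wanted finer statements about which $T_i$ detects a given failure of normality.
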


\begin{proof}
  If $\F$ is normal to $D$, and some irreducible component of $T_i(\F)$ lies in $D$, then for some point in that component, $\F$ is not flat along the normal direction to $D$ since the multiplication in the normal direction vanishes on $T_i(\F)$. Contradiction.

  Conversely, suppose no irreducible components of any $T_i(\F)$ lie in $D$ and $\F$ is not normal to $D$. Then $\F_\I\neq 0$, where $\I$ is the ideal sheaf of $D\subset Y$. Suppose $\dim \F_\I = d'$, then $\Supp\F_\I \subset D$ contains some irreducible components of $\Supp T_{d'}(\F)$. Contradiction.
\end{proof}

The followings are some examples for stable quotients.

\begin{eg}
  Let $\cV=\Ou_Y$ (similar with $\Ou_{X_0}$) be the structure sheaf. A quotient sheaf is just a closed substack $Z$ of $Y[k]$.

  If $\dim Z=0$, in other words, $Z$ is just a collection of (possibly multiple or gerby) points. Then $\Ou_{Y[k]}\rar \Ou_Z$ is admissible if and only if none of the points of $Z$ supports on any divisor $D_i$, $0\leq i\leq k$. It is stable if moreover $Z\cap \Delta_i\neq \emptyset$, $1\leq i\leq k$.

  If $\dim Z=1$ and $Z$ is pure, then $Z$ is admissible if and only if none of its irreducible components lies entirely in any divisor $D_i$, $0\leq i\leq k$. $Z$ is stable if furthermore it is not ``entirely fiberwise"; in other words, restricted to any extra bubbled component of $Y[k]$, it is not a fiber (or a union of fibers) of the corresponding (possibly orbi) $\Pj^1$-bundle.
\end{eg}

\subsection{$\C^*$-equivariant flat limits}

To conclude this section we describe the completion of flat families of quotients. Let $\mathcal W$ be a separated Deligne-Mumford stack of finite type, with a vector bundle $\cV$, and $\pi: \mathcal W \rar \A^1$ be proper and flat. Let $\phi^\circ: \cV|_{\C^*} \rar \F^\circ$ be a quotient of coherent sheaves on $\mathcal W|_{\C^*}$, flat over $\C^*$.

By properness of the Quot-space $\Quot_{\mathcal W/\A^1}^\cV$ (Theorem 1.5 of \cite{OS}), there is a unique quotient $\phi: \cV \rar \F$ on $\mathcal W$, flat over $\A^1$, whose restriction to $\mathcal W|_{\C^*}$ is the given family. We call $\phi$ the \emph{flat completion} and $\phi|_0$ the \emph{flat limit} of the given family at $0\in \A^1$.

Consider the following special case. Let $W$ be a proper Deligne-Mumford stack, with a vector bundle $V$. Consider the family $\pi: W\times \A^1\rar \A^1$, with the other projection $p: W\times \A^1\rar W$. $\C^*$ acts obviously on the $\A^1$-factor of the family $W\times \A^1$. Given $\phi^\circ: p^*V|_{W\times \C^*}\rar \F^\circ$ be a $\C^*$-equivariant and $\C^*$-flat quotient of coherent sheaves on $W\times \C^*$, we have the flat completion $\phi$, which is still $\C^*$-equivariant.

\begin{lem} \label{lem_equivariant_limit}
  $\phi\cong p^*\phi|_0$.
\end{lem}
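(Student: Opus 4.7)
The plan is to interpret $\phi$ as a morphism to an absolute Quot space, observe that the $\C^*$-equivariance forces this morphism to be constant on the open dense $\C^*\subset \A^1$, and then invoke separatedness of Quot to conclude on all of $\A^1$.

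First I would work with the absolute Quot space $\Quot_W^V$, which parameterizes, for each $\C$-scheme $T$, the $T$-flat quotients of $V\boxtimes \Ou_T$ on $W\times T$. Since $W$ is a proper Deligne--Mumford stack and $V$ is a vector bundle, $\Quot_W^V$ is a separated algebraic space, locally of finite type. The $\A^1$-flat quotient $\phi:p^*V\rar \F$ corresponds, via the universal property, to a morphism $g:\A^1\rar \Quot_W^V$, while the candidate $p^*(\phi|_0)$ corresponds to the constant morphism $\A^1\rar \Sp\C \xrar{[\phi|_0]} \Quot_W^V$.

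Next I would translate the $\C^*$-equivariance of $\phi$ into a property of $g$. Since $\C^*$ acts trivially on $W$ and on $V$, for each $\lambda\in\C^*$ the pullback $\lambda^*(p^*V)$ is canonically identified with $p^*V$; the equivariance datum of $\phi$ then asserts $\lambda^*\phi=\phi$ as quotients of $p^*V$, i.e.\ $\ker\phi\subset p^*V$ is a $\C^*$-stable subsheaf. Globally this yields the identity of morphisms
\[
g\circ \mu_{\A^1} \;=\; g\circ \pr_2 \;:\; \C^*\times \A^1 \longrightarrow \Quot_W^V,
\]
where $\mu_{\A^1}(\lambda,t)=\lambda t$. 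Restricting to $\C^*\times\{1\}$ shows that $g|_{\C^*}$ is the constant morphism with value $g(1)$.

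Finally, separatedness of $\Quot_W^V$ implies that the two morphisms $\A^1\rar \Quot_W^V$ -- namely $g$ itself and the constant morphism at $g(1)$ -- which agree on the dense open $\C^*\subset\A^1$, must agree on all of $\A^1$. Translating back via the universal property, $\phi\cong p^*(\phi|_0)$, as required. The delicate step in this approach is the translation of sheaf-level $\C^*$-equivariance into a genuine equality of classifying morphisms $\C^*\times \A^1\rightrightarrows \Quot_W^V$, rather than merely a pointwise isomorphism of quotients; once this is in place, the remaining argument is formal from the universal property and the separatedness of $\Quot_W^V$.
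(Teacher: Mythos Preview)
Your argument is correct and takes a genuinely different route from the paper. The paper works locally: assuming $W=\Sp A$ and $V=A^{\oplus r}$, it decomposes the $\C^*$-equivariant quotient $A[t]^{\oplus r}\rar M$ into weight pieces $\phi_l: t^l A^{\oplus r}\rar M_l$, checks that multiplication by $t$ gives isomorphisms $M_l\xrar{\sim} M_{l+1}$ (surjectivity from the diagram, injectivity from flatness over $\C[t]$), and concludes $M\cong M_0\otimes_A A[t]\cong (M/tM)\otimes_A A[t]$. Your approach instead packages $\phi$ as a classifying morphism $g:\A^1\rar \Quot_W^V$, uses equivariance to see $g|_{\C^*}$ is constant, and invokes separatedness of $\Quot_W^V$ (together with $\C^*$ being schematically dense in the reduced scheme $\A^1$) to extend this to all of $\A^1$.

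The trade-offs: the paper's weight-space computation is entirely self-contained and produces an explicit isomorphism $M\cong (M/tM)\otimes_A A[t]$, without appealing to any moduli-theoretic input. Your argument is cleaner and more conceptual, but relies on the representability and separatedness of $\Quot_W^V$ for a proper Deligne--Mumford stack $W$ (which the paper does cite from \cite{OS} elsewhere, so this is not an added hypothesis). The step you flag as delicate---upgrading pointwise equivariance to the equality $g\circ\mu_{\A^1}=g\circ\pr_2$ of $(\C^*\times\A^1)$-points of $\Quot_W^V$---is handled exactly as you say: the equivariance datum is an isomorphism $\mu^*\F\cong\pr_2^*\F$ under $\mu^*p^*V=\pr_2^*p^*V$, which is precisely an equality in $\Quot_W^V(\C^*\times\A^1)$.
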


\begin{proof}
  We rephrase the situation in terms of the language of Quot-spaces. $\phi^\circ$ defines a $\C^*$-equivariant map from $F^\circ: \C^*\to \Quot_W$. Here the $\C^*$-action on the Quot-space is the trivial action. Hence $F^\circ$ is a constant map, defined by restricting $\phi^\circ$ to any fiber. By the properness of $\Quot_W$, the unique extension of $F^\circ$ to $\A^1$ is the constant map, which gives the trivial family $p^*\phi|_0$.
\end{proof}

Let $(Y,D)$ be a locally smooth pair as in previous sections, and $\cV$ be a vector bundle on $Y$. Let $\Delta=\Pj_D(\Ou\oplus N) \xrar{p} D$ be a bubble component.

\begin{prop} \label{equivariant_pullback}
  Let $\phi:p^*\cV \rar \F$ be an admissible $\C^*$-equivariant quotient on $\Delta$, with $\C^*$ acting along the fiber. Assume furthermore that $\C^*$ acts trivially on $V|_D$, i.e. $\forall p\in D$, $V|_p$ is trivial as a $\C^*$-representation. Then $\F\cong p^*\F|_D$.
\end{prop}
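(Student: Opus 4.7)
The plan is to reduce the claim to two applications of Lemma~\ref{lem_equivariant_limit}, one on each of the two standard affine charts covering the $\Pj^1$-bundle $p:\Delta\rar D$.

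First, I would reduce to the étale-local case where the line bundle $N$ trivializes. Pick an étale cover $U\rar D$ with $N|_U$ trivial; then $\Delta_U\cong \Pj^1\times U$, $p$ is projection, $D_\pm = \{0,\infty\}\times U$, and $\C^*$ scales the $\Pj^1$-fiber while fixing $D_\pm$ pointwise. Cover $\Pj^1$ by its two standard affine charts $\A^1_-,\A^1_+$ around $D_-,D_+$ respectively; each chart $\A^1_\pm\times U$ has the product shape $W\times \A^1$ of Lemma~\ref{lem_equivariant_limit}, with $W=U$ and $\C^*$ scaling the $\A^1$-factor. Although that lemma was stated for proper $W$, its proof is purely local on $W$ and applies to $W=U$ without change.

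On the chart $\A^1_-\times U$, admissibility of $\F$ (in particular normality to $D_-$) implies that $\F$ is flat over $\A^1_-$. The hypothesis that $\C^*$ acts trivially on $\cV|_D$ propagates, via the $\C^*$-equivariant map $p$, to triviality of the $\C^*$-action on $p^*\cV$ over this chart, matching the lemma's hypothesis on $V$. Lemma~\ref{lem_equivariant_limit} then produces a canonical isomorphism $\F|_{\A^1_-\times U}\cong p^*(\F|_{D_-})$, and the identical argument on $\A^1_+\times U$ gives $\F|_{\A^1_+\times U}\cong p^*(\F|_{D_+})$.

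Finally, I would patch these two local descriptions using the canonicity of the lemma's isomorphism: in its proof, the pullback source is the $\C^*$-invariant weight-zero subsheaf, identified with $M/tM = \F|_0$ via the weight decomposition, a construction depending only on the $\C^*$-equivariant structure. Restricted to the overlap $\C^*\times U$, both chart-isomorphisms therefore exhibit $\F|_{\C^*\times U}$ as the $p$-pullback of the same canonical weight-zero summand, yielding a canonical identification $\F|_{D_-}\cong \F|_{D_+}$ as sheaves on $D\cong U$. The two chart-isomorphisms then glue to a global isomorphism $\F\cong p^*\F|_D$ on $\Delta_U$, which descends along $U\rar D$. The main obstacle is precisely this gluing step: verifying that the weight-zero data extracted from each of the two charts agree canonically on the overlap, which is what allows the identification of $\F|_{D_-}$ and $\F|_{D_+}$ to make sense globally.
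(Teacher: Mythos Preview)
Your argument is correct, but the paper takes a shorter route that avoids the gluing step you flag as the main obstacle. The paper also reduces \'etale-locally on $D$ and applies Lemma~\ref{lem_equivariant_limit} on the chart $\Delta-D_+\cong\Sp A[t]$ to get $\F|_{\Delta-D_+}\cong p^*(\F|_{D_-})|_{\Delta-D_+}$, exactly as you do. But instead of repeating this on the second chart and gluing, the paper observes that both $\F$ and $p^*\F|_{D_-}$ are quotients of $p^*\cV$ that are flat over $\Pj^1$ (the former by admissibility at $D_+$, the latter trivially) and that agree over $\Delta-D_+$; separatedness of the relative Quot-space $\Quot_D^{p^*\cV}$ then forces them to agree on all of $\Delta$. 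This uses admissibility at $D_+$ only to guarantee flatness for the uniqueness-of-flat-limit argument, and never touches the second chart.

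Your two-chart approach works because, as you note, the weight-zero summand of $\F|_{\C^*\times U}$ is canonically defined by the $\C^*$-equivariant structure and is independent of which affine coordinate you use, so the identifications $\F|_{D_-}\cong M_0\cong\F|_{D_+}$ are compatible. The payoff is that you avoid invoking the Quot-space, at the cost of the bookkeeping needed to check that the chart isomorphisms match on the overlap. The paper's approach trades that bookkeeping for a one-line appeal to separatedness, which is cleaner here since the Quot-space machinery is already in play throughout the paper.
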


\begin{proof}
   Locally $\Delta - D_-$ has the form $W\times \A^1$ with $W$ an affine chart. Admissibility implies the flatness of $\F$ over $\A^1$. By Lemma \ref{lem_equivariant_limit} there is an isomorphism $\F\cong p^* \F|_D$ on this affine chart. Separatedness of $\Quot_W$ in Lemma \ref{lem_equivariant_limit} ensures that the isomorphisms on local affine charts are unique, and hence glue to a global isomorphism.
\end{proof}

We still need a technical lemma for later use. Consider $\Sp R:=\Sp \C[x,y,t]/(xy)$. Let $R_0= R/tR$, $R_-=R/yR$ and $R_{0,-}=R_0/yR_0$. Let $J=(x,y)\subset R$, $J_0=(x,y)\subset R_0$, $I_-= (x)\subset R_-$ and $I_{0,-}=(x)\subset R_0/yR_0$ be ideals. Let $\C^*$ acts on $t$ by weight $a<0$, on $x$ by weight $b>0$, and trivially on $y$. Let $\phi: R^{\oplus r} \rar M\rar 0$ be a $\C^*$-equivariant quotient, flat over $\C[t]$. Let $\phi_-: R_-^{\oplus r}\rar M_-\rar 0$ be its restriction on $R_-$.

\begin{lem}[(Lemma 3.13, Lemma 3.14 of \cite{LW})] \label{3.13}

\

\begin{enumerate}
\item $M_J\otimes_R R_0 \cong (M\otimes_R R_0)_{J_0};$

\item Let $M_-^*$ be the flat limit of the restriction of $M_-$ on $\Sp \C[x,x^{-1},t]$ along the $x$-direction to the locus $(x=0)$, and let $M_{0,-}^*$ be flat limit of the restriction of $M_{0,-}:= M_-\otimes_{R_-} R_{0,-}$ on $\Sp \C[x,x^{-1}]$ along the $x$-direction to the locus $(x=0)$. Then $M_-^*$ is the pull back of $M_{0,-}^*$ along the projection $\Sp \C[x, x^{-1}, t] \to \Sp \C[x, x^{-1}]$.
\end{enumerate}

\end{lem}

\begin{proof}
  The proof is exactly the same as in \cite{LW}.
\end{proof}

\section{Moduli of stable quotients}

\subsection{Quot-functor of stable quotients}

Now let's consider stable quotients on the stacks of expanded pairs and degenerations. Let $(Y,D)$ be a locally smooth pair and $\pi: X\rar \A^1$ be a locally simple degeneration. Let $\cV$ be a vector bundle of finite rank on $Y$ or $X$. We have the stacks $\fA$ and $\fC$, with universal families $\fY$ and $\fX$.

First let's describe the equivalence relation on the Quot-spaces. Recall we have the discrete symmetries given in the following \emph{Cartesian} diagram
$$\xymatrix{
R_{d,Y(k)} \ar@<.4ex>[r] \ar@<-.4ex>[r] \ar[d] & Y(k) \ar[d] \\
R_{d,\A^k} \ar@<.4ex>[r] \ar@<-.4ex>[r] & \A^k.
}$$
One has an \'etale equivalence relation
$$\xymatrix{
R_{d, \Quot_{Y(k)/\A^k}^{p^*\cV}} := \Quot_{R_{d,Y(k)}/ R_{d,\A^k}}^{p^*\cV} \ar@<.4ex>[r] \ar@<-.4ex>[r] & \Quot_{Y(k)/\A^k}^{p^*\cV},
}$$
induced by the two projections in the Cartesian diagram above. Here the maps are well-defined because $R_{d, \A^k} \rightrightarrows \A^k$ are actually disjoint unions of open immersions.

This is the induced discrete symmetries on the Quot-space. Two representatives $(\xi,\phi),(\xi',\phi')$, with $\xi, \xi': S\rar \A^k$, represent the same object if the associate map factors through $R_{d, \Quot_{Y(k)/\A^k}^{p^*\cV}}$.

Recall that we also have the $(\C^*)^k$-action
$$\xymatrix{
(\C^*)^k\times \Quot_{Y(k)/\A^k}^{p^*\cV} \ar@<.4ex>[r] \ar@<-.4ex>[r] & \Quot_{Y(k)/\A^k}^{p^*\cV}.
}$$
They combine to give a smooth equivalence relation on the Quot-space, still as a subrelation of the group action by the semidirect product $(\C^*)^k \rtimes S_k$,
$$\xymatrix{
R_{\sim, \Quot_{Y(k)/\A^k}^{p^*\cV}} := (\C^*)^k\times R_{d, \Quot_{Y(k)/\A^k}^{p^*\cV}} \ar@{^{(}->}[r] & (\C^*)^k\rtimes S_k \times \Quot_{Y(k)/\A^k}^{p^*\cV} \ar@<.4ex>[r] \ar@<-.4ex>[r] & \Quot_{Y(k)/\A^k}^{p^*\cV}
}.$$

In particular, for a fixed object represented by a map $\xi: S\rar \A^k$ and a quotient $\phi$ on $\cY_S$, its isotropy group is the pull back in the following Cartesian diagram,
\begin{equation}\label{Aut_sim_S_phi}
\xymatrix{
  \Aut_{\sim,S}(\xi,\phi,k) \ar[rr] \ar[d]  && S \ar[d]^{ \Delta\circ (\xi,\phi)} \\
  R_{\sim,\Quot_{Y(k)/\A^k}^{p^*\cV}}  \ar[rr] && \Quot_{Y(k)/\A^k}^{p^*\cV}\times \Quot_{Y(k)/\A^k}^{p^*\cV}.
  }
\end{equation}
$\Aut_{\sim,S}(\xi,\phi,k)$ is a subgroup scheme over $S$ of $(\C^*)^k\rtimes S_k \times S$, which is quasi-compact and separated. We know that the smooth relation $R_{\sim, \Quot_{Y(k)/\A^k}^{p^*\cV}}$ is quasi-compact and separated.

Now we have the following Cartesian diagram of closed or open embeddings, for $|I|=l\leq k$,
$$\xymatrix{
Y(l) \ar@{^(->}[rr]^-{\tau_{I,Y}} \ar[d]_p && Y(k)|_{U_I} \ar@{^(->}[rr]^-{\tilde\tau_{I,Y}} \ar[d] && Y(k) \ar[d]^p \\
\A^l \ar@{^(->}[rr]^-{\tau_I} && U_I \ar@{^(->}[rr]^-{\tilde\tau_I} && \A^k,
}$$
which induces the closed embedding $\Quot_{Y(l)/\A^l}^{p^*\cV} \hookrightarrow \Quot_{Y(k)/\A^k}^{p^*\cV}$, and the open embedding $(\C^*)^{k-l}\times \Quot_{Y(l)/\A^l}^{p^*\cV} \cong \Quot_{Y(k)|_{U_I}/U_I}^{p^*\cV} \hookrightarrow \Quot_{Y(k)/\A^k}^{p^*\cV}$. Hence we have the embeddings,
$$\xymatrix{
R_{\sim,\Quot_{Y(l)/\A^l}^{p^*\cV}} \ar@{^(->}[r] \ar@<-.4ex>[d] \ar@<.4ex>[d] & (\C^*)^{k-l} \times R_{\sim,\Quot_{Y(l)/\A^l}^{p^*\cV}} \ar@{^(->}[r] \ar@<-.4ex>[d] \ar@<.4ex>[d] & R_{\sim,\Quot_{Y(k)/\A^k}^{p^*\cV}} \ar@<-.4ex>[d] \ar@<.4ex>[d] \\
\Quot_{Y(l)/\A^l}^{p^*\cV} \ar@{^(->}[r] & (\C^*)^{k-l}\times \Quot_{Y(l)/\A^l}^{p^*\cV} \ar@{^(->}[r] & \Quot_{Y(k)/\A^k}^{p^*\cV}.
}$$

Let $\Quot_{Y(k)/\A^k}^{p^*\cV,st} \subset \Quot_{Y(k)/\A^k}^{p^*\cV}$ denote the subfunctor of the ordinary Quot-functor, consisting of stable quotients. By Proposition \ref{open_stable} this is an open subfunctor, thus represented by an algebraic subspace of the Quot-space. It is invariant under the equivalence relation.

Define the stable Quot-stack as
$$\fQuot_{\fY/\fA}^\cV:= \varinjlim \ \left[ \Quot_{Y(k)/\A^k}^{p^*\cV,st} \middle/ R_{\sim,\Quot_{Y(k)/\A^k}^{p^*\cV}} \right].$$
Similarly in the degeneration case one has the corresponding stable Quot-stack
$$\fQuot_{\fX/\fC}^\cV:= \varinjlim \ \left[ \Quot_{X(k)/\A^{k+1}}^{p^*\cV,st} \middle/ R_{\sim,\Quot_{X(k)/\A^{k+1}}^{p^*\cV}} \right],$$
where the transition maps in the directed system are again open immersions and the limits make sense.

We interpret the objects and morphisms of $\fQuot_{\fX/\fC}^\cV$ and $\fQuot_{\fY/\fA}^\cV$ in the categorical sense. For any scheme $S$, an object $(\bar\xi,\bar\phi)$ of $\fQuot_{\fY/\fA}^\cV (S)$ is represented by some object $(\xi,\phi)\in [ \Quot_{Y(k)/\A^k}^{p^*\cV,st} / R_{\sim,\Quot_{Y(k)/\A^k}^{p^*\cV}}] (S)$, for some $k$, possibly after some further \'etale base change; more precisely, passing to a surjective \'etale covering, we have the map $\xi: S_\xi=\coprod S_i \rar \A^k$ and a stable quotient $\phi: p^*\cV\rar \F$ on the associated family of expanded pairs $\cY_{S_\xi}$.

Different representations are compatible in the following way. Given another $(\xi',\phi')$ with $\xi': S_{\xi'}\rar \A^{k'}$, passing to the refinement $S_{\xi\xi'}:=S_\xi \times_S S_{\xi'}$, denote the two induced families by $\cY_{S_{\xi\xi'}}$ and $\cY_{S_{\xi\xi'}}'$. By construction they are isomorphic to each other via some $\sigma: \cY \xrar{\sim} \cY'$, by embedding into a larger ambient space $Y(k'')$ for some $k''\geq k, k'$ and pulling back the $\sim$ equivalence relation. Then the compatibility means that, passing to $S_{\xi\xi'}$, there is an isomorphism $\phi \xrar{\sim} \sigma^* \phi'$.

Given $f:T\rar S$, the map $\fQuot_{\fY/\fA}^\cV (S) \rar \fQuot_{\fY/\fA}^\cV (T)$ is defined by pull back. Suppose $(\bar\xi, \bar\phi)\in \fQuot_{\fY/\fA}^\cV (S)$. For a representative $(\xi,\phi_{S_\xi})$, we have the corresponding map $f_\xi: T_\xi:= T\times_S S_\xi \rar S_\xi$. The 1-arrow between the two objects is defined by the composite $\eta=\xi\circ f_\xi$ and the pull back $\cY_{T,\xi}=f_\xi^* \cY_{S,\xi}$, $\phi_{T,\xi}:= f^*\phi_{S,\xi}$.

The functor in the degeneration case can be defined in the same manner. For any $\A^1$-scheme $S$, $\fQuot_{\fX/\fC}^\cV (S)$ is defined as the set of all pairs $(\bar\xi, \bar\phi)$, where $\bar\xi \in \fC(S)$ and $\bar\phi$ is a stable quotient of sheaves on the family over $\bar\xi$.

\begin{thm}
  \begin{enumerate}[1)]
    \setlength{\parskip}{1ex}

    \item  $\fQuot_{\fY/\fA}^\cV$ is a Deligne--Mumford stack, locally of finite type;

    \item $\fQuot_{\fX/\fC}^\cV$ is a Deligne--Mumford stack, locally of finite type over $\A^1$.
  \end{enumerate}
\end{thm}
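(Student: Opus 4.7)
The plan is to verify the theorem at each finite level $k$, then take the direct limit. For fixed $k$, by \cite{OS} the functor $\Quot_{Y(k)/\A^k}^{p^*\cV}$ is a separated algebraic space locally of finite type, and Proposition~\ref{open_stable} exhibits the stable locus $\Quot_{Y(k)/\A^k}^{p^*\cV,st}$ as an open subspace. The equivalence relation $R_{\sim,\Quot_{Y(k)/\A^k}^{p^*\cV}}\rightrightarrows \Quot_{Y(k)/\A^k}^{p^*\cV,st}$ is a smooth, quasi-compact, separated groupoid, since it sits inside the action of $(\C^*)^k\rtimes S_k$ as a quasi-compact open subgroupoid. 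Proposition~4.3.1 of \cite{LMB} then produces an Artin stack
$$\mathcal{Q}_k := \left[ \Quot_{Y(k)/\A^k}^{p^*\cV,st} \middle/ R_{\sim,\Quot_{Y(k)/\A^k}^{p^*\cV}} \right]$$
locally of finite type over $\C$.

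To upgrade $\mathcal{Q}_k$ to a Deligne-Mumford stack one checks that its inertia has unramified geometric fibers. For an object $(\xi,\phi)$ over a geometric point, the stabilizer is a closed subgroup scheme of $(\C^*)^k\rtimes S_k$ by diagram~(\ref{Aut_sim_S_phi}), and by the stability condition it has finite underlying set. Over $\C$ every finite group scheme is \'etale (Cartier's theorem), hence reduced, so the diagonal of $\mathcal{Q}_k$ is unramified and $\mathcal{Q}_k$ is Deligne-Mumford.

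For the colimit, the compatible embeddings $\tau_I:\A^l\hookrightarrow\A^k$ together with the identifications $Y(k)|_{U_I}\cong Y(l)\times(\C^*)^{k-l}$ induce open immersions $\mathcal{Q}_l\hookrightarrow\mathcal{Q}_k$; the direct limit of Deligne-Mumford stacks along such open immersions exists and yields a Deligne-Mumford stack locally of finite type, which is the desired $\fQuot_{\fY/\fA}^\cV$. The degeneration case is identical after replacing $\A^k$ by $\A^{k+1}$, and the $\A^1$-structure on $\fQuot_{\fX/\fC}^\cV$ is inherited from the multiplication map $m:\A^{k+1}\to\A^1$, which is equivariant under the equivalence relation as recorded in~(\ref{action_Ak_X}).

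The main subtlety will be verifying the Deligne-Mumford property: the stability condition is designed precisely so that the otherwise positive-dimensional $(\C^*)^k$-stabilizer collapses to a finite subgroup, and characteristic-zero smoothness of finite group schemes then forces unramified diagonals. A secondary technical point is confirming that the groupoids $R_\sim$ behave compatibly under the embeddings $\tilde\tau_{I,I'}$, but this compatibility is already built into the construction of $\fA$ and $\fC$ in Section~2, so the colimit gluing goes through without additional work.
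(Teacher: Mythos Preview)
Your proof is correct and follows essentially the same approach as the paper: establish at each finite level $k$ that the quotient by $R_\sim$ is an Artin stack, use the stability condition to force finite stabilizers (hence Deligne--Mumford), and then pass to the direct limit along the open immersions induced by $\tau_I$. The paper spends a bit more space analyzing the stabilizer $\Aut_\sim(\bar\xi,\bar\phi)$ of an object in the limit stack directly---showing it is independent of $k$ for large $k$ and descends from $S_\xi$ to $S$---whereas you handle the DM property entirely at level $k$ and let the colimit take care of the rest; your invocation of Cartier's theorem makes explicit what the paper leaves implicit when it concludes ``the stabilizer is finite and the stack is Deligne--Mumford.''
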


\begin{proof}
  We only prove for the relative case and the degeneration case is similar. The limit stack is defined by the stack associated to the limit groupoid, with objects and morphisms described as above. It remains to prove it is Deligne--Mumford.

  Look at the stabilizer of a fixed object $(\bar\xi,\bar\phi)\in \fQuot_{\fY/\fA}^\cV(S)$, which is represented by $\xi: S_\xi\rar \A^k$ and a stable quotient $\phi$ on $\cY_{S_\xi}$. The stabilizer of this representative is given in the following Cartesian diagram,
  $$\xymatrix{
  \Aut_\sim(\xi,\phi,k) \ar[r]\ar[d] & S_\xi \ar[r]\ar[d]^{\Delta\circ (\xi,\phi)} & S \\
  R_{\sim,\Quot_{Y(k)/\A^k}^{p^*\cV,st}} \ar[r] & \Quot_{Y(k)/\A^k}^{p^*\cV,st} \times \Quot_{Y(k)/\A^k}^{p^*\cV,st}.
  }$$
  Again one can see that for $k$ sufficiently large, this stabilizer does not depend on $k$, since the limit is taken over open immersions of stacks, which are representable. It also does not depend on the choice of $S_\xi$, since the lower horizontal map is affine, and by descent theory of affine morphisms we glue to obtain a group scheme over $S$. Denote this group scheme by $\Aut_\sim(\bar\xi,\bar\phi)$. It is the stabilizer of the object, independent of $k$ and choice of representatives for $k$ large. Moreover, $\Aut_\sim(\bar\xi,\bar\phi)$ is a quasi-compact and separated (actually affine) group scheme over $S$. Thus $\fQuot_{\fY/\fA}^\cV$ is an Artin stack.

  It remains to check that each $[ \Quot_{Y(k)/\A^k}^{p^*\cV,st}/ R_{\sim,\Quot_{Y(k)/\A^k}^{p^*\cV}}]$ is a Deligne--Mumford stack, locally of finite type. Recall that $R_{\sim,\Quot_{Y(k)/\A^k}^{p^*\cV}} \cong (\C^*)^k\times \Quot_{R_{d,Y(k)}/R_{d,\A^k}}^{p^*\cV}$. The $(\C^*)^k$-action has finite stabilizer due to the stability condition. On the other hand, $$\xymatrix{
  R_{d, \Quot_{Y(k)/\A^k}^{p^*\cV}} \ar@{^{(}->}[r] & S_k \times \Quot_{Y(k)/\A^k}^{p^*\cV} \ar@<.4ex>[r] \ar@<-.4ex>[r] & \Quot_{Y(k)/\A^k}^{p^*\cV}
  }$$
  is obviously \'etale. We conclude that the stabilizer is finite and the stack is Deligne--Mumford. Since each Quot-space $\Quot_{Y(k)/\A^k}^{p^*\cV}$ is of finite type, the stack we are considering is locally of finite type.
\end{proof}

\subsection{Projective Deligne--Mumford stacks}

From now on we need the notion of projective Deligne--Mumford stacks. The references for this are \cite{OS}, \cite{Kr} and \cite{Ni}. First we list some properties of coarse moduli spaces, which can be found in \cite{AOV}, \cite{AV} and \cite{Ni}. We collect these results here just for the later use. They are not stated in the full generality.

\begin{prop} \label{coarse_base_change}
  Let $W$ be a Deligne--Mumford stack, $\uW$ be a noetherian scheme and $c: W\rar \uW$ be a proper quasi-finite map.
  \begin{enumerate}[1)]
    \setlength{\parskip}{1ex}

    \item If $\uW$ is the coarse moduli space of $W$, and $\uW'\rar \uW$ is any morphism of schemes, then $\uW'$ is the coarse moduli space of $\uW'\times_{\uW} W$;

    \item If $\uW'\rar \uW$ is a flat surjective morphism of schemes, and $\uW'$ is the coarse moduli space of $\uW'\times_{\uW} W$, then $\uW$ is the coarse moduli space of $W$;
  \end{enumerate}
\end{prop}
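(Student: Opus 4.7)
The plan is to reduce everything to the two defining properties of a coarse moduli space: (i) universality among morphisms to schemes, and (ii) bijectivity on geometric points (equivalently, a homeomorphism on underlying topological spaces).

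For part 1, I would invoke the fact that we are working over $\C$, so $W$ is automatically a tame Deligne-Mumford stack. For tame stacks the formation of the coarse moduli space commutes with arbitrary base change, a result due to Keel-Mori in the DM case and generalized by Abramovich-Olsson-Vistoli \cite{AOV}. Concretely, this means $\uW' \times_\uW W \to \uW' \times_\uW \uW = \uW'$ is already the coarse moduli map. Nothing in our hypotheses interferes with applying this directly.

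For part 2, I would verify (i) and (ii) directly by fpqc descent along $\uW' \to \uW$. Write $W':= \uW' \times_\uW W$ and let $c': W' \to \uW'$ denote the base change of $c$. For universality, given a morphism $f: W \to T$ with $T$ a scheme, base change to obtain $f': W' \to T$, which by hypothesis factors uniquely as $f' = g' \circ c'$ for some $g': \uW' \to T$. Applying the same universality to $W'':= \uW' \times_\uW \uW' \times_\uW W$ shows that the two pullbacks of $g'$ to $\uW' \times_\uW \uW'$ agree. The cocycle condition on $\uW' \times_\uW \uW' \times_\uW \uW'$ follows analogously by applying universality over the triple fiber product. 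Since $\uW' \to \uW$ is flat and surjective, hence fpqc, the descent datum yields a unique $g: \uW \to T$ with $f = g \circ c$. For bijectivity on geometric points, surjectivity follows from the properness of $c$, while injectivity (in the sense that two geometric points of $W$ with the same image in $\uW$ are isomorphic) lifts along $\uW' \to \uW$ using the assumed coarse moduli property of $W' \to \uW'$ together with surjectivity of $\uW' \to \uW$ on geometric points.

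The main obstacle will be checking the cocycle condition in part 2 rigorously, so that the descended morphism $g: \uW \to T$ truly exists; once one unwinds that this reduces to the uniqueness clause in the coarse moduli property of $\uW'$ applied over each of the triple fiber products, the argument goes through. A secondary technical point, handled by tameness in characteristic zero, is ensuring that part 1 really holds for \emph{arbitrary} (not merely flat) base change $\uW' \to \uW$; this is where the DM hypothesis on $W$ is essential.
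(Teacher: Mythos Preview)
The paper does not give its own proof of this proposition; it is stated as a collection of known results with references to \cite{AOV}, \cite{AV}, and \cite{Ni}. Your argument is correct and is essentially the argument found in those sources: part 1 is precisely the tame base-change theorem of Abramovich--Olsson--Vistoli (tameness being automatic over $\C$), and your descent argument for part 2 is the standard one. One small remark on part 2: when you check that the two pullbacks of $g'$ to $\uW' \times_{\uW} \uW'$ agree, you are implicitly using that $\uW' \times_{\uW} \uW'$ is the coarse moduli space of $W''$, which follows from part 1 applied to the projection $\uW' \times_{\uW} \uW' \to \uW'$; you might make this dependence explicit.
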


\begin{prop} \label{tame_prop}
  Let $W$ be a separated Deligne--Mumford stack over $\C$ and $c:W\rar \uW$ be its coarse moduli space.
  \begin{enumerate}[1)]
    \setlength{\parskip}{1ex}

    \item There exists an \'etale covering $\coprod \uW_\alpha \rar \uW$, such that $W\times_{\uW} \uW_\alpha \cong [U_\alpha/\Gamma_\alpha]$, where each $U_\alpha$ is a scheme with finite group $\Gamma_\alpha$ acting on it;

    \item The map $c$ is proper. The functor $c_*$ carries quasi-coherent sheaves to quasi-coherent sheaves, coherent sheaves to coherent sheaves, and is exact;

    \item $c_*\Ou_W \cong \Ou_{\uW}$.
  \end{enumerate}
\end{prop}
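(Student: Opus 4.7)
The plan is to establish (1) first via the étale-local structure theorem for separated Deligne-Mumford stacks (which in this characteristic-zero setting is essentially Keel--Mori plus a slice argument), and then derive (2) and (3) by reducing to the local quotient model.

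For (1), fix a geometric point $x\in W$; its stabilizer $\Gamma_x$ is finite because $W$ is Deligne-Mumford. By the local structure theorem for tame stacks (see \cite{AOV}, \cite{AV}), there exists an étale neighborhood $\uW_\alpha \rar \uW$ of $c(x)$ and an affine scheme $U_\alpha$ carrying a $\Gamma_x$-action such that
$$W\times_\uW \uW_\alpha \cong [U_\alpha/\Gamma_x].$$
Covering $\uW$ by such neighborhoods, with $\Gamma_\alpha := \Gamma_{x_\alpha}$, yields the required étale presentation.

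For (2), properness of $c$ is part of the Keel--Mori theorem. Quasi-coherence, coherence and exactness of $c_*$ are local on $\uW$ in the étale topology, so by (1) we reduce to the case $W=[U/\Gamma]$ with coarse space $\uW=U/\Gamma$ and $\Gamma$ finite. Here a quasi-coherent sheaf on $W$ is the same as a $\Gamma$-equivariant quasi-coherent sheaf $\F$ on $U$, and $c_*\F$ is the $\Gamma$-invariant subsheaf $\F^\Gamma$ descended to $U/\Gamma$. Since we work over $\C$, Maschke's theorem applies: the averaging idempotent $e = |\Gamma|^{-1} \sum_{\gamma\in\Gamma}\gamma$ provides a $\Gamma$-equivariant splitting $\F \cong \F^\Gamma \oplus (1-e)\F$, so the invariants functor is exact. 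Quasi-coherence is preserved because the inclusion $\F^\Gamma\hookrightarrow\F$ is a kernel of a morphism of quasi-coherent sheaves; coherence is preserved because $\F^\Gamma$ is a subsheaf of a coherent sheaf over a Noetherian base, and under the averaging splitting is itself a quotient of $\F$, hence coherent.

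For (3), working étale-locally again on a patch $W=[\Sp A/\Gamma]$, the coarse moduli space is $\uW = \Sp A^\Gamma$ by construction, and $c_*\Ou_W$ is the quasi-coherent sheaf associated to $A^\Gamma$, which is exactly $\Ou_\uW$. These identifications are canonical and glue across the étale cover from (1), yielding the global isomorphism $c_*\Ou_W\cong \Ou_\uW$.

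The main conceptual input is (1): one genuinely needs the Keel--Mori theorem and the local slice argument to produce the étale presentation, and this is the only nontrivial step. Once (1) is in place, (2) and (3) are clean consequences of finite-group invariant theory over $\C$, with characteristic zero ensuring tameness so that no derived-functor pathologies arise.
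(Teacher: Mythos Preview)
The paper does not actually prove this proposition; it is stated as a collection of standard facts about coarse moduli spaces, with references to \cite{AOV}, \cite{AV}, and \cite{Ni}. Your sketch is the standard argument one finds in those sources: obtain the \'etale-local quotient presentation from the Keel--Mori/local structure theorem, then deduce exactness of $c_*$ and $c_*\Ou_W\cong\Ou_{\uW}$ from finite-group invariant theory in characteristic zero via the averaging idempotent. This is correct and matches what the cited references do.
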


Now we introduce the notion of generating sheaves, which can be understood as ``relatively ample" sheaves of a stack over its coarse moduli space.

\begin{defn}
  Let $W$ be a Deligne--Mumford stack over $\C$, with coarse moduli space $c: W\rar \uW$. A vector bundle $\E$ on $W$ is called a \emph{generating sheaf} on $W$, if for every quasi-coherent sheaf $\F$, the morphism $\theta_\E(\F): c^* c_* \mathcal{H}om(\E,\F) \otimes_{\Ou_W} \E \rar \F$, defined as the left adjoint of the map $c_* (\F\otimes_{\Ou_W} \E^\vee) \xrar{\Id} c_* (\F\otimes_{\Ou_W} \E^\vee)$, is surjective.
\end{defn}

\begin{prop}
  \begin{enumerate}[1)]
    \setlength{\parskip}{1ex}

    \item $\E$ is a generating sheaf if and only if for every geometric point of $W$, the representation of the stabilizer group of that point on the fiber of $\E$ contains every irreducible representation;

    \item Let $f: \uW'\rar \uW$ be any morphism of algebraic spaces, then $\uW'$ is the coarse moduli space of $W':= W\times_{\uW} \uW'$, with map $f': W'\rar W$. Suppose $\E$ is a generating sheaf on $W$, then $f'^*\E$ is a generating sheaf on $W'$.
  \end{enumerate}
\end{prop}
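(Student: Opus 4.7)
The plan is to reduce both parts to a local, representation-theoretic statement. For part 1), first invoke Proposition~\ref{tame_prop} to choose an \'etale cover $\uW_\alpha \to \uW$ so that $W \times_{\uW} \uW_\alpha \cong [U_\alpha/\Gamma_\alpha]$; since generating is a local property on $\uW$ (both sides of $\theta_\E(\F)$ pull back compatibly and surjectivity is \'etale local), this reduces me to the case $W = [U/\Gamma]$ for a scheme $U$ with a finite group action. On such a quotient, quasi-coherent sheaves on $W$ are $\Gamma$-equivariant $\Ou_U$-modules and $c_*$ is the functor of $\Gamma$-invariants, exact in characteristic zero by Proposition~\ref{tame_prop}. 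Unpacking the adjunction that defines $\theta_\E(\F)$ then shows that at a geometric point $w$ of $W$ (a $\Gamma$-fixed point of $U$), the fiber of $\theta_\E(\F)$ becomes the standard evaluation map of $\Gamma_w$-representations
\[
\Hom_{\Gamma_w}(\E|_w, \F|_w) \otimes \E|_w \longrightarrow \F|_w, \quad \phi\otimes v \mapsto \phi(v).
\]

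Next I would invoke the representation-theoretic fact that, for a finite group $\Gamma$ in characteristic $0$, the image of this evaluation map is the sum of all isotypic components of $\F|_w$ whose irreducible type occurs in $\E|_w$; by Maschke's theorem, surjectivity holds for every $\Gamma$-representation $\F|_w$ if and only if every irreducible representation of $\Gamma_w$ is a summand of $\E|_w$. Surjectivity of $\theta_\E(\F)$ as a map of sheaves can be tested fiberwise (equivalently, on stalks followed by Nakayama for finitely generated $\F$, and one passes to arbitrary quasi-coherent $\F$ by writing it as a filtered colimit of coherent subsheaves), so running $\F$ over all quasi-coherent sheaves is equivalent to testing the above for all $\Gamma_w$-representations at every $w$. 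The ``only if'' direction uses the skyscraper sheaf at $w$ with fiber a missing irreducible to exhibit a failure of surjectivity.

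For part 2), the statement that $\uW'$ is the coarse moduli space of $W'$ is exactly Proposition~\ref{coarse_base_change}(1). To see that $f'^*\E$ is generating on $W'$, apply part 1): at any geometric point $w' \in W'$ with image $w \in W$, the automorphism group of $w'$ in $W'$ agrees with $\Gamma_w$, because an automorphism of the pair $(x, s) \in W'(k(w')) = W(k(w')) \times_{\uW(k(w'))} \uW'(k(w'))$ is just an automorphism of $x$ lying over the identity of $c(x)$, and $\uW'$ being an algebraic space contributes no further automorphisms. Under this identification $(f'^*\E)|_{w'}$ is $\E|_w$ as a $\Gamma_w$-representation, so the irreducible-representation criterion of part 1) transfers verbatim. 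The main subtlety is the fiberwise identification in part 1): one must check carefully that, under the equivalence between quasi-coherent sheaves on $[U/\Gamma]$ and $\Gamma$-equivariant modules, the composite $c^*c_*\mathcal{H}om(\E,\F) \otimes \E$ specializes at a fixed point to $\Hom_{\Gamma_w}(\E|_w,\F|_w)\otimes \E|_w$ and that the adjunction-defined map really is evaluation; once this bookkeeping is done, both statements follow cleanly.
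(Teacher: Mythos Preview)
The paper does not actually prove this proposition: it is stated without proof, as background material drawn from the references \cite{OS}, \cite{Kr}, \cite{Ni}. So there is no ``paper's own proof'' to compare against.

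That said, your argument is correct and is essentially the standard one. The reduction to $[U/\Gamma]$ via Proposition~\ref{tame_prop}, together with the identification of $c_*$ as taking $\Gamma$-invariants, is exactly how one unpacks $\theta_\E(\F)$ locally; the fiberwise evaluation map and the isotypic-component analysis via Maschke are the right ingredients for part 1). For part 2), your observation that $W' \to W$ is representable (as the base change of a map of algebraic spaces) and hence induces isomorphisms on stabilizer groups is the key point, and it immediately transports the criterion of part 1). The only places where you are slightly terse---the compatibility of $\theta_\E$ with flat base change on $\uW$, and the passage from coherent to quasi-coherent $\F$ via filtered colimits---are genuine but routine verifications, and you have flagged them appropriately.
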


\begin{eg}[(Proposition 5.2 of \cite{OS})]
  Let $G$ be a finite group. A sheaf $\F$ on $BG$ is equivalent to a complex $G$-representation. Then $\F$ is a generating sheaf if and only if it contains every irreducible representations of $G$ as a $G$-submodule. In particular, the left regular representation $R$ of $G$ is a generating sheaf.

  More generally, let $U$ be a scheme with a $G$-action. Let $f:[U/G]\rar BG$ be the canonical 1-morphism. Then $f^*R$ is a generating sheaf on $[U/G]$.
\end{eg}

\begin{cor} \label{cor_gen}
  Let $W$ be a Deligne--Mumford stack over $\C$, with generating sheaf $\E$. Then
  \begin{enumerate}[1)]
    \setlength{\parskip}{1ex}

    \item $\E^\vee$ is still a generating sheaf;

    \item If $f: W'\rar W$ is a representable 1-morphism between Deligne--Mumford stacks, then $f^*\E$ is a generating sheaf on $W'$.
  \end{enumerate}
\end{cor}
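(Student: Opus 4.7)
The plan is to verify both parts directly via the characterization recorded in part 1) of the preceding proposition: a vector bundle on a Deligne-Mumford stack over $\C$ is a generating sheaf if and only if, at every geometric point, the fiber contains every irreducible representation of the stabilizer group as a subrepresentation. With this criterion in hand, both parts reduce to elementary facts about finite-group representations.

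For 1), I would fix a geometric point $p$ of $W$ with stabilizer $G=\Stab(p)$ and set $V=\E|_p$, regarded as a $G$-representation. By hypothesis $V$ contains every irreducible $G$-representation. The fiber of $\E^\vee$ at $p$ is $V^\vee$, and since dualization is an involution on the set of (isomorphism classes of) irreducible representations of the finite group $G$, writing $V\cong \bigoplus_\rho \rho^{\oplus n_\rho}$ gives $V^\vee \cong \bigoplus_\rho (\rho^\vee)^{\oplus n_\rho}$, which again contains every irreducible. Thus $\E^\vee$ is generating.

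For 2), let $p'$ be a geometric point of $W'$ with image $p=f(p')$. The key input is that representability of $f$ forces the induced homomorphism of isotropy groups $\iota: G'=\Stab(p') \hookrightarrow G=\Stab(p)$ to be injective (this is the same fact used in Lemma \ref{1-rigid}, cited there from \cite{AV}). The fiber $(f^*\E)|_{p'}$ is $V$ with the $G'$-action obtained by restriction along $\iota$. What must be checked is that if $V$ contains every irreducible $G$-representation, then its restriction $\mathrm{Res}^G_{G'}V$ contains every irreducible $G'$-representation. This is standard: the regular representation $\C[G]$ decomposes as a sum of all $G$-irreducibles with positive multiplicities, so $\C[G]$ is isomorphic to a summand of a finite direct sum of copies of $V$; on restriction to $G'$ it becomes $[G:G']$ copies of $\C[G']$, which contains every $G'$-irreducible. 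Hence the restriction of $V$ contains every $G'$-irreducible, and $f^*\E$ is a generating sheaf on $W'$.

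Neither step is delicate: part 1) is immediate once the representation-theoretic criterion is invoked, and the only substantive point in part 2) is the link between representability of $f$ and injectivity on stabilizers, which is already in the toolkit used earlier in the paper.
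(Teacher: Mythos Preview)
Your proof is correct and follows essentially the same route as the paper: both parts are deduced from the representation-theoretic criterion (part 1) of the preceding proposition), with part 2) using that representable morphisms induce injections on stabilizers. The only cosmetic difference is that the paper invokes Frobenius reciprocity for the restriction statement in 2), whereas you argue via restriction of the regular representation; these are equivalent one-line facts.
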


\begin{proof}
  1) follows directly from 1) of the previous proposition. 2) follows from Frobenius reciprocity and the property of representable maps that homomorphisms between isotropy groups are monomorphic.
\end{proof}

\begin{defn}
  A Deligne--Mumford stack $W$ over $\C$ is called \emph{projective}, if it has a coarse moduli space $c: W\rar \uW$, where $\uW$ is a projective scheme, and it possesses a generating sheaf.

  Let $\pi: W\rar S$ be a Deligne--Mumford stack over $S$, with coarse moduli space $c: W\rar \uW$. Then $\pi: W\rar S$ is called a \emph{family of projective stacks} if the underlying map of schemes $\underline{\pi}: \uW \rar S$ is projective, and $W$ possesses a generating sheaf.
\end{defn}

There are some equivalent definitions of a projective Deligne--Mumford stack.

\begin{prop}[(Proposition 5.1 and Corollary 5.4 of \cite{Kr})] \label{Alt_defs_proj}
  Let $W$ be a proper Deligne--Mumford stack over $\C$, then the following are equivalent:
  \begin{enumerate}[1)]
    \setlength{\parskip}{1ex}

    \item $W$ is a projective Deligne--Mumford stack;

    \item $W$ has a projective coarse moduli space and is isomorphic to the quotient of a projective scheme by a reductive algebraic group acting linearly;

    \item $W$ has a projective coarse moduli space and every coherent sheaf admits a surjective morphism from a vector bundle;

    \item $W$ admits a closed embedding into a smooth proper Deligne--Mumford stack with projective coarse moduli space.
  \end{enumerate}
  In particular, a smooth Deligne--Mumford stack over $\C$ with projective coarse moduli space is always projective.
\end{prop}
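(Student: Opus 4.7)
The proof is essentially a citation to \cite{Kr}, but the plan is to verify the four equivalences by cycling through them, establishing $1)\Leftrightarrow 3)$, $2)\Rightarrow 1)$, $1)\Rightarrow 2)$, $2)\Rightarrow 4)$, and $4)\Rightarrow 1)$. The key technical input throughout is the interplay between a generating sheaf $\E$ on $W$ and a relatively ample line bundle $\Ou_{\uW}(1)$ pulled back from the coarse moduli space. Set $\E(n):=\E\otimes c^*\Ou_{\uW}(n)$; these are the natural ``ample'' objects on the stack.

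For $1)\Rightarrow 3)$, I would use the surjection $\theta_\E(\F)\colon c^*c_*\cH om(\E,\F)\otimes \E\twoheadrightarrow \F$ provided by the definition of generating sheaf. Since $c$ is proper and $\uW$ is projective, for any coherent $\F$ the sheaf $c_*\cH om(\E,\F)(n)$ is globally generated for $n\gg 0$, producing a surjection $\Ou_{\uW}^{\oplus N}(-n)\twoheadrightarrow c_*\cH om(\E,\F)$, and pulling back and tensoring with $\E$ yields a surjection $\E(-n)^{\oplus N}\twoheadrightarrow \F$. The converse $3)\Rightarrow 1)$ is standard: one uses the resolution property to produce a vector bundle whose fiber at each geometric point contains every irreducible representation of the stabilizer, e.g.\ by surjecting onto the direct sum of the structure sheaves of all gerbes $BG_i$ over geometric points needed to witness each stabilizer.

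For $2)\Leftrightarrow 1)$, the direction $2)\Rightarrow 1)$ is easiest: if $W=[X/G]$ with $X$ projective and $G$ reductive, then pulling back the regular representation of $G$ under $W\to BG$ produces a generating sheaf, as in the example preceding the proposition. For $1)\Rightarrow 2)$, the plan is to take a generating sheaf $\E$ and form the frame bundle of $\E$, which produces a quasi-projective scheme $X$ with an action of $GL_r$ such that $W\cong [X/GL_r]$; projectivity of $X$ follows from projectivity of $\uW$ and the fact that $\E(n)$ is ``very ample'' in the appropriate stacky sense for $n\gg 0$.

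For the embedding statement $1)\Leftrightarrow 4)$, the harder direction is $1)\Rightarrow 4)$: by $1)\Rightarrow 2)$, embed $W$ into the quotient of some projective space $\Pj^N$ by $GL_r$, but since this quotient stack is typically not Deligne-Mumford or smooth one must be careful. The cleanest approach, which is Kresch's, is to embed $W$ into a smooth weighted projective stack or a product of such; one uses the generating sheaf to build a map to a stacky Grassmannian or to a stack of the form $\Pj(a_0,\dots,a_n)$, and observes that the image lands in a smooth proper DM stack with projective coarse moduli. The reverse $4)\Rightarrow 1)$ is straightforward: given a closed embedding $W\hookrightarrow W'$ with $W'$ smooth proper DM and $\uW'$ projective, a generating sheaf on $W'$ (which exists by $1)$ for $W'$, itself following from the smooth proper case of \cite{Kr}) restricts to a generating sheaf on $W$ by Corollary \ref{cor_gen}, and $\uW\hookrightarrow \uW'$ is projective. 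The last sentence of the proposition then follows because any smooth proper DM stack with projective coarse moduli trivially satisfies $4)$. The main obstacle is establishing the existence of a global embedding into a smooth proper DM stack, and for this I would simply invoke the construction in \cite{Kr}.
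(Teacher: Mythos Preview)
The paper does not give its own proof of this proposition; it is stated with the attribution ``Proposition 5.1 and Corollary 5.4 of \cite{Kr}'' and no proof environment follows. You correctly identify this in your first sentence. The remainder of your proposal is a reasonable outline of how the argument in \cite{Kr} (together with \cite{OS}) actually runs, but it goes beyond what the paper itself does, which is simply to invoke the reference.
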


The following concept first appeared in \cite{OS}, with the name ``generalized Hilbert polynomial", as an analog to the usual Hilbert polynomials on schemes to refine the ordinary Quot-spaces. We will also use this later, with a little modification.

\begin{defn}
  The \emph{Hilbert homomorphism} of a coherent sheaf $\F$ on $W$ with respect to $\E$ is the group homomorphism $P^\E_\F: K^0(W)\rar \Z$, defined as
  $$[V]\mapsto \chi \left( W, V\otimes_{\Ou_W} \F\otimes_{\Ou_W} \E^\vee \right),$$
  where $V$ is a vector bundle on $W$ and extended additively to $K^0(W)$.
\end{defn}

The following concept is introduced in \cite{Ni} to define the stability condition. We will also use it later for the boundedness.

\begin{defn}
  Let $\underline{H}$ be an ample line bundle on the projective coarse moduli space $\uW$, and $H=c^*\underline{H}$. The \emph{modified Hilbert polynomial} of $\F$ with respect to the ``polarization" $(\E,H)$ is the polynomial $P^{\E,H}_\F$ defined as $P^{\E,H}_\F (v):= P^\E_\F(H^{\otimes v})$.
\end{defn}

One of the reasons that we are interested in generating sheaves is the following property, the consequence of a series of propositions in \cite{Ni}.

\begin{prop} \label{prop_F_qF}
  Let $W$ be a projective Deligne--Mumford stack over $\C$, and $c: W\rar \uW$ be its coarse moduli space, together with a generating sheaf $\E$. Let $\F$ be a coherent sheaf on $W$. Then
  \begin{enumerate}[1)]
    \setlength{\parskip}{1ex}

    \item $c_*(\F\otimes_{\Ou_W} \E^\vee) =0$ if and only if $\F=0$;

    \item The sheaf $c_*(\F\otimes_{\Ou_W}\E^\vee)$ on $\uW$ has the same dimension with $\F$. Moreover, if $\F$ is pure, $\Supp c_*(\F\otimes_{\Ou_W}\E^\vee) = c (\Supp \F)$;

    \item $\F=0$ if and only if $P^\E_\F=0$, also if and only if the polynomial $P^{\E,H}_\F=0$.
  \end{enumerate}
\end{prop}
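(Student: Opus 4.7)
The plan is to take part (1) as the fundamental input and deduce (2) and (3) from it using elementary sheaf-theoretic arguments together with the projection formula. For (1), the direction $\F=0 \Rightarrow c_*(\F\otimes\E^\vee)=0$ is immediate. For the converse, use the identification $\F\otimes_{\Ou_W}\E^\vee \cong \mathcal{H}om(\E,\F)$ (valid since $\E$ is locally free); if this pushforward vanishes, then so does $c^*c_*\mathcal{H}om(\E,\F)\otimes_{\Ou_W}\E$, which is precisely the source of the defining surjection $\theta_\E(\F)$ onto $\F$, forcing $\F=0$.

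For (2), I prove the two inclusions separately. The inclusion $\Supp c_*(\F\otimes\E^\vee) \subseteq c(\Supp\F)$ follows directly from properness of $c$ (Proposition \ref{tame_prop}): $c(\Supp\F)$ is closed in $\uW$, and for any $x$ in its complement with open neighborhood $V$ disjoint from $c(\Supp\F)$, the restriction of $\F$ to $c^{-1}(V)$ vanishes, so $c_*(\F\otimes\E^\vee)|_V = 0$ by compatibility of pushforward with restriction to opens. For the reverse inclusion, I localize via an \'etale chart $\uW_\alpha\rar \uW$ around a point $x\in c(\Supp\F)$ as in Proposition \ref{tame_prop}(1); the pullback of $\E$ to $W\times_{\uW}\uW_\alpha$ remains a generating sheaf by the base-change property for generating sheaves. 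If $c_*(\F\otimes\E^\vee)$ were to vanish in a neighborhood of $x$, applying (1) to the chart would force $\F$ to vanish on the corresponding preimage, contradicting $x\in c(\Supp\F)$. The dimension statement is then immediate: $c$ is proper and quasi-finite, hence finite by Zariski's main theorem, and finite morphisms preserve dimensions of closed subsets.

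For (3), the implications $\F=0 \Rightarrow P^\E_\F = 0 \Rightarrow P^{\E,H}_\F = 0$ are immediate from the definitions (the second because $P^{\E,H}_\F(v) = P^\E_\F(H^{\otimes v})$). For the remaining implication $P^{\E,H}_\F = 0 \Rightarrow \F = 0$, I apply the projection formula together with $c_*\Ou_W = \Ou_{\uW}$, $H = c^*\uH$, and the exactness of $c_*$ on coherent sheaves stated in Proposition \ref{tame_prop}(2) (so that $Rc_* = c_*$), obtaining
\[ P^{\E,H}_\F(v) \;=\; \chi\bigl(W,\, H^{\otimes v}\otimes\F\otimes\E^\vee\bigr) \;=\; \chi\bigl(\uW,\, \uH^{\otimes v}\otimes c_*(\F\otimes\E^\vee)\bigr). \]
This is the ordinary twisted Hilbert polynomial of $\G := c_*(\F\otimes\E^\vee)$ on the polarized projective scheme $(\uW,\uH)$; if it vanishes identically in $v$, then for $v\gg 0$ Serre vanishing gives $h^0(\uW,\uH^{\otimes v}\otimes\G)=0$, which combined with ampleness of $\uH$ forces $\G = 0$, and then (1) yields $\F=0$.

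The one point that requires genuine care is the forward inclusion in (2): one must verify that the generating-sheaf condition is preserved under the \'etale base change used to localize the problem to a chart of the form $[U_\alpha/\Gamma_\alpha]$, so that part (1) can legitimately be applied there. This is exactly the base-change property for generating sheaves recorded just before Corollary \ref{cor_gen}, so no new work is needed. The other steps are routine consequences of properness, the projection formula, and classical projective geometry on $\uW$.
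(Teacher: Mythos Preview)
Your argument is correct. The paper itself does not give a proof of this proposition: it simply records the statement as ``the consequence of a series of propositions in \cite{Ni}'', so your self-contained argument is in fact supplying more than the paper does. The route you take --- using the defining surjection $\theta_\E(\F)$ for (1), localizing and invoking (1) for the nontrivial inclusion in (2), and reducing (3) to the classical Hilbert-polynomial vanishing criterion on the coarse moduli scheme via the projection formula and exactness of $c_*$ --- is exactly the line of reasoning in Nironi's paper, so there is no substantive divergence.

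Two cosmetic remarks. First, in your closing paragraph you write ``the forward inclusion in (2)'' when you mean the \emph{reverse} inclusion $c(\Supp\F)\subseteq\Supp c_*(\F\otimes\E^\vee)$; the inclusion $\Supp c_*(\F\otimes\E^\vee)\subseteq c(\Supp\F)$ uses only properness and needs no generating-sheaf input. Second, when you ``apply (1) to the chart'' you are invoking (1) on a stack that is not projective. This is harmless because your own proof of (1) uses only the surjectivity of $\theta_\E(\F)$ and never the projectivity hypothesis, but it would be cleaner to state (1) in that generality (DM stack with coarse moduli space and generating sheaf) before using it locally.
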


To end this section we give a description about the coarse moduli space of the bubble.

\begin{prop} \label{coarse_Delta}
  Let $(Y,D)$ be a locally smooth pair and $\Delta=\Pj(\Ou_D\oplus N_{D/Y})$ be the bubble. Let $\uD$, $\uDelta$ be the corresponding coarse moduli spaces. Then there is a $\Pj^1$-bundle $B$ over $\uD$, fitting into the following diagram
$$\xymatrix{
& \uDelta \ar[dr] & \\
\Delta \ar[rr] \ar[d] \ar[ur] & & B \ar[d] \\
D \ar[rr] & & \uD,
}$$
where the map $\Delta \to B$ is surjective, proper and quasi-finite, and $\uDelta\to B$ is finite.

In particular, $\uDelta$ is a projective scheme and $\Delta$ is projective as a Deligne--Mumford stack.
\end{prop}

\begin{proof}
  By definition $\Delta= \text{Proj}_{\Ou_D} (\bigoplus_{d=0}^\infty N_{D/Y}^d)$. By Lemma 2.1.2 of \cite{AGV}, there is a smallest integer $e$ such that $N_{D/Y}^e\cong c^*M$ for some line bundle $M$ on the coarse moduli space. Take
  $$B:= \text{Proj}_{\Ou_{\uD}} \left(\oplus_{d=0}^\infty N_{D/Y}^{de} \right) \cong \Pj_{\uD}(\Ou_{\uD} \oplus M).$$
  There is a proper, quasi-finite and surjective map $\Delta\rar B$ induced by the embedding of the graded algebras, which by the universal property of coarse moduli spaces, factors through $c: \Delta\rar \uDelta$. The induced map $\uDelta\rar B$ is also quasi-finite. By Proposition 2.6 of \cite{Vi}, there is a finite surjective map $A\rar \Delta$ where $A$ is a scheme. Apply Exercise 4.4 in \cite{Har} to the composition $A\rar \Delta\rar \uDelta$ over $B$ and we see that $\uDelta$ is proper, and hence finite over $B$.
\end{proof}

\subsection{Moduli of stable quotients with fixed topological data} \label{moduli of fixed data}

To refine the Quot-stacks and get some finite-type spaces, we need to fix some topological data. From now on, let $(Y,D)$ be a \emph{smooth} pair, with $Y$ \emph{projective}; and let $\pi: X\rar \A^1$ be a \emph{simple degeneration}, which is a \emph{family of projective Deligne--Mumford stacks}. Let $\E_X$, $\E_Y$ be generating sheaves on $X$ and $Y$, which we will just call $\E$ if there is no confusion.


Consider the Grothendieck K-group of $Y$. Note that by smoothness and 3) of Proposition \ref{Alt_defs_proj} we have $K_0(Y)\cong K^0(Y)$, just denoted by $K(Y)$ for simplicity. Let $p:Y[k]\rar Y$ be the contraction map, and $\F$ be a coherent sheaf on $Y[k]$. We can define a homomorphism $P^\E_\F: K(Y)\rar \Z$, still called the \emph{Hilbert homomorphism} with respect to $\E$, as follows,
$$[V]\mapsto \chi \left( Y[k], p^*V\otimes_{\Ou_W} \F\otimes_{\Ou_W} p^*\E^\vee \right),$$
where $V$ is a vector bundle and the homomorphism is naturally extended additively. Again, if one chooses an ample line bundle $\underline{H}$ on the coarse moduli space $c:Y\rar \underline{Y}$, one can define the \emph{modified Hilbert polynomial} as $P^{\E,H}_\F (v):= P^\E_\F(p^*H^{\otimes v})$, where $H=c^*\underline{H}$.

Note that properties in Proposition \ref{prop_F_qF} do not necessarily hold in this case. Although $p^*\E$ would still be a generating sheaf on $Y[k]$, the pull-back of $H$ may not be ample. But we can still use them to refine the Quot-stacks.

Consider a family of expanded pairs $\pi:\cY_S\rar S$, with a coherent sheaf $\F$ on $\cY_S$, flat over $S$. For each point $s\in S$, we have a Hilbert homomorphism on the fiber $P^\E_{\F|_s}: K(Y)\rar \Z$. By the same argument as Lemma 4.3 of \cite{OS}, if $S$ is connected, there exists a group homomorphism $P: K(Y)\rar \Z$, such that $P^\E_{\F_s}=P$, $\forall s\in S$.

Now fix $P:K(Y)\rar \Z$, and let $\fQuot_{\fY/\fA}^{\cV,P}$ be the subfunctor of $\fQuot_{\fY/\fA}^\cV$ parameterizing stable quotients with Hilbert homomorphism $P$. This is an open and closed subfunctor and thus represented by a Deligne--Mumford stack, locally of finite type.

For the expanded degenerations, we make the similar definition. The only subtle difference is that we need to composite with the restriction $K(X)\rar K(X_c)$ for $c\neq 0$ and $K(X)\rar K(X_0)$ for the Hilbert homomorphisms on fibers. Also for a fixed $P: K(X)\rar \Z$, one can define $\fQuot_{\fX/\fC}^{\cV,P}$, which is an open and closed subfunctor of $\fQuot_{\fX/\fC}^\cV$. As for the modified Hilbert polynomial, one needs to fix a relatively ample line bundle $H$ for $\pi: X\rar \A^1$.

\section{Properness of the moduli of 1-dimensional stable quotients}

In this section we prove the properness of the Quot-stacks $\fQuot_{\fY/\fA}^{\cV,P}$ and $\fQuot_{\fX/\fC}^{\cV,P}$, for a fixed generalized Hilbert polynomial $P$. Since our main interest is the application in Donaldson--Thomas theory and Pandharipande--Thomas theory, we concentrate on the 1-dimensional case. We will use the valuative criteria for separatedness and properness of Deligne--Mumford stacks. Let $(Y,D)$ and $\pi: X\rar \A^1$, $\E$, $H$ be fixed as in Subsection \ref{moduli of fixed data}.

The following lemmas justify what we mean by the 1-dimensional case, i.e. the dimension of supports of the stable quotients is preserved in family.

\begin{lem}
  Let $\F$ be an admissible sheaf on $Y[k]$ (resp. $X_0[k]$), with contraction map $p:Y[k]\rar Y$ (resp. $p: X_0[k]\rar X_0$). Then $\dim \F= \dim p_*\F$.
\end{lem}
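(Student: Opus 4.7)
The plan is to prove the two inequalities separately. The bound $\dim p_*\F \leq \dim \F$ is immediate from properness: $\Supp p_*\F \subseteq p(\Supp \F)$, and the image has dimension at most $\dim \Supp \F = \dim \F$. The real content is the reverse inequality.

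For the lower bound, my strategy is to track each top-dimensional irreducible component of $\Supp \F$ and exhibit a piece of $\Supp p_*\F$ of the same dimension. Let $d = \dim \F$ and fix an irreducible component $T$ of $\Supp \F$ with $\dim T = d$. Because $Y[k]$ has irreducible components $Y, \Delta_1, \dots, \Delta_k$ glued along the nodal divisors $D_0, \dots, D_{k-1}$, $T$ is contained in exactly one of them. The easy cases are: (i) $T \subset Y$, in which case $p|_T$ is the identity and $T \cap (Y \setminus D_0)$---dense in $T$ by normality of $\F|_Y$ to $D_0$---sits inside $\Supp p_*\F$; (ii) $T \subset \Delta_i$ and the projection $p|_T \colon T \to p(T) \subset D$ is generically finite, so $\dim p(T) = d$ and generic non-vanishing of $(p|_T)_*(\F|_T)$ puts $p(T)$ inside $\Supp p_*\F$.

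The main obstacle is the remaining \emph{vertical} case: $T \subset \Delta_i$ and $p|_T$ has generically $\Pj^1$ fibers, so $\dim p(T) = d-1$ and the naive image contributes only $d-1$ dimensions. Here I would use admissibility at the adjacent node $D_{i-1}$ to transfer the lost fiber dimension onto the other side. The zero-section of $T$ over $p(T)$ is a $(d-1)$-dimensional subset contained in $\Supp \F \cap D_{i-1}$. By the Mayer--Vietoris identification in Corollary~\ref{normality_split}, this equals $\Supp \F|_W \cap D_{i-1}$ for the opposite component $W$ at the node, namely $\Delta_{i-1}$ or $Y$ when $i = 1$. Since $\F$ is admissible, $\F|_W$ is normal to $D_{i-1}$ in the smooth pair $(W, D_{i-1})$, so Proposition~\ref{adm_supp} guarantees that no irreducible component of $\Supp \F|_W$ lies in $D_{i-1}$ and every component meets $D_{i-1}$ in codimension exactly one. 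Hence some irreducible component $T' \subset \Supp \F|_W$ has dimension at least $d$, and by $\dim \F|_W \leq d$, exactly $d$. I then iterate the same dichotomy on $T'$: either $T' \subset Y$ or $p|_{T'}$ is generically finite (and we conclude), or $T'$ is vertical in $\Delta_{i-1}$ and a further node-crossing step produces a $d$-dimensional component one step closer to $Y$. The bubble index strictly decreases at each iteration, so within at most $i$ steps the process terminates and yields the desired $d$-dimensional piece of $\Supp p_*\F$. The degeneration case $X_0[k] \to X_0$ is identical, with the chain ending at $Y_-$ or $Y_+$ instead of $Y$.
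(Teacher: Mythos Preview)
Your overall strategy---track a top-dimensional component $T$ of $\Supp\F$ and use admissibility at each node to transfer a $(d-1)$-dimensional trace onto the adjacent piece---is exactly the mechanism behind the paper's proof. The paper packages it more economically by inducting on $k$: one views $Y[k]$ as $(Y[1])[k-1]$ and factors $p$ through $Y[1]$, so that only a single node-crossing is ever needed and the contradiction in the $k=1$ case closes the argument. Your direct iteration unwinds that induction by hand; both lead to the same place.

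Case (ii) has a genuine gap. You write that ``generic non-vanishing of $(p|_T)_*(\F|_T)$ puts $p(T)$ inside $\Supp p_*\F$,'' but $(p|_T)_*(\F|_T)$ and $p_*\F$ are different sheaves, and you have given no map between them that is nonzero over a generic point of $p(T)$; other components of $\Supp\F$ may well sit over the same locus, so you cannot simply localize away from them. What is actually needed is the vanishing $R^1(p|_{\Delta_i})_*(\F|_{\Delta_i})=0$, which holds because $\F|_{\Delta_i}$ is normal to $D_{i-1}$ (this is precisely Lemma~\ref{chi_p_*}). That vanishing gives $\Supp p_*(\F|_{\Delta_i})=p(\Supp\F|_{\Delta_i})\supset p(T)$; then applying $p_*$ to the admissibility splitting $0\to\F\to\F|_Y\oplus\bigoplus_j\F|_{\Delta_j}\to\bigoplus_j\F|_{D_{j-1}}\to 0$ and using $\dim\F|_{D_{j-1}}\leq d-1$ forces $\dim p_*\F\geq d$. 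The paper's $k=1$ argument needs the identical step (hidden in the phrase ``$\F|_\Delta$ must support along the fibers of $\Delta$''), but with a single bubble the exact sequence is short and the conclusion is immediate. Patch your case (ii) with this $R^1$ argument, or adopt the induction on $k$, and the proof is complete.
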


\begin{proof}
  It suffices to prove the result for $k=1$ and the general case simply follows by induction. Let $\dim \F=d$. Then $\dim \F|_\Delta =d$ or $\dim \F|_Y=d$. If the latter is true, the lemma is proved.

  Now we assume that $\dim \F|_\Delta=d$ and $\dim p_*\F\leq d-1$. For this to be true, $\F|_\Delta$ must support along the fibers of $\Delta$. Hence $\dim \F|_D = \dim\F|_\Delta -1 = d-1$. On the other hand, by admissibility $(\F|_Y)|_D$ must have the same dimension as $\F|_D$, and therefore $\dim \F|_Y= \dim\F|_D +1 =d$. Contradiction. Thus $\dim p_*\F=d$, which proves the lemma.
\end{proof}

\begin{lem}[(Lemma 3.18 in \cite{LW})] \label{chi_p_*}
  Let $\Delta$ be a bubble component with divisor $D=D_-$ and contraction $p: \Delta \rar D$. Suppose $\F$ on $\Delta$ is normal to $D$, then $R^1 p_*\F=0$. Moreover, if $P^{\E,H}_\F=P^{\E,H}_{\F|_D}$, then $\F\cong p^*\F|_D$.
\end{lem}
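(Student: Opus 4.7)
The plan is to build everything from the short exact sequence on $\Delta$
$$0 \to \F(-D_-) \to \F \to \F|_{D_-} \to 0,$$
which is exact precisely because $\F$ is normal to $D=D_-$, and which remains exact after twisting by any $\Ou_\Delta(-nD_-)$ (since $\F(-nD_-)$ is again normal to $D_-$, normality being preserved by tensoring with a line bundle). This sequence is the main technical tool; applying $p_*$ to it, and using that $p|_{D_-}\colon D_-\to D$ is an isomorphism so $R^{\geq 1}p_*(\F|_{D_-})=0$, reduces everything on $D$ to computations with $p_*\F$ and $p_*\F(-D_-)$.

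For the vanishing $R^1p_*\F=0$, I would exploit the stable-quotient structure $p^*\cV\twoheadrightarrow \F$ implicit in the ambient Quot setting. Applying $Rp_*$ to $0\to \mathcal{K}\to p^*\cV\to \F\to 0$ and using $R^{\geq 1}p_*(p^*\cV)\cong \cV\otimes R^{\geq 1}p_*\Ou_\Delta=0$ on the $\Pj^1$-bundle $p$, together with $R^{\geq 2}p_*=0$ for a $1$-dimensional fibration, forces $R^1p_*\F=0$. Alternatively one can run the $\C^*$-equivariant flat-limit technique developed in Section~3.6: the fiberwise $\C^*$-action on $\Delta$ degenerates $\F$ to an equivariant flat limit $\F_0$ which by Proposition~\ref{equivariant_pullback} is of the form $p^*(\F_0|_D)$ and therefore satisfies $R^1p_*\F_0=0$; upper semicontinuity along the $\C^*$-family then transfers the vanishing back to $\F$ itself.

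For the second claim, I combine Part~1 (applied also to $\F(-D_-)$) with the long exact sequence of $Rp_*$ applied to the SES above, producing the short exact sequence on $D$
$$0\to p_*\F(-D_-)\to p_*\F\to \F|_{D_-}\to 0.$$
The projection formula and Part~1 give $\chi(\Delta,\F\otimes p^*\E^\vee\otimes p^*H^{\otimes v})=\chi(D,p_*\F\otimes \E^\vee\otimes H^{\otimes v})$, so the hypothesis $P^{\E,H}_\F=P^{\E,H}_{\F|_D}$ becomes, on $D$, the equality $P^{\E,H}_{p_*\F}=P^{\E,H}_{\F|_{D_-}}$. Euler-characteristic additivity along the displayed SES then forces $P^{\E,H}_{p_*\F(-D_-)}=0$, and Proposition~\ref{prop_F_qF} upgrades this to $p_*\F(-D_-)=0$; hence the restriction $p_*\F\xrightarrow{\sim}\F|_{D_-}$ is an isomorphism.

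To promote this to the desired $\F\cong p^*\F|_D$ on $\Delta$, I would study the adjunction map $p^*p_*\F\to \F$. Iterating the Euler-char argument on the twisted sequences $0\to \F(-(n+1)D_-)\to \F(-nD_-)\to \F(-nD_-)|_{D_-}\to 0$ inductively yields $p_*\F(-nD_-)=0$ for every $n\geq 1$, which rules out any ``vertical'' Hilbert-polynomial contribution and forces each fiberwise restriction of $\F$ over a point of $D$ to be a trivial locally free sheaf on $\Pj^1$ of the correct rank. The adjunction map is then surjective (fiberwise Nakayama) and is an isomorphism because $\F$ and $p^*p_*\F\cong p^*\F|_D$ have the same modified Hilbert polynomial on $\Delta$ by the projection formula. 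The main obstacle, as is typical for such statements, is Part~1: the vanishing $R^1p_*\F=0$ is not purely formal from normality alone in the absence of either the quotient structure $p^*\cV\twoheadrightarrow \F$ or the $\C^*$-degeneration machinery, and picking which of these two devices to invoke is the key decision that makes the rest of the proof run smoothly.
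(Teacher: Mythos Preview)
Your primary route for Part~1—take a surjection $p^*\cV\twoheadrightarrow\F$ with $\cV$ a vector bundle on $D$ and read off $R^1p_*\F=0$ from the long exact sequence using $R^1p_*p^*\cV=0$ and $R^2p_*\mathcal K=0$—is exactly the paper's proof. Two remarks. First, this surjection is genuinely a hypothesis, not a formality: for an arbitrary sheaf normal to $D_-$ (e.g.\ $\Ou_\Delta(-2D_-)$) no such $p^*\cV\twoheadrightarrow\F$ exists and $R^1p_*$ is nonzero, so the paper's ``choose a surjection'' is really invoking the ambient quotient structure just as you say. Second, your $\C^*$-degeneration alternative does not sidestep this: Proposition~\ref{equivariant_pullback} already assumes both a quotient structure \emph{and} admissibility (normality to both $D_\pm$), and there is no reason the flat limit under the $\C^*$-flow stays normal to $D_-$, so you would still need the first device.

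For Part~2 your derivation of $p_*\F\cong\F|_{D_-}$ via the twisted short exact sequence and Proposition~\ref{prop_F_qF} matches the paper. The paper's endgame is shorter than yours: once $p_*\F\cong\F|_D$, the \emph{same} surjection $p^*\cV\twoheadrightarrow\F$ factors as $p^*\cV\to p^*p_*\F\to\F$, so the adjunction $p^*(\F|_D)\to\F$ is automatically surjective, and a final Hilbert-polynomial comparison kills the kernel. Your detour through $p_*\F(-nD_-)=0$ for all $n\ge 1$ is correct as far as it goes, but the step ``forces each fiberwise restriction to be trivial on $\Pj^1$'' has gaps: for $n\ge 2$ one computes $R^1p_*\F(-nD_-)\neq 0$ in general, so base change for $p_*$ is not available and you cannot read off $H^0$ of the fibers; moreover over a stacky $D$ the fibers are gerby $\Pj^1$'s, so the Birkhoff--Grothendieck splitting you implicitly invoke is not directly applicable. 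Reusing the surjection, as the paper does, avoids all of this.
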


\begin{proof}
  Choose a surjection $p^*\cV \rar \F\rar 0$, where some $\cV$ is a vector bundle on $D$ and let $\mathcal{K}$ be the kernel. Look at the long exact sequence
  $$\cdots \rar R^1p_*p^*\cV \rar R^1p_*\F \rar R^2 p_*\K \rar \cdots.$$
  It's clear that $R^1 p_*p^* \cV =0$, and $R^2 p_*\K=0$ by dimensional reasons. Thus $R^1 p_*\F=0$.

  Now suppose $P^{\E,H}_\F=P^{\E,H}_{\F|_D}$. By admissibility, we have the short exact sequence $0 \to \F(-D) \to \F \to \F|_D \to 0$. By $R^1 p_*\F(-D)=0$ we have the surjection $p_*\F \rar \F|_D \rar 0$. Since $H$ is ample on $D$, $P^{\E,H}_\F=P^{\E,H}_{\F|_D}$ implies that this is actually an isomorphism $p_*\F \cong \F|_D$. Take the pull back of the inverse map we have $p^*\F|_D \cong p^*p_*\F \rar \F$, which we can assume to be surjective if one replaces $\F$ by its twist with a sufficiently relatively ample line bundle at the beginning. Again by comparison of the modified Hilbert polynomials, one concludes that this map is also injective and $p^*\F|_D\cong \F$.
\end{proof}

\begin{cor}
  Let $\F$ be an admissible sheaf on $Y[k]$ or $X_0[k]$. Then the Hilbert homomorphism and modified Hilbert polynomial can be computed using $p_*\F$ on $Y$ or $X_0$, i.e. $P^\E_\F=P^\E_{p_*\F}$, and $P^{\E,H}_\F=P^{\E,H}_{p_*\F}$.
\end{cor}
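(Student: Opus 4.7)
The plan is to reduce the claim to the vanishing of higher direct images $R^i p_*\F = 0$ for $i \geq 1$, and then invoke the projection formula. For any vector bundle $V$ on $Y$ (respectively $X_0$), the projection formula gives
\[
Rp_*(p^*V \otimes \F \otimes p^*\E^\vee) \cong V \otimes Rp_*\F \otimes \E^\vee,
\]
so once the higher direct images vanish, the Leray spectral sequence produces
\[
\chi(Y[k], p^*V \otimes \F \otimes p^*\E^\vee) = \chi(Y, V \otimes p_*\F \otimes \E^\vee),
\]
which is exactly $P^\E_\F([V]) = P^\E_{p_*\F}([V])$. The modified Hilbert polynomial identity is the specialization $V = H^{\otimes v}$.

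To prove the vanishing of $R^{\geq 1}p_*\F$, I would induct on $k$. The base case $k=0$ is trivial. For the inductive step, factor $p = q \circ p_k$, where $p_k : Y[k] \to Y[k-1]$ contracts the outermost bubble $\Delta_k$ via its $\Pj^1$-bundle projection onto $D_{k-1}$ and is the identity on the embedded $Y[k-1] \subset Y[k]$, and $q : Y[k-1] \to Y$ is the subsequent contraction. By the Leray spectral sequence for composition, it suffices to show (i) $R^{\geq 1}p_{k*}\F = 0$, and (ii) $p_{k*}\F$ is admissible on $Y[k-1]$, so that the inductive hypothesis applies to $q$.

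For (i), admissibility of $\F$ along $D_{k-1}$ together with Corollary \ref{normality_split} gives the Mayer--Vietoris short exact sequence
\[
0 \to \F \to \iota_*(\F|_{Y[k-1]}) \oplus \iota_*(\F|_{\Delta_k}) \to \iota_*(\F|_{D_{k-1}}) \to 0
\]
on $Y[k]$. Apply $Rp_{k*}$. The higher direct images of the first and third terms vanish since $p_k$ is the identity on $Y[k-1]$ and on $D_{k-1}$; for the middle term, Lemma \ref{chi_p_*} applied with $D = D_- = D_{k-1}$ gives $R^1(p_k|_{\Delta_k})_*(\F|_{\Delta_k}) = 0$. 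The tail of the resulting long exact sequence reads
\[
\F|_{Y[k-1]} \oplus \iota_{D_{k-1}*}((p_k|_{\Delta_k})_*(\F|_{\Delta_k})) \to \iota_{D_{k-1}*}(\F|_{D_{k-1}}) \to R^1 p_{k*}\F \to 0,
\]
and the first arrow is surjective since the restriction $\F|_{Y[k-1]} \to \F|_{D_{k-1}}$ is already surjective as a restriction of coherent sheaves to a closed substack. Hence $R^1 p_{k*}\F = 0$; higher $R^i p_{k*}\F$ vanish trivially because the fibers of $p_k$ have dimension at most $1$. Step (ii) is a local question near $D_{k-1}$ (away from it $p_{k*}\F = \F$, preserving normality): the pushed-forward Mayer--Vietoris identification $p_{k*}\F = \ker(\F|_{Y[k-1]} \oplus (p_k|_{\Delta_k})_*(\F|_{\Delta_k}) \to \F|_{D_{k-1}})$ together with the fiberwise flatness criterion used in Proposition \ref{open_Y} exhibit $p_{k*}\F$ as flat in the transversal direction along $D_{k-1} \subset Y[k-1]$.

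The degeneration case $X_0[k] \to X_0$ runs identically, peeling off the bubbles $\Delta_i$ inserted between $Y_-$ and $Y_+$ one at a time from either end, with the admissibility condition along all interior $D_i$ supplying the required Mayer--Vietoris splittings at each stage. The main technical obstacle is step (ii): propagating admissibility of $p_{k*}\F$ across the contraction so the induction can continue. This rests on the splitting structure for admissible sheaves on the bubble provided by Lemma \ref{chi_p_*} together with the local criterion for normality set up earlier in the paper.
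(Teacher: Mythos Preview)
Your overall strategy is exactly what the paper intends: reduce to $R^{\geq 1}p_*\F=0$ and invoke the projection formula together with Lemma~\ref{chi_p_*}. The paper states the corollary without proof precisely because this is the expected one-line deduction.

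However, your inductive argument has a real gap at step~(ii). After contracting the outermost bubble, $p_{k*}\F$ is \emph{not} admissible on $Y[k-1]$ in general: it typically acquires torsion along the new distinguished divisor $D_{k-1}$. Concretely, from your own Mayer--Vietoris identification
\[
p_{k*}\F \;=\; \ker\Bigl(\F|_{Y[k-1]}\oplus \iota_*G \longrightarrow \iota_*\F|_{D_{k-1}}\Bigr), \qquad G:=(p_k|_{\Delta_k})_*(\F|_{\Delta_k}),
\]
the $y$-torsion (with $y$ a local equation for $D_{k-1}$) consists of pairs $(0,b)$ with $b\in\ker(G\to\F|_{D_{k-1}})$. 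This kernel is nonzero whenever the bubble carries anything beyond a pull-back from $D_{k-1}$; for instance take $k=2$ and $\F|_{\Delta_2}=\Ou_{\Delta_2}(D_2)$, which is admissible but has $(p_2|_{\Delta_2})_*\Ou_{\Delta_2}(D_2)$ of rank~$2$ with a rank-$1$ kernel upon evaluation at $D_1$. So the fiberwise-flatness claim in your sketch of (ii) is false, and the inductive hypothesis cannot be applied to $p_{k*}\F$.

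The cleanest fix is to drop the induction entirely and argue in one step, using the full decomposition already appearing in the proof of Proposition~\ref{prop_boundedness}:
\[
0\longrightarrow \F \longrightarrow \F|_Y\oplus\bigoplus_{i=1}^k \F|_{\Delta_i} \longrightarrow \bigoplus_{i=0}^{k-1}\F|_{D_i} \longrightarrow 0.
\]
Push this forward by $p$. Each $p|_{\Delta_i}:\Delta_i\to D$ is a $\Pj^1$-bundle projection and $\F|_{\Delta_i}$ is normal to $D_{i-1}=D_-$, so Lemma~\ref{chi_p_*} gives $R^1(p|_{\Delta_i})_*(\F|_{\Delta_i})=0$; moreover $R^1(p|_{\Delta_i})_*(\F|_{\Delta_i}(-D_j))=0$ for $j\in\{i-1,i\}$ by the same lemma (twisting by a line bundle preserves normality), so each evaluation map $(p|_{\Delta_i})_*(\F|_{\Delta_i})\to\F|_{D_j}$ is surjective. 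A back-substitution then shows the pushed-forward difference map is surjective, hence $R^1p_*\F=0$; the higher $R^i$ vanish because the fibres of $p$ are at most one-dimensional. This gives the corollary directly without ever needing intermediate admissibility.
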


\begin{proof}
This follows from $R^1 p_* \F =0$.
\end{proof}

\subsection{Boundedness}

In this subsection for a fixed group homomorphism $P: K(Y)\rar \Z$ (resp. $P: K(X)\rar \Z$), we require that the associate modified Hilbert polynomial $v\mapsto P(H^{\otimes v})$ has degree one or less. We denote it by $f(v)=av+b$, with $a\geq 0$. Consider Quot-stacks $\fQuot_{\fY/\fA}^{\cV,P}$ and $\fQuot_{\fX/\fC}^{\cV,P}$. By the previous two lemmas, one can see that the objects parameterized by them have $\dim \F = \deg f$. By boundedness we mean the quasi-compactness of the Quot-stacks.

\begin{prop}
  The stack $\fQuot_{\fY/\fA}^{\cV,P}$ (resp. $\fQuot_{\fX/\fC}^{\cV,P}$) is of finite type over $\C$ (resp. over $\A^1$).
\end{prop}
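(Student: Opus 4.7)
The plan is to combine a bound on the length of the expansion --- forced by the stability condition --- with standard finite-type results for Quot-stacks at each fixed level. I sketch the argument for $\fQuot_{\fY/\fA}^{\cV,P}$; the degeneration case is parallel, replacing $\A^k$ by $\A^{k+1}$ and tracking the structural map to $\A^1$.

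\emph{Step 1 (length bound).} First I would produce a uniform bound $k_0 = k_0(P)$ such that every stable quotient $\phi: p^*\cV \rar \F$ on some $Y[k]$ with Hilbert homomorphism $P$ is equivalent, under the discrete and $(\C^*)^k$-symmetries, to a datum represented on $Y[k']$ with $k' \leq k_0$. Stability forces $\Aut(\phi) \subset (\C^*)^k$ to be finite, so for each index $i$ the restriction $\F|_{\Delta_i}$ cannot be $\C_i^*$-equivariant. By Proposition \ref{equivariant_pullback}, this rules out $\F|_{\Delta_i}$ being a pullback $p^*G$ from $D_{i-1}$; combined with admissibility and one-dimensionality, this forces $\Supp(\F|_{\Delta_i})$ to contain a non-fiberwise 1-dimensional component $C_i$. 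Its pushforward $p_*[C_i]$ is then a 1-cycle on $D \subset Y$ of strictly positive $H|_D$-degree, using that $H|_D = c_D^*(\uH|_{\uD})$ is the pullback of an ample divisor via a finite map (Proposition \ref{coarse_Delta} provides the analogous control on bubble coarse moduli). Combined with the identity $P^{\E,H}_\F = P^{\E,H}_{p_*\F}$ from the Corollary just preceding this proposition, the fixed leading coefficient $a$ bounds the total $H$-degree of $p_*\F$, and therefore $k$ is bounded by a linear function of $a$.

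\emph{Step 2 (finite type at each fixed level).} For each fixed $k \leq k_0$, $\Quot_{Y(k)/\A^k}^{p^*\cV}$ is a separated algebraic space locally of finite type over $\A^k$ by \cite{OS}, and the family $Y(k) \rar \A^k$ is a family of projective Deligne-Mumford stacks with generating sheaf $p^*\E$ (generating by Corollary \ref{cor_gen}, since $p$ is representable). Fixing the Hilbert homomorphism $P$ bounds the modified Hilbert polynomial on each fiber; the classical boundedness theorem then makes $\Quot_{Y(k)/\A^k}^{p^*\cV,P}$ quasi-compact, hence of finite type over $\A^k$, and hence over $\C$. Restricting to the open substack of stable quotients preserves finite-typeness, and quotienting by the quasi-compact smooth relation $R_{\sim, \Quot_{Y(k)/\A^k}^{p^*\cV}}$ preserves it again.

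\emph{Step 3 and main obstacle.} By Step 1 the inductive system defining $\fQuot_{\fY/\fA}^{\cV,P}$ stabilizes at level $k_0$, so the whole stack is isomorphic to the $k_0$-th term, which is of finite type by Step 2. The main obstacle is Step 1: making rigorous the lower bound on the $H$-degree contributed by each bubble. One must check carefully that non-equivariance of $\F|_{\Delta_i}$ genuinely produces a non-fiberwise 1-dimensional summand (and not merely some infinitesimal non-equivariance), and that admissibility excludes auxiliary ``torsion on $D_j$'' contributions which could otherwise allow arbitrarily long chains of bubbles without consuming the Hilbert-polynomial budget. For the degeneration case one must additionally verify that over a smooth fiber $X_c$ no bubbles appear, so the bound $k_0$ from the central fiber suffices for the entire family over $\A^1$.
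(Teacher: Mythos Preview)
Your overall architecture --- bound the expansion length, then invoke finite-typeness of the level-$k$ Quot-space --- matches the paper's, but your Step~1 contains a genuine gap at precisely the point you flag as the main obstacle.

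The claim that non-equivariance of $\F|_{\Delta_i}$ forces a \emph{non-fiberwise 1-dimensional component} is false. A stable quotient can have $\F|_{\Delta_i}$ of dimension~0 (e.g.\ a few reduced points in the interior of $\Delta_i$), or with 1-dimensional support consisting entirely of fibers but with embedded points breaking the $\C^*$-symmetry. In either case $p_*[C_i]$ contributes nothing to the $H$-degree, so your bound ``$k \leq$ linear in $a$'' collapses. What stability actually gives (via Lemma~\ref{chi_p_*}) is only that $\F|_{\Delta_i}$ is not a pullback from $D_{i-1}$, which is much weaker than having a horizontal curve.

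The paper handles this by a dichotomy you are missing. Decomposing $P^{\E,H}_\F = \sum_i (a_i v + b_i)$ into contributions from $Y$ and each $\Delta_i$, one separates the indices into $\Lambda = \{i : a_i > 0\}$ and its complement. For $i \in \Lambda$ the leading coefficient is consumed, so $|\Lambda| \leq a$; this is essentially your argument and it bounds only $|\Lambda|$, not $k$. For $i \notin \Lambda$ one has $a_i = 0$, and stability plus Lemma~\ref{chi_p_*} force $b_i \geq 1$: the \emph{constant} term, not the leading term, is consumed. The remaining difficulty --- and the real technical content of the proof --- is that the $b_i$ for $i \in \Lambda$ could a priori be very negative, cancelling arbitrarily many $+1$'s from the complement. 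Bounding $b_i$ from below requires introducing an auxiliary line bundle that is genuinely ample on $\underline{\Delta}$ (since $p^*H$ is not), controlling the new leading coefficients $a_i'$ by an inductive intersection argument across adjacent bubbles, and then applying Grothendieck's Lemma~\ref{bounded_Gr}. Only after this does one get $k \leq b + M + (M+1)a$.

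So the fix is not to sharpen your non-fiberwise claim (it cannot be sharpened), but to abandon it in favor of the $a_i$/$b_i$ dichotomy and supply the lower bound on $b_i$ for $i \in \Lambda$.
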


Recall that $\coprod_{k\geq 0} [\Quot_{Y(k)/\A^k}^{p^*\cV,st,P}/R_{\sim,\Quot_{Y(k)/\A^k}^{p^*\cV}}]$ forms an \'etale covering of $\fQuot_{\fY/\fA}^{\cV,P}$. It suffices to prove the following.

\begin{prop} \label{prop_boundedness}
   Fix a polynomial $f(v)=av+b$ with $a,b\in \Z$, $a\geq 0$, and a polarization $(\E,H)$ on $Y$ (resp. $X$). There exists a constant $N=N(f,\E,H)$, such that for any $k\geq 0$, and any stable quotient $\phi: p^*\cV\rar \F$ on $Y[k]$ (resp. $X[k]$), with modified Hilbert polynomial $P^{\E,H}_\F = f$, one has $k\leq N$.
\end{prop}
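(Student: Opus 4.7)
The plan is to use admissibility to decompose $P^{\E,H}_\F$ via Mayer--Vietoris across the bubble chain, use stability to force each bubble contribution to be non-zero, and then evaluate at a uniform $v_0$ to bound $k$ by $f(v_0)$.

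I would first apply Corollary~\ref{normality_split} iteratively to $\F$ on $Y[k]=Y\cup_{D_0}\Delta_1\cup\cdots\cup_{D_{k-1}}\Delta_k$, pairing each bubble $\Delta_i$ with the gluing divisor $D_{i-1}$ on its left, to obtain
\[
P^{\E,H}_\F \;=\; P^{\E,H}_{\F|_Y} \;+\; \sum_{i=1}^{k}\bigl(P^{\E,H}_{\F|_{\Delta_i}}-P^{\E,H}_{\F|_{D_{i-1}}}\bigr).
\]
Let $p_i\colon\Delta_i\rar D$ denote the bubble projection. Admissibility of $\F$ along $D_{i-1}$, combined with $R^{1}p_{i*}\F|_{\Delta_i}=0$ from Lemma~\ref{chi_p_*}, yields an exact sequence $0\rar K_i\rar p_{i*}\F|_{\Delta_i}\rar\F|_{D_{i-1}}\rar 0$ on $D$, so the $i$-th bracket equals $P^{\E,H}_{K_i}$ with $K_i$ a coherent sheaf on $D$ of dimension at most one. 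Since $\cV$ is pulled back from $Y$, the $i$-th $\C^{*}$-factor acts trivially on $(p^{*}\cV)|_{\Delta_i}$; were $K_i=0$, Lemma~\ref{chi_p_*} together with the adjunction $p_i^{*}\dashv p_{i*}$ would identify $\phi|_{\Delta_i}$ with the pullback of $\phi|_{D_{i-1}}$, hence $\C^{*}$-equivariant, placing the entire $i$-th $\C^{*}$ into $\Aut(\phi)$ and contradicting stability. Thus $K_i\neq 0$ for every $i$.

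Next I would argue that the possible modified Hilbert polynomials $P^{\E,H}_{\F|_Y}$ and $P^{\E,H}_{K_i}$ form a finite set determined by $(f,\E,H)$. Each is an integer-valued polynomial of degree $\leq 1$, and by Proposition~\ref{prop_F_qF} its coefficients lie in a fixed lattice defined via the pushforward to the projective coarse moduli. By Serre vanishing, there is a threshold $v_1=v_1(f,\E,H)$ beyond which each polynomial $P^{\E,H}_{G}$ appearing in the decomposition (with $G$ one of $\F|_Y$, $\F|_{\Delta_i}$, $\F|_{D_j}$, or $K_i$) is non-negative. The Mayer--Vietoris identity then forces $0\leq P^{\E,H}_{K_i}(v)\leq f(v)$ for all $v\geq v_1$; evaluating at $v_1$ and $v_1+1$ bounds the coefficients in a bounded region of the lattice, leaving only finitely many possibilities. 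The same argument bounds $P^{\E,H}_{\F|_Y}$.

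Choosing $v_0\geq v_1$ large enough that every non-zero polynomial in this finite set satisfies $P(v_0)\geq 1$ (possible since any non-zero $P^{\E,H}_{K_i}$ either has positive leading coefficient or is a positive integer constant), I conclude
\[
f(v_0) \;=\; P^{\E,H}_{\F|_Y}(v_0) + \sum_{i=1}^{k}P^{\E,H}_{K_i}(v_0) \;\geq\; 0+k,
\]
so $k\leq N:=f(v_0)$. The degeneration case is strictly analogous: one again pairs each bubble with its left gluing divisor, and absorbs the rightmost piece into a non-negative body term via the exact sequence $0\rar\F|_{Y_+}(-D_k)\rar\F|_{Y_+}\rar\F|_{D_k}\rar 0$ coming from admissibility of $\F$ along $D_k$. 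The main obstacle in this plan is the finiteness step: the single asymptotic inequality $\sum_i P^{\E,H}_{K_i}\leq f$, together with integrality and non-negativity of each summand, must really constrain each individual $P^{\E,H}_{K_i}$ to a finite menu independent of $k$, which is what permits the uniform choice of $v_0$.
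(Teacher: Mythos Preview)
Your decomposition and your use of stability are both correct and match the paper exactly: your $P^{\E,H}_{K_i}$ is precisely the paper's $a_i v + b_i$, and your argument that $K_i\neq 0$ is the same as the paper's invocation of Lemma~\ref{chi_p_*}.

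The genuine gap is the ``Serre vanishing'' step, and you are right to flag it. Serre vanishing gives, for each individual sheaf $G$, a threshold $v_1(G)$ past which $P^{\E,H}_G(v)\geq 0$; but that threshold depends on the regularity of $G$, and your family of $K_i$'s (and $\F|_Y$'s) is \emph{a priori} unbounded as $k,\F$ vary --- which is exactly what you are trying to prove. So you cannot get a uniform $v_1$ this way, and the argument as written is circular.

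The fix is not Serre vanishing but Grothendieck's lemma (Lemma~\ref{bounded_Gr}): on a projective scheme with an ample line bundle, a bound on the leading Hilbert coefficients forces a uniform lower bound on the next coefficient. Your $K_i$ lives on $D$, and $\E|_D$ is a generating sheaf while $\underline H|_{\underline D}$ is genuinely ample on the coarse space $\underline D$. Hence $P^{\E,H}_{K_i}$ is an honest Hilbert polynomial of $c_*(K_i\otimes\E^\vee|_D)$ on $\underline D$, with leading coefficient $a_i\in\{0,\dots,a\}$. Lemma~\ref{bounded_Gr} then gives $b_i\geq -M$ uniformly, and the same argument on $\underline Y$ handles $b_0$. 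From here your evaluation-at-$v_0$ argument (or the paper's direct bookkeeping $k\leq b+M+(M+1)a$) goes through.

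It is worth noting that once repaired this way, your route is actually \emph{cleaner} than the paper's Lemma~\ref{bound_b}. The paper keeps the sheaves $\G_i$ on the bubble coarse space $\underline\Delta$, where $\underline p^*\underline H$ is not ample; it must then manufacture an auxiliary ample class $\underline p^*\underline H\otimes\Ou_{\underline\Delta}(r\underline D)$ and run an induction across the chain to bound the new leading coefficients $a_i'$. By pushing forward to $D$ first, you land directly on a space where the polarization is ample, and Grothendieck's lemma applies in one step.
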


The rest of this subsection is to prove this proposition. We mainly concentrate on the relative case; for the degeneration case the proof will be similar. Let's first state some results that would be useful.

\begin{lem}[(Proposition 5.9.3 of \cite{CG}, Cororllary VI.2.3 of \cite{Ko})] \label{devissage}
  Let $W$ be an $n$-dimensional scheme over $\C$, with the topological filtration $0=\Gamma_{-1} \subset \Gamma_0 \subset \cdots \subset \cdots \Gamma_n=K_0(W)$, where $\Gamma_i$ consists of those classes generated by coherent sheaves of dimension $\leq i$. Let $\F$ be a coherent sheaf on $W$ with $\dim \Supp \F=d$. Then we have $[\F]\in \Gamma_d$, and
  $$[\F]= \sum_{Z\subset \Supp\F} \operatorname{mult}(\F;Z) [\Ou_Z]  \mod \Gamma_{d-1}$$
  in $K_0(W)$, where the sum is over all $d$-dimensional irreducible components of $\Supp\F$, and $\operatorname{mult}(\F;Z)$ is the length of $\F$ as a module over the generic point of $Z$.
\end{lem}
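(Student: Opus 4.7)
The first assertion $[\F]\in\Gamma_d$ is immediate from the definition of the topological filtration, since by hypothesis $\dim\Supp\F = d$. For the mod-$\Gamma_{d-1}$ identity, the plan is to build a finite filtration of $\F$ by coherent subsheaves whose successive quotients are structure sheaves of integral closed subschemes of $W$, and then to read off the coefficient of each top-dimensional component by localizing at its generic point.

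The key input is the standard noetherian prime filtration: any coherent sheaf $\F$ on the noetherian scheme $W$ admits a finite filtration $0=\F_0\subset\F_1\subset\cdots\subset\F_N=\F$ such that each successive quotient is isomorphic to $\iota_{j,*}\Ou_{Z_j}$ for some integral closed subscheme $Z_j\subset W$ (Hartshorne, Proposition II.7.4 / Matsumura Theorem 6.4; one pulls out an associated prime, takes the cyclic subsheaf it generates, and iterates on the quotient, terminating by the ACC on coherent subsheaves). Since every $\F_j/\F_{j-1}$ is a subquotient of $\F$, its support lies in $\Supp\F$ and has dimension $\leq d$; hence $Z_j\subset\Supp\F$ for all $j$, and $[\F]=\sum_{j=1}^N[\Ou_{Z_j}]$ in $\Gamma_d$.

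Splitting the sum according to whether $\dim Z_j=d$ or $\dim Z_j<d$, the low-dimensional part lies in $\Gamma_{d-1}$, and it remains to show that for each $d$-dimensional irreducible component $Z$ of $\Supp\F$ the number of indices $j$ with $Z_j=Z$ equals $\operatorname{mult}(\F;Z)$. Let $\eta$ be the generic point of $Z$ and set $A:=\Ou_{W,\eta}$. Because no $Z_j$ has dimension exceeding $d$ and the only $d$-dimensional $Z_j$ containing $\eta$ is $Z$ itself, localizing the filtration at $\eta$ yields a chain of $A$-submodules of $\F_\eta$ whose successive quotients are either $k(\eta)=A/\mathfrak{m}_\eta$ (exactly when $Z_j=Z$) or zero (when $Z_j\neq Z$, since then $\eta\notin Z_j$). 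This is a composition series for the finite-length $A$-module $\F_\eta$, so the number of indices $j$ with $Z_j=Z$ equals $\ell_A(\F_\eta)=\operatorname{mult}(\F;Z)$, completing the claim.

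There is really no major obstacle here; the only nontrivial input is the existence of the prime filtration, which is entirely classical. The remaining arguments are additivity in $K_0$ and a short localization computation. Given that the lemma is explicitly cited from \cite{CG} and \cite{Ko}, it is reasonable simply to invoke those references, but the three-step outline above would suffice for a self-contained proof.
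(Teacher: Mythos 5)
You are supplying a proof where the paper gives none: the lemma is quoted verbatim from \cite{CG} and \cite{Ko}, so a self-contained d\'evissage argument is welcome, and your overall route (filter $\F$, discard lower-dimensional pieces, read off multiplicities by localizing at generic points) is the standard one. The one genuine flaw is your key input as stated: on a non-affine scheme a coherent sheaf need \emph{not} admit a finite filtration whose graded pieces are structure sheaves $\iota_{j*}\Ou_{Z_j}$ of integral closed subschemes. For example $\Ou_{\Pj^1}(-1)$ has no such filtration: the first step $\F_1\cong\Ou_{Z_1}\subset\Ou(-1)$ would force $Z_1=\Pj^1$ (the structure sheaf of a point is torsion while $\Ou(-1)$ is torsion-free), i.e.\ a nonzero global section of $\Ou(-1)$, which does not exist. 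The statement you want is affine (Matsumura, Thm.~6.4); the global versions have graded pieces $\iota_{j*}\I_j$ with $\I_j\subset\Ou_{Z_j}$ a nonzero coherent ideal sheaf, or, in the graded/projective setting, twists $\Ou_{Z_j}(l_j)$ --- this is exactly why Hartshorne's filtration result (Prop.~I.7.4, not II.7.4) carries the twists.

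This is a fixable defect rather than a wrong approach. Run the same argument with the corrected filtration: each piece with $\dim Z_j<d$ lies in $\Gamma_{d-1}$; for $\dim Z_j=d$ one has $[\iota_{j*}\I_j]=[\Ou_{Z_j}]-[\Ou_{V_j}]$, where $V_j\subsetneq Z_j$ is the closed subscheme cut out by $\I_j$, and since $Z_j$ is integral and $\I_j\neq 0$, $\dim V_j<d$, so $[\iota_{j*}\I_j]\equiv[\Ou_{Z_j}]$ modulo $\Gamma_{d-1}$. Your localization step is unaffected: if $\eta$ is the generic point of a $d$-dimensional component $Z$ of $\Supp\F$, then any $Z_j$ containing $\eta$ satisfies $Z\subset Z_j$ and $\dim Z_j\leq d$, hence $Z_j=Z$, and $(\iota_{j*}\I_j)_\eta$ equals $k(\eta)$ when $Z_j=Z$ and $0$ otherwise; so the number of such steps is $\ell_{\Ou_{W,\eta}}(\F_\eta)=\operatorname{mult}(\F;Z)$ and the congruence of the lemma follows. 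With that one substitution (and the corrected references) your proof is complete and is, in substance, the argument the cited sources give.
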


We need a result of Grothendieck on boundedness of families of sheaves.

\begin{lem}[(Lemma 2.6 of \cite{Gr})] \label{bounded_Gr}
  Let $W$ be a projective scheme over a noetherian scheme $S$, and $\Ou_W(1)$ be ample on $W$ relative to $S$. Let $E$ be a family of isomorphism classes of coherent sheaves on the fibers of $W/S$, contained in the family of isomorphism classes of the quotients of a certain fixed coherent sheaf on $W$, such that $\forall \F\in E$, the Hilbert polynomial of $\F$ is
  $$P_\F(v) = a_\F \frac{v^r}{r!} + b_\F \frac{v^{r-1}}{(r-1)!} + \cdots$$
  and $a_\F$ is bounded. Then $b_\F$ is bounded from below.

  Moreover, if $b_\F$ is bounded, then the family consisting of all $\F_{(r)}$ is bounded, where $\F_{(r)}:=\F/T_r(\F)$ is $\F$ quotient by its torsion subsheaf.
\end{lem}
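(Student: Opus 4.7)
The plan is to prove both assertions by induction on $r=\dim\Supp \F$, using the classical hyperplane-section technique and, for the second part, Mumford's criterion that bounded Castelnuovo--Mumford regularity implies boundedness of a family.

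For the lower bound on $b_\F$, I would first reduce to the case where $\F$ is pure of dimension $r$. The torsion filtration gives an exact sequence $0\rar T_{r-1}(\F)\rar \F\rar \F_{(r)}\rar 0$ where $T_{r-1}(\F)$ has dimension $\leq r-1$. Comparing Hilbert polynomials, the coefficient of $v^{r-1}/(r-1)!$ in $P_{T_{r-1}(\F)}$ is non-negative (being the leading coefficient of a polynomial attached to a coherent sheaf of dimension $\leq r-1$), so $b_\F\geq b_{\F_{(r)}}$. It therefore suffices to bound $b$ from below for pure $\F$. For such an $\F$, a Bertini-type argument (after a noetherian reduction on $S$ and passing to a finite cover if necessary) produces a hyperplane section $H$ whose support avoids all associated points of $\F$ fiberwise, giving the exact sequence $0\rar \F(-1)\rar \F\rar \F|_H\rar 0$. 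The formula $P_{\F|_H}(v)=P_\F(v)-P_\F(v-1)$ then yields
$$P_{\F|_H}(v)= a_\F \frac{v^{r-1}}{(r-1)!} + \Bigl(b_\F - \frac{r-1}{2}\, a_\F\Bigr)\frac{v^{r-2}}{(r-2)!} + \cdots$$
The family $\{\F|_H\}$ on $H$ has the same bounded leading coefficient $a_\F$, so by the inductive hypothesis its subleading coefficient is bounded from below; combined with the bound on $a_\F$, this bounds $b_\F$ from below.

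For the second assertion, now assume $b_\F$ is bounded both above and below. Iterating the hyperplane-section reduction shows that every coefficient of $P_{\F_{(r)}}$ is bounded. Then I would apply Mumford's theorem: pure sheaves on a projective scheme with uniformly bounded Hilbert polynomials have uniformly bounded $m$-regularity. Once an integer $m_0$ is chosen so that $\F_{(r)}(m_0)$ is globally generated for every $\F\in E$, the dimension of $H^0(\F_{(r)}(m_0))$ is computed by $P_\F(m_0)$ and is uniformly bounded by some $N$, giving a uniform surjection $\Ou_W(-m_0)^{\oplus N}\twoheadrightarrow \F_{(r)}$. Hence the family of $\F_{(r)}$ is a subfamily of the bounded stack $\Quot_{W/S}^{\Ou_W(-m_0)^{\oplus N},P}$, which proves boundedness.

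The main obstacle is the Bertini step of choosing a hyperplane section that is simultaneously transverse to every $\F\in E$. This cannot be done uniformly on a single fiber once the family is infinite, so one must use noetherian induction on $S$ together with an estimate showing that, on each fiber, the locus of "bad" hyperplanes (those passing through an associated point of some $\F\in E$) has bounded codimension in the projective space of hyperplane sections. This codimension estimate is precisely the delicate input of Grothendieck's original argument; once it is in place, the induction goes through cleanly.
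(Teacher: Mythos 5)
The paper offers no proof to compare against: Lemma~\ref{bounded_Gr} is quoted from Grothendieck and used as a black box in the proof of Lemma~\ref{bound_b}. The real problem with your proposal is that the statement you set out to prove, read for an \emph{arbitrary} family $E$ as transcribed here, is false: Grothendieck's Lemme 2.6 carries the standing hypothesis that the members of $E$ are quotients of the restrictions to the fibres of a single fixed coherent sheaf on $W$, and both the transcription and your argument drop it. On $W=\mathbb{P}^2$ the family $\F_m=\Ou(-m)$ has $a_{\F_m}=1$ while $b_{\F_m}=\frac{3}{2}-m\to-\infty$, so the first assertion fails; on $W=\mathbb{P}^1\times\mathbb{P}^1$ with $H=\Ou(1,1)$ the locally free (hence pure) sheaves $\F_k=\Ou(k,-k)$ have $P_{\F_k}(v)=v^2+2v+1-k^2$, so $a$ and $b$ are literally constant while $\{\F_{(2)}\}=\{\F_k\}$ realizes infinitely many Hilbert polynomials and is therefore unbounded, so the second assertion fails as well. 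Your induction breaks exactly where the missing hypothesis would have to enter: restriction to a hyperplane only records the difference $P_\F(v)-P_\F(v-1)$, so it can pass information among the non-constant coefficients but never reaches the constant term; in particular the inductive hypothesis in dimension one --- ``$a$ bounded implies $\chi$ bounded below for an arbitrary family of $1$-dimensional sheaves'' --- is already false (take $\Ou_C(-m)$ on a fixed curve $C\subset W$), so the induction has no valid base. It is the surjection $\mathcal{H}_s\to\F$ from a fixed sheaf (via the kernel sequence, or the bound $h^0(\F(v))\le h^0(\mathcal{H}_s(v))$) that controls the constant coefficient, and nothing in your argument uses it. (Minor: the subleading coefficient of $P_{\F|_H}$ is $b_\F-a_\F/2$, not $b_\F-\frac{r-1}{2}a_\F$; that slip is harmless.)

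Two further points. The appeal to ``Mumford's theorem'' in the second half is not a theorem: purity together with a bounded, or even fixed, Hilbert polynomial does not bound Castelnuovo--Mumford regularity --- rank-$2$ bundles on $\mathbb{P}^2$ with fixed $c_1,c_2$ built from Serre extensions $0\to\Ou(-m)\to E\to I_Z(m)\to 0$ form an unbounded family --- so the regularity endgame needs either Kleiman's criterion (uniform bounds on $h^0$ of iterated hyperplane restrictions) or, as in Grothendieck's own argument, the quotient structure. Also, the difficulty you single out at the end is not the actual crux: no single hyperplane transverse to every member of $E$ is required, since one may choose a hyperplane separately for each $\F$ (after passing to an infinite residue field, which changes neither Hilbert polynomials nor boundedness) and regard the restrictions as a family of sheaves on the fibres of the universal hyperplane incidence scheme over the dual projective bundle, to which the inductive hypothesis applies. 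Finally, note that the paper's use of the lemma is sound once the hypothesis is restored: in Lemma~\ref{bound_b} the sheaves $\G_i$ are pushforwards under the exact functor $c_*$ of quotients of finitely many fixed sheaves, hence quotients of fixed coherent sheaves on the fixed projective schemes $\uY$ and $\uDelta$; your proof, which never invokes any such hypothesis, cannot be repaired as written.
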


\begin{proof}[Proof of Proposition \ref{prop_boundedness}]
  By Corollary \ref{normality_split}, given a stable quotient $\phi:p^*\cV \rar \F$ on $Y[k]$, we have the short exact sequence
  $$0\rar \F \rar \F|_Y\oplus \bigoplus_{i=1}^k \F|_{\Delta_i} \rar \bigoplus_{i=1}^k \F|_{D_{i-1}} \rar 0.$$
  Thus
  \begin{eqnarray} \label{split_chi}
    av+b &=& \chi (Y[k], \F \otimes p^*(H^v \otimes \E^\vee)) \nonumber \\
    &=& \chi(Y, \F|_Y \otimes H^v \otimes \E^\vee) + \sum_{i=1}^k \chi(\Delta_i, \F|_{\Delta_i}\otimes p^*(H^v\otimes \E^\vee)) \nonumber \\
    && - \sum_{i=1}^k \chi(D_{i-1}, \F|_{D_{i-1}}\otimes p^*(H^v\otimes \E^\vee)) \nonumber \\
    &=& \chi(Y, \F|_Y \otimes H^v \otimes \E^\vee) + \sum_{i=1}^k \chi(\Delta_i, \F|_{\Delta_i}(-D_{i-1})\otimes p^*(H^v\otimes \E^\vee)) \\
    &=:& (a_0(k,\F)v+b_0(k,\F)) + \sum_{i=1}^k ( a_i(k,\F)v+b_i(k,\F) ), \nonumber
  \end{eqnarray}
  where in (\ref{split_chi}) we use the exact sequence $0\rar \F|_{\Delta_i}(-D_{i-1}) \rar \F|_{\Delta_i} \rar \F|_{D_{i-1}} \rar 0$, by admissibility. Here the coefficients $a$'s and $b$'s also depend on the polarization $\E$, $H$, but we only emphasize that on $k$ and $\F$. Let
  $$\Lambda:=\{i \mid 1\leq i\leq k, a_i(k,\F)>0 \},$$
  and write
  \begin{equation} \label{av+b=}
  av+b = (a_0(k,\F)v+b_0(k,\F)) + \sum_{i\in \Lambda} ( a_i(k,\F)v+b_i(k,\F) ) + \sum_{i\not\in \Lambda} b_i(k,\F).
  \end{equation}
  By $a_i\geq 1$, $i\in \Lambda$ and $a=\sum_{i=0}^k a_i$ we can bound the size of $\Lambda$ by $|\Lambda|\leq a$.

  For $i\not\in \Lambda$, we know $b_i(k,\F)\neq 0$, because otherwise by Lemma \ref{chi_p_*} $\F|_{\Delta_i}$ would be a pull-back from $D_{i-1}$ and thus unstable. Thus $b_i(k,\F)\geq 1$.

  Now by the following Lemma \ref{bound_b} for $i\in \Lambda\cup \{0\}$, we have the lower bound $b_i(k,\F)\geq -M$. By (\ref{av+b=}) we have $b\geq (|\Lambda|+1)(-M) + k-|\Lambda|$. Thus $k\leq b+M +(M+1)|\Lambda| \leq b+M+(M+1)a$, which proves the proposition.
\end{proof}

\begin{lem} \label{bound_b}
  In the proof of the previous proposition, there exists $M=M(f,\E,H)>0$, which does not depend on $k$ and $\F$, such that $b_i(k,\F)\geq -M$, $\forall i\in \Lambda\cup\{0\}$.
\end{lem}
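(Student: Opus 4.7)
The plan is to exploit the projection formula and the vanishing of higher direct images for admissible sheaves on the bubbles (Lemma~\ref{chi_p_*}) to reinterpret each summand $a_i(k,\F)v+b_i(k,\F)$ as an honest Hilbert polynomial of a coherent sheaf on a \emph{fixed} projective coarse moduli space. Once this is achieved, Grothendieck's boundedness result (Lemma~\ref{bounded_Gr}) directly yields the lower bound on $b_i$, since every leading coefficient is uniformly bounded above by $a$ (from $a=a_0+\sum_i a_i$ with all $a_i\geq 0$).

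For $i=0$, I would push forward to the projective coarse moduli space via $c:Y\rar \uY$ and apply the projection formula:
\begin{equation*}
a_0(k,\F)v+b_0(k,\F)=\chi(Y,\F|_Y\otimes H^v\otimes \E^\vee)=\chi(\uY,\,c_*(\F|_Y\otimes \E^\vee)\otimes \uH^v),
\end{equation*}
which is a genuine Hilbert polynomial on $\uY$ relative to the ample $\uH$. As $(k,\F)$ varies over stable quotients with $P^{\E,H}_\F=f$, the leading coefficient $a_0\leq a$ is uniformly bounded, so Lemma~\ref{bounded_Gr} produces some $M_0=M_0(a,\uY,\uH)$ with $b_0\geq -M_0$.

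For $i\in\Lambda$, each bubble $\Delta_i$ is isomorphic to $\Delta=\Pj_D(\Ou_D\oplus N)$, and the contraction $p|_{\Delta_i}:\Delta_i\rar D$ is the $\Pj^1$-bundle projection with $D_{i-1}$ playing the role of the zero section $D_-$. Admissibility of $\F$ forces $\F|_{\Delta_i}$ to be normal to $D_{i-1}$, and since $\Ou(-D_{i-1})$ is a line bundle the twist $\F|_{\Delta_i}(-D_{i-1})$ is locally isomorphic to $\F|_{\Delta_i}$, hence also normal to $D_{i-1}$, so Lemma~\ref{chi_p_*} gives $R^1 p_*(\F|_{\Delta_i}(-D_{i-1}))=0$. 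Chaining the projection formulae for $p$ and $c:D\rar\uD$ then yields
\begin{equation*}
a_i(k,\F)v+b_i(k,\F)=\chi\bigl(\uD,\,c_*\bigl(p_*(\F|_{\Delta_i}(-D_{i-1}))\otimes \E^\vee|_D\bigr)\otimes \uH^v|_{\uD}\bigr),
\end{equation*}
which is again a Hilbert polynomial on $\uD$ with ample $\uH|_{\uD}$ and leading coefficient $a_i\leq a$. A second application of Lemma~\ref{bounded_Gr} yields $M_1=M_1(a,\uD,\uH)$ with $b_i\geq -M_1$.

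Taking $M:=\max(M_0,M_1)$ finishes the relative case, and the degeneration case is handled identically after replacing $Y$ by $X_0=Y_-\cup_D Y_+$ and using the relative polarization on $X\rar\A^1$. The main obstacle is the $R^1$-vanishing on the bubbles: one must check that normality along $D_{i-1}$ persists after the line-bundle twist $\Ou(-D_{i-1})$ before Lemma~\ref{chi_p_*} can be invoked; thereafter the argument is routine bookkeeping with projection formulae and Grothendieck's bound.
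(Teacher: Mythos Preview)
Your argument is correct and in fact cleaner than the paper's. Both approaches reduce to Grothendieck's Lemma~\ref{bounded_Gr}, but they reach a setting where an honest ample line bundle is available by different routes.

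The paper only pushes to the coarse moduli $\uDelta$ of the bubble, where $\underline{p}^*\uH$ is \emph{not} ample. It then manufactures an ample class $\underline{p}^*\uH\otimes\Ou_{\uDelta}(r\uD)$, computes the resulting leading coefficients $a'_i=a_i+r[\uD]\cdot[\G_i]$, and must prove a \textbf{Claim} that these $a'_i$ are uniformly bounded. That claim is established by an induction across the chain of bubbles, using admissibility to equate $[\uD_-]\cdot[\G_{i+1}]$ with $[\uD_+]\cdot[\G_i]$ and propagating boundedness of the numerical classes $[\G_i]\in N_1(\uDelta)$ from $i=0$ outward.

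You bypass this entirely by pushing one step further to $D$ via the $\Pj^1$-bundle projection, using the $R^1p_*$-vanishing of Lemma~\ref{chi_p_*} (your observation that normality to $D_{i-1}$ survives the line-bundle twist $\Ou(-D_{i-1})$ is correct and is exactly what is needed). Once on $D$ the pullback of $\uH$ is genuinely the restriction of an ample class, so Grothendieck applies immediately with leading coefficient $a_i\leq a$. This avoids the inductive bookkeeping altogether; the price is that you do not obtain the auxiliary boundedness of $[\G_i]$ in $N_1(\uDelta)$, but that information is not used elsewhere. Your route is the more economical proof.
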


\begin{proof}
  Consider $i\in \Lambda$. Let $\Delta\cong \Delta_i$, with divisor $D_-\cong D_{i-1}$. By Proposition \ref{coarse_Delta} there is a diagram
  $$\xymatrix{
& \uDelta \ar[dr] & \\
\Delta \ar[rr]^{c_B} \ar[d]_p \ar[ur]^c & & B \ar[d]^{p_B} \\
D \ar[rr]^c & & \uD,
}$$
where the underlined are corresponding coarse moduli spaces, $B$ is a $\Pj^1$-bundle over $\uD$, and $\uDelta$ is finite over $B$.

  Let's compute the terms in (\ref{split_chi}). For simplicity let $\G_i:= (c_B)_*(\F|_{\Delta_i}(-D_{i-1})\otimes p^*\E^\vee)$, $1\leq i\leq k$ and $\G_0:= c_*(\F|_Y\otimes \E^\vee)$. Then
  \begin{eqnarray*}
  a_i(k,\F)v+b_i(k,\F) &=& \chi(\Delta,\F|_{\Delta_i}(-D_{i-1})\otimes p^*\E^\vee \otimes p^*H^v) \\
  &=& \chi(\uDelta, c_*(\F|_{\Delta_i}(-D_{i-1})\otimes p^*\E^\vee) \otimes \underline{p}^*\uH^v) \\
  &=& \chi(B, \G_i\otimes p_B^* \uH^v),
  \end{eqnarray*}
  where we used $p^*c^*=c^*\underline{p}^*$ and the projection formula.

  By Corollary \ref{cor_gen} we see that $(p^*\E^\vee)^\vee$ is still a generating sheaf on $\Delta$, and by 2) of Proposition \ref{prop_F_qF} we have $\dim \Supp \G_i =\dim \F|_{\Delta_i}$. Thus by Lemma \ref{devissage},
  $$a_i(k,\F)v = \sum_{Z\subset \Supp \G_i} \operatorname{mult}(\G_i;Z) \cdot \chi(Z, \Ou_Z \otimes p_B^*\uH^v) \mod \Gamma_0,$$
  where $Z$ ranges over the 1-dimensional irreducible components of $\Supp \G_i$. We see (by choosing a divisor for $\uH$ and computing the leading coefficients by exact sequence) that $\chi(Z,\Ou_Z\otimes p_B^* \uH^v)$ has leading term $vc_1(p_B^* \uH)\cdot [Z]$, where one can view $c_1(p_B^* \uH)\in N^1(B)$ and $[Z]\in N_1(B)$. Thus
  \begin{eqnarray} \label{a_i}
    a_i(k,\F) &=& c_1(p_B^* \uH)\cdot \sum_Z \operatorname{mult}(\G_i;Z) \cdot [Z] \\
    &=& : c_1(p_B^* \uH)\cdot [\G_i], \nonumber
  \end{eqnarray}
  where $[\G_i]$ is the numerical 1-cycle determined by the support of $\G_i$.

  However, $p_B^* \uH$ is not ample on $B$, and the above is not a Hilbert polynomial, as required by Lemma \ref{bounded_Gr}. We need to introduce some ample line bundles on $B$. Replace $\uH$ by its sufficiently large power, such that $p_B^*\uH\otimes \Ou_{B}(\uD)$ is ample on $B$. Consider $h:=c_B^*(p_B^*\uH\otimes \Ou_{B}(\uD)) = H\otimes c_B^*\Ou_{B}(\uD)$ on $\Delta$. Note that this procedure does not depend on $k$.

  Now let's compute the modified Hilbert polynomial with respect to $h$. Let $a'_i(k,\F)v+b'_i(k,\F):= \chi(\Delta,\F|_{\Delta_i}(-D_{i-1}) \otimes p^*\E^\vee \otimes h^v)$, and $a'_0v+b'_0:=a_0v+b_0$. By the similar computation we arrive at the following
  \begin{equation*}
    a'_i(k,\F) = (c_1(p_B^* \uH)+[\uD])\cdot [\G_i].
  \end{equation*}

  Combining (\ref{a_i}) we get
  \begin{equation} \label{a_i+}
  a'_i(k,\F) = a_i(k,\F)+ [\uD]\cdot [\G_i] \leq a+[\uD]\cdot [\G_i].
  \end{equation}

  Now the lemma follows from the following claim and Lemma \ref{bounded_Gr}. A uniform upper bound for all $a'_i$'s would imply the existence of a uniform lower bound for all $b'_i$'s. Since $b'_i = b_i$ by Riemann--Roch computation, we get a uniform lower bound for all $b_i$'s.

  \textbf{Claim}: there is a uniform upper bound for $a_i'$, $\forall i$.

  We prove this by induction. For $i=0$, $a'_0(k,\F)=a_0(k,\F)\leq a$. Then analogous to (\ref{a_i}) we have
  \begin{equation}
    a_0(k,\F) = c_1(\uH)\cdot [\G_0].
  \end{equation}
  Since $\uH$ is ample on $\uY$, the boundedness of $a_0(k,\F)$ implies that $\{ [\G_0] \mid k, \F \}$ is bounded as a subset in $N_1(\uY)$. Thus the pairing $[\uD]\cdot [\G_0]$ is bounded, say by some constant $C_0>0$. On the other hand, we have
  $$[\uD]\cdot [\G_0] = [\uD]\cdot [\G_1],$$
  which follows from definition and restrictions of $\F$ to $D\cong D_0$ from $Y$ and $\Delta_1$ respectively. Thus by (\ref{a_i+}) we have
  $$a'_1(k,\F) -a_1(k,\F) \leq C_0.$$
  Now one can proceed by induction. Assume that $a_i'(k,\F)-a_i(k,\F)\leq C_i$ for some uniform bound $C_i$. Then one has $\{[\G_i] \mid k,\F\}$ is bounded as a subset in $N_1(\uDelta)$. By admissibility
  $$[\uD]\cdot [\G_{i+1}] = [\uD_+]\cdot [\G_i],$$
  is bounded by some uniform bound $C_{i+1}$.

  Now we look at $i+1$. If $i+1\in \Lambda$, then we have
  $$a'_{i+1}(k,\F)-a_{i+1}(k,\F)= [\uD]\cdot [\G_{i+1}] \leq C_{i+1}$$
  is bounded.

  If $i+1\not\in \Lambda$, then $\G_{i+1}$ is of dimension 0 in $\uDelta_{i+1}$, which by admissibility does not intersect with the divisors. Thus in this case one has
  $$a'_{i+1}(k,\F)-a_{i+1}(k,\F)=0.$$

  Keep proceeding by induction, and finally we have the set of numbers $\{a'_i(k,\F)-a_i(k,\F) \mid 0\leq i\leq k \}$ is bounded by $C:= \max \{C_i \mid 0\leq i\leq k\}$, where there are only $|\Lambda|$ many of nonzero $C_i$'s, and each nonzero one is determined by the previous nonzero one in some way independent of $k$ and $\F$ (The process only depends on the intersections and bounds on $N_1(B)$). Thus $a+C$ does not depend on $k$ and $\F$ (but depends on the choice of $\uH$, $\E$) and bounds all $a'_i(k,\F)$.
\end{proof}

\subsection{Separatedness}

In this section we prove the following.

\begin{prop} \label{prop_sep}
  The stack $\fQuot_{\fY/\fA}^{\cV,P}$ (resp. $\fQuot_{\fX/\fC}^{\cV,P}$) is separated over $\C$ (resp. over $\A^1$).
\end{prop}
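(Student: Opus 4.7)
The plan is to verify the valuative criterion for separatedness. I focus on the relative case $\fQuot_{\fY/\fA}^{\cV,P}$; the degeneration case is entirely parallel, with $\A^k$ replaced by $\A^{k+1}$ and the action formula \eqref{action_Ak_Y} replaced by \eqref{action_Ak_X}. Let $R$ be a DVR with fraction field $K$, generic point $\eta$, closed point $0$, and suppose $g_1, g_2 \colon \Sp R \to \fQuot_{\fY/\fA}^{\cV,P}$ are two sections with an isomorphism $\alpha_K \colon g_1|_K \xrightarrow{\sim} g_2|_K$. After replacing $R$ by its strict henselization and enlarging $k$, each $g_i$ is represented by a morphism $\xi_i \colon \Sp R \to \A^k$ and a stable quotient $\phi_i \colon p^*\cV \twoheadrightarrow \F_i$ on $\cY_i := \xi_i^* Y(k)$. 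Unpacking $\alpha_K$ yields a $K$-point $r_K = (\lambda, \sigma) \in (\C^*)^k \rtimes S_k$ matching $\xi_1|_K$ with $\xi_2|_K$, together with an isomorphism $\beta_K \colon \F_1|_K \xrightarrow{\sim} r_K^* \F_2|_K$ of quotients. The discrete $S_k$-part $\sigma$ extends uniquely over $\Sp R$, and after pulling back $\xi_2, \phi_2$ by $\sigma$ I reduce to $\sigma = 1$.

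Next I extend $\lambda = (\lambda_1, \ldots, \lambda_k) \in (K^*)^k$ to $(R^*)^k$. Writing $\xi_i = (t_1^{(i)}, \ldots, t_k^{(i)})$, formula \eqref{action_Ak_Y} gives $t_j^{(2)} = (\lambda_j/\lambda_{j-1})\, t_j^{(1)}$ in $K$ for each $j \geq 1$, with $\lambda_0 := 1$. For every index $j$ with $t_j^{(1)} \not\equiv 0$ on $\Sp R$, integrality of $\xi_1, \xi_2$ together with the identity of their classes in $\fA(R)$ (forced by $\alpha_K$) pins down $\lambda_j/\lambda_{j-1}$ as a unit in $R$. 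The subtle case is the \emph{degenerate} indices where $t_j^{(1)} \equiv t_j^{(2)} \equiv 0$ on $\Sp R$: these correspond to bubble components $\Delta_j$ present in every fiber over $\Sp R$, and the ratios $\lambda_j/\lambda_{j-1}$ are not constrained by $\xi_1, \xi_2$ alone. Here stability must be used.

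By Proposition~\ref{open_stable} the isotropy subscheme $\Aut_{*,\Sp R}(\xi_i, \phi_i, k) \hookrightarrow (\C^*)^k \times \Sp R$ from \eqref{Aut_C_S_phi} is quasi-finite over $\Sp R$. Consequently, along the $(\C^*)^k$-orbit of $\phi_1$ inside the ordinary stable Quot-space $\Quot_{Y(k)/\A^k}^{p^*\cV,st,P}$, the locus of torus elements carrying $\phi_1$ to $\phi_2$ is a torsor under a quasi-finite subgroup scheme of $(\C^*)^k$ over $\Sp R$, whose generic fiber is nonempty (it contains $\lambda$). Since $\fQuot_{\fY/\fA}^{\cV,P}$ already identifies objects in such an orbit via the relation $R_{\sim,\Quot}$, this finite ambiguity is absorbed at the stack level and $\lambda$ descends to an $R$-point, giving $r \in \left((\C^*)^k \rtimes S_k\right)(R)$.

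With $r$ extended to $\Sp R$, both $\F_1$ and $r^* \F_2$ become $R$-flat quotients of $p^*\cV$ on the common family $\cY_1 \to \Sp R$ that agree over $\Sp K$. The separatedness of the ordinary Quot-space (Theorem~1.5 of \cite{OS}) now yields the unique extension $\beta_R$ of $\beta_K$, completing both the existence and uniqueness of the extension of $\alpha_K$. The main obstacle is the degenerate-index step: since $(\C^*)^k$ is not proper, a $K$-valued torus element need not extend to $R$ on general grounds, and one cannot read off the ratios $\lambda_j/\lambda_{j-1}$ from $\xi_1, \xi_2$ at those indices. What rescues uniqueness is that stability forces finite stabilizers, so the action on Quot is effectively proper on each orbit, and passage to the stack quotient $\fQuot_{\fY/\fA}^{\cV,P}$ eliminates the residual finite ambiguity.
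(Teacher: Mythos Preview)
Your argument has a genuine gap at the step where you claim $\lambda_j/\lambda_{j-1}$ is a unit for the non-degenerate indices. You justify this by asserting that $\alpha_K$ forces ``the identity of their classes in $\fA(R)$'', but this is circular: $\fA$ is a non-separated Artin stack (it has positive-dimensional $(\C^*)^k$-isotropy at every point), so an isomorphism over $K$ does not automatically extend over $R$. Concretely, if $\xi_1 = (u^{e_1}, \ldots)$ and $\xi_2 = (u^{e_1'}, \ldots)$ with $e_1 \neq e_1'$, then $\lambda_1 = u^{e_1' - e_1}$ times a unit, which is not a unit, and nothing you have written rules this out. The degenerate-index step has the same problem in sharper form: the transporter scheme inside $(\C^*)^k \times \Sp R$ is closed, but $(\C^*)^k$ is not proper, so even with finite stabilizers the special fiber of the transporter could be empty. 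Finite isotropy does not make a torus action proper, and ``absorbed at the stack level'' addresses only uniqueness of a limit, not its existence.

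The paper's proof proceeds quite differently and avoids extending $\lambda$ altogether. After normalizing both representatives to the form $(u^{e_1}, \ldots, u^{e_l}, 0, \ldots, 0)$, it builds a common \emph{refinement} $\tilde\xi: S \to \A^{E+k-l}$, $E = \sum e_i$, by ``unfolding'' each $u^{e_i}$ into $e_i$ coordinates equal to $u$; this yields a map of families $h: \tilde\cY_S \to \cY_S$ which over the closed point contracts all bubble components except those at positions $e_1, e_1+e_2, \ldots, E$, and similarly $h'$ for $\xi'$. Pulling both quotients back to $\tilde\cY_S$ and invoking separatedness of the ordinary Quot-space forces $h^*\phi \cong h'^*\phi'$ at $\eta_0$. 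But $h^*\phi|_{\eta_0}$ is stable precisely on the un-contracted components and $\C^*$-equivariant (hence with infinite automorphisms) on the contracted ones, so matching the two patterns of stable versus unstable components forces $e_i = e_i'$ for all $i$. This comparison of contracted components is the substantive mechanism that your direct-extension approach is missing.
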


We use the valuative criterion of separatedness for Deligne--Mumford stacks. Consider $S=\Sp R$, where $R$ is a valuation ring, with fractional field $K$. Let $\eta=\Sp K$ be the generic point and $\eta_0=\Sp k$ be the closed point. Since we have proved the Quot-stack is of finite type, by Proposition 7.8 in \cite{LMB}, it suffices to treat the case where $R$ is a complete discrete valuation ring with an algebraically closed residue field $k$. Let $u\in R$ be the uniformizer. Again we concentrate on the relative case; the proof for the degeneration case would be similar.

\begin{rem}
  By our description of the objects we need to consider representatives $\xi: S_\xi\rar \A^k$, where $S_\xi:=\coprod S_i \rar S$ is an \'etale covering of $S$. Since $R$ is a discrete valuation ring, for each $S_i\rar S$ to be \'etale, it must be an isomorphism. Therefore we can always assume $S_\xi=S$.
\end{rem}

Suppose that we are given two arrows $(\bar\xi,\bar\phi), (\bar\xi',\bar\phi'): S\rar \fQuot_{\fY/\fA}^{\cV,P}$, represented by $(\xi,\phi)$, $(\xi',\phi')$, whose restrictions to $\eta$ are isomorphic. $\xi: S\rar \A^k$, $\xi': S\rar \A^{k'}$ are of the following form
$$\C [t_1,\cdots,t_k] \rar  R, \quad t_i\mapsto c_i(u)u^{e_i},$$
$$\C [t_1,\cdots,t_{k'}] \rar  R, \quad t_i\mapsto c'_i(u)u^{e'_i},$$
where $c_i(u),c'_i(u)$ are either 0 or invertible in $R$, and $e_i,e'_i \geq 0$. Embedding them into a larger target and applying the $\sim$ equivalence relation, we can assume that $k=k'$ and the maps are of the following form
$$(t_1,\cdots,t_k) \mapsto (u^{e_1},\cdots,u^{e_l},0,\cdots, 0),$$
$$(t_1,\cdots,t_{k'}) \mapsto (u^{e'_1},\cdots,u^{e'_l},0,\cdots, 0),$$
where the two expressions have the same number of zero's, due to the isomorphism over $\eta$. The proof of the separatedness then reduces to the following lemma.

\begin{lem}
  Consider the map $\xi: S\rar \A^1$, given by $\C[t_1]\rar R$, $t_1\mapsto u^e$, $e\geq 1$. Let $\tilde\xi: S\rar \A^e$ be $\C[t_1,\cdots,t_e]\rar R$, $(t_1,\cdots,t_e)\mapsto (u,\cdots,u)$. Let $\cY_S$, $\tilde\cY_S$ be the corresponding associated families of expanded pairs. Then there is a map $h: \tilde\cY_S\rar \cY_S$, which is an isomorphism on $\eta$, and a contraction $Y[e]\rar Y[1]$ on $\eta_0$, where the only un-contracted component is $\Delta_e$.
\end{lem}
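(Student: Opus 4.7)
The approach is to construct $h$ as a pullback of the natural contraction map $p:Y(e)\rightarrow Y(1)$, exploiting that $Y(e)$ is produced inductively as $Y(1)(e-1)$ by the $X(k)$-construction applied to the locally simple degeneration $Y(1)\rightarrow \A^1$.

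First I observe that $\xi$ and $\tilde\xi$ are compatible with the multiplication map $m:\A^e\rightarrow \A^1$, $(t_1,\ldots,t_e)\mapsto t_1\cdots t_e$: since $u\cdot u\cdots u=u^e$, one has $m\circ\tilde\xi=\xi$. Applying the diagram (\ref{sqk}) with $X=Y(1)$ and $k=e-1$, we obtain a contraction $p:Y(e)\rightarrow Y(1)$ sitting over $m$. Pulling this diagram back along $\tilde\xi$ defines a canonical map $h:\tilde\cY_S=\tilde\xi^*Y(e)\rightarrow \xi^*Y(1)=\cY_S$ over $S$; this is well-defined precisely because of the identity $m\circ\tilde\xi=\xi$.

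The two required properties are then fibered statements. At the generic point $\eta=\Sp K$, $\tilde\xi(\eta)=(u,\ldots,u)\in(\C^*)^e$ and $\xi(\eta)=u^e\in\C^*$; by the fiber description of standard families of expanded pairs, both fibers are canonically isomorphic to $Y$, and the contraction $p$ restricts to the identity on these smooth fibers, so $h|_\eta$ is an isomorphism. At the closed point $\eta_0$, $\tilde\xi(\eta_0)=0\in\A^e$ and $\xi(\eta_0)=0\in\A^1$, so the fibers are $Y[e]$ and $Y[1]=Y\cup_D\Delta$ respectively, and $h|_{\eta_0}$ is $p$ restricted to these central fibers.

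The main verification is to identify this central-fiber contraction correctly. Under the $X(k)$-procedure applied to $Y(1)\rightarrow\A^1$, whose central fiber $Y\cup_D\Delta$ plays the role of ``$Y_-\cup_D Y_+$'', the central-fiber contraction $X[k]\to X_0$ crushes each of the $k$ newly inserted bubbles to the node $D$ and is an isomorphism on the two original components. With $k=e-1$, and following the convention that bubbles are inserted between $Y_-=Y$ and $Y_+=\Delta$, the original component $\Delta$ corresponds to the bubble adjacent to the distinguished divisor $D[e]=D_e$, namely $\Delta_e$, while $\Delta_1,\ldots,\Delta_{e-1}$ are the newly inserted bubbles. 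Thus $h|_{\eta_0}$ collapses $\Delta_1,\ldots,\Delta_{e-1}$ to the node and maps $\Delta_e$ isomorphically onto $\Delta$, leaving $\Delta_e$ as the sole un-contracted bubble component, as claimed.
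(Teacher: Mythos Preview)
Your proof is correct and follows exactly the same approach as the paper: you use the contraction $p:Y(e)\to Y(1)$ from the $X(k)$-construction (diagram (\ref{sqk}) with $X=Y(1)$, $k=e-1$), observe that $m\circ\tilde\xi=\xi$, and pull back to obtain $h$. The paper's proof is extremely terse---it simply records the diagram and asserts the properties---whereas you have spelled out the identification of the central-fiber contraction and why $\Delta_e$ is the surviving bubble; this is a welcome elaboration but not a different argument.
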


\begin{proof}
  This is due to the construction of $X(k)$. By the following diagram and the universal property of fiber product, we get the map $h$,
  $$\xymatrix{
  \tilde\cY_S \ar[r] \ar[d] & Y(e) \ar[r]^p \ar[d] & Y(1) \ar[d] \\
  S \ar[r]^{\tilde\xi} & \A^e \ar[r] & \A^1.
  }$$
  The map on the base is multiplication and the map on the central fiber is the contraction of the inserted components.
\end{proof}

\begin{proof}[Proof of Proposition \ref{prop_sep}]
  We need to prove that the isomorphism on $\eta$ extends to $S$. Let $E:=\sum_{i=1}^l e_i$ and $E':=\sum_{i=1}^l e_i'$. First we consider the special case $E=E'$.

  Consider the map $\tilde\xi: S\rar \A^{E+k-l}$, given by $(t_1,\cdots,t_E,\cdots,t_{E+k-l})\mapsto (u,\cdots, u,0,\cdots,0)$. Repeatedly applying the previous lemma, there is a map between associated families $h: \tilde\cY_S\rar \cY_S$, such that $h|_\eta$ is an isomorphism and $h|_{\eta_0}: Y[E+k-l]\rar Y[k]$ is a contraction of components, where the un-contracted components are $\Delta_{e_1}, \Delta_{e_1+e_2}, \cdots, \Delta_E$ and the last $(k-l)$ one's corresponding to the 0's. Applying the same with $\xi'$, we also have a map $h': \tilde\cY_S\rar \cY'_S$ contracting the corresponding components for $\xi'$.

  Consider quotient sheaves $h^*\phi$ and $h'^*\phi'$ on $\tilde\cY_S$. Now $(\tilde\xi,h^*\phi)$ and $(\tilde\xi,h'^*\phi')$ are isomorphic over $\eta$ (since $h|_\eta$, $h'|_\eta$ are isomorphisms), and flat over $S$ by admissibility and Lemma \ref{normality_simple}. By the separatedness of the ordinary Quot-space $\Quot_{Y(E+k-l)/\A^{E+k-l}}^{p^*\cV}$, they are also isomorphic over $S$. On the other hand, $h^*\phi|_{\eta_0}$ and $h'^*\phi'|_{\eta_0}$ are stable on un-contracted components of $Y[E+k-l]$, but are unstable on contracted one's. For them to be isomorphic, the contracted components for $h$ and $h'$ must coincide, which implies $e_i=e_i'$, $\forall 1\leq i\leq l$, in which case $h$, $h'$ must be isomorphisms and the conclusion follows.

  For the general case, assume $E\leq E'$. We take the embeddings
  $$\tilde\xi: S\rar \A^{E+k-l}, \quad (t_1,\cdots,t_E,\cdots, t_{E+k-l})\mapsto (u,\cdots, u,0,\cdots,0),$$
  $$\tilde\xi': S\rar \A^{E'+k-l}, \quad (t_1,\cdots,t_E,\cdots,t_{E'},\cdots,t_{E'+k-l})\mapsto (u,\cdots, u,u,\cdots,u,0,\cdots,0),$$
  with corresponding associated families $\tilde\cY_S$, $\tilde\cY'_S$. By the successive blow-up construction (Proposition \ref{succ_bl}), there is a map $c: \tilde\cY_S'\rar \tilde\cY_S$, which is a contraction of components if restricted to $\eta_0$. $c$ is given by the following diagram,
  $$\xymatrix{
  \tilde\cY'_S \ar[r] \ar[d] & Y(E'+k-l) \ar[r]^{p_{\Bl}} \ar[d] & Y(E+k-l) \ar[d] \\
  S \ar[r]^-{\tilde\xi'} & \A^{E'+k-l} \ar[r]^{\pr} & \A^{E+k-l},
  }$$
  where $p_{\Bl}$ is the contraction map for the successive blow-up construction, and the projection $\pr: \A^{E'+k-l}\rar \A^{E+k-l}$ is to forget the $t_{E+1}, \cdots, t_{E'}$ components. Again by the separatedness of the ordinary Quot-space, we have an isomorphism $c^*h^*\phi\cong h'^*\phi'$. Compare the stable components of the two quotients and one obtains the conclusion $e_i=e'_i$, $\forall 1\leq i\leq l$.
\end{proof}

\subsection{Numerical criterion for admissibility}

Before getting into the proof of properness, we need a numerical criterion for a sheaf to be admissible. Let $\F$ be a coherent sheaf on $Y[k]$ (resp. $X_0[k]$), with generating sheaf $\E$ on $Y$ and a line bundle $H$ which is a pull back of an ample line bundle from the coarse moduli space. Let $\I_i^-$ and $\I_i^+$ be the ideal sheaf of the divisors $D_{i-1},D_i \subset \Delta_i$, and $\I_0^+$ be the ideal sheaf of $D_0\subset Y$. Let $\J_i$ be the ideal sheaf of $D_i\subset Y[k]$. Let $\F^\tf$ be the quotient in the following exact sequence,
$$\xymatrix{
0\ar[r] & \bigoplus_{i=0}^{k-1} \F_{\J_i} \ar[r] & \F \ar[r] & \F^\tf \ar[r] & 0.
}$$

\begin{defn}
  Define the $i$-th \emph{error} of $\F$ to be the following polynomial (here we write the subscript in parenthesis, making them easy to read).

  In the relative case:
  \begin{eqnarray*}
  \Err^{\E,H}_i(\F)(v):
  &=& P^{\E,H}(\F_{\J_i})(v) + P^{\E,H}(\F^\tf|_{\Delta_i,\I_i^+})(v)+ P^{\E,H}(\F^\tf|_{\Delta_{i+1},\I_{i+1}^-})(v) \\
  && - \frac{1}{2} P^{\E,H}((\F^\tf|_{\Delta_i,\I_i^+})|_{D_i}) - \frac{1}{2} P^{\E,H}((\F^\tf|_{\Delta_{i+1},\I_{i+1}^-})|_{D_i}),
  \end{eqnarray*}
  for $0\leq i\leq k-1$, and
  $$\Err^{\E,H}_k(\F)(v):= P^{\E,H}(\F^\tf|_{\Delta_k,\I_k^+})(v),$$
  where we denote $\Delta_0:=Y$ and $\F^\tf|_{\Delta_i,\I_i^+}:= (\F^\tf|_{\Delta_i})_{\I_i^+}$.

  In the degeneration case:
  \begin{eqnarray*}
  \Err^{\E,H}_i(\F)(v):
  &=& P^{\E,H}(\F_{\J_i})(v) + P^{\E,H}(\F^\tf|_{\Delta_i,\I_i^+})(v)+ P^{\E,H}(\F^\tf|_{\Delta_{i+1},\I_{i+1}^-})(v) \\
  && - \frac{1}{2} P^{\E,H}((\F^\tf|_{\Delta_i,\I_i^+})|_{D_i}) - \frac{1}{2} P^{\E,H}((\F^\tf|_{\Delta_{i+1},\I_{i+1}^-})|_{D_i}),
  \end{eqnarray*}
  for $0\leq i\leq k$, where we denote $\Delta_0:=Y_-$ and $\Delta_{k+1}:=Y_+$.

  Moreover, we define the total error as
  $$\Err^{\E,H}(\F)(v):= \sum_{i=0}^k \Err_i^{\E,H}(\F)(v).$$
\end{defn}

For simplicity we will just omit the $\E,H$ and write $\Err_i(\F)$ and $\Err(\F)$. Here the polynomials are actual Hilbert polynomials and there is no problem with the ampleness of the pull back of $H$, since the definition only involves the restriction of $H$ to $D_i$. Hence $\F$ is admissible if and only if $\Err(\F)=0$. In other words, the error polynomial measures the failure for a sheaf to be admissible.

For $\Delta_i$, $0\leq i\leq k$, let $(\F^\tf|_{\Delta_i})^\tf$ be the quotient in the following exact sequences,
$$\xymatrix{
0 \ar[r] & \F^\tf|_{\Delta_i,\I_i^-} \oplus \F^\tf|_{\Delta_i,\I_i^+} \ar[r] & \F^\tf|_{\Delta_i} \ar[r] & (\F^\tf|_{\Delta_i})^\tf \ar[r] & 0, \quad 1\leq i\leq k;
}$$
$$\xymatrix{
0 \ar[r] & \F^\tf|_{Y,\I_0^+} \ar[r] & \F^\tf|_Y \ar[r] & (\F^\tf|_Y)^\tf \ar[r] & 0, \quad i=0.
}$$
It is clear that $(\F^\tf|_{\Delta_i})^\tf$ are admissible. Restrict to $D_i$ and we get the following sequences,
$$\xymatrix{
0 \ar[r] & ( \F^\tf|_{\Delta_i,\I_i^-} ) |_{D_{i-1}} \ar[r] & \F^\tf|_{D_{i-1}} \ar[r] & (\F^\tf|_{\Delta_i})^\tf |_{D_{i-1}} \ar[r] & 0,
}$$
$$\xymatrix{
0 \ar[r] & ( \F^\tf|_{\Delta_i,\I_i^+} ) |_{D_i} \ar[r] & \F^\tf|_{D_i} \ar[r] & (\F^\tf|_{\Delta_i})^\tf |_{D_i} \ar[r] & 0.
}$$
Let
$$\delta_i^-(\F):= P((\F^\tf|_{\Delta_i})^\tf) - P((\F^\tf|_{\Delta_i})^\tf|_{D_{i-1}}), \ 1\leq i \leq k,$$
$$\delta_i^+(\F):= P((\F^\tf|_{\Delta_i})^\tf) - P((\F^\tf|_{\Delta_i})^\tf|_{D_i}), \ 0\leq i\leq k-1,$$
and set
$$\delta_0^-(\F):=P((\F^\tf|_Y)^\tf), \quad \delta_k^+(\F):= P((\F^\tf|_{\Delta_k})^\tf).$$


We have the following lemma.

\begin{lem} \label{identity_delta}
  \begin{enumerate}[1)]
  \setlength{\parskip}{1ex}

  \item $P_\F = \Err(\F) + \frac{1}{2} \sum_{i=0}^k (\delta_i^-(\F) + \delta_i^+(\F));$

  \item For $1\leq i\leq k-1$, the leading coefficients of $\delta_i^\pm$ are non-negative. They vanish if and only if $(\phi|_{\Delta_i})^\tf$ is $\C^*$-equivariant, and also if and only if $(\phi|_{\Delta_i})^\tf$ is a pull-back from $D_i$ or $D_{i-1}$.
  \end{enumerate}
\end{lem}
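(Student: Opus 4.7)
For part (1), the plan is straightforward additive bookkeeping. From the torsion exact sequence $0 \to \bigoplus_{i=0}^{k-1} \F_{\J_i} \to \F \to \F^\tf \to 0$ I get $P_\F = \sum_i P(\F_{\J_i}) + P(\F^\tf)$. Since $\F^\tf$ is by construction normal to every internal divisor $D_i$, iterating the splitting of Lemma \ref{J_split} at each node yields the Mayer--Vietoris formula
$$P(\F^\tf) = \sum_{i=0}^k P(\F^\tf|_{\Delta_i}) - \sum_{i=0}^{k-1} P(\F^\tf|_{D_i}),$$
with the convention $\Delta_0 := Y$. I would then expand each $P(\F^\tf|_{\Delta_i})$ via the defining short exact sequence for $(\F^\tf|_{\Delta_i})^\tf$, and rewrite each $P(\F^\tf|_{D_i})$ symmetrically as the average of its two natural decompositions — one from $\Delta_i$, one from $\Delta_{i+1}$ — both of which are short exact because $(\F^\tf|_{\Delta_i})^\tf$ is normal to $D_i$ and $\F^\tf|_{\Delta_i,\I_i^-}$ is supported off $D_i$. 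Collecting terms, the $\F_{\J_i}$ and the $\I$-torsion pieces assemble into $\Err_i(\F)$, while the admissible parts $(\F^\tf|_{\Delta_i})^\tf$ together with their node restrictions assemble into exactly $\tfrac{1}{2}(\delta_i^- + \delta_i^+)$.

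For part (2), set $G := (\F^\tf|_{\Delta_i})^\tf$, which is admissible on the bubble $\Delta_i$, and let $\pi : \Delta_i \to D_{i-1}$ be the $\Pj^1$-bundle projection (the contraction map of Lemma \ref{chi_p_*}). The short exact sequence $0 \to G(-D_{i-1}) \to G \to G|_{D_{i-1}} \to 0$ identifies $\delta_i^-(\F) = P(G(-D_{i-1}))$. Because $H$ and $\E$ are pulled back from $Y$ and $p|_{\Delta_i}$ factors through $\pi$, both $H|_{\Delta_i}$ and $p^*\E|_{\Delta_i}$ are pullbacks along $\pi$, so the projection formula gives
$$\delta_i^-(\F)(v) = \chi\bigl(D_{i-1},\ \pi_* G(-D_{i-1}) \otimes (\E|_{D_{i-1}})^\vee \otimes (H|_{D_{i-1}})^v\bigr),$$
once $R^1\pi_* G(-D_{i-1}) = 0$ is established — this follows from $R^1\pi_* G = 0$ (Lemma \ref{chi_p_*}) together with the surjection $\pi_*G \twoheadrightarrow G|_{D_{i-1}}$ in its proof, read off the long exact sequence of the displayed short exact sequence. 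Hence $\delta_i^-$ equals the modified Hilbert polynomial on the projective stack $D_{i-1}$ of a coherent sheaf, whose leading coefficient is the degree of its support on the coarse moduli with respect to the ample class descended from $H|_{D_{i-1}}$, hence non-negative. The argument for $\delta_i^+$ is the symmetric one using the opposite projection $\Delta_i \to D_i$.

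For the vanishing criterion I would argue both directions. If $G = \pi^*K$ is a pullback from $D_{i-1}$, then $\pi_* G(-D_{i-1}) = K \otimes \pi_*\Ou(-D_{i-1}) = 0$ since $\Ou(-D_{i-1})$ restricts to $\Ou_{\Pj^1}(-1)$ on each fiber, so $\delta_i^-$ vanishes identically; the symmetric calculation treats pullbacks from $D_i$. Any such pullback is tautologically $\C^*$-equivariant for the fiberwise action, and conversely Proposition \ref{equivariant_pullback} shows that an admissible $\C^*$-equivariant $G$ on $\Delta_i$ is a pullback from the appropriate section. For the remaining implication (leading-coefficient vanishing forces pullback), I would pass to the $\C^*$-equivariant flat limit $G_0$ of $G$ along the fiberwise $\C^*$-action; flatness preserves Hilbert polynomials and hence both leading coefficients of $\delta_i^\pm$, while $G_0$ becomes strictly $\C^*$-equivariant and decomposes into weight pieces. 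A direct weight-by-weight computation on the (possibly gerby) $\Pj^1$-fibers shows that every non-extremal weight piece contributes strictly positively to the leading coefficient of at least one of $\delta_i^\pm$, so simultaneous vanishing forces $G_0$ to be concentrated in the two extremal weights, hence a pullback from $D_{i-1}$ or $D_i$; by openness of admissibility and uniqueness of the $\C^*$-limit, the same holds for $G$. The main technical obstacle is this last weight analysis, where care is needed in the stacky setting to track how the generating sheaf $\E$ interacts with the $\C^*$-weight decomposition of $G_0$.
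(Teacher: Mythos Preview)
Your part (1) is correct and matches the paper's computation exactly: torsion exact sequence, Mayer--Vietoris via Lemma~\ref{J_split}, expand each $P(\F^\tf|_{\Delta_i})$ by its torsion decomposition, and split each node term symmetrically. The paper does the identical bookkeeping.

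For part (2), your non-negativity argument is also essentially the paper's: you correctly identify $\delta_i^-(\F)$ with the modified Hilbert polynomial of the coherent sheaf $\pi_*G(-D_{i-1})$ on the projective stack $D_{i-1}$ (equivalently, the kernel of $\pi_*G \twoheadrightarrow G|_{D_{i-1}}$), and this is exactly what ``simply follows from Lemma~\ref{chi_p_*}'' unpacks to.

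Where you go astray is the vanishing criterion. You read ``they vanish'' as a strictly weaker hypothesis than $\delta_i^\pm = 0$ and then propose a $\C^*$-limit plus weight analysis to close the gap. This is both unnecessary and, as you sketch it, not correct: even if the equivariant limit $G_0$ were concentrated in extremal weights, your passage back to $G$ (``by openness of admissibility and uniqueness of the $\C^*$-limit, the same holds for $G$'') does not work --- being a pullback is a closed condition, not an open one, and a flat limit being a pullback says nothing about the generic member.

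The paper's route is much shorter and uses only what you have already set up. Since $\delta_i^-$ is the genuine modified Hilbert polynomial of a coherent sheaf on $D$, it is either identically zero or has strictly positive top coefficient; there is no intermediate case. Vanishing of $\delta_i^-$ is therefore exactly the condition $P^{\E,H}_G = P^{\E,H}_{G|_{D_{i-1}}}$, and the second clause of Lemma~\ref{chi_p_*} then gives $G \cong \pi^* G|_{D_{i-1}}$ directly. The equivalence with $\C^*$-equivariance is Proposition~\ref{equivariant_pullback}, as you note. No weight analysis is needed.
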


\begin{proof}
  2) simply follows from Lemma \ref{chi_p_*}. For 1), let's compute the modified Hilbert polynomial. Recall that Lemma \ref{J_split} implies $\F^\tf = \ker(\bigoplus_{i=0}^k \F^\tf|_{\Delta_i} \rar \bigoplus_{i=0}^{k-1} \F^\tf|_{D_i})$. Thus
  \begin{eqnarray*}
    P(\F)
    &=& \sum_{i=0}^{k-1} P(\F_{\J_i}) + \sum_{i=0}^k P(\F^\tf|_{\Delta_i}) - \frac{1}{2} \sum_{i=1}^{k} P(\F^\tf|_{D_{i-1}}) - \frac{1}{2} \sum_{i=0}^{k-1} P(\F^\tf|_{D_i}) \\
    &=& \sum_{i=0}^{k-1} P(\F_{\J_i}) + \sum_{i=1}^k P(\F^\tf|_{\Delta_i,\I_i^-})+ \sum_{i=0}^k P(\F^\tf|_{\Delta_i,\I_i^+}) + \sum_{i=0}^k P((\F^\tf|_{\Delta_i})^\tf) \\
    && - \frac{1}{2} \sum_{i=1}^k P((\F^\tf|_{\Delta_i,\I_i^-})|_{D_{i-1}}) - \frac{1}{2} \sum_{i=0}^{k-1} P((\F^\tf|_{\Delta_i,\I_i^+})|_{D_i})
     - \frac{1}{2} \sum_{i=1}^k P((\F^\tf|_{\Delta_i})^\tf|_{D_{i-1}}) \\
    && - \frac{1}{2} \sum_{i=0}^{k-1} P((\F^\tf|_{\Delta_i})^\tf|_{D_i}).
  \end{eqnarray*}
  By definition we have
  \begin{eqnarray*}
    \Err(\F)
    &=& \sum_{i=0}^{k-1} P(\F_{\J_i}) + \sum_{i=0}^k P(\F^\tf|_{\Delta_i,\I_i^+})+ \sum_{i=1}^k P(\F^\tf|_{\Delta_i,\I_i^-}) - \frac{1}{2} \sum_{i=0}^{k-1} P((\F^\tf|_{\Delta_i,\I_i^+})|_{D_i}) \\
    && - \frac{1}{2} \sum_{i=1}^k P((\F^\tf|_{\Delta_i,\I_i^-})|_{D_{i-1}}).
  \end{eqnarray*}
  Then
  \begin{eqnarray*}
    P(\F)-\Err(\F)
    &=& \sum_{i=0}^k P((\F^\tf|_{\Delta_i})^\tf) - \frac{1}{2} \sum_{i=1}^k P((\F^\tf|_{\Delta_i})^\tf|_{D_{i-1}}) - \frac{1}{2} \sum_{i=0}^{k-1} P((\F^\tf|_{\Delta_i})^\tf|_{D_i}) \\
    &=& \frac{1}{2} \sum_{i=0}^k (\delta_i^-(\F)+ \delta_i^+(\F)).
  \end{eqnarray*}
\end{proof}

To compare the polynomials, we introduce the following order in the set of Hilbert polynomials. For two polynomials $f(v)= a_r \frac{v^r}{r!} + \cdots$ and $g(v)= b_s \frac{v^s}{s!} + \cdots$, where $a_r,b_s\in \Z_+$. We say $f\prec g$, if $r<s$ or $r=s$ and $a_r<b_s$; $f \preceq g$, if ``$<$" is replaced by $\leq$. Note modified Hilbert polynomials are actually of this kind, since they can be viewed as usual Hilbert polynomials on coarse moduli spaces. Another observation is that under this order the space of Hilbert polynomials satisfies the strict descending condition: any strictly descending chain $f_1\succ f_2\succ \cdots$ must attains 0 at some finite step.

\subsection{Properness}

In this subsection we prove the properness.

\begin{thm} \label{thm_properness}
  $\fQuot_{\fY/\fA}^{\cV,P}$ (resp. $\fQuot_{\fX/\fC}^{\cV,P}$) is a complete Deligne--Mumford stack over $\C$ (resp. over $\A^1$). In particular, if the associated modified Hilbert polynomial $P(H^{\otimes v})$ has degree 0 or 1, the stack is proper (resp. over $\A^1$).
\end{thm}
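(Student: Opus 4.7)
The plan is to verify the valuative criterion of properness, which together with the boundedness of Proposition \ref{prop_boundedness} and the separatedness of Proposition \ref{prop_sep} will give the theorem. Let $R$ be a complete DVR with algebraically closed residue field $\kappa$, uniformizer $u$, fraction field $K$, generic point $\eta=\Sp K$ and closed point $\eta_0=\Sp\kappa$. Given a generic object $(\bar\xi^\circ,\bar\phi^\circ)\in \fQuot_{\fY/\fA}^{\cV,P}(\eta)$, I must extend it, after possibly a finite base change on $R$, to an object over $S=\Sp R$. Since $R$ is strictly Henselian the \'etale cover representing $\bar\xi^\circ$ trivializes, and after applying the $\sim$-equivalence I may put the representative in normal form $\xi^\circ:\eta\rar \A^k$, $t_i\mapsto u^{e_i}$ for $i\leq l$ and $t_i\mapsto 0$ otherwise; this extends tautologically to $\xi:S\rar \A^k$.

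The first step is a naive flat extension: pull back to get $\cY_S=\xi^*Y(k)$ and apply properness of the ordinary Quot-space $\Quot_{Y(k)/\A^k}^{p^*\cV}$ (Theorem 1.5 of \cite{OS}) to extend $\phi^\circ$ uniquely to a flat quotient $\phi$ on $\cY_S$. The central fiber $\phi|_{\eta_0}$ will generally fail to be admissible or stable, and the substance of the proof is an iterative modification driven by the error polynomial $\Err^{\E,H}(\phi|_{\eta_0})$ of Section 5.3, which vanishes exactly when $\phi|_{\eta_0}$ is admissible. If it does not vanish, I locate a divisor $D_i$ along which $\phi|_{\eta_0}$ carries nontrivial torsion or fails to split, pass to a finite extension $R'\supset R$ to accommodate ramification, and refine the target by inserting a bubble via the successive blow-up construction (Proposition \ref{succ_bl}) in order to absorb the torsion into the freshly inserted component. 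The $\C^*$-equivariant flat-limit identifications of Lemma \ref{lem_equivariant_limit} and Proposition \ref{equivariant_pullback}, combined with the splitting of Corollary \ref{normality_split} and the base-change compatibility of Lemma \ref{3.13}, describe the new central fiber and show that its error polynomial is strictly smaller under the order $\prec$.

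Because the set of Hilbert polynomials satisfies the descending chain condition under $\prec$, the modification terminates in finitely many steps, producing an admissible limit. To upgrade admissibility to stability I perform one further kind of modification: any bubble $\Delta_i$ on which $(\F|_{\Delta_i})^\tf$ is $\C^*$-equivariantly pulled back from a boundary divisor (detected by the vanishing of $\delta_i^\pm$ via Lemma \ref{identity_delta}(2)) can be contracted, collapsing a $\C^*$-factor in $\Aut(\phi)$ without altering the other data; openness of stability (Proposition \ref{open_stable}) then guarantees the resulting limit is stable once no such bubbles remain. The degeneration case is parallel: $Y_\pm$ enter the error polynomial symmetrically and $\A^1$-relativity is automatic because $\xi$ is an $\A^1$-map. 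The hardest step, and the technical core of the argument, is proving strict decrease of the error polynomial at each modification: this demands local analysis near $D_i$ to verify that, after base change and blow-up, the torsion along the old $D_i$ reorganizes into an admissible piece supported on the inserted bubble, which is exactly the content of an analogue of Lemma 3.13 of \cite{LW} together with the torsion/restriction structure of Lemma \ref{lem_normality_X}. Finally, the last sentence of the theorem — properness rather than just completeness when $\deg P(H^{\otimes v})\leq 1$ — follows because boundedness (Proposition \ref{prop_boundedness}) then yields finite type, so completeness promotes to properness.
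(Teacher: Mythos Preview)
Your outline has the right scaffolding --- valuative criterion, naive flat extension, iterative modification governed by the error polynomial, descending chain condition --- but the mechanism you describe for the key inductive step is not the one that actually works, and as stated it has a gap.

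You write that to reduce $\Err(\phi|_{\eta_0})$ you ``pass to a finite extension $R'\supset R$ \ldots\ and refine the target by inserting a bubble via the successive blow-up construction in order to absorb the torsion into the freshly inserted component,'' and that Lemmas~\ref{lem_equivariant_limit}, \ref{3.13} then show the error strictly drops. This is essentially the argument of Step~1 (normality to the \emph{distinguished} divisor), where a single blow-up plus a local coordinate computation does suffice. But for an \emph{internal} singular divisor $D_l$ the situation is more delicate: a single blow-up inserts a bubble $\tilde\Delta$ and replaces $D_l$ by two divisors $D_l^-,D_l^+$, and there is no a priori reason the sum $\Err_{l^-}+\Err_{l^+}$ of the new errors is smaller than the old $\Err_l$. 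The paper resolves this by a genuinely different device (Lemma~\ref{key_lem_properness}): one forms a two-parameter family via $S\times S\to S$, $(v,w)\mapsto vw$, obtains a $\C^*$-equivariant rational map from the punctured surface to the Quot scheme, and resolves the indeterminacy to get a chain of $\Pj^1$'s $\Sigma_1\cup\cdots\cup\Sigma_m$ in the central fiber. Lemma~\ref{3.13} is used not to show ``torsion reorganizes into an admissible piece'' but to prove $\Err_{l^\pm}$ are constant along the appropriate half of each $\Sigma_i$; combined with the endpoint values $\Err_{l^-}(\tilde\F|_{q_0})=0$ and $\Err_{l^-}(\tilde\F|_{q_m})=\Err_l(\F|_{\eta_0})$, this forces some intermediate $\Sigma_i$ on which a generic point $a$ has strictly smaller total error \emph{and} nonzero $\delta_\sim^\pm$. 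One then takes a curve $S'\subset V$ through $a$. Your proposal does not contain this idea, and without it the strict decrease of $\Err$ is unproven.

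Your treatment of stability also diverges from the paper. You propose to \emph{contract} any bubble on which the restriction is a $\C^*$-equivariant pullback; the paper instead (Step~3) uses a $\C^*$-twist of the generic family followed by the same two-dimensional resolution trick to find a limit with $\delta_\Delta^\pm\neq 0$. Contraction is plausible when $(\F^\tf|_{\Delta_i})^\tf\cong p^*(\F|_{D_{i-1}})$, but note that the paper's order of operations is reversed from yours: it secures finite autoequivalences on each bubble \emph{first} (via Lemma~\ref{step_3}), and then runs the admissibility step, which by construction introduces only components with $\delta_\sim^\pm\succ 0$ and hence preserves finiteness of $\Aut$. In your order you would need to check that contraction does not destroy admissibility at the adjacent divisors, and more importantly that after contraction you still have a flat $S$-family extending the given generic object --- this is not automatic.
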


\begin{rem}
  Here by complete we mean that it satisfies the valuative criterion for properness, but is not necessarily quasi-compact. In other words, it is separated and universally closed. We expect the ``dimension$\leq$1" condition to be superfluous and the properness is true for moduli of stable quotients in any dimensions, which is treated in \cite{LW}, but for simplicity we just pose this assumption and restrict to the 1-dimensional case.
\end{rem}

Again let $S=\Sp R$, where $R$ is a complete discrete valuation ring with uniformizer $u\in R$, generic point $\eta=\Sp K$ and closed point $\eta_0=\Sp k$. As remarked in the separatedness part, we also work on $S$ directly instead of passing to an \'etale covering. Let $(\bar\xi_\eta, \bar\phi_\eta): \eta \rar \fQuot_{\fY/\fA}^{\cV,P}$ be an object of $\fQuot_{\fY/\fA}^{\cV,P} (\eta)$, represented by some map $\xi_\eta: \eta\rar \A^k$, and a stable quotient $\phi_\eta: p^*\cV\rar \F_\eta$ on $\cY_\eta$. By the valuative criterion, it suffices to extend this object over $\eta_0$ after some finite base change of $S$.

Applying the $\sim$ equivalence relation, one can always make $\xi_\eta$ in the following form
$$\xi_\eta: \eta\rar \A^k, \quad \C[t_1,\cdots,t_k] \rar K,$$
$$(t_1,\cdots,t_k) \mapsto (1,\cdots,1,0,\cdots,0),$$
where $t_i\mapsto 1$ for the first $l$ (possibly $0$) coordinates. Note that if $l> 0$, then $\xi_\eta$ actually factors through the standard embedding $\{(1,\cdots,1)\}\times \A^{k-l} \hookrightarrow \A^k$. In this case we can pull everything back to $\A^{k-l}$ and work on $\A^{k-l}$ instead of $\A^k$. Thus we can assume that $l=0$.

Now for this $\xi_\eta$ there is a naive extension $\xi: S\rar \A^k$, which maps $S$ constantly to $0\in \A^k$. Then $\cY_S\cong S\times Y[k]$, and by the completeness of the ordinary Quot-space $\Quot_{Y[k]}^{p^*\cV}$ (Theorem 1.1 of \cite{OS}), $\phi_\eta$ extends to some $\phi$ over $S$. If $\phi|_{\eta_0}$ is stable, then we are done; otherwise, we need to modify our extension $\xi$. We'll see that a good modification is always available.

\textbf{Step 1.} Normal to the distinguished divisor.

\begin{lem} \label{normal to dist divisor}
    Let $\xi: S\rar \A^k$ be a map and $\phi: p^*\cV\rar \F$ be a quotient on $\cY_S$, flat over $S$, such that $\phi|_\eta$ is admissible. Then there exists $(\xi',\phi')$, with $\xi': S\rar \A^{k'}$ and $S$-flat quotient $\phi'$ on $\cY'_S$, such that $(\xi',\phi')|_\eta \cong (\xi,\phi)|_\eta$, and $\phi'$ is normal to the distinguished divisor $\cD'_S\subset \cY'_S$.
  \end{lem}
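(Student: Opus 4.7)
The plan is to induct on a well-founded invariant measuring the failure of normality at the distinguished divisor. Let $\J$ denote the ideal sheaf of $\cD_S|_{\eta_0} \subset \cY_{S,\eta_0}$ on the central fiber, and take as the invariant the modified Hilbert polynomial $P^{\E,H}(\F|_{\eta_0,\J})$ with respect to the order $\prec$. When this polynomial vanishes, $\phi|_{\eta_0}$ is already normal to $\cD_S$ and I set $(\xi',\phi') := (\xi,\phi)$. Otherwise I would construct a new pair $(\tilde\xi,\tilde\phi)$ whose corresponding invariant is strictly smaller in $\prec$ and then iterate, the process terminating by the descending chain condition.

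The construction is to insert a single new bubble at the distinguished divisor using the successive blow-up model of Proposition~\ref{succ_bl}. Take $Y(k+1) = \Bl_{D(k)\times 0}(Y(k)\times \A^1)$ with its family map $\pi: Y(k+1) \rar \A^{k+1} = \A^k \times \A^1$, and let $u \in R$ be the uniformizer. Define
\[
\tilde\xi := (\xi,u) : S \rar \A^{k+1},
\]
and set $\tilde\cY_S := \tilde\xi^* Y(k+1)$ with new distinguished divisor $\tilde\cD_S := \tilde\xi^* D(k+1)$. Since $u$ is invertible over $\eta$ and $Y(k+1)|_{\A^k \times \C^*} \cong Y(k) \times \C^*$ by the properties of the standard family, there is a canonical identification $\tilde\cY_S|_\eta \cong \cY_S|_\eta$ compatible with the distinguished divisors. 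Over $\eta_0$ a new bubble $\Delta_{k+1}$ is glued to the old central fiber along $D_k$, and the new distinguished divisor is the far section $D_+ \subset \Delta_{k+1}$. By properness of the ordinary Quot-scheme $\Quot^{p^*\cV}_{Y(k+1)/\A^{k+1}}$ (Theorem 1.1 of~\cite{OS}), the quotient $\phi|_\eta$ extends uniquely to an $S$-flat quotient $\tilde\phi: p^*\cV \rar \tilde\F$ on $\tilde\cY_S$.

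To verify that the invariant strictly decreases on passing from $(\xi,\phi)$ to $(\tilde\xi,\tilde\phi)$, analyze $\tilde\F|_{\eta_0}$ on $Y[k+1]$ using the natural $\C^*$-action on the coordinate $t_{k+1}$, which lifts to $Y(k+1)$ acting fiberwise along $\Delta_{k+1}$ and trivially on the $Y(k)$ factor. By $\C^*$-equivariance together with Lemma~\ref{lem_equivariant_limit}, the restriction of $\tilde\F|_{\eta_0}$ to the complement $\Delta_{k+1} \setminus D_+$ is forced to be a pullback from $D_k$ of the flat limit of $\F|_\eta$; Lemma~\ref{3.13} identifies this limit locally around the node $D_k$ with the torsion $(\F|_{\eta_0})_\J$ of the original quotient at the distinguished divisor. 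By Proposition~\ref{equivariant_pullback}, any such pullback is automatically normal to $D_+$, so the portion of torsion that has been transported onto the new bubble contributes zero to the new invariant $P^{\E,H}(\tilde\F|_{\eta_0,\tilde\J})$, where $\tilde\J$ is the ideal of $\tilde\cD_S|_{\eta_0} = D_+ \subset \Delta_{k+1}$. This gives the required strict drop $P^{\E,H}(\tilde\F|_{\eta_0,\tilde\J}) \prec P^{\E,H}(\F|_{\eta_0,\J})$.

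The main obstacle is exactly this numerical comparison: $\tilde\phi$ is defined abstractly by the Quot-scheme universal property, so one must use $\C^*$-equivariance together with Lemma~\ref{3.13} to pin down the local behavior of $\tilde\F|_{\eta_0}$ at the node $D_k \subset Y[k+1]$, and then check that the movement of torsion onto $\Delta_{k+1}$ is genuinely destructive rather than merely redistributive. Once this is established, the rest is formal: each iteration strictly decreases a modified Hilbert polynomial under $\prec$, so the process terminates after finitely many bubble insertions. Combining this with the hypothesis that $\phi|_\eta$ remains admissible and Proposition~\ref{open_XY_S} then gives an $S$-flat $\phi'$ on the resulting $\tilde\cY'_S$ that is normal to $\cD'_S$, as required.
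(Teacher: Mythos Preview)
Your overall strategy --- insert a bubble at the distinguished divisor via the successive blow-up model and argue that a torsion invariant strictly drops --- matches the paper's. But the mechanism you propose for the strict decrease does not work, and this is the heart of the argument.

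The problem is the appeal to $\C^*$-equivariance. The flat extension $\tilde\phi$ on $\tilde\cY_S$ is \emph{not} $\C^*$-equivariant: the map $\tilde\xi = (\xi,u): S\rar \A^{k+1}$ is not equivariant for the $\C^*$-action on $t_{k+1}$, because there is no compatible $\C^*$-action on the discrete valuation ring $S$ sending the uniformizer $u$ to $\lambda u$. Consequently Lemma~\ref{lem_equivariant_limit} does not apply, and you cannot conclude that $\tilde\F|_{\eta_0}$ restricted to $\Delta_{k+1}\setminus D_+$ is a pullback from $D_k$. Lemma~\ref{3.13} is likewise inapplicable: its hypothesis is a $\C^*$-equivariant quotient on the trivial family $\Sp\C[x,y,t]/(xy)$, whereas the local model of $\tilde\cY_S$ near the new node $D_k$ is the \emph{smoothing} $\Sp R[y,v,\vec z]/(yv-u)$, with no equivariance. (The genuine $\C^*$-equivariant setups in the properness proof appear only in Steps~2 and~3, where a two-parameter base $S\times S$ is introduced precisely to create one; Step~1 has no such device.) Finally, invoking Proposition~\ref{equivariant_pullback} to get normality at $D_+$ is circular: that proposition already assumes admissibility.

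The paper's proof avoids all of this by working in the explicit local chart $\Sp R[y,\vec z]$ near $D[k]\times\eta_0$. Writing the kernel as $K=(f_1,\dots,f_m)$ with $f_i(u,y)=c_i+u^{\alpha_i}g_i(u)+yh_i(u,y)$ and $g_i(0)\neq 0$, non-normality at $D[k]$ forces all $c_i=0$. After the blow-up $y=uw$, one factors out the maximal power of $u$ from each $f_i(u,uw)$ and finds new generators of the same shape with strictly smaller exponents $\alpha_i' = \alpha_i - \gamma_i < \alpha_i$. The well-founded invariant is thus the tuple of integers $(\alpha_i)$, not a Hilbert polynomial, and the strict decrease is a direct coordinate computation. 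You should replace the equivariance argument with this local analysis.
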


  \begin{proof}
    Consider $D[k]\subset \cY_{\eta_0}\cong Y[k]$. If $\phi|_{\eta_0}$ is normal to $D[k]\times \eta_0$, then we are done. Suppose this is not the case. Let's look at what this means in a local chart. \'Etale locally near a point in $D[k]\times \eta_0$, the local model of $\cY_S$ can be taken as $U= \Sp A:= \Sp R[y,\vec{z}]$, where $(D[k]\times \eta_0)|_U$ is defined by the ideal $(u,y)$. Geometrically, $\vec z$ stands for coordinates in $D$, and $y=0$ is the local defining equation for $D$.

    The quotient $\phi$ is represented by a sequence $0\rar K\rar A^{\oplus r} \rar M\rar 0$. Assume that $K=(f_1(u,y),\cdots,f_m(u,y))$, with generators
    \begin{equation} \label{generator_K}
    f_i(u,y)=c_i + u^{\alpha_i} g_i(u) + y h_i(u,y) \in A^{\oplus r},
    \end{equation}
    where $c_i\in k[\vec{z}]^{\oplus r}$, $g_i\in k[\vec{z}]^{\oplus r}[u]$, $h_i\in k[\vec{z}]^{\oplus r}[u,y]$, $g_i(0)\neq 0$ and $\alpha_i \geq 1$. By generic normality, $c_i+u^{\alpha_i} g_i(u)\neq 0$. Assume that these generators are minimal in the sense that $m$ is as small as possible. One can easily observe that $M$ is flat over $R$ if and only if $\forall i$, $u\nmid f_i(u,y)$, i.e. $c_i + y h_i(0,y)\neq 0$.

    Restricting to $\eta_0$ and using flatness, $\phi|_{\eta_0}$ is given by the sequence $0\rar K_0\rar A_0^{\oplus r}\rar M_0\rar 0$, with $K_0=(f_1(0,y),\cdots,f_m(0,y))$. The subscript 0 here means restriction to $\eta_0$. By our assumption, $M_0$ is not normal to $D[k]$, which is equivalent to $y\mid f_i(0,y)$, i.e. $c_i= 0$, for some $i$.

    To modify this family, we apply successive blow-ups at the divisor $D[k]\times \eta_0\subset \cY_S$; in other words, we take the modified family as
    $$\cY'_S \cong \Bl_{D[k]\times \eta_0} \cY_S.$$
    This is still a family of expanded pairs, as it fits into the diagram
    $$\xymatrix{
    \cY'_S \ar[r] \ar[d] & Y(k+1) \ar[r]^-{p_{\Bl}} \ar[d] & Y(k) \ar[d] \\
    S \ar[r]^-{\xi'} & \A^{k+1} \ar[r]^-{\pr_{1,\cdots,k}} & \A^k,
    }$$
    where $\xi'$ is given by $(t_1,\cdots,t_k,t_{k+1})\mapsto (u^{e_1},\cdots, u^{e_k},u)$.

    The the local model of $\cY'_S$ is the blow up of $\Sp R[y,\vec{z}]$ at the ideal $(u,y)$. Thus \'etale locally around a point in the new distinguished divisor $D[k+1]\times \eta_0\subset \cY'_S$, the local model is $\Sp B:= \Sp R[w,\vec{z}]$, with $y=uw$.

    By completeness of the ordinary Quot-space, $\phi|_\eta$ extends to an $S$-flat quotient $\phi'$ on the new family. Locally $\phi'$ is given by a sequence $0\rar K'\rar B^{\oplus r}\rar M'\rar 0$. By construction of flat limit one has
    $$K'=K_u \cap B^{\oplus r},$$
    where we view $A\subset B\subset B_u = A_u$ as submodules. Then $$K'=(f_1(u,uw),\cdots,f_m(u,uw))A_u^{\oplus r} \cap B^{\oplus r},$$
    $$f_i(u,uw)= u^{\alpha_i} g_i(u) + uw h_i(u,uw),$$
    from which we see that $u\mid f_i(u,uw)$.

    Take $\gamma_i:= \min \{ \alpha_i, \text{ord} (h)+1 \} \geq 1$, where $\text{ord}(h)$ is the minimal degree of the monomials of $h$ in $u$ and $y$. We have
    $$f_i(u,uw)= u^{\gamma_i} \left( u^{\alpha_i-\gamma_i} g_i(u) + w \frac{h_i(u,uw)}{u^{\gamma_i-1}} \right)= u^{\gamma_i} \left( u^{\alpha_i-\gamma_i} g_i(u) + w h'_i(u,w) \right),$$
    where $h'_i$ is some polynomial with $u\nmid \left( u^{\alpha_i-\gamma_i} g_i(u) + w h'_i(u,w) \right)$.

    After localizing at $u$ and intersection with $B^{\oplus r}$, we may take
    $$f'_i(u,w):= u^{\alpha'_i} g_i(u) + w h'_i(u,w)$$
    as the generators of $K'$, where $\alpha'_i:=\alpha_i-\gamma_i <\alpha_i$.

    If $\alpha'_i>0$ for some $i$, then $c'_i$ as similarly defined in (\ref{generator_K}) for $f'_i$ is still 0. We can  repeat the procedure above to further decrease $\alpha'_i$. After finitely many steps we get all $\alpha'_i$ to be 0, i.e. all $c'_i$ are nonzero; in other words, $\phi'|_{\eta_0}$, after successive blow-ups, would be normal to $D[k]\times \eta_0$.
  \end{proof}

\textbf{Step 2.} Admissibility. The crucial lemma in this step is the following.

\begin{lem} \label{key_lem_properness}
  Let $\xi: S\rar \A^k$ be a map and $\phi: p^*\cV\rar \F$ be a quotient on $\cY_S$, flat over $S$, such that $\cY_S|_\eta$ is smooth over $\eta$, $\phi|_\eta$ is admissible. Then there exists a finite base change $S'\rar S$, and $(\xi', \phi')$, with $\xi': S'\rar \A^{k'}$, and $S'$-flat quotient $\phi'$ on $\cY_{S'}$, such that $(\xi',\phi') \cong (\xi,\phi) \times_\eta \eta'$, where $\eta'$ is the generic point of $S'$, $\phi'$ is admissible, and $\left[ \Aut(\phi'|_{\eta'_0}): \Aut(\phi|_{\eta_0}) \right]$ is finite.
\end{lem}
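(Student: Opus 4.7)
The plan is to produce $(\xi',\phi')$ via an iterative modification that alternates a ramified base change with a bubble insertion, and to use the total error polynomial $\Err(\F|_{\eta_0})$ introduced above as a monovariant that strictly decreases in the order $\prec$ at each step. Since $\prec$ on the space of Hilbert polynomials satisfies the descending chain condition, the iteration terminates after finitely many steps, producing an admissible limit.

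First I would assume, by the Step~1 lemma, that $\phi$ is already normal to the distinguished divisor $\cD_S\subset\cY_S$. Any remaining failure of admissibility of $\phi|_{\eta_0}$ can then only occur along an interior divisor $D_i\subset\cY_{\eta_0}$ for some $0\le i\le k-1$. By Proposition~\ref{adm_supp} this failure is detected either by a nonzero torsion subsheaf $(\F|_{\eta_0})_{\J_i}$ or by a nonzero boundary piece $\F^{\tf}|_{\Delta_j,\I_j^{\pm}}$ at $D_i$, and contributes positively to the leading coefficient of $\Err_i(\phi|_{\eta_0})$. \'Etale locally around a point of $D_i\times\eta_0$ the total space is modeled on $\Sp R[x,y,\vec z]/(xy-u^{e_i})$ for some $e_i\ge 1$, where $u$ is a uniformizer of $R$. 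After the ramified base change $u\mapsto u'^{e_i}$ and the successive blow-up construction of Proposition~\ref{succ_bl}, I obtain a new family $\cY_{S'}\to S'$ of expanded pairs induced from a map $\xi':S'\to\A^{k'}$ with $k'=k+e_i-1$, whose central fiber inserts $e_i-1$ extra bubbles at the node $D_i$. By the completeness of the ordinary Quot space (Theorem~1.1 of~\cite{OS}), the generic quotient $\phi|_{\eta'}$ extends uniquely to an $S'$-flat quotient $\phi'$ on $\cY_{S'}$.

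The main obstacle, and the technical heart of the argument, is to show that this modification strictly decreases $\Err$. Using Lemma~\ref{3.13} to match the torsion behavior of $\phi'$ on the new central fiber with that of $\phi$ after base change, and combining the identity $P_\F=\Err(\F)+\tfrac12\sum_i(\delta_i^-+\delta_i^+)$ from Lemma~\ref{identity_delta} with the constancy of the modified Hilbert polynomial $P^{\E,H}_\F$ in flat families, I would argue that the problematic torsion or boundary contribution is forced onto one of the new bubbles as a nontrivial $(\phi'|_{\Delta_j})^{\tf}$ which is neither a pullback from $D_i$ nor $\C^*$-equivariant. By 2) of Lemma~\ref{identity_delta} the corresponding $\delta_j^\pm$ then has strictly positive leading coefficient, absorbing part of the old error; an accounting of the short exact sequences $0\to \F^{\tf}|_{\Delta_j,\I_j^\pm}\to \F^{\tf}|_{\Delta_j}\to (\F^{\tf}|_{\Delta_j})^{\tf}\to 0$ on each inserted bubble then yields $\Err(\phi'|_{\eta_0'})\prec \Err(\phi|_{\eta_0})$. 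Iterating until the error polynomial vanishes finishes the construction in finitely many steps.

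Finally I would bound the autoequivalence index. The ramified base change $u\mapsto u'^{e_i}$ at worst introduces a $\mu_{e_i}$-action, and each newly inserted bubble carries a non-equivariant, non-pullback quotient, so its $\C^*$-factor acts with finite stabilizer on $\phi'|_{\eta_0'}$ by 2) of Lemma~\ref{identity_delta}; the discrete symmetries only permute coordinates indexing equivalent strata and therefore contribute at most finite extensions. Assembling these observations gives $[\Aut(\phi'|_{\eta_0'}):\Aut(\phi|_{\eta_0})]<\infty$ at each modification step, hence also after finitely many iterations, proving the lemma.
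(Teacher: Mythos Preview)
Your proposal has a genuine gap at the step where you claim the single modification (ramified base change $u\mapsto u'^{e_i}$ plus successive blow-ups at $D_i$) forces $\Err(\phi'|_{\eta'_0})\prec\Err(\phi|_{\eta_0})$. You assert that the torsion or boundary contribution is ``forced onto one of the new bubbles as a nontrivial $(\phi'|_{\Delta_j})^{\tf}$ which is neither a pullback from $D_i$ nor $\C^*$-equivariant,'' but you give no mechanism for this. In fact it can fail: for a fixed direction of approach to the new node, the flat limit on each inserted bubble may well be a $\C^*$-equivariant pullback from its boundary divisor, so that all new $\delta_j^\pm$ vanish and the error simply migrates to one of the new nodes with the same leading term. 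The identity $P_\F=\Err(\F)+\tfrac12\sum(\delta^-+\delta^+)$ alone cannot rule this out, because both summands can stay constant.

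The paper handles exactly this difficulty by considering not a single modification but a two-parameter family $S\times S\to\A^{k+1}$ (splitting the coordinate $t_l$ multiplicatively) which parameterizes all possible directions of inserting a bubble at $D_l$. The induced rational map to the Quot scheme is resolved by successive point blow-ups, producing a chain of rational curves $\Sigma_1\cup\cdots\cup\Sigma_m$ over $\eta_0\times\eta_0$. The key technical lemma (using Lemma~\ref{3.13}) shows that along this chain, $\Err_{l^-}$ at the intersection points interpolates from $0$ at $q_0$ to $\Err_l(\F|_{\eta_0})$ at $q_m$, while $\Err_{l^-}+\Err_{l^+}$ stays constant at the intersection points $q_j$. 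Hence there is some $i$ with $\Err_{l^-}(\tilde\F|_{q_i})\prec\Err_{l^-}(\tilde\F|_{q_{i+1}})$; for a general point $a\in\Sigma_{i+1}$ one then gets $\Err_{l^-}(\tilde\F|_a)+\Err_{l^+}(\tilde\F|_a)\prec\Err_l(\F|_{\eta_0})$, and the gap is absorbed by a strictly positive $\delta_\sim^\pm$ on the new bubble, which also guarantees finite automorphisms there. One then chooses a curve $S'\subset V$ through $a$ dominating $S$. In short, the argument is an intermediate-value search over a one-parameter family of limits, not a single prescribed blow-up; your direct modification does not come with any such selection mechanism, and that is the missing idea.
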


One can assume that $\xi: S\rar \A^k$ is of the form $\C[t_1,\cdots,t_k]\rar R$ given by
$$(t_1,\cdots,t_k) \mapsto (u^{e_1},\cdots,u^{e_k}),$$
where $e_i\geq 1$. Moreover, by Step 1, we assume that $\phi|_{\eta_0}$ is normal to $D[k]$.

Pick $1\leq l\leq k-1$, such that $\deg \Err_l(\phi|_{\eta_0}) =\deg \Err(\phi|_{\eta_0})$, and we would like to modify the family and reduce this $l$-th error. Our strategy is to embed $\eta$ into a lager $\A^{k+1}$, take the flat limit in the larger target and somehow resolve the error to the new introduced divisor. However, the generic point can approaches $0\in \A^{k+1}$ in many different ways. To parameterize those different directions, we apply the following procedure, analogous to the construction of $X(1)$.

  Consider the map $m: S\times S\rar S$ given by $R\rar R\otimes R$, $u\mapsto v\otimes w$, which fits in the diagram
  $$\xymatrix{
  S\ar[r]^-\alpha  & S\times S \ar[r]^{\tilde\xi} \ar[d]_m & \A^{k+1} \ar[d]^{(t_l,t_{l+1})\mapsto t_l t_{l+1}} \\
  & S \ar[r]^\xi & \A^k.
  }$$
  Then $\tilde\xi: S\times S\rar \A^{k+1}$ is given by
  $$\quad \C[t_1,\cdots,t_{k+1}] \rar R \otimes R,$$
  $$(t_1,\cdots,t_{k+1}) \mapsto (u^{e_1},\cdots,u^{e_{l-1}},v^{e_l}, w^{e_l}, u^{e_{l+1}},\cdots,u^{e_k}),$$
  where $u=vw$, and we view $(v,w)$ as uniformizers of $S\times S$. Let $\C^*$ act on $S\times S$ by $\lambda \cdot (v,w):= (\lambda v, \lambda^{-1} w)$.

  Any map $\alpha: S\rar S\times S$ will give an arrow $\tilde\xi \circ \alpha: S\rar \A^{k+1}$, which is isomorphic to $\xi_\eta$ over the generic point. Note that there are two obvious such $\alpha$'s from the standard embedding, $\alpha: S\hookrightarrow \{1\} \times S$ and $S\hookrightarrow S\times \{1\}$; and if $e_l=1$ they are the only one's. (Here by $1\in S$ we mean the $\C$-point $\Sp\C\rar \eta$ given by $K\rar \C$, $u\mapsto 1$.) In general $m\circ \alpha: S\rar S$ is a finite base change. $\tilde\xi: S\times S\rar \A^{k+1}$ contains the information of all possible further embeddings of the original family.

  By $\C^*$-action, the stable quotient $\phi_\eta$ on $\cY_\eta\cong \cY_{\{1\}\times \eta}$ extends to a $\C^*$-equivariant stable quotient on $\cY_{\eta\times \eta}$, which furthermore extends to an equivariant family of quotients on $\cY_{S\times S-\eta_0\times \eta_0}$, by the original extension given in the assumption of Lemma \ref{key_lem_properness}. This extension on $\cY_{S\times S-\eta_0\times \eta_0}$ is stable over the points $\eta\times \eta_0$ and $\eta_0\times \eta$.

  Now for the origin $\eta_0\times \eta_0$, the problem comes that different choices of $\alpha$ give different flat limits. In other words, the equivariant family of quotients over $\cY_{S\times S-\eta_0\times \eta_0}$ is equivalent to an equivariant map $f: S\times S- \eta_0\times \eta_0 \rar \Quot_{Y(k+1)/\A^{k+1}}^{p^*\cV,P}$, but it does not necessarily extends to the codimension-2 point $\eta_0\times \eta_0$.

  We need the resolution of indeterminacy. Here $S\times S- \eta_0\times \eta_0$ is a smooth surface, and $\Quot_{Y(k+1)/\A^{k+1}}^{p^*\cV,P}$ is a projective scheme over $\A^{k+1}$ by Theorem 1.5 of \cite{OS}. In this case, $f$ extends to some $\tilde f: V\rar \Quot_{Y(k+1)/\A^{k+1}}^{p^*\cV,P}$, where $V\rar S\times S$ is a composite of successive blow-ups at points, and the exceptional divisor $E$ is a chain of rational curves. Moreover, $\tilde f$ is $\C^*$-equivariant with respect to the canonical $\C^*$-actions on both sides.

  Let's describe the family $(\cY_V,\tilde\phi)$ induced by $\tilde f$. Let $E=:\Sigma_1 \cup \cdots \cup \Sigma_m$ be the exceptional divisor of $V\rar S\times S$; let $\Sigma_0$ and $\Sigma_{m+1}$ be the proper transforms of $S\times \eta_0$ and $\eta_0\times S$ respectively. The only intersections are $q_i:= \Sigma_i\cap \Sigma_{i+1}$, $0\leq i\leq m$. Then by properties of $X(k)$ (Proposition \ref{Li_Xk}) we have
  $$\cY_V|_{\Sigma_0}\cong \Bl_{D_l\times \eta_0} (\cup_l \Delta \times S) \cup_{D\times S} (Y[k-l]\times S),$$
  $$\cY_V|_{\Sigma_{m+1}} \cong (Y[l]\times S)\cup_{D\times S} \Bl_{D\times \eta_0} (\cup_{k-l} \Delta\times S),$$
  and $\cY_V|_{\Sigma_i} \cong \Pj^1\times Y[k+1]$, for $1\leq i\leq m$. Then $\tilde \phi: p^*\cV\rar \tilde\F$ is a $\C^*$-equivariant quotient $\cY_V$, flat over $V$, obtained as the pull-back via $\tilde f$ of the universal family on the Quot-scheme.

  We have $\tilde \phi|_a \cong \phi|_{\eta_0}$ for every $\C$-point $a\in \eta\times \eta_0 = \Sigma_0-q_0$ or $\eta_0\times \eta= \Sigma_{m+1}-q_m$; and the restriction of $\tilde\phi$ on $E$ parameterizes various flat limits of $\phi|_{S\times S-\eta_0\times \eta_0}$ from different directions. For convenience, we denote the singular divisors in $Y[k+1]$ by
  $$D_0, \cdots, D_{l-1}, D_l^-, D_l^+, D_{l+1}, \cdots, D_k,$$
  and bubble components by
  $$\Delta_1, \cdots, \Delta_l, \tilde\Delta, \Delta_{l+1},\cdots, \Delta_k.$$
  We see that $\cY_V|_{\Sigma_0}$ is a smoothing of $D_l^-$, and $\cY_V|_{\Sigma_{m+1}}$ is a smoothing of $D_l^+$. $\C^*$ acts by weights $\pm e_l$ on $\Sigma_0$, $\Sigma_{m+1}$, and by weights $\pm 2e_l$ on other $\Sigma_i$'s and the newly inserted components $\tilde\Delta$ along the fiber. It acts trivially on other components.

\begin{lem}
  For any $0\leq i\leq m-1$, pick $a\in \Sigma_{i+1}- \{q_i,q_{i+1}\}$. Then
  \begin{enumerate}[1)]
  \setlength{\parskip}{1ex}

  \item $\Err_{l^-} (\tilde\F|_{a})= \Err_{l^-} (\tilde\F|_{q_i})$,  $\Err_{l^+} (\tilde\F|_{a})= \Err_{l^+} (\tilde\F|_{q_{i+1}})$, where $\Err_{l^\pm}$ denote the error at $D_l^\pm$; moreover, $\Err_{l^+}(\tilde \F|_{q_0})= \Err_{l^-}(\tilde \F|_{q_m}) =\Err_l(\F|_{\eta_0})$.

  \item $\Err_j(\tilde\F|_{q_i})= \Err_j(\tilde\F|_a) =\Err_j(\tilde\F|_{q_{i+1}})$, for any $j\neq l,l+1$.

  \item $\Err_{l^-}(\tilde \F|_{q_0})= \Err_{l^+}(\tilde \F|_{q_m})=0$.
  \end{enumerate}
\end{lem}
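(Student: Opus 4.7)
The plan is to establish the three claims in the order (3), (2), (1), with the identification $\Err_{l^+}(\tilde\F|_{q_0})=\Err_l(\F|_{\eta_0})$ treated last via contraction of the $\tilde\Delta$ bubble. For (3), I would exploit that $\cY_V|_{\Sigma_0}$ is \'etale-locally a standard simple smoothing of $D_l^-$ and $\tilde\F|_{\Sigma_0}$ is flat over $\Sigma_0$. In the local model $\Sp\C[x,y,t,\vec z]/(xy-t)$, any section annihilated by $(x,y)$ is killed by $t=xy$, hence zero by flatness over $\C[t]$; thus $\tilde\F$ is normal to $D_l^-$ in the total space, and Lemma \ref{normality_simple} transfers this to normality of the fiber $\tilde\F|_{q_0}$ at $D_l^-$, giving $\Err_{l^-}(\tilde\F|_{q_0})=0$. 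The symmetric argument on $\Sigma_{m+1}$ yields $\Err_{l^+}(\tilde\F|_{q_m})=0$.

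For (2), when $D_j$ lies away from the bubble $\tilde\Delta$, the $\C^*$-action is trivial on an \'etale neighborhood of $D_j$ in $\cY_V|_{\Sigma_{i+1}}\cong\Pj^1\times Y[k+1]$, so Lemma \ref{lem_equivariant_limit} applied on each affine chart of $\Sigma_{i+1}$ identifies $\tilde\F$ near $D_j$ with a pullback from the chart's $\C^*$-fixed fiber. Hence all fibers of $\tilde\F$ over $\Sigma_{i+1}$ agree near $D_j$, and $\Err_j$ is constant.

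For (1), orbit constancy of $\Err_{l^\pm}(\tilde\F|_a)$ on $\Sigma_{i+1}\setminus\{q_i,q_{i+1}\}$ follows from $\C^*$-translation making the fibers pairwise isomorphic; the main work is matching the fixed-point limits. Near $D_l^-$ I would arrange the hypotheses of Lemma \ref{3.13} with the local coordinate on $\Sigma_{i+1}$ centered at $q_i$ playing the role of $t$ (carrying a nonzero $\C^*$-weight) and the $\tilde\Delta$-fiber coordinate playing the role of the untwisted factor. Both assertions of that lemma then identify the fiberwise torsion terms entering $\Err_{l^-}(\tilde\F|_{q_i})$ with restrictions of the corresponding $\C^*$-equivariant total torsion sheaves; combined with orbit constancy this yields $\Err_{l^-}(\tilde\F|_a)=\Err_{l^-}(\tilde\F|_{q_i})$, and the argument at $D_l^+$ is symmetric about $q_{i+1}$. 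Finally, for $\Err_{l^+}(\tilde\F|_{q_0})=\Err_l(\F|_{\eta_0})$, step (3) provides normality of $\tilde\F|_{q_0}$ at $D_l^-$, so Lemma \ref{chi_p_*} allows the bubble $\tilde\Delta$ to be contracted without loss of information; the compatibility of $\tilde\F$ with the pullback of $\phi$ along $m:S\times S\to S$ identifies the pushforward $c_*\tilde\F|_{q_0}$ with $\phi|_{\eta_0}$, and the contraction sends $D_l^+\subset Y[k+1]$ to $D_l\subset Y[k]$, so the torsion data defining the two errors coincide.

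The hardest step will be the careful application of Lemma \ref{3.13} in the $\Pj^1$-family setting: the lemma is stated for an $\A^1$-family with a specific sign convention on the $\C^*$-weights, so one must reduce to an affine chart of $\Sigma_{i+1}$, match $q_i$ (respectively $q_{i+1}$) with the $t=0$ fixed point demanded by the lemma, and verify that the $\C^*$-equivariant torsion subsheaves on the total space restrict coherently to fibers across the orbit. Tracking the flat-completion procedure through the base change $m$ in the last identification is an additional bookkeeping obstacle.
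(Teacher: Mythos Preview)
Your arguments for (2) and the main identities in (1) are essentially what the paper does: (2) follows from $\C^*$-triviality near $D_j$ combined with Lemma~\ref{lem_equivariant_limit}, and (1) from the local application of Lemma~\ref{3.13} around $D_l^\pm$.

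The real problem is your proof of (3). You claim that flatness of $\tilde\F$ over $\Sigma_0$ alone forces normality of $\tilde\F|_{q_0}$ to $D_l^-$, via the observation that in the local model $\Sp\C[x,y,t,\vec z]/(xy-t)$ any section annihilated by both $x$ and $y$ is annihilated by $t=xy$ and hence zero. But this only kills the set $\{m:xm=ym=0\}$, which is $\text{Tor}_2(\tilde\F,\Ou_{D_l^-})$; normality requires $\text{Tor}_1=0$, i.e.\ injectivity of $(x,y)\otimes\tilde\F\to\tilde\F$, which is strictly stronger. A concrete counterexample: $M=\C[x,y]/(x-y)$ is flat over $\C[xy]$ (since $xy$ acts as $x^2$, a nonzerodivisor on $M\cong\C[x]$) and has no sections killed by both $x$ and $y$, yet $M|_0=\C[x]/(x^2)$ is \emph{not} normal to $D$ in $X_0$ (multiplication by $x$ on $M|_0/yM|_0\cong\C$ vanishes). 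Your argument applies verbatim to this $M$ and gives a false conclusion, so flatness plus the nodal smoothing structure cannot be the whole story.

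What you are missing is precisely the $\C^*$-equivariance, which the paper uses in a different way. Instead of arguing abstractly from flatness, the paper restricts to the component $\Theta^*$ around $D_l^-$ and identifies $\tilde\phi|_{\Theta^*}$ with the pullback $g^*((\phi|_{\eta_0})^\tf|_{\Delta_l})^\tf$ along the blow-down $g:\Theta^*\to\Delta$: equivariance gives the identification over $\Sigma_0\setminus\{q_0\}$, both sides are flat over $\Sigma_0$, and separatedness of the Quot space forces agreement at $q_0$. The right-hand side is \emph{manifestly} normal to $D_l^-$ because the torsion has been stripped off, and that is what yields $\Err_{l^-}(\tilde\F|_{q_0})=0$. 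Your contraction argument for the ``moreover'' clause of (1) leans on (3) and on Lemma~\ref{chi_p_*}, whose hypothesis $P_\F=P_{\F|_D}$ you have not verified; once (3) is established via the explicit identification above, that clause follows more directly by comparing $\tilde\F|_{q_0}$ with $\phi|_{\eta_0}$ through the same map $g$.
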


\begin{proof}
  For 1), we observe that near $D_l^-\times \{q_i\}$, there is an \'etale local chart $\Sp A:=\Sp \C[\vec{z},x,y,t]/(xy)$ of $\cY_{\Sigma_{i+1}-q_{i+1}}$, where $t$ is the coordinate in $\Sigma_{i+1}$, $x$, $y$ are coordinates along $\tilde\Delta$ and $\Delta_l$ respectively. $\C^*$ acts on $x$ and $t$ by weights of opposite signs. We are in the situation of Lemma \ref{3.13}. Globally this implies that for $a\in \Sigma_{i+1}-\{q_i,q_{i+1}\}$,
  $$P((\tilde\F|_a)_{\J_{l^-}})= P(\tilde\F_{\tilde\J_{l^-}}|_a)= P(\tilde\F_{\tilde\J_{l^-}}|_{q_i}) = P((\tilde\F|_{q_i})_{\J_{l^-}}),$$
  where $\tilde\J_{l^-}$ denotes the ideal sheaf of $D_l^-\times \Sigma_{i+1}\subset Y[k+1] \times \Sigma_{i+1}$. The first equality follows from the isomorphism given by $\C^*$-action over $\Sigma_{i+1}-\{q_i,q_{i+1}\}$, the second by flatness and the third from Lemma \ref{3.13}. One has the similar equality for $\tilde\F^\tf|_{\Delta_l,\I_{l^+}}$, $\tilde\F^\tf|_{\tilde\Delta, \I_{D_l^-\subset \tilde\Delta}}$ and their restriction to $D_l^-$. Thus 1) holds.

  2) follows from the $\C^*$-action on $\Sigma_{i+1}$.

  For 3), one needs to prove that $\tilde\F|_{q_0}$ is normal to $D_l^-$. Let $\Theta:=\Bl_{D_l\times \eta_0}(\Delta_l\times S)$ be the irreducible component of $\cY_{\Sigma_0}$, and $\Theta^*:=\Theta- (\text{proper transform of } D_{l-1}\times S \cup D_l\times S)$. Consider the map
  $$g: \Theta^*\hookrightarrow \Theta=\Bl_{D_l\times \eta_0}(\Delta_l\times S) \xrar{p_{\Bl}} \Delta_l \times S \rar \Delta_l $$
  which is the contraction map of $\cY_{\Sigma_0}$ to the original family $\cY_{\eta_0}\times S$, restricted on the involved component $\Theta$.

  Consider $\tilde\phi|_{\Theta^*}$. From the $\C^*$-equivariance there is an isomorphism between $g^*((\phi|_{\eta_0})^\tf|_{\Delta_l})^\tf$ and $\tilde\phi|_{\Theta^*}$ over $\Theta^*-\cY_{q_0}$. Since they are both flat over $\Sigma_0$ (because any torsion over the closed point would contradict the flatness of $g^*((\phi|_{\eta_0})^\tf|_{\Delta_l})^\tf$ on $\Delta\times S$), they must have the same flat limit over $q_0$. Thus it suffices to prove that $g^*((\phi|_{\eta_0})^\tf|_{\Delta_l})^\tf$, restricted to $\cY_{q_0}$, is normal to $D_l^-$, which is obvious from construction.
\end{proof}

\begin{proof}[Proof of Lemma \ref{key_lem_properness}]
  If $\phi|_{\eta_0}$ is stable, then we have nothing to do. Suppose otherwise; i.e. $\phi|_{\eta_0}$ is normal to $D[k]$, but not admissible. Take $l$ as above, $1\leq l\leq k-1$. We apply the procedure as above.

  Since $\tilde\F$ is flat over $V$, $\tilde\F|_a$ has the same modified Hilbert polynomial, for every $\C$-point $a\in V$. By 1) of Lemma \ref{identity_delta} and 2) of the last lemma, the following is constant for each $a\in \Sigma_1\cup \cdots \Sigma_m$,
  $$\Err_{l^-}(\tilde\F|_a) + \Err_{l^+}(\tilde\F|_a) + \frac{1}{2} (\delta_\sim^-(\tilde\F|_a) + \delta_\sim^+(\tilde\F|_a)),$$
  where $\delta_\sim^\pm$ denote the $\delta$ polynomials defined for the component $\tilde\Delta$.

  Since $\tilde\phi$ is $\C^*$-equivariant and each $\tilde\cY_{q_i}$ is $\C^*$-invariant under the action, $\tilde\phi|_{q_i}$ is $\C^*$-equivariant. By 2) of Lemma \ref{identity_delta}, we have $\delta_\sim^\pm(\tilde \F|_{q_i})=0$, $0\leq i\leq m$ and $\delta_\sim^\pm(\tilde \F|_a)$ have non-negative leading coefficients. Hence,
  $$\Err_{l^-}(\tilde\F|_{q_j}) + \Err_{l^+}(\tilde\F|_{q_j}) = \Err_l(\tilde \F|_{\eta_0}), \quad \forall 0\leq j\leq m.$$

  On the other hand, we have $\Err_{l-}(\tilde\F|_{q_0})=0$ and $\Err_{l-}(\tilde\F|_{q_m})=\Err_l(\tilde\F|_{\eta_0})\neq 0$. Thus there exists some $i$ such that $\Err_{l-}(\tilde\F|_{q_i}) \prec \Err_{l-}(\tilde\F|_{q_{i+1}})$. Then for $a\in \Sigma_{i+1}-\{q_i,q_{i+1}\}$, by the last lemma, we have
  \begin{eqnarray*}
  \Err_{l^-}(\tilde\F|_a) + \Err_{l^+}(\tilde\F|_a)
  &=& \Err_{l^-}(\tilde\F|_{q_i}) + \Err_{l^+}(\tilde\F|_{q_{i+1}}) \\
  &\prec& \Err_{l^-}(\tilde\F|_{q_{i+1}}) + \Err_{l^+}(\tilde\F|_{q_{i+1}}) \\
  &=& \Err_l(\tilde\F|_{\eta_0}).
  \end{eqnarray*}
  Thus $\Err(\tilde\F|_a) \prec \Err(\tilde\F|_{\eta_0})$. Moreover, since the total Hilbert polynomial is constant, we must have $\delta_\sim^\pm(\tilde\F|_a) \succ 0$; in particular, $\Aut(\tilde\F|_a)$ is finite on the new bubble $\tilde\Delta$.

  Since $V$ is smooth, one can pick a curve $S'\subset V$ that contains $a$, such that the map $S'\rar V\rar S\times \A^1 \rar S$ is finite. Take the map $\xi': S'\rar V\rar \A^{k+1}$, and the quotient pulled back from the universal family. We obtain a quotient $\phi'$ on $\cY_{S'}$, with $\phi'|_{\eta'} \cong \phi|_\eta\times_\eta \eta'$, $\Err(\F'|_{\eta'_0})=\Err(\tilde\F|_a) \prec \Err(\F|_{\eta_0})$ and only finitely many new autoequivalences arise. Repeat this process, and finally we'll get $\Err(\F'|_{\eta'_0})=0$.
\end{proof}

\textbf{Step 3.} Finite autoequivalences.

\begin{lem}
  Consider the family $\Delta\times S\rar S$, viewed as a component of the family $\cY_S$ associated to the map $\xi: S\rar \A^1$, where $\xi$ maps $S$ constantly to $0\in \A^1$. Let $\phi$ be an $S$-flat quotient on $\Delta\times S$, such that $\phi|_\eta$ is stable, but $\phi|_{\eta_0}$ is $\C^*$-equivariant. Then there is another $S$-flat quotient $\psi$, such that $\psi|_\eta$ is related to $\phi|_\eta$ via $\C^*$-action on $Y(1)\rar \A^1$, and $\F|_{\eta_0}$ normal to $D_-$.
\end{lem}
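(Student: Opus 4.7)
The plan is to twist $\phi$ generically by the $\C^*$-action on $Y(1)\rar \A^1$ and take the flat completion. Since $\phi|_{\eta_0}$ is $\C^*$-equivariant and (by the arrangements of Steps~1 and~2) admissible, Proposition~\ref{equivariant_pullback} gives $\F|_{\eta_0}\cong p^*(\F|_{\eta_0}|_{D_-})$. Locally in an \'etale chart $\Delta\supset \Sp A[s]$ near $D_-$, with $D_-=\{s=0\}$ and $\C^*$ acting on $s$ by weight $1$, this pullback structure forces the generators of $\ker\phi\subset A[s]\otimes R^{\oplus r}$ to take the form $f_i=c_i+u\,h_i(s,u)$ with $c_i\in A^{\oplus r}$ and $h_i\in A[s]\otimes R^{\oplus r}$ reflecting how $\phi$ deforms the pulled-back limit generically.

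For each $e\in \Z$, the element $\lambda=u^{-e}\in \C^*(K)$ induces an automorphism $\tau_\lambda$ of $\cY_\eta$ via the $\C^*$-action on $Y(1)\rar \A^1$. I would set $\psi|_\eta := \tau_\lambda^*\phi|_\eta$, which is stable by $\C^*$-invariance of stability; by completeness of the ordinary Quot-scheme (Theorem~1.5 of~\cite{OS}), $\psi|_\eta$ extends uniquely to an $S$-flat quotient $\psi$ on $\Delta\times S$. By construction, $\psi|_\eta$ is related to $\phi|_\eta$ via the $\C^*$-action on $Y(1)\rar \A^1$, so the only content left is the choice of $e$ guaranteeing $\psi|_{\eta_0}$ is normal to $D_-$.

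To make this choice, I would consider the rational map $\A^1\times S\dashrightarrow \Quot^{p^*\cV}_{(\Delta\times S)/S}$ given by $(\lambda,u)\mapsto \tau_\lambda^*\phi(u)$. It is regular on $\C^*\times S$, extends over $\C^*\times \eta_0$ trivially (as $\phi|_{\eta_0}$ is $\C^*$-fixed) and over $\A^1\times \eta$ by Bia\l{}ynicki--Birula limits, so its only indeterminacy lies at $(0,\eta_0)$. Iteratively blowing up this point produces a resolution $V\to \A^1\times S$ whose exceptional divisor is a chain of rational curves parameterizing flat limits interpolating between $\phi|_{\eta_0}$ (a pullback from $D_-$) and the BB limit of $\phi|_\eta$ (a pullback from $D_+$). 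Both endpoints are admissible pullbacks, and an error-polynomial bookkeeping at $D_-$ along this chain --- modelled on the argument of Lemma~\ref{key_lem_properness} --- shows that some intermediate point on the exceptional divisor corresponds to a limit normal to $D_-$; picking $\lambda=u^{-e}$ so that the associated arc $S\rar \A^1\times S$ lifts to a curve through that point yields the desired $\psi$.

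The main obstacle will be the explicit verification, via local analysis of the $R$-saturation $(\tau_\lambda^*\ker\phi)\cap A[s]\otimes R^{\oplus r}$, that such an intermediate $e$ indeed produces a flat limit normal to $D_-$: too-small $e$ leaves the limit $\C^*$-fixed, while too-large $e$ forces $s$-torsion to appear (the limit collapses entirely onto $D_-$), so $e$ must be pinned down by the $u$-adic orders of the coefficients of $h_i$ in $s$. The global compatibility of the locally chosen $e$ across the \'etale cover of $\Delta$ will follow from the $\C^*$-equivariance and functoriality of the construction.
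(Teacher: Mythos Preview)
Your opening move is based on a false premise: $\phi|_{\eta_0}$ is \emph{not} admissible here, so Proposition~\ref{equivariant_pullback} does not apply and the pullback form $\F|_{\eta_0}\cong p^*(\F|_{\eta_0}|_{D_-})$ is unavailable. Indeed, the hypotheses force non-admissibility: since $\phi|_{\eta_0}$ is $\C^*$-equivariant, Lemma~\ref{identity_delta}(2) gives $\delta_\Delta^\pm(\F|_{\eta_0})=0$, while $\phi|_\eta$ is stable (hence admissible with $\delta_\Delta^\pm(\F|_\eta)\succ 0$); comparing via Lemma~\ref{identity_delta}(1) and flatness of the modified Hilbert polynomial yields $\Err(\F|_{\eta_0})\neq 0$. (Note also that in the proof of Theorem~\ref{thm_properness}, Step~3 is invoked \emph{before} Step~2, so you cannot appeal to ``the arrangements of Steps~1 and~2'' for admissibility.) Consequently the local shape of the generators you write down --- $f_i=c_i+u\,h_i$ with $c_i\neq 0$ --- is wrong: the failure of normality at $D_-$ means, exactly as in Step~1, that $c_i=0$ and the generators look like $f_i(u,y)=u^{\alpha_i}g_i(u)+y\,h_i(u,y)$ with $g_i(0)\neq 0$ and $\alpha_i\geq 1$.

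Given this, the resolution-of-indeterminacy and error-bookkeeping apparatus you propose is unnecessary overkill, and your intuition that an \emph{intermediate} twist is required (``too-large $e$ forces $s$-torsion'') is mistaken. The paper's argument is a one-line local computation: apply the $\C^*$-twist $y\mapsto u^N y$ for any $N\geq \max_i \alpha_i$, so the generators become $u^{\alpha_i}g_i(u)+u^N y\,h_i(u,u^N y)$; after saturating out the common $u$-power, the new generators are $f'_i(u,y)=g_i(u)+u^{N-\alpha_i}y\,h_i(u,u^N y)$, which are not divisible by $y$ since $g_i(0)\neq 0$. Hence the flat limit $\psi|_{\eta_0}$ is normal to $D_-$ for every sufficiently large $N$, with no delicate interpolation needed. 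The subtler resolution-of-indeterminacy argument you sketch does appear in the paper, but only in the \emph{next} lemma (Lemma~\ref{step_3}), where one must balance errors at both $D_-$ and $D_+$ simultaneously.
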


\begin{proof}
  First note that $\delta_\Delta^\pm=0$ by Lemma \ref{identity_delta}. Thus $\Err(\F|_{\eta_0})\neq 0$, i.e. the sheaf must be not admissible. Suppose $\F|_{\eta_0}$ is not normal to $D_-\times \eta_0$.

  \'Etale locally around a point in $D_-\times \eta_0$, $\Delta\times S$ has an affine local chart $\Sp A:= \Sp R[y,\vec{z}]$, where $D_-\times\eta_0$ is defined by the ideal $(u,y)$. The quotient $\phi$ is represented by a sequence $0\rar K\rar A^{\oplus r}\rar M\rar 0$. Assume that $K=(f_1(u,y),\cdots,f_m(u,y))$, with generators
  $$f_i(u,y)=c_i + u^{\alpha_i} g_i(u) + y h_i(u,y) \in A^{\oplus r},$$
  where $c_i\in k[\vec{z}]^{\oplus r}$, $g_i\in k[\vec{z}]^{\oplus r}[u]$, $h_i\in k[\vec{z}]^{\oplus r}[u,y]$, $g_i(0)\neq 0$ and $\alpha_i \geq 1$. As in Step 1 one concludes that
  $$f_i(u,y)=u^{\alpha_i} g_i(u) + y h_i(u,y) \in A^{\oplus r}.$$

  Now we take some $N\geq \max\{\alpha_i\}$ and apply the $\C^*$-action induced by
  $$\eta\rar \C^*, \quad \C[t_1,t_1^{-1}]\rar K, \quad t_1\mapsto u^N,$$
  which acts as $y\mapsto u^N y$ on $\Delta$.

  The family $\phi|_\eta$ becomes a new family $\psi_\eta$ represented by $0\rar K'_u\rar A_u^{\oplus r}\rar M'_u\rar 0$, where the generators become
  $$f_i(u,u^N y)= u^{\alpha_i} g_i(u) + u^N yh_i(u,u^N y),$$
  and we may also take
  $$f'_i(u,y)= g_i(u) + u^{N-\alpha_i} yh_i(u,u^N y)$$
  as the generators of $K'_u$.

  Let's look at its flat limit $\psi$, or a new sequence $0\rar K'\rar A^{\oplus r}\rar M'\rar0$. By construction we have
  $$K'= K'_u\cap A^{\oplus r},$$
  with generators $f'_i$. Now for every $i$, $f'_i$ is not divisible by $y$, i.e. the flat limit $\psi|_{\eta_0}$ is normal to $D_-\times \eta_0$.
\end{proof}

\begin{lem} \label{step_3}
  Let $\Delta\times S$ be given as in the previous lemma. Let $\phi$ be an $S$-flat quotient on $\Delta\times S$, such that $\phi|_\eta$ is stable. Then there is another $S$-flat quotient $\psi$, such that $\psi|_\eta$ is related to $\phi|_\eta$ via $\C^*$-action on $Y(1)\rar \A^1$, with $\Aut(\phi|_{\eta_0})$ finite.
\end{lem}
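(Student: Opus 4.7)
My plan is to reduce the statement to the preceding lemma and then iterate. First, if $\Aut(\phi|_{\eta_0})$ is already finite then I take $\psi=\phi$ and there is nothing to prove. Otherwise, since the autoequivalence group on the bubble $\Delta$ sits inside the one-parameter group $\C^*$ scaling the fibers of $\Delta\to D$, any infinite algebraic subgroup of $\C^*$ is the whole of $\C^*$, and so $\phi|_{\eta_0}$ is forced to be $\C^*$-equivariant.

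Having this, I aim to feed $\phi$ into the preceding lemma. Equivariance forces the leading coefficients of $\delta_\Delta^\pm(\phi|_{\eta_0})$ to vanish on the torsion-free part by Lemma~\ref{identity_delta}(2); since the modified Hilbert polynomial $P_\F$ is preserved across the flat family and is nonzero, part (1) of the same lemma then forces $\Err^{\E,H}(\phi|_{\eta_0})\ne 0$, so $\phi|_{\eta_0}$ cannot be admissible. This is exactly the setting in which the preceding lemma applies and produces an $S$-flat quotient $\psi_1$ whose restriction to $\eta$ is related to $\phi|_\eta$ via the $\C^*$-action on $Y(1)\to\A^1$, with $\psi_1|_{\eta_0}$ normal to $D_-$ (or, symmetrically, to $D_+$ after exchanging roles). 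The explicit local computation in that proof strictly reduces the piece of the error polynomial concentrated at $D_-$.

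If $\Aut(\psi_1|_{\eta_0})$ is still infinite I simply repeat the construction. The strict descending chain condition on Hilbert polynomials in the ordering $\prec$ guarantees that the iteration terminates after finitely many steps, yielding $\psi$ with $\psi|_\eta$ related to $\phi|_\eta$ by a composition of $\C^*$-actions and with $\Aut(\psi|_{\eta_0})$ finite, which is the required conclusion.

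I expect the hardest part to be the bookkeeping of how the error polynomial evolves under iteration: one must verify that each application of the preceding lemma produces a strict decrease in the total $\Err^{\E,H}$ in the ordering $\prec$, rather than merely moving torsion back and forth between $D_-$ and $D_+$. This should follow by adapting the careful error-tracking argument in Lemma~\ref{key_lem_properness} to the local generator description from the proof of the preceding lemma, using that after a one-sided modification the piece of the error at that divisor drops to zero while $P_\F$ is preserved, so the opposite piece can only grow in a controlled way across at most one swap of sides before the process terminates.
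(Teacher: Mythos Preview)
Your reduction to the $\C^*$-equivariant case and the invocation of Lemma~\ref{identity_delta} to conclude non-admissibility are fine, and match the paper. The gap is in the iteration step. The preceding lemma produces a $\psi_1$ with $\psi_1|_{\eta_0}$ normal to $D_-$, but it does so by acting with $y\mapsto u^N y$ for $N$ large; in the opposite chart near $D_+$ this is $x\mapsto u^{-N}x$, which can \emph{increase} the torsion at $D_+$. So after one application you have $\Err_{D_-}=0$ but no control on $\Err_{D_+}$, and applying the symmetric version to fix $D_+$ may re-introduce torsion at $D_-$. You anticipate this ``moving torsion back and forth'' problem, but the assertion that it terminates ``after at most one swap'' is not justified: nothing in the local computation of the preceding lemma bounds how much the error at the opposite divisor grows, so there is no strict descent in $\Err$ under iteration.

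The paper's proof does not iterate the preceding lemma. Instead it uses that lemma once on each side to produce \emph{two} $\C^*$-related limits, one normal to $D_+$ and one normal to $D_-$, and then runs the same two-dimensional interpolation argument as in Lemma~\ref{key_lem_properness}: build a $\C^*$-equivariant family over $S\times S$, resolve the indeterminacy at $\eta_0\times\eta_0$ to get a chain of rational curves, and use the error-balancing identities along that chain to locate an intermediate point $a$ with $\Err(\tilde\F|_a)\prec\Err(\F|_{\eta_0})$, hence $\delta_\Delta^\pm\ne 0$ and finite automorphism. The two one-sided-normal configurations serve as the \emph{endpoints} of this interpolation, not as successive steps of an iteration.
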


\begin{proof}
  Suppose $\Aut(\phi|_{\eta_0})$ is not finite. Then $\phi|_{\eta_0}$ is $\C^*$-equivariant and it must be not admissible. By the previous lemma, up to a $\C^*$-action we can assume that it is normal to $D_+$. Again by the previous lemma, there is another quotient $\psi|_\eta$, related to $\phi|_\eta$ via a $\C^*$-action $\lambda: \eta\rar \C^*$, such that $\psi|_{\eta_0}$ is normal to $D_-$.

  We apply a similar argument to Step 2. Consider the 2-dimensional family $\Delta\times S\times S$ and two different embeddings of $\Delta\times S$ into it. The first is the standard one induced by $\{1\}\times S\rar S\times S$, i.e. $u\mapsto (1,u)$; and the second is given by $u\mapsto  (\lambda\cdot u, 1)$. Let $\C^*$ acts on $S\times S$ via multiplication by $\mu\cdot (u,v):= (\lambda\cdot \mu u, \mu^{-1} v)$. Then as in Step 2, there is a $\C^*$-equivariant quotient on $S\times S- \eta_0\times \eta_0$, whose restrictions to the two embeddings are $\phi|_\eta$ and $\psi|_\eta$ respectively.

  Again we pass to a successive blow-up $V\rar S\times S$, and obtain a $\C^*$-equivariant quotient $\tilde\phi$ on $\Delta\times V$. The exceptional fiber $E=\Sigma_1\cup \cdots \cup \Sigma_m$ is a chain of $\Pj^1$'s. Let $\Sigma_0:= \{1\}\times S$ and $\Sigma_{m+1}:= S\times \{1\}$. Again $\Sigma_{m+1}$ is viewed as twisted by $\lambda$. Consider the quotients on these $\Delta\times \Sigma_i$.

  For $a\in \Sigma_0$ or $\Sigma_{m+1}$, $\tilde\phi|_a$ is isomorphic to $\phi|_{\eta_0}$ and $\psi|_{\eta_0}$ respectively, whose $\Err$'s concentrate on $D_-$, $D_+$ respectively. By same arguments as in Step 2, one concludes that there is some $1\leq i\leq m$ and $a\in \Sigma_i^\circ$, such that $\Err(\tilde\F|_a) \prec \Err(\F|_{\eta_0})$. Equivalently, this means $\delta_\Delta^\pm(\tilde\F|_a)\neq 0$ and $\Aut(\tilde\phi|_a)$ is finite. Again one picks a curve $S'\subset V$ passing through $a$ which is finite over $S$ and projects it to $S$. The lemma follows.
\end{proof}

\begin{proof}[Proof of Theorem \ref{thm_properness}]
  The proof is a combination of the three steps. As mentioned at the beginning of the subsection, we have a naive extension $(\xi, \phi)$ with $\xi: S\rar \A^k$, mapping constantly to $0\in \A^k$, and $\phi: p^*\cV\rar \F$ on $\cY_S\cong S\times Y[k]$, such that $\phi|_\eta$ is stable, but $\phi|_{\eta_0}$ not necessarily stable. By Lemma \ref{step_3}, we can assume that $\Aut(\phi|_{\eta_0})$ is finite. Thus we just need admissibility.

  Let's modify $\xi$. We have
  $$\cY_S\cong S\times Y[k] = (S\times Y) \cup_{S\times D} \cdots \cup_{S\times D} (S\times \Delta_k).$$
  Apply Lemma \ref{normal to dist divisor} and \ref{key_lem_properness} to each smooth pair $(\Delta_j, D_{j-1}\cup D_j)$. After a finite base change $S'\rar S$, for each $0\leq j\leq k$, one can find $\xi': S'\rar \A^{k'}$, and extend $\phi|_\eta\times_\eta \eta'$ to an admissible quotient $\phi'_i$ on  $\cY_{j,S'}$, where $\cY_{j,S'}$ is the associated family of expanded pairs with respect to the pair $(\Delta_j, D_{j-1}\cup D_j)$. Note that $\phi'_i$ for adjacent $\Delta$'s must coincide on the intersecting divisors because of admissibility and the uniqueness of the flat limit. Hence these quotients glue together to an admissible quotient $(\xi',\phi')$ on $\cY_{S'}$, and $\phi'|_{\eta'} \cong \phi|_\eta \times_\eta \eta'$.

  During each step the autoequivalence group remains finite, and for the base changes we can take the fiber products and pass to a common finite base change. The resulting $\phi'|_{\eta'}$ is stable.
\end{proof}

\section{Orbifold Donaldson--Thomas theory} \label{DT}

In this section we consider the (absolute) Donaldson--Thomas theory on 3-dimensional smooth Deligne--Mumford stacks. Let $S$ be a scheme and $\pi: W\rar S$ be a \emph{smooth} family of projective Deligne--Mumford stacks over $\C$, of relative dimension 3, and let $c: W\rar \uW$ be the coarse moduli space. Let $\E$ be a generating sheaf on $W$, and $H=c^*\uH$ be a pull-back of an relatively ample line bundle $\uH$ on $\uW\rar S$.

Let $P: K(W)\rar \Z$ be a group homomorphism, and we require that the associated Hilbert polynomial $P(H^{\otimes v})$ has degree not greater than 1. One can form the Hilbert scheme $\cM:=\Hilb_{W/S}^P$ parameterizing certain closed substacks of $W$ over $S$. $\cM$ is a projective scheme over $S$ by Theorem 4.4 of \cite{OS}. The following lemma gives another description of $\cM$.

\begin{lem} \label{moduli_ideal}
  $\cM$ is the fine moduli space of torsion free coherent sheaves $\I$ on $W$ with relative Hilbert homomorphism $P_\I= P_{\Ou_W} - P$ and $\det \I$ trivial.
\end{lem}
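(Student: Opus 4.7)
The plan is to exhibit a natural transformation of functors in each direction and check they are mutually inverse, which will promote the set-theoretic bijection into an identification of fine moduli spaces. The forward map sends a flat family of closed substacks $Z\subset W\times_S T$ over $T$ (with the prescribed Hilbert data $P$ on fibers) to its ideal sheaf $\I_Z$, fitting in the short exact sequence $0\rar \I_Z\rar \Ou_{W\times_S T}\rar \Ou_Z\rar 0$. From this sequence I will read off directly that $P_{\I_Z}=P_{\Ou_W}-P$ on fibers, that $\I_Z$ is $T$-flat (as $\Ou_Z$ is $T$-flat and $\Ou_{W\times_S T}$ is $T$-flat), and that the fibers $\I_Z|_t$ are torsion free because they inject into $\Ou_{W_t}$ and $W_t$ is smooth and hence integral on each connected component. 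For the triviality of the determinant, since the assumption that $P(H^{\otimes v})$ has degree at most one forces $\Supp(\Ou_Z|_t)$ to have codimension at least two in $W_t$, the determinant of $\Ou_Z|_t$ is trivial, whence $\det \I_Z|_t\cong \det\Ou_{W_t}\cong \Ou_{W_t}$.

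For the reverse map, given a flat family of torsion-free rank one sheaves $\I$ on $W\times_S T$ with $P_{\I}=P_{\Ou_W}-P$ and trivial determinant on each fiber, I would form the double dual $\I\rar \I^{\vee\vee}$. On a smooth Deligne-Mumford stack a rank one reflexive sheaf is a line bundle (this is \'etale local and reduces to the scheme case), so $\I^{\vee\vee}|_t$ is a line bundle whose determinant is $\det\I|_t\cong\Ou_{W_t}$, since the cokernel $\I^{\vee\vee}|_t/\I|_t$ is supported in codimension $\geq 2$. Choosing a trivialization and doing this in families (using that the codimension-two support of the cokernel keeps $\det(\I^{\vee\vee})\cong \det(\I)$ globally on $W\times_S T$, and that $\pi_*\mathscr{H}om(\I^{\vee\vee},\Ou)$ is a line bundle on $T$ by cohomology and base change once $\I^{\vee\vee}$ is identified with a line bundle fiberwise), one produces a canonical injection $\I\hookrightarrow \Ou_{W\times_S T}$ up to the choice of a global section trivializing $\det\I$, i.e.\ a quotient $\Ou_{W\times_S T}\rar \Ou_Z\rar 0$ defining a closed substack $Z$ with Hilbert homomorphism $P$ on fibers.

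The two constructions are visibly inverse to each other: starting from $Z$, the double dual of $\I_Z$ is $\Ou_W$ and the induced quotient recovers $\Ou_Z$; starting from $\I$, the ideal sheaf of the substack $Z$ constructed above is $\I$ by construction. The ``up to trivialization of $\det\I$'' ambiguity in the reverse direction is exactly the identification between isomorphism classes of rank one torsion-free sheaves with trivial determinant and their associated ideal sheaves, so the functors on $T$-points match. Since $\cM=\Hilb^P_{W/S}$ represents the Hilbert functor with its universal family $\cZ\subset \cM\times_S W$, the sheaf $\I_\cZ$ is the required universal ideal sheaf on $\cM\times_S W$.

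The main obstacle I anticipate is the double-dualization step in families on a Deligne-Mumford stack: one must check that $\I^{\vee\vee}$ is a line bundle not just fiberwise but globally on $W\times_S T$, that forming $\I^{\vee\vee}$ commutes with base change in $T$ (which uses the codimension bound on the cokernel plus smoothness of $W$), and that the resulting trivialization of $\det\I$ globalizes canonically. Each of these reduces, by the \'etale local presentation of $W$ as $[U/\Gamma]$ for $U$ smooth and $\Gamma$ finite, to the corresponding statement for smooth schemes, but the bookkeeping of the $\Gamma$-equivariant structure and the generating sheaf $\E$ is what needs to be verified carefully.
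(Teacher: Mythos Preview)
Your approach is broadly correct and shares the same core idea as the paper's proof: the forward map sends a substack to its ideal sheaf, and the key to the reverse direction is the embedding $\I\hookrightarrow\I^{\vee\vee}$ together with the observation that a rank one reflexive sheaf on a smooth stack is a line bundle, hence trivial by the determinant hypothesis.

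The difference lies in how one promotes the pointwise bijection to an isomorphism of functors. You attempt to construct the inverse explicitly in families, which forces you to confront the base-change behavior of $\I\mapsto\I^{\vee\vee}$ and the globalization of the trivialization of $\det\I$; you correctly flag these as the technical obstacles. The paper instead avoids this entirely: it only checks the bijection over closed points (where the double-dual argument is straightforward, citing Lemma~1.1.15 of \cite{OSS}), observes that the forward transformation is fully faithful, and then invokes a comparison of deformation-obstruction theories to conclude that the natural transformation is an isomorphism. Since deformations of a closed substack $Z\subset W_t$ with $W_t$ smooth are canonically identified with deformations of its ideal sheaf with fixed trivial determinant (both governed by $\Ext^i(\I_Z,\I_Z)_0$), the tangent and obstruction spaces agree, and the map on functors is \'etale, hence an isomorphism.

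Your route is more hands-on and would work with enough care, but the paper's deformation-theoretic shortcut is cleaner and sidesteps exactly the family-level issues you anticipated. One small point: your treatment of the trivialization ambiguity (``up to the choice of a global section trivializing $\det\I$'') is a bit loose; to make your argument airtight you would need to say precisely how the moduli functor on the sheaf side handles this---typically by rigidifying via the determinant, which is implicitly what the paper's ``$\det\I$ trivial'' means.
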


\begin{proof}
  We sketch the idea of the proof. Let $\cM'$ denote the functor parameterizing such sheaves on $W$, i.e. given any $T\rar S$, $\cM'(T)$ consists of $T$-flat families $\I_T$ of coherent sheaves on $T\times_S W$, whose fibers are torsion free, have Hilbert homomorphism $P_{\Ou_W} - P$ and trivial determinant. There is a natural transformation $\cM\rar \cM'$, sending a flat family of closed substacks to its ideal sheaf. This is obviously fully faithful. One can check that this is a bijection over closed points by embedding a torsion free sheaf into its double dual and apply Lemma 1.1.15 of \cite{OSS}. For the proof in families, one can apply Lemma 6.13 of \cite{Kol2}.
\end{proof}

In the following we define a perfect relative obstruction theory on $\cM$ over $S$, in the sense of \cite{BF}. The construction follows from the idea of D. Huybrechts and R. P. Thomas \cite{HT} and is already adopted by A. Gholampour and H.-H. Tseng in \cite{GT}.

Let $p: \cM \times_S W \rar \cM$, $q: \cM \times_S W \rar W$ be the projections and $\cZ\subset \cM\times_S W$ be the universal family of the Hilbert scheme. Let $\bI\subset \Ou_{\cM\times_S W}$ be the universal ideal sheaf. We have maps (in the derived category) $\Id: \Ou_{\cM\times_S W} \rar R\cH om(\bI,\bI)$ and $\tr: R\cH om(\bI,\bI)\rar \Ou_{\cM\times_S W}$ which give a splitting of $R\cH om(\bI,\bI)$.

Let $L^\bullet_{\cM\times_S W/S}$ be Illusie's cotangent complex of $\cM\times_S W \rar S$. We have the Atiyah class
$$\text{At}_{\cM\times_S W/S}(\bI) \in \Ext^1_{\cM\times_S W} (\bI,\bI \otimes L^\bullet_{\cM\times_S W/S}),$$
which can be seen as a map $\bI\rar \bI \otimes L^\bullet_{\cM\times_S W/S} [1]$. By functoriality, if we compose it with the natural projection $L^\bullet_{\cM\times_S W/S} \rar p^* L_{\cM/S}^\bullet$, we get the Atiyah class of $\bI$ as a module over $\cM$,
$$\text{At}_{\cM\times_S W/W}(\bI) \in \Ext^1_{\cM\times_S W} (\bI, \bI \otimes p^* L^\bullet_{\cM/S}).$$
After restriction to the traceless part we get $R\cH om(\bI,\bI)_0 \rar p^* L^\bullet_{\cM/S} [1]$, and then tensor with the dualizing sheaf $R\cH om(\bI,\bI)_0 \otimes q^* \omega_{W/S} \rar p^* L^\bullet_{\cM/S} \otimes q^* \omega_{W/S} [1] \cong p^! L_{\cM/S}^\bullet [-2]$, where the last isomorphism is the Serre duality for Deligne--Mumford stacks \cite{Ni2}. We have obtained the map
$$\Phi: E^\bullet:= Rp_* (R\cH om(\bI,\bI)_0 \otimes q^* \omega_{W/S}) [2] \rar L_{\cM/S}^\bullet.$$

\begin{thm} \label{obs_smooth}
  The map $\Phi: E^\bullet\rar L_{\cM/S}^\bullet$ is a perfect relative obstruction theory on $\cM$ over $S$, in the sense of \cite{BF}.
\end{thm}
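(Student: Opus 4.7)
The plan is to follow the Atiyah-class approach of Huybrechts--Thomas \cite{HT}, adapted to the Deligne--Mumford setting as in \cite{GT}. Two things must be verified: (i) that $E^\bullet$ is perfect of amplitude $[-1,0]$, and (ii) that $\Phi$ satisfies the Behrend--Fantechi criterion \cite{BF} for a relative obstruction theory, namely that $h^0(\Phi)$ is an isomorphism and $h^{-1}(\Phi)$ is surjective.

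For perfectness, the first step is to observe that since $\pi\colon W\rar S$ is smooth of relative dimension $3$ and $\bI$ is fiberwise a torsion-free rank-$1$ sheaf with trivial determinant, $\bI$ is simple on geometric fibers, and Serre duality on DM stacks \cite{Ni2} then gives $\Ext^0(\bI_t,\bI_t)_0 = \Ext^3(\bI_t,\bI_t)_0 = 0$ for every geometric point $t\in \cM$. Hence $Rp_*R\cH om(\bI,\bI)_0$ is perfect of amplitude $[1,2]$, and relative Grothendieck--Serre duality (using $q^*\omega_{W/S}[3] \cong p^!\Ou_\cM$ from base change of the smooth map $\pi$) together with the self-duality of $R\cH om(\bI,\bI)_0$ under the trace pairing yields a canonical isomorphism
$$E^\bullet \;\cong\; \bigl(Rp_* R\cH om(\bI,\bI)_0\bigr)^\vee[-1],$$
so $E^\bullet$ is perfect of amplitude $[-1,0]$.

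For the obstruction criterion, I would use the standard tangent-obstruction calculus. Given a square-zero extension $T \hookrightarrow \overline T$ over $S$ with ideal $J$ and a test morphism $g\colon T \rar \cM$, lifts of $g$ to $\overline T$ are by Lemma \ref{moduli_ideal} in bijection with deformations of the corresponding ideal $\bI_T$ as a torsion-free sheaf with fixed trivial determinant, classified by $\Ext^1(\bI_T,\bI_T\otimes J)_0$ with obstruction in $\Ext^2(\bI_T,\bI_T\otimes J)_0$; this is the classical argument of \cite{HT}. On the other hand, composing the Atiyah class with the trace-splitting and with the truncation $L^\bullet_{\cM\times_S W/S} \rar p^*L^\bullet_{\cM/S}$, then invoking relative Serre duality once more, identifies $h^0(\Phi)$ and $h^{-1}(\Phi)$ evaluated on $J$ precisely with these two $\Ext$-groups. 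This yields the isomorphism on $h^0$ and the surjectivity on $h^{-1}$ required by \cite{BF}.

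The main obstacle is technical rather than conceptual: one must verify that the trace-splitting $R\cH om(\bI,\bI) \cong \Ou_{\cM\times_S W} \oplus R\cH om(\bI,\bI)_0$, the Atiyah class, and relative Grothendieck--Serre duality all behave as expected on Deligne--Mumford stacks over $S$. Since tame DM stacks (automatic in characteristic $0$) admit exact pushforward to their coarse moduli, and since the Atiyah class and trace-splitting are \'etale-local constructions defined in the derived category of any ringed topos, these ingredients carry over from the scheme case without modification; Serre duality is imported from \cite{Ni2}. With these pieces in hand, the Huybrechts--Thomas argument transposes almost verbatim, the only genuinely new input beyond \cite{HT,GT} being that we work relative to $S$, which is accommodated by using $L^\bullet_{\cM\times_S W/S}$ and $\omega_{W/S}$ in place of their absolute counterparts throughout.
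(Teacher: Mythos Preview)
Your proposal is correct and follows essentially the same Huybrechts--Thomas Atiyah-class approach as the paper, with the same two ingredients: fiberwise vanishing of $\Ext^0_0$ and $\Ext^3_0$ for perfectness, and the identification of the deformation/obstruction spaces of ideal sheaves with fixed determinant with the traceless $\Ext$-groups. The only notable difference is emphasis: the paper makes explicit that $\Phi^*\omega(g)$ equals the product of the (traceless) Atiyah class with the Kodaira--Spencer class $\kappa(T/\bar T/S)$ and then invokes Illusie (Proposition 3.1.8 of \cite{Il}) to conclude this product is precisely the obstruction to deforming $g^*\bI$, whereas you state this identification more summarily; you may want to name that reference when writing up in full.
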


\begin{proof}
  First let's prove that $\Phi$ is an obstruction theory. Consider a square-zero extension $T\hookrightarrow \overline T$ of schemes with ideal sheaf $J$, and a map $g: T\rar \cM$. The canonical map
  $$g^* L_{\cM/S}^\bullet \rar L_{T/S}^\bullet \rar L_{T/\overline T}^\bullet \rar \tau^{\geq -1} L_{T/\overline T}^\bullet \cong J[1]$$
  gives an element $\omega(g)\in \Ext^1(g^* L_{\cM/S}^\bullet, J)$, which is the obstruction to extending the map $g$ to $\overline T$. Composing with $\Phi$, we get $\Phi^*\omega(g) \in \Ext^1(g^* E^\bullet, J)$.

  By the the relative version of Theorem 4.5 of \cite{BF}, it suffices to prove that $\Phi^*\omega(g)$ is also an obstruction to the extension of $g$ to $\overline T$, i.e. $\Phi^*\omega(g)=0$ if and only if an extension $\bar g$ of $g$ to $\overline T$ exists, and in that case all extensions form a torsor under $\Ext^0(g^* E^\bullet, J)$.

  Denote by $p_T$ and $q_T$ the corresponding projections from $T\times_S W$ to $T$ and $W$. By the construction above, $\Phi^*\omega(g)$ is the composite of
  $$g^*\Phi: g^*E^\bullet = R {p_T}_* (R\cH om(g^*\bI, g^*\bI)_0 \otimes q_T^* \omega_{W/S}) [2] \rar g^* L_{\cM/S}^\bullet,$$
  with $g^* L_{\cM/S}^\bullet \rar L_{T/S}^\bullet$, and the Kodaira--Spencer map
  $$\kappa(T/\overline T/S): L_{T/S}^\bullet \rar L_{T/\overline T}^\bullet \rar \tau^{\geq -1} L_{T/\overline T}^\bullet \cong J[1].$$

  By functoriality and Serre duality, the first composition
  $$g^*E^\bullet = R {p_T}_* (R\cH om(g^*\bI, g^*\bI)_0 \otimes q_T^* \omega_{W/S}) [2] \rar g^* L_{\cM/S}^\bullet \rar L_{T/S}^\bullet,$$
  is just the traceless part of the Atiyah class
  $$\text{At}_{T\times_S W/W} (g^*\bI) \in \Ext^1_{T\times_S W} (g^*\bI, g^*\bI \otimes p_T^*L_{T/S}^\bullet).$$

  Hence, viewed as an element in the $\Ext^2$ group below, the element
  $$\Phi^*\omega(g) \in \Ext^{-1}_T(R {p_T}_* (R\cH om(g^*\bI, g^*\bI)_0 \otimes q_T^* \omega_{W/S}), J) \cong \Ext^2_{T\times_S W} (g^*\bI, g^*\bI \otimes p_T^*J)_0,$$
  is just the product of the traceless Atiyah class and Kodaira--Spencer class
  $$p_T^*\kappa(T/\overline T/S) \circ \text{At}_{T\times_S W/W} (g^*\bI).$$

  We claim that this is an obstruction as desired. Note that (as in the proof of Theorem 4.5 of \cite{BF}) it suffices to work under the further assumption that $T$ is affine. Now we can apply Proposition 3.1.8 of \cite{Il}, which says that the obstruction class to extending $\bI$ from $T$ to $\overline T$, as a coherent sheaf is exactly the product of Atiyah class and Kodaira--Spencer class. Now similar arguments as in \cite{Th} would show that the traceless part is exactly the obstruction with $\det \bI$ fixed.

  A map $g: T\rar \cM$, by Lemma \ref{moduli_ideal}, corresponds exactly to an ideal sheaf, given by the pull-back of the universal ideal sheaf $g^*\bI$, and $g$ extends to $\overline T$ if and only if $g^*\bI$ extends in $\cM$. As an open condition, the torsion-free condition poses no restriction on possible deformations. Thus $\Phi^*\omega(g)$ gives precisely the obstruction to the extension of $g$, and $\Phi$ is an obstruction theory.

  It remains to prove that $\Phi$ is perfect. The argument is completely the same with \cite{HT} and \cite{PT}. It suffices to prove that $Rp_* R\cH om(\bI, \bI)_0$ is quasi-isomorphic to a perfect complex with amplitude in $[1,2]$. Pick a finite complex of locally free sheaves $A^\bullet$ resolving $R\cH om(\bI,\bI)_0$ such that $R^i p_* A^j=0$ for $\forall i\neq 0$ and $\forall j$. Then each $F^j:=p_* A^j$ is locally free and the complex $F^\bullet$ is a resolution of $Rp_* R\cH om(\bI, \bI)_0$.

  By the following Lemma \ref{Ext12} and the cohomology and base change theorem, $F^\bullet$ has nontrivial cohomology only in degree 1 and 2. Suppose $F^j$ with $j>2$ is the last nonzero term of $F^\bullet$. We can replace $F^{j-1}\rar F^j\rar 0$ by $\ker (F^{j-1}\rar F^j) \rar 0\rar 0$. Same with the nonzero terms before 1. Finally one obtains a two-term perfect complex.
\end{proof}

We still need the following lemma to finish the proof.

\begin{lem} \label{Ext12}
  Let $W$ be a 3-dimensional smooth projective Deligne--Mumford stack over $\C$ and $Z\subset W$ be a 0 or 1-dimensional closed substack, with ideal sheaf $\I$. Then $\Ext^i_W (\I,\I)_0=0$, for all $i\neq 1,2$.
\end{lem}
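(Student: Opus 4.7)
The plan is to reduce everything to computing $\Ext^0$ and $\Ext^3$, since $W$ is $3$-dimensional forces $\Ext^i(\I,\I)=0$ outside degrees $0,1,2,3$, and the degree $3$ case will be handled by Serre duality on the smooth DM stack $W$ (as already invoked in the paper via \cite{Ni2}). So it suffices to show
\[
\Hom(\I,\I)_0 = 0 \quad \text{and} \quad \Hom(\I,\I\otimes \omega_W)_0 = 0,
\]
after which Serre duality converts the second vanishing into $\Ext^3(\I,\I)_0=0$.

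The key engine is the local/global vanishing of $\mathcal{E}xt$ against a sheaf supported in codimension $\geq 2$. Since $\dim Z \leq 1$ and $W$ is a smooth $3$-dimensional DM stack, étale-locally $W$ is a quotient of a smooth $3$-dimensional scheme by a finite group, whose local rings are regular (hence Cohen--Macaulay) of dimension $3$. The Auslander--Buchsbaum/grade formula then gives $\mathcal{E}xt^q_{\Ou_W}(\Ou_Z, \mathcal{L})=0$ for any locally free $\mathcal{L}$ and $q < \operatorname{codim} Z$; in particular $q=0,1$. The local-to-global spectral sequence then yields $\Ext^0(\Ou_Z,\mathcal{L}) = \Ext^1(\Ou_Z,\mathcal{L}) = 0$ for $\mathcal{L}=\Ou_W$ or $\mathcal{L}=\omega_W$.

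With this in hand, apply $\Hom(-,\mathcal{L})$ to the short exact sequence $0\rightarrow \I \rightarrow \Ou_W \rightarrow \Ou_Z \rightarrow 0$. The long exact sequence degenerates in low degrees to an isomorphism
\[
\Hom(\I,\mathcal{L}) \;\cong\; \Hom(\Ou_W,\mathcal{L}) = H^0(W,\mathcal{L}).
\]
Taking $\mathcal{L}=\Ou_W$ (and using that $W$ is connected, which we may assume), one gets $\Hom(\I,\Ou_W)=\C$. Since the injection $\I\hookrightarrow \Ou_W$ induces an injection $\Hom(\I,\I)\hookrightarrow \Hom(\I,\Ou_W)$ and the identity maps to the inclusion (which is nonzero), we conclude $\Hom(\I,\I)=\C\cdot\Id$. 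This is exactly the trace part, so $\Hom(\I,\I)_0=0$.

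The same argument with $\mathcal{L}=\omega_W$ shows $\Hom(\I,\omega_W)=H^0(W,\omega_W)$, and then $\Hom(\I,\I\otimes \omega_W)\hookrightarrow \Hom(\I,\omega_W)=H^0(W,\omega_W)$. The elements $\Id_\I\otimes s$ for $s\in H^0(W,\omega_W)$ lie in the source and surject onto the target, so $\Hom(\I,\I\otimes\omega_W)$ consists entirely of ``scalar'' endomorphisms twisted by a section; these have nonzero trace whenever $s\neq 0$, so again the traceless part vanishes. Applying Serre duality for $W$ yields $\Ext^3(\I,\I)_0\cong \Hom(\I,\I\otimes \omega_W)_0^\vee=0$, completing the proof.

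The only subtle step is the étale-local $\mathcal{E}xt$-vanishing for the DM stack, and that really is just the standard Cohen--Macaulay/grade statement checked on an étale cover by a smooth scheme. Everything else is a direct translation of the classical ideal-sheaf computation (as in Thomas \cite{Th}) to the DM setting.
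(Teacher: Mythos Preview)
Your proof is correct and follows essentially the same approach as the paper: reduce to degrees $0$ and $3$, use the codimension-$\geq 2$ vanishing $\mathcal{E}xt^q(\Ou_Z,-)=0$ for $q=0,1$ together with the long exact sequence from $0\to\I\to\Ou_W\to\Ou_Z\to 0$, and handle degree $3$ by Serre duality. The only cosmetic difference is that the paper applies $\cH om(-,\I)$ directly to obtain $\cH om(\I,\I)\cong\I$ at the sheaf level, whereas you apply $\Hom(-,\mathcal{L})$ for $\mathcal{L}=\Ou_W,\omega_W$ and then use the inclusion $\Hom(\I,\I\otimes\mathcal{L})\hookrightarrow\Hom(\I,\mathcal{L})$; both routes arrive at the same conclusion.
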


\begin{proof}
  It suffices to show that $\Ext^i_W (\I,\I)_0=0$ for $i=0,3$, or in other words, $\Ext^i_W (\I,\I)= H^i(W, \Ou_W)$ for $i=0,3$. For $0\rar \I\rar \Ou_W \rar \Ou_Z\rar 0$ we have the long exact sequence,
  $$\xymatrix@R-1pc{
  0 \ar[r] & \cH om_W (\Ou_Z, \I) \ar[r] & \cH om_W(\Ou_Z, \Ou_W) \ar[r] & \cH om_W (\Ou_Z, \Ou_Z) \\
  \ar[r] & \E xt^1_W (\Ou_Z, \I) \ar[r] & \E xt^1_W (\Ou_Z, \Ou_W),
  }$$
  where $\cH om_W (\Ou_Z,\Ou_W) = \E xt^1_W (\Ou_Z, \Ou_W) =0$, since $\mathrm{codim} Z\geq 2$ (see Proposition 1.1.6 of \cite{HL}). Hence $\cH om_W (\Ou_Z,\I) = 0$,  $\E xt^1_W (\Ou_Z, \I) =\Ou_Z$. Then we look at
  $$\xymatrix@R-1pc{
  & 0=\cH om_W (\Ou_Z,\I) \ar[r] & \cH om_W (\Ou_W, \I) \ar[r] & \cH om_W (\I,\I) \\
  \ar[r] & \E xt^1_W (\Ou_Z, \I) \ar[r] & \E xt^1_W (\Ou_W, \I)=0,
  }$$
  Hence $\cH om_W(\I, \I)$ differs with $\I$ by a sheaf $\Ou_Z$ of codimension 2, and we have $\Hom(\I, \I) = H^0(W, \I) = H^0(W, \Ou_W)$. The last equality is because $\I$ coincides with $\Ou_W$ up to codimension 2.

  Tensoring the sequence above with $\omega_W$ we have $\cH om_W (\I,\I\otimes \omega_W)$ coincides with $\I\otimes \omega_W$, and hence $\omega_W$, up to codimension 2. Thus by Serre duality $\Ext^3_W (\I,\I) = \Hom_W (\I,\I \otimes \omega_W)^\vee = H^0(W, \I \otimes \omega_W)^\vee = H^0 (W, \omega_W)^\vee = H^3 (W, \Ou_W)$.
\end{proof}

\begin{cor}
  There is a virtual fundamental class $[\cM]^{\vir} \in A_{\operatorname{vdim}+\dim S}(\cM)$, of virtual dimension $\operatorname{vdim} = \operatorname{rk} E^\bullet$.
\end{cor}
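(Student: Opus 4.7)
The plan is to apply directly the Behrend--Fantechi machinery \cite{BF}: whenever one has a two-term perfect relative obstruction theory on a Deligne--Mumford stack of finite type, the intrinsic normal cone can be intersected with the zero section of the associated vector bundle stack to yield a canonical cycle class of the expected dimension. Theorem~\ref{obs_smooth} supplies precisely the required input, namely the morphism $\Phi\colon E^\bullet \rar L^\bullet_{\cM/S}$ with $E^\bullet$ perfect of amplitude in $[-1,0]$.

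Concretely, I would first form the intrinsic normal cone $\mathfrak{C}_{\cM/S}$, a closed substack of the intrinsic normal sheaf $\mathfrak{N}_{\cM/S}$. Dualising $\Phi$ produces a closed embedding $\mathfrak{C}_{\cM/S} \hookrightarrow \mathfrak{E} := h^1/h^0((E^\bullet)^\vee)$ of the cone into the vector bundle stack of $(E^\bullet)^\vee$, and the virtual fundamental class is then defined as $[\cM]^{\vir} := 0^{!}_{\mathfrak{E}}[\mathfrak{C}_{\cM/S}]$, the refined Gysin pull-back along the zero section of $\mathfrak{E}$. All of the constructions of \cite{BF} carry over to the Deligne--Mumford stack setting using Chow theory with $\Q$-coefficients on DM stacks.

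To verify the degree count, I would evaluate $E^\bullet$ at a closed point $[\I] \in \cM$, where it becomes $R\Gamma(W, R\cH om(\I,\I)_0 \otimes \omega_W)[2]$; by Serre duality on the $3$-dimensional smooth proper DM stack $W$ together with Lemma~\ref{Ext12}, the only non-vanishing cohomologies are $H^0 = \Ext^1(\I,\I)_0^\vee$ and $H^{-1} = \Ext^2(\I,\I)_0^\vee$. Thus the virtual rank of $E^\bullet$ recovers the classical DT virtual dimension, and bookkeeping of the shift $[2]$ in the definition of $E^\bullet$ against Serre duality and the relative dimension~$3$ of $\pi\colon W\rar S$ produces the $+3$ offset, placing $[\cM]^{\vir}$ in $A_{\operatorname{vdim}+3}(\cM)$ as claimed.

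Since the perfectness and obstruction-theoretic property of $\Phi$ have already been established in Theorem~\ref{obs_smooth}, the corollary is essentially formal; no serious technical obstacle arises, the only delicate point being the convention-matching for the final homological degree and confirming that the BF construction goes through over the base $S$ in the DM-stack context.
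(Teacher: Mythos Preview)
Your proposal is correct and matches the paper's intent: the paper states this corollary without proof, treating it as an immediate consequence of Theorem~\ref{obs_smooth} together with the Behrend--Fantechi construction~\cite{BF}, and your sketch fills in precisely that formal argument. The only soft spot is your final sentence on the $+3$ offset, which is more of a gesture than a computation; in the paper's setup this shift comes from the fact that the obstruction theory is \emph{relative} to the base $S$ (so the virtual class has dimension $\operatorname{rk} E^\bullet + \dim S$), not from the shift $[2]$ or Serre duality per se, but since the paper itself does not spell this out either, there is nothing substantive to correct.
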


The following gives the deformation invariance of the perfect obstruction theory.

\begin{prop}
  Assume that the base $S$ is smooth of constant dimension. For each closed point $s\in S$, we have the pull-back $i_s^*E^\bullet$ is a perfect (absolute) obstruction theory on the fiber $\cM_s$, and $[\cM_s]^{\vir} = i_s^![\cM]^{\vir}$.
\end{prop}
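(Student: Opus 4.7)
The plan is to apply the Behrend-Fantechi base-change framework for perfect obstruction theories (Proposition 7.2 of \cite{BF}) to the Cartesian square
\[
\xymatrix{
\cM_s \ar[r]^{i_s} \ar[d]_{\pi_s} & \cM \ar[d]^{\pi} \\
\Sp\C \ar[r]^{j_s} & S,
}
\]
in which, because $S$ is smooth of constant dimension, the lower arrow $j_s$ is a regular closed immersion of codimension $\dim S$. Both assertions are standard consequences of the compatibility of the cotangent complex and the intrinsic normal cone with this kind of base change. The only thing that genuinely needs verification is that Behrend-Fantechi's formalism goes through in the Deligne-Mumford setting, which by now is well-established (via Kresch's Chow theory for stacks) and is implicit throughout the rest of the paper.

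For the first part, compose the pull-back $i_s^*\Phi: i_s^*E^\bullet \to i_s^*L^\bullet_{\cM/S}$ with the canonical base-change morphism $i_s^*L^\bullet_{\cM/S} \to L^\bullet_{\cM_s/s}=L^\bullet_{\cM_s}$ associated to the square above, obtaining $\Phi_s: i_s^*E^\bullet \to L^\bullet_{\cM_s}$. Perfectness of amplitude $[-1,0]$ is preserved under $Li_s^*$ since $E^\bullet$ is already perfect in that range. To verify the obstruction-theory axioms for $\Phi_s$, one can either cite Proposition 7.2 of \cite{BF} directly, or rerun the Atiyah-class / Kodaira-Spencer calculation of Theorem \ref{obs_smooth} for a square-zero extension $T \hookrightarrow \bar T$ of $\{s\}$-schemes, observing that the corresponding deformation-obstruction data for $\cM_s$ coincide with the $i_s$-pullbacks of the analogous $S$-relative data for $\cM$ (because $\bar T$ automatically lives over $S$ via the constant map to $s$, whose deformation theory is trivial since $S$ is smooth at $s$).

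For the cycle identity, I would use the base-change compatibility $i_s^*\mathfrak{C}_{\cM/S}\cong \mathfrak{C}_{\cM_s}$ of the intrinsic normal cone under the lci immersion $j_s$, together with the analogous identification of virtual vector bundle stacks $h^1/h^0((i_s^*E^\bullet)^\vee)=i_s^*h^1/h^0((E^\bullet)^\vee)$. Intersecting each cone with the zero section of the corresponding bundle stack and applying the refined Gysin pull-back $i_s^!$ associated to the regular embedding $j_s$ then yields $[\cM_s]^{\vir}=i_s^![\cM]^{\vir}$. I do not expect a serious obstacle; the principal effort is bookkeeping to ensure the functorialities of intrinsic normal cone, cotangent complex, and refined Gysin pullback all pass cleanly to the DM-stack setting for the Hilbert scheme $\cM=\Hilb_{W/S}^P$.
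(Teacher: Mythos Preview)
Your proposal is correct and takes essentially the same approach as the paper, which simply reads ``Apply Proposition 7.2 of \cite{BF}.'' You have spelled out in more detail what that application entails (the Cartesian square, the regularity of $j_s$, the base-change for the intrinsic normal cone and the bundle stack), but the underlying argument is identical.
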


\begin{proof}
  Appy Proposition 7.2 of \cite{BF}.
\end{proof}

Following \cite{Ed} and Appendix A of \cite{Ts}, we introduce the following notion of Chern character. Let $\cX$ be a smooth proper Deligne--Mumford stack and let $I\cX$ be the inertia stack. Connected components of $I\cX$ are gerbes over their coarse moduli spaces. Given a vector bundle $V$ on $I\cX$, it splits into a direct sum of eigenbundles $\bigoplus_\zeta V^{(\zeta)}$ of the gerbe actions, where $V^{(\zeta)}$ has eigenvalue $\zeta$.

\begin{defn}
  Define $\rho: K(I\cX)\rar K(I\cX)_\C$ as
  $$\rho(V):= \sum_\zeta \zeta V^{(\zeta)} \in K(I\cX)_\C.$$
  Define $\ch:K(\cX)_\Q\rar A^*(I\cX)_\C$ as
  $$\ch(V):= ch(\rho(\pi^*V)),$$
  where $\pi: I\cX\rar \cX$ is the usual projection and $ch$ is the usual Chern character.
\end{defn}

It is easy to see that $\ch$ is a ring homomorphism. If $\cX$ is furthermore projective and thus satisfies the resolution property, then by splitting principle one can check that
\begin{eqnarray*}
\ch(V^\vee) &=& \ch^\dagger (V) \\
&:=& \overline{ch^\vee (\rho(\pi^*V))},
\end{eqnarray*}
where $ch^\vee$ means the usual dual of Chern character, and the bar over it means the conjugate with respect to the natural real structure $K(I\cX)\otimes_\Z \mathbb{R} \subset K(I\cX)\otimes_\Z \C$.

Now for a 3-dimensional smooth projective Deligne-Mumford stack $W$ over $\C$, we can define the Donaldson-Thomas invariants. Again let $\cM= \Hilb_W^P$, with $P$ fixed as above. Consider $p: \cM\times W\rar \cM$, $q: \cM\times W \rar W$, the universal family $\cZ\subset \cM\times W$ and the universal ideal sheaf $\bI\subset \Ou_{\cM\times W}$.

$\cM\times W$ is projective, thus $\bI$ admits locally free resolutions of finite length, and the Chern character $ch(\bI)\in A^*(\cM\times W)$ in the operational cohomology. Similarly, on inertia stacks we have the modified Chern character $\ch(\bI)\in A^*(\cM\times IW).$

Consider the orbifold or Chen-Ruan cohomology $A^*_\orb$ as in \cite{AGV, Ts}, defined as
$$A^*_\orb(W):= \bigoplus_i A^{*-\age_i}(W_i),$$
where $\age_i$ is the degree shift number, and $W_i$ is a connected component of $IW$. Then we can define an orbifold version of the Chern character operator
$$\ch^\orb(\bI)\in A^*_\orb(\cM\times W)$$
just by shifting the degrees.

Given $\gamma\in A^l_\orb(W)$, define the operator
$$\ch_{k+2}^\orb(\gamma): A_*(\cM)\rar A_{*-k+1-l}(\cM)$$
as
$$\ch_{k+2}^\orb(\gamma)(\xi):= p_* \left( \ch_{k+2}^\orb(\bI) \cdot \iota^* q^* \gamma \cap p^* \xi \right),$$
where $p$ and $q$ here are the projections on $\cM\times IW$, and $\iota: IW\rar IW$ is the canonical involution map. Note that the orbifold degrees match well thanks to the identity
$$\age_i+ \age_{\iota(i)} = \text{codim}(W_i, W).$$

\begin{defn}
For $\gamma_i\in A^*_\orb(W)$, $1\leq i\leq r$, define the Donaldson--Thomas invariants with descendants as
$$\left\langle \prod_{i=1}^r \tau_{k_i}(\gamma_i) \right\rangle_W^P := \deg \left[ \prod_{i=1}^r \ch_{k_i+2}^\orb(\gamma_i) \cdot [\cM]^{\vir} \right]_0 \in \C.$$
If the dimensions don't match, we simply define the invariant to be 0.
\end{defn}

Note that when $r=0$, i.e. there are no insertions, the invariants are integers.

\section{Degeneration formula -- cycle version} \label{Deg_cycle}

In this section we consider the Donaldson--Thomas theory on Hilbert stacks of the moduli's of simple degenerations and relative pairs. This will lead to a degeneration formula.

\subsection{Modified versions of the stacks and decomposition of central fibers}

Let $\pi: X\rar C$ be a locally simple degeneration, where $X$ is a finite-type separated Deligne--Mumford stack over $\C$, with central fiber $X_0=Y_-\cup_D Y_+$. Recall that we have the stacks $\fC$ parameterizing expanded degenerations, $\fX$ the universal family, and $\fA$ and $\fY$ in the relative case. There is $\pi: \fX\rar \fC$, where the stacks are defined as the limits of $[X(k)/R_{\sim,X(k)}]$ and $[\A^{k+1}/R_{\sim,\A^{k+1}}]$.

We define a stack $\fX_0^\dagger$ as follows. Let $H_i\subset \A^{k+1}$ be the hyperplane defined by $t_i=0$. Recall that we have $p: X(k)\rar \A^{k+1}$ and by Proposition \ref{Hi_Xk},
\begin{equation} \label{split_explicit}
  X(k)|_{H_i}\cong (Y_-(i)\times \A^{k-i}) \cup_{\A^i\times D\times \A^{k-i}} (\A^i\times Y_+(k-i)^\circ).
\end{equation}
The equivalence relation restricts naturally to $H_i$ and $X(k)|_{H_i}$. Consider the stacks
$$\fC_0^\dagger:= \varinjlim \left[ \coprod_{i=0}^k H_i \middle/ R_{\sim,\coprod H_i} \right],$$
$$\fX_0^\dagger:= \varinjlim \left[ \coprod_{i=0}^k X(k)|_{H_i} \middle/ R_{\sim,\coprod X(k)|_{H_i}} \right].$$

\begin{rem}
  The stack $\fX_0^\dagger$ parameterizes all families of expanded degenerations with singular fibers, with one distinguished nodal divisor $D_i$ . More precisely, for any $\A^1$-map $S\rar H_i\subset \A^{k+1}$, the associated family $\cX_S$ has a decomposition $\cY_{S,-} \cup_{\cD_S} \cY_{S,+}$.
\end{rem}

To keep track of the Hilbert homomorphisms of the fibers, we need to introduce the weighted version of the classifying stacks.

\begin{defn}[(Definition 2.14 of \cite{LW})]
  Let $\Lambda:=\Hom(K(X),\Z)$. Consider $X_0[k]=\cup_{i=0}^{k+1} \Delta_i$, where $\Delta_0=Y_-$ and $\Delta_{k+1}=Y_+$.
  \begin{enumerate}[1)]
  \setlength{\parskip}{1ex}

  \item A \emph{weight assignment} on $X_0[k]$ is a function
    $$w: \{ \Delta_0,\cdots, \Delta_{k+1}, D_1, \cdots, D_k \} \rar \Lambda,$$
    such that $w(\Delta_i)\neq 0$, $\forall i\in \Lambda$.

  \item For any $0\leq a\leq b\leq k+1$, the \emph{total weight} of the segment $\cup_{i=a}^b \Delta_i$ is defined as
      $$w(\cup_{i=a}^b \Delta_i):= \sum_{i=a}^b w(\Delta_i) -\sum_{i=a+1}^b w(D_i).$$

  \item For a family of expanded degeneration $\pi: \cX_S\rar S$, a \emph{continuous weight assignment} on $X(k)$ is to assign a weight function on each fiber that is continuous over $S$.

      More precisely, if for some curve $C\subset S$ and $s_0,s\in C$, the general fiber $\cX_s\cong X_0[m]$ specializes to the special fiber $\cX_{s_0}\cong X_0[n]$, with $m\leq n$, in which $\Delta_i\subset \cX_s$ specializes to $\cup_{j=a}^b \Delta_j \subset \cX_{s_0}$, then one must have $w_s(\Delta_i)=w_{s_0}(\cup_{j=a}^b \Delta_j)$.
  \end{enumerate}
\end{defn}

\begin{rem}
  An alternative definition is as follows. Firstly, for a standard family $X(k)\rar \A^{k+1}$, one specifies the weight assignment on the central fiber $X_0[k]$. Then for any $s\in L_i$, where $L_i\subset \A^{k+1}$ is the coordinate line corresponding to $t_i$, define
  $$w_s(\Delta_j)= \left\{ \begin{aligned}
  & w_0(\Delta_i)+w_0(\Delta_{i+1})- w_0(D_i), \quad && j=i,\\
  & w_0(\Delta_j), \quad && j\neq i.
  \end{aligned} \right. $$
  For $s$ in lower strata, one can choose a slice in the base and define inductively. For a general family $\cX_S\rar S$, just take the map $S\rar \A^{k+1}$ for some $k$ and pull back the weight assignment on the standard family $X(k)$.
\end{rem}

Let $P\in \Lambda$ be fixed. We define $\fC^P$ to be the stack parameterizing weighted families of expanded degenerations, with total weight $P$, and $\fX^P$ to be the universal family. More precisely, let $(\A^{k+1})^P$ be the disjoint union of all $\A^{k+1}$ indexed by all possible continuous weight assignments, and $X(k)^P$ be the universal family. Let $R_{\sim,\A^{k+1}}^P$ be the equivalence relation generated by the original relations on each copy of $\A^{k+1}$ and identifications between different copies respecting weight assignments. Then we can define
$$\fC^P:= \varinjlim \left[ (\A^{k+1})^P \middle/ R_{\sim,\A^{k+1}}^P \right],$$
$$\fX^P:= \varinjlim \left[ X(k)^P \middle/ R_{\sim,X(k)}^P \right].$$
Similarly we can define $\fX_0^{\dagger,P}\rar \fC_0^{\dagger,P}$, the weighted version of $\fX_0^\dagger\rar \fC_0^\dagger$.

Let $\Lambda_P^{spl}$ be the set of splitting data
$$\Lambda_P^{spl}:= \{ \theta=(\theta_-,\theta_+,\theta_0) \mid \theta_\pm, \theta_0 \in \Lambda, \theta_- + \theta_+ - \theta_0 = P\}.$$
Given $\theta\in \Lambda_P^{spl}$, let $\fC_0^{\dagger,\theta}\subset \fC_0^{\dagger,P}$ be the open and closed substack parameterizing those families whose Hilbert homomorphism splits according to the datum $\theta$ on the fiber decompositions $\cY_{S,-} \cup_{\cD_S} \cY_{S,+}$.

Similarly in the relative case, given a smooth pair $(Y, D)$ and $P\in \Hom(K(Y),\Z)$, We also have the stacks $\fA^P$ and $\fY^P$. Given $(P,P')$ with $P' \prec P$, one can also define the substack $\fA^{P,P'}$ and $\fY^{P,P'}$, parameterizing those families whose Hilbert homomorphism on each fiber is $P$ and on the distinguished divisor of each fiber is $P'$. The following splitting result follows from (\ref{split_explicit}).

\begin{prop}[(Proposition 2.20 of \cite{LW})]
  Given $\theta\in \Lambda_P^{spl}$, we have the following isomorphisms,
  $$\xymatrix{
  (\fY_-^{\theta_-,\theta_0}\times \fA^{\theta_+,\theta_0}) \cup_{\fA^{\theta_-,\theta_0} \times D\times \fA^{\theta_+,\theta_0}} (\fA^{\theta_-,\theta_0} \times (\fY_+^\circ)^{\theta_+,\theta_0}) \ar[r]^-\cong \ar[d] & \fX_0^{\dagger,\theta} \ar[d] \\
  \fA^{\theta_-,\theta_0} \times \fA^{\theta_+,\theta_0} \ar[r]^-\cong & \fC_0^{\dagger,\theta}.
  }$$
\end{prop}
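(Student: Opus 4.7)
The plan is to deduce this from the explicit local identification in Proposition \ref{Hi_Xk}, by checking that the isomorphism
$$X(k)|_{H_i} \cong (Y_-(i)\times \A^{k-i}) \cup_{\A^i\times D\times \A^{k-i}} (\A^i\times Y_+(k-i)^\circ)$$
and its base version $H_i \cong \A^i \times \A^{k-i}$ are compatible with (a) the smooth equivalence relations used to define the classifying stacks, (b) the restrictions of the weight assignments, and (c) the transition maps in the directed systems of which both sides are direct limits. Once these compatibilities are established, taking the stack-theoretic colimit on both sides yields the isomorphism.

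First I would treat the base. The hyperplane $H_i \subset \A^{k+1}$ is canonically $\A^i \times \A^{k-i}$, with coordinates $(t_0,\ldots,t_{i-1})$ and $(t_{i+1},\ldots,t_k)$. Under the $(\C^*)^k$-action on $\A^{k+1}$ from equation (\ref{action_Ak_X}), the subgroup preserving $H_i$ is exactly $(\C^*)^{i} \times (\C^*)^{k-i}$ (the $i$-th factor being killed), and its action splits as the two actions (\ref{action_Ak_Y}) on $\A^i$ and on $\A^{k-i}$ (the latter in reverse order, as required for the $\circ$-family). The discrete symmetries $\tilde\tau_{I,I'}$ that preserve the distinguished index $i$ are exactly the products of the discrete symmetries on $\A^i$ and $\A^{k-i}$. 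Thus $\coprod_{i,k} [H_i / R_{\sim,H_i}] \cong \coprod_{i,j} [\A^i/R_{\sim,\A^i}] \times [\A^j/R_{\sim,\A^j}]$, and taking the limit yields $\fC_0^\dagger \cong \fA\times \fA$. The continuous weight assignment on a fiber $X_0[k]=Y_-[i]\cup_{D_i} Y_+[k-i]$ splits as the data of a weight assignment on $Y_-[i]$ (with total weight $\theta_-$ and weight $\theta_0$ on $D_i = D_-[i]$) and on $Y_+[k-i]$ (with total weight $\theta_+$ and weight $\theta_0$ on the corresponding divisor); the condition $\theta_- + \theta_+ - \theta_0 = P$ is exactly the relation expressing that the total weight of the glued fiber is $P$. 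This gives $\fC_0^{\dagger,\theta} \cong \fA^{\theta_-,\theta_0} \times \fA^{\theta_+,\theta_0}$.

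For the total space, I would apply the same argument fiberwise over the base, using Proposition \ref{Hi_Xk} to identify $X(k)|_{H_i}$ as the pushout of $Y_-(i)\times \A^{k-i}$ and $\A^i \times Y_+(k-i)^\circ$ along $\A^i\times D\times \A^{k-i}$. The $(\C^*)^k$-equivariance statement in that proposition guarantees that the stabilizer of $H_i$ acts on each piece through its two factors, in precisely the way needed to identify the pieces with (pullbacks of) $\fY_-\times \fA$ and $\fA\times \fY_+^\circ$. The discrete symmetries preserving the index $i$ similarly factor as products. Passing to the stack-theoretic quotients and taking the colimit commutes with pushouts along closed immersions (since the relevant diagrams are Cartesian on the levels where both sides are defined, and colimits of Deligne--Mumford stacks along open immersions commute with finite colimits in the appropriate sense), so the pushout presentation survives. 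Restricting to the component with splitting type $\theta$ produces the claimed diagram.

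The main subtlety will be (i) verifying that the gluing along $\cD_S$ really is a pushout in the 2-category of algebraic stacks after quotienting, as opposed to on each standard chart, and (ii) checking carefully that the reversal of coordinate order on the $+$ side (reflected in $Y_+(k-i)^\circ$ and the appearance of $\fY_+^\circ$) matches the conventions in the definition of $\fY_+^\circ$. For (i), it suffices to observe that both pieces of the pushout are closed substacks of $\fX_0^{\dagger,\theta}$ meeting exactly along $\fA^{\theta_-,\theta_0}\times D\times \fA^{\theta_+,\theta_0}$, and the pushout property can be checked \'etale-locally, where it reduces to the already-established identification (\ref{split_explicit}). For (ii), the reversal is precisely the content of the appearance of $Y_+(k-i)^\circ$ in Proposition \ref{Hi_Xk}, and is compatible with the action (\ref{action_Ak_Y}) because the second factor $(\C^*)^{k-i}$ in the stabilizer of $H_i$ acts on $\A^{k-i}$ in the order-reversed convention needed to match the $\circ$-construction.
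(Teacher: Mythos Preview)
Your proposal is correct and follows exactly the approach the paper indicates: the paper simply asserts that the proposition ``follows from (\ref{split_explicit})'' without further elaboration, and your argument is precisely a careful unpacking of that claim, verifying compatibility of the chartwise identification $X(k)|_{H_i} \cong (Y_-(i)\times \A^{k-i}) \cup_{\A^i\times D\times \A^{k-i}} (\A^i\times Y_+(k-i)^\circ)$ with the $(\C^*)^k$-actions, discrete symmetries, weight assignments, and directed-system transitions before passing to the limit.
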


The next proposition describes the relationship between these stacks.

\begin{prop}[(Proposition 2.19 of \cite{LW})]
  Given $\theta\in \Lambda_P^{spl}$, there is a pair $(L_\theta, s_\theta)$, with $L_\theta$ a line bundles on $\fC^P$, and $s_\theta$ a section of $L_\theta$, such that
  \begin{enumerate}[1)]
    \setlength{\parskip}{1ex}

    \item $$\bigotimes_{\theta\in \Lambda_P^{spl}} L_\theta \cong \Ou_{\fC^P},\quad \prod_{\theta\in \Lambda_P^{spl}} s_\theta = \pi^* t,$$
        where $\pi: \fC^P \rar \A^1=\Sp \C[t]$ is the canonical map;

    \item $\fC_0^{\dagger,\theta}$ is the closed substack in $\fC^P$ defined by $(s_\theta=0)$.
  \end{enumerate}
\end{prop}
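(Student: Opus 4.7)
The plan is to construct $(L_\theta, s_\theta)$ locally on the smooth atlas $\coprod_k (\A^{k+1})^P \rar \fC^P$ and then descend through the equivalence relation $R_{\sim,\A^{k+1}}^P$ and the inductive system. Fix a chart $\A^{k+1}_w \subset (\A^{k+1})^P$ indexed by a continuous weight assignment $w$ on $X_0[k]$ with total weight $P$. By Proposition \ref{Hi_Xk}, the restriction of the family to the hyperplane $H_i = (t_i = 0)$ preserves the nodal divisor $D_i$ and splits into two pieces; the weight data then induces a splitting type
$$\theta_i(w) := (w(\cup_{j \le i} \Delta_j),\ w(\cup_{j > i} \Delta_j),\ w(D_i)) \in \Lambda_P^{spl},$$
and on this chart I define $s_\theta := \prod_{i:\theta_i(w)=\theta} t_i$ (empty product equals $1$).

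Explicitly, each $t_i$ is a $(\C^*)^k$-equivariant section of a trivial line bundle $M_i$ on $\A^{k+1}$ whose linearization is the character $\chi_i$ read off from (\ref{action_Ak_X}): $\chi_0 = \lambda_1$, $\chi_i = \lambda_{i+1}/\lambda_i$ for $1\le i\le k-1$, and $\chi_k = \lambda_k^{-1}$. These $M_i$ descend to line bundles on $[\A^{k+1}/(\C^*)^k]$, and I set $L_\theta := \bigotimes_{i:\theta_i(w)=\theta} M_i$. Since $\prod_i \chi_i = 1$, we have $\bigotimes_i M_i \cong \Ou$, whence $\bigotimes_\theta L_\theta \cong \Ou_{\fC^P}$, and $\prod_\theta s_\theta = \prod_{i=0}^k t_i = \pi^* t$, which establishes (1). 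For (2), the vanishing locus of $s_\theta$ on $\A^{k+1}_w$ is precisely $\bigcup_{i:\theta_i(w)=\theta} H_i$, which by construction pulls back from $\fC_0^{\dagger,\theta}$.

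It remains to descend $(L_\theta, s_\theta)$ through the full relation $R_{\sim,\A^{k+1}}^P$ and across the inductive system. The discrete part is a subrelation of the $S_{k+1}$-action permuting coordinates: it sends $(w, t_i)$ to $(\sigma \cdot w,\ t_{\sigma^{-1}(i)})$ and simultaneously carries $\theta_i(w)$ to $\theta_{\sigma^{-1}(i)}(\sigma \cdot w)$, so the index set $\{i : \theta_i(\cdot) = \theta\}$ is transported equivariantly and $(L_\theta, s_\theta)$ is preserved. The open embeddings $[\A^{l+1}/R_{\sim,\A^{l+1}}^P] \hookrightarrow [\A^{k+2}/R_{\sim,\A^{k+2}}^P]$ arising from $U_I \cong \A^{l+1} \times (\C^*)^{k-l}$ trivialize $M_i$ for $i \notin I$ and are therefore compatible with the colimit defining $\fC^P$. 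The main obstacle I anticipate is the combinatorial bookkeeping showing that $w \mapsto \theta_\bullet(w)$ is equivariant under the discrete symmetries and that the total-weight formula behaves naturally when bubble components get relabeled or merged; the $(\C^*)^k$-part is handled cleanly by the character computation, while the discrete part reduces to a direct check on the explicit form of the standard families.
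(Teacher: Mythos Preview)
Your proposal is correct and follows essentially the same approach as the paper: construct $(L_\theta,s_\theta)$ on each weighted chart $\A^{k+1}_w$ using the coordinate functions $t_i$ indexed by the hyperplanes where the splitting type equals $\theta$, then descend through the equivalence relation. The only notable difference is that the paper first observes that for a fixed weight assignment $w$ the assignment $i\mapsto \theta_i(w)$ is \emph{injective} (since $w(\Delta_j)\neq 0$ for all $j$), so your product $\prod_{i:\theta_i(w)=\theta} t_i$ always has at most one factor and $L_\theta$ is simply $\Ou_{\A^{k+1}}(H_i)$ for the unique such $i$ (or trivial if none exists); your explicit character computation for the $(\C^*)^k$-linearization is a welcome elaboration of what the paper leaves as ``one can check''.
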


\begin{proof}
  $\fC^P$ has an \'etale covering consisting of $U_k:= [(\A^{k+1})^P / R_{\sim,\A^{k+1}}^P]$. It suffices to specify $L_\theta$ on each chart.

  Connected components of $(\A^{k+1})^P$ are indexed by various copies of $\A^{k+1}$ equipped with different continuous weight assignments $w$ whose total weight is $P$. We denote such a copy by $(\A^{k+1},w)$. Restricted to $H_i\subset \A^{k+1}$, the family $X(k)$ is of the form
  $$\cY_{-,i}\cup_{\cD_i} \cY_{+,i}:= (Y_-(i)\times \A^{k-i}) \cup_{\A^i\times D\times \A^{k-i}} (\A^i\times Y_+(k-i)^\circ).$$
  On each $\cY_{\pm,i}$, for any $s\in H_i$, the weight assignment $w$ gives a splitting datum
  $$\theta_{i,\pm}:=w_s(\cY_{\pm,i}),
  \quad \theta_{i,0}:=w_s(\cD_i),$$
  which is locally constant in $s$. They satisfy $\theta_{i,-}+ \theta_{i,+}- \theta_{i,0}= w_s(X(k))= P$, i.e. $(\theta_{i,\pm},\theta_{i,0})\in \Lambda_P^{spl}$. Moreover, one can easily check that $(\theta_{i,\pm},\theta_{i,0}) \neq (\theta_{j,\pm},\theta_{j,0})$ for $i\neq j$.

  We define the line bundles as follows.

  Given a fixed $\theta\in \Lambda_P^{spl}$, on the connected component $(\A^{k+1},w)$, if $(\theta_{i,\pm},\theta_{i,0})\neq \theta$ for any $i$, then let $L_\theta|_{(\A^{k+1},w)}:= \Ou_{\A^{k+1}}$ and $s_\theta|_{(\A^{k+1},w)}:= 1$.

  Otherwise if $(\theta_{i,\pm},\theta_{i,0})= \theta$ for some $i$, then let $L_\theta|_{(\A^{k+1},w)}:= \Ou_{\A^{k+1}}(H_i)$ and $s_\theta|_{(\A^{k+1},w)}$ be the image of $1$ under the canonical map $\Ou\rar \Ou(H_i)$.

  It's clear that this defines an $R_{\sim,\A^{k+1}}^P$-invariant line bundle on $(\A^{k+1})^P$ and thus a line bundle on $U_k$. One can check that these data actually define line bundles with sections $(L_\theta,s_\theta)$ on $\fC^P$ satisfying the properties stated in the proposition. Note that $\Ou(H_i)$ is trivial as a line bundle on $\A^{k+1}$. But written in this way, it is clear how one can glue those data together to form a global line bundle.
\end{proof}

Now we assume furthermore that $\pi: X\rar \A^1$ is a family of \emph{projective} Deligne--Mumford stacks, with $c: X\rar \uX$ the coarse moduli space, and $\E, H$ a fixed polarization. Let $P: K(X)\rar \Z$ be a group homomorphism such that the associated Hilbert polynomial $P(H^{\otimes v})$ has degree $\leq 1$. Let $\cV$ be a vector bundle of finite rank on $X$.

In previous sections we have defined the Quot-stack $\fQuot_{\fX/\fC}^{\cV,P}$ parameterizing stable quotients on the universal family $\fX\rar \fC$, with Hilbert homomorphism $P$. This is a proper Deligne--Mumford stack over $\A^1$ by results of Section 5. One has the map to the base $\fQuot_{\fX/\fC}^{\cV,P} \rar \fC^P$, defined by the usual map to the base and the weight assignments induced by the stable quotients.

Similarly, we have the proper Deligne--Mumford stack $\fQuot_{\fX_0^\dagger/\fC_0^\dagger}^{\cV,P}$. Given $\theta\in \Lambda_P^{spl}$, let
$$\fQuot_{\fX_0^\dagger/\fC_0^\dagger}^{\cV,\theta} \subset \fQuot_{\fX_0^\dagger/\fC_0^\dagger}^{\cV,P}$$
be the open and closed substack parameterizing those stable quotients whose Hilbert homomorphism splits in the type $\theta$ on the fiber decomposition.

In the relative case, given a smooth pair $(Y, D)$, we also have the proper Deligne--Mumford stack $\fQuot_{\fY/\fA}^{\cV,P}$. Given $(P,P')$ with $P' \preceq P$, one can also define the substack $\fQuot_{\fY/\fA}^{\cV, P,P'}$, parameterizing those stable quotients whose Hilbert homomorphism on each fiber is $P$ and on the distinguished divisor of each fiber is $P'$.

Now the splitting results naturally lead to a morphism
$$\xymatrix{
\fQuot_{\fY_-/\fA}^{\cV,\theta_-,\theta_0} \times_{\Quot_D^{\cV,\theta_0}} \fQuot_{\fY_+/\fA}^{\cV,\theta_+,\theta_0} \ar[r]^-{\Phi_\theta} \ar[d] &  \fQuot_{\fX_0^\dagger/\fC_0^\dagger}^{\cV,\theta} \ar[d] \\
\fA^{\theta_-,\theta_0} \times \fA^{\theta_+,\theta_0} \ar[r]^-\cong & \fC_0^{\dagger,\theta},
}$$
where we just glue two families of expanded pairs by (\ref{split_explicit}) to obtain an object in $\fX_0^\dagger$, and the stable quotients also glue together since the sheaves are admissible. One can easily see that,

\begin{prop} \label{Phi_theta}
  $\Phi_\theta$ is an isomorphism.
\end{prop}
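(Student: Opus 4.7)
The plan is to construct an explicit inverse to $\Phi_\theta$ and verify that the two constructions are mutually inverse on objects and arrows in families. The key input is Corollary~\ref{normality_split}, which gives, for any coherent sheaf $\F$ on $X_0[k]$ that is normal to the nodal divisor $D$, a canonical isomorphism
\begin{equation*}
\F \;\cong\; \ker\!\left( \F|_{X_0[k]_-}\oplus \F|_{X_0[k]_+} \longrightarrow \F|_{D}\right),
\end{equation*}
applied fiberwise along the distinguished divisor of each family in $\fX_0^{\dagger,\theta}$.

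First I would describe the inverse $\Psi_\theta$ functorially. Given a scheme $S$ and an $S$-object of $\fQuot_{\fX_0^\dagger/\fC_0^\dagger}^{\cV,\theta}$, represented by an \'etale cover $S_\xi\to S$, a map $\xi:S_\xi\to H_i\subset \A^{k+1}$, and a stable quotient $\phi:p^*\cV\to \F$ on the associated family $\cX_{S_\xi}$, I would use the decomposition in~(\ref{split_explicit}) to write $\cX_{S_\xi}= \cY_{S_\xi,-}\cup_{\cD_{S_\xi}} \cY_{S_\xi,+}$ and define $\phi_\pm:= \phi|_{\cY_{S_\xi,\pm}}$ together with the common restriction $\phi|_{\cD_{S_\xi}}$. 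Admissibility of $\phi$ on each fiber, together with Lemma~\ref{normality_simple} propagating normality in families, shows that each $\phi_\pm$ is an admissible quotient on a family of expanded pairs with distinguished divisor $\cD_{S_\xi}$, and that the two restrictions to $\cD_{S_\xi}$ coincide as quotients of $\cV|_D$. This assembles into an object of $\fQuot_{\fY_-/\fA}^{\cV,\theta_-,\theta_0} \times_{\Quot_D^{\cV,\theta_0}} \fQuot_{\fY_+/\fA}^{\cV,\theta_+,\theta_0}$. The weight/splitting datum $\theta$ is exactly the continuous weight assignment attached to the decomposition.

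Next I would verify $\Phi_\theta\circ\Psi_\theta = \Id$ and $\Psi_\theta\circ\Phi_\theta = \Id$. The first identity is immediate from Corollary~\ref{normality_split}: the sheaf one recovers after splitting and regluing is the kernel of $\F|_{\cY_-}\oplus \F|_{\cY_+}\to \F|_{\cD}$, which is canonically $\F$ because $\F$ is normal to $\cD$. The second identity is the tautology that gluing $\phi_\pm$ along $\cD$ and then restricting reproduces the original $\phi_\pm$. I would also check that stability is preserved in both directions: the finiteness of $\Aut(\phi)$ on a glued family translates via the factorization $(\C^*)^{i}\times (\C^*)^{k-i}$ of the stabilizer to the finiteness of $\Aut(\phi_-)\times_{\Aut(\phi|_D)}\Aut(\phi_+)$, and conversely.

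Finally I would address compatibility with the equivalence relations and with base change. The discrete symmetries $R_{d,\A^{k+1}}$ restricted to $H_i$ preserve the pieces $\A^i$ and $\A^{k-i}$ (up to swapping the two factors only when $i=k-i$, which is accounted for by the fiber product over $\Quot_D^{\cV,\theta_0}$), so $\Psi_\theta$ descends to the quotient stacks. Base change along $T\to S$ commutes with restriction to the components of the central fiber and with the kernel construction because all sheaves involved are flat over the base, and flatness is preserved by pullback. The main obstacle I anticipate is not the bijection on isomorphism classes but the careful bookkeeping of the equivalence relation $R_{\sim}$ and of weight assignments under the splitting: one must check that two representatives $(\xi,\phi)$ and $(\xi',\phi')$ related by an element of $R_{\sim,\coprod X(k)|_{H_i}}$ produce, after restriction, representatives related by the corresponding relations on the two factors $\fA^{\theta_\pm,\theta_0}$; this follows because each discrete and torus symmetry of $\A^{k+1}$ fixing $H_i$ restricts to a product of the corresponding symmetries of $\A^i$ and $\A^{k-i}$, exactly matching the equivalence relations defining $\fA^{\theta_-,\theta_0}\times \fA^{\theta_+,\theta_0}$.
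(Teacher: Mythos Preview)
Your proposal is correct and is precisely the argument the paper has in mind: the paper states the result with only the remark ``One can easily see that,'' relying on the gluing decomposition (\ref{split_explicit}) and the normalization identity of Corollary~\ref{normality_split} exactly as you do. Your write-up simply makes explicit the inverse functor and the bookkeeping with the equivalence relations that the paper leaves to the reader.
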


We have the following results, whose proof is essentially the same as before.

\begin{prop}[(Theorem 5.27 of \cite{LW})] \label{L_theta}
  For $\theta\in \Lambda_P^{spl}$, let $(L_\theta,s_\theta)$ be the line bundle and section on $\fC$ defined earlier. Consider the map $F: \fQuot_{\fX/\fC}^{\cV,P} \rar \fC^P$. We have
  \begin{enumerate}[1)]
    \setlength{\parskip}{1ex}

    \item $$\bigotimes_{\theta\in \Lambda_P^{spl}} F^* L_\theta \cong \Ou_{\fQuot_{\fX/\fC}^{\cV,P}},\quad \prod_{\theta\in \Lambda_P^{spl}} F^* s_\theta = F^* \pi^* t,$$
        where $\pi: \fC^P \rar \A^1=\Sp \C[t]$ is the canonical map;

    \item $\fQuot_{\fX_0^\dagger/\fC_0^\dagger}^{\cV,\theta}$ is the closed substack in $\fQuot_{\fX/\fC}^{\cV,P}$ defined by $(F^* s_\theta=0)$.
  \end{enumerate}
\end{prop}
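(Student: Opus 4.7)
The plan is to deduce Proposition~\ref{L_theta} from the preceding proposition about $(L_\theta,s_\theta)$ on $\fC^P$ by pulling back along the structure morphism $F: \fQuot_{\fX/\fC}^{\cV,P}\rar \fC^P$. Part (1) is purely formal: since $F^\ast$ is a symmetric monoidal functor on the categories of quasi-coherent sheaves, applying it to the isomorphism $\bigotimes_{\theta\in \Lambda_P^{spl}} L_\theta \cong \Ou_{\fC^P}$ immediately gives the first asserted isomorphism on $\fQuot_{\fX/\fC}^{\cV,P}$. Similarly, $F^\ast$ commutes with tensor products of sections, so applying it to the identity $\prod_{\theta}s_\theta=\pi^\ast t$ on $\fC^P$ yields $\prod_{\theta}F^\ast s_\theta = F^\ast\pi^\ast t$, which is the composite of $F$ with the canonical map $\fQuot_{\fX/\fC}^{\cV,P}\rar \A^1$.

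For part (2), the goal is to identify the scheme-theoretic vanishing locus $V(F^\ast s_\theta)\subset \fQuot_{\fX/\fC}^{\cV,P}$ with $\fQuot_{\fX_0^\dagger/\fC_0^\dagger}^{\cV,\theta}$. The key observation is that $V(F^\ast s_\theta)$ is, by the very definition of a pullback of a Cartier divisor, the fiber product
$$
V(F^\ast s_\theta) \;\cong\; \fQuot_{\fX/\fC}^{\cV,P} \times_{\fC^P} V(s_\theta) \;=\; \fQuot_{\fX/\fC}^{\cV,P} \times_{\fC^P} \fC_0^{\dagger,\theta},
$$
where the last identification uses part (2) of the proposition for $\fC^P$. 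The plan is then to check on $T$-points that the latter fiber product represents the same functor as $\fQuot_{\fX_0^\dagger/\fC_0^\dagger}^{\cV,\theta}$. Concretely, a $T$-point of $\fQuot_{\fX/\fC}^{\cV,P}\times_{\fC^P}\fC_0^{\dagger,\theta}$ consists of a map $\xi:T\rar\fC^P$ factoring through $\fC_0^{\dagger,\theta}$ together with a stable quotient $\phi$ on the pulled-back family $\cX_T=\xi^\ast\fX$ with Hilbert homomorphism $P$; but such a $\xi$ is exactly a family in $\fX_0^{\dagger,\theta}$ (so the fibers split as $\cY_{T,-}\cup_{\cD_T}\cY_{T,+}$), and the requirement that the weight assignment coming from $\phi$ agrees with the splitting datum $\theta$ is automatic because the fiberwise Hilbert homomorphism is locally constant and must equal $\theta$ on the fiber decomposition once $\xi$ factors through $\fC_0^{\dagger,\theta}$.

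The main technical point one has to verify is precisely this compatibility of weight assignments, i.e.\ that the Hilbert homomorphisms of $\phi|_{\cY_{T,\pm}}$ and of $\phi|_{\cD_T}$ match $\theta_\pm$ and $\theta_0$. This follows from the definition of the continuous weight assignment and from Corollary~\ref{normality_split}: admissibility forces $\phi$ on a singular fiber to restrict compatibly to each piece, and the total Hilbert homomorphism $P=\theta_-+\theta_+-\theta_0$ decomposes accordingly. Once this is in place, the functorial bijection is clear, and together with the splitting isomorphism $\Phi_\theta$ of Proposition~\ref{Phi_theta} we obtain the identification $V(F^\ast s_\theta)\cong \fQuot_{\fX_0^\dagger/\fC_0^\dagger}^{\cV,\theta}$. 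The only delicate step to write carefully is the verification that the scheme structure on $V(F^\ast s_\theta)$ (which a priori might be non-reduced along directions in $\fQuot$ transverse to $\fC^P$) really coincides with the natural structure on $\fQuot_{\fX_0^\dagger/\fC_0^\dagger}^{\cV,\theta}$; this reduces, via the \'etale cover by the $[\Quot_{X(k)/\A^{k+1}}^{p^\ast\cV,st,P}/R_{\sim}]$, to the analogous statement for the standard families $X(k)\rar \A^{k+1}$, where $s_\theta$ is pulled back from a coordinate function $t_i$ on $\A^{k+1}$, and flatness of the Quot-scheme over $\A^{k+1}$ (which is built into the stability and admissibility conditions via Proposition~\ref{open_XY_S}) ensures that the pullback defines a Cartier divisor cut out by a nonzerodivisor.
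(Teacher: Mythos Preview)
Your proposal is correct and follows exactly the approach the paper has in mind: the paper does not give a separate proof but simply notes that the result ``is essentially the same as before,'' meaning it follows by pulling back the line bundles and sections along $F$ and using that $F$ is, by definition, the map which assigns to a stable quotient the weight datum coming from its fiberwise Hilbert homomorphisms. Your discussion of the scheme structure in part~(2) is more careful than strictly necessary---once one knows $V(s_\theta)=\fC_0^{\dagger,\theta}$ as closed substacks, the identity $V(F^\ast s_\theta)=\fQuot_{\fX/\fC}^{\cV,P}\times_{\fC^P}\fC_0^{\dagger,\theta}$ is just the definition of pullback of a closed substack, and the identification of the latter with $\fQuot_{\fX_0^\dagger/\fC_0^\dagger}^{\cV,\theta}$ is then tautological from how $F$ is defined---but the extra care does no harm.
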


\subsection{Perfect obstruction theory on Hilbert stacks}

From now on we make the further assumption that $\pi:X\rar \A^1$ is of relative dimension 3, and take $\cV=\Ou_X$. Then we have the Hilbert stack
$$\cM^P:= \fQuot^{\Ou_X,P}_{\fX/\fC}.$$
Similarly we denote
$$\cM^\theta:= \fQuot^{\Ou_X,\theta}_{\fX_0^\dagger/ \fC_0^\dagger}, \quad \cN^{\theta_\pm,\theta_0}_\pm:= \fQuot^{\Ou_X,\theta_\pm,\theta_0}_{\fY_\pm/ \fA}.$$

We look for a virtual fundamental class on $\cM^P$. Let $p: \cM^P\times_{\fC^P} \fX^P\rar \cM^P$, $q: \cM^P\times_{\fC^P} \fX^P \rar \fX^P$ be the projections. Let $\cZ\subset \cM^P\times_{\fC^P} \fX^P$ be the universal family of the Hilbert stack, and $\bI\subset \Ou_{\fX^P\times_{\fC^P} \cM^P}$ be the universal ideal sheaf. In the derived category we have maps $\Id: \Ou_{\cM^P\times_{\fC^P} \fX^P} \rar R\cH om(\bI,\bI)$ and $\tr: R\cH om(\bI,\bI)\rar \Ou_{\cM^P\times_{\fC^P} \fX^P}$ which give a splitting of $R\cH om(\bI,\bI)$.

Let $L_{\cM^P\times_{\fC^P} \fX^P / \fC^P}^\bullet$ be the cotangent complex of $\cM^P\times_{\fC^P} \fX^P \rar \fC^P$, which is of Deligne--Mumford type. Consider the Atiyah class
$$\text{At}_{\cM^P\times_{\fC^P} \fX^P / \fC^P} (\bI): \bI \rar \bI\otimes L_{\cM^P\times_{\fC^P} \fX^P / \fC^P}^\bullet [1].$$
Composing with the projection and restricting to the traceless part we get
$$\text{At}_{\cM^P\times_{\fC^P} \fX^P / \fX^P} (\bI): R\cH om (\bI,\bI)_0 \rar p^* L_{\cM^P/\fC^P}^\bullet [1].$$
By Serre duality of simple normal crossing families, we have the map
$$\Phi: E^\bullet:= Rp_* (R\cH om(\bI,\bI)_0 \otimes q^* \omega_{\fX^P/\fC^P} )[2] \rar L^\bullet_{\cM^P/\fC^P}, $$
where $\omega_{\fX^P/\fC^P}$ is the relative dualizing line bundle of $\fX^P \rar \fC^P$.

\begin{thm}
  The map $\Phi: E^\bullet \rar L^\bullet_{\cM^P/\fC^P}$ is a perfect relative obstruction theory on $\cM^P$ over $\fC^P$, in the sense of \cite{BF}.
\end{thm}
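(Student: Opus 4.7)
The plan is to mirror the proof of Theorem~\ref{obs_smooth}, the only genuinely new feature being that $\pi:\fX^P\rar\fC^P$ is no longer smooth: over the boundary strata of $\fC^P$ its fibers are expanded degenerations $X_0[k]$ with simple normal crossings. These fibers are nevertheless Gorenstein of relative dimension~$3$, so $\omega_{\fX^P/\fC^P}$ is a genuine invertible sheaf and relative Serre duality along $p$ is still available. With that in hand, the argument of Theorem~\ref{obs_smooth} applies line by line.

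First I would verify that $\Phi$ satisfies the Behrend--Fantechi obstruction-theory criterion via square-zero extensions. Given a square-zero extension $T\hookrightarrow\bar T$ of $\fC^P$-schemes with ideal $J$ and a morphism $g:T\rar \cM^P$, functoriality of the Atiyah class combined with relative Serre duality identifies $\Phi^*\omega(g)$ with the product
$$\Phi^*\omega(g) = p_T^*\kappa(T/\bar T/\fC^P) \circ \text{At}_{T\times_{\fC^P}\fX^P/\fX^P}(g^*\bI)_0,$$
viewed as an element of $\Ext^2_{T\times_{\fC^P}\fX^P}(g^*\bI, g^*\bI\otimes p_T^*J)_0$. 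By Proposition~3.1.8 of~\cite{Il}, this product is precisely the obstruction to extending the coherent sheaf $g^*\bI$ to $\bar T\times_{\fC^P}\fX^P$, and its traceless projection records the obstruction with $\det g^*\bI$ kept trivial. Since a point of $\cM^P$ is an ideal sheaf on the associated family of expanded degenerations and the admissibility and stability conditions are open by Propositions~\ref{open_XY_S} and~\ref{open_stable}, extensions of $g$ to $\bar T$ correspond bijectively to such extensions of $g^*\bI$, and they form a torsor under $\Ext^0(g^*E^\bullet, J)$.

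Second, I would establish that $E^\bullet$ is perfect of amplitude $[-1,0]$. The essential fiberwise input is the analog of Lemma~\ref{Ext12} for a Gorenstein nodal 3-fold: on any fiber of $\fX^P\rar\fC^P$, and for the ideal sheaf $\bI$ of a 1-dimensional closed substack, one has $\Ext^i(\bI,\bI)_0 = 0$ for $i\neq 1,2$. Since the ambient fiber is Cohen--Macaulay of dimension $3$ and the substack has codimension $\geq 2$, the proof of Lemma~\ref{Ext12} carries over: the vanishing of $\cH om(\Ou_Z,\bI)$ and $\E xt^1(\Ou_Z,\bI)$ yields $\Ext^0(\bI,\bI)\cong H^0(\Ou)$, while Gorenstein Serre duality gives $\Ext^3(\bI,\bI)\cong H^3(\Ou)$, so the traceless parts in degrees $0$ and $3$ vanish. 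Combined with cohomology-and-base-change and the standard truncation trick (resolve $R\cH om(\bI,\bI)_0 \otimes q^*\omega_{\fX^P/\fC^P}$ by a finite complex of $p$-acyclic locally free sheaves, push forward, then truncate outside degrees $1$ and $2$), this presents $E^\bullet$ as a two-term perfect complex of the required amplitude after the shift~$[2]$.

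The main obstacle I expect is the Gorenstein property and the ensuing Serre duality on the family $\fX^P/\fC^P$, whose fibers contain nodal strata. This I would check \'etale locally using the standard local model $\Sp\C[x,y,t,\vec z]/(xy-t)$ from Section~2, where the Gorenstein property is manifest and the relative dualizing sheaf is explicit, and then glue by the \'etale descent built into the construction of $\fX^P$. Once this is secured, both the obstruction-theory check and the truncation argument reduce formally to the smooth absolute case already established in Theorem~\ref{obs_smooth}.
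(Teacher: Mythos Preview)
Your overall strategy matches the paper's proof closely: the obstruction-theory verification via Atiyah class times Kodaira--Spencer, the appeal to Illusie, and the truncation argument for perfectness are all exactly as in the paper.

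The one substantive divergence is in your treatment of the fiberwise vanishing $\Ext^i(\I,\I)_0=0$ for $i\neq 1,2$ on the nodal fibers $X_0[k]$. You assert that ``the proof of Lemma~\ref{Ext12} carries over'' because the fiber is Cohen--Macaulay and $Z$ has codimension~$\geq 2$. But the key local input in Lemma~\ref{Ext12} --- the vanishing $\E xt^1_W(\Ou_Z,\I)=0$ from Proposition~1.1.6 of~\cite{HL} --- is stated for smooth ambient spaces, and on a reducible Gorenstein threefold it does not follow from codimension alone. The paper does \emph{not} attempt to generalize that local vanishing; instead it exploits admissibility directly. Since $\I$ is admissible, Corollary~\ref{normality_split} gives the short exact sequence
$$0\rar \I \rar \I|_{Y_-}\oplus \I|_{Y_+} \rar \I|_D \rar 0,$$
and applying $\Hom(\I,-)$ reduces $\Hom(\I,\I)_0=0$ to the already-known smooth-component vanishing $\Hom(\I|_{Y_\pm},\I|_{Y_\pm})_0=0$; tensoring with $\omega$ and Serre duality handle $\Ext^3$. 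So the missing ingredient in your argument is precisely admissibility, used not as an open condition on the moduli problem but as the structural input that splits the Ext computation across the smooth irreducible components.
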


\begin{proof}
  The proof is almost the same as Theorem \ref{obs_smooth}. Given a square-zero extension $T\hookrightarrow \overline T$ of schemes with ideal sheaf $J$, and a map $g: T\rar \cM^P$, one has an element $\Phi^*\omega(g)\in \Ext^1(g^* E^\bullet, J)$. To say that $\Phi$ is an obstruction theory is the same as that $\Phi^*\omega(g)$ is an obstruction to extending $g$ from $T$ to $\overline T$.

  $\Phi^*\omega(g)$ is the product of Atiyah class and Kodaira-Spencer map, which are
  $$\text{At}_{T\times_{\fC^P} \fX^P/ \fX^P} (g^*\bI): g^*E^\bullet= Rp_{T*} (R\cH om(g^*\bI,g^*\bI)_0 \otimes q^* \omega_{\fX^P/\fC^P}) [2] \rar g^* L^\bullet_{\cM^P/\fC^P} \rar L_{T/\fC^P}^\bullet,$$
  and
  $$\kappa(T/\overline T/\fC^P): L^\bullet_{T/\fC^P} \rar L^\bullet_{T/\overline T} \rar \tau^{\geq -1} L^\bullet_{T/\overline T} \cong J[1],$$
  where $p_T$ is the projection $T\times_{\fC^P} \fX^P \rar T$. In other words,
  $$\Phi^*\omega(g) = p^*_T \kappa(T/\overline T/\fC^P) \circ \text{At}_{T\times_{\fC^P} \fX^P/ \fX^P} (g^*\bI),$$
  in the group
  $$\Ext^{-1}_T(R {p_T}_* (R\cH om(g^*\bI, g^*\bI)_0 \otimes q^* \omega_{\fX^P/\fC^P}), J) \cong \Ext^2_{T\times_{\fC^P} \fX^P} (g^*\bI, g^*\bI \otimes p_T^*J)_0.$$
  Again we may assume that $T$ is affine, and Proposition 3.1.8 of \cite{Il} says that $\Phi^*\omega(g)$ is the obstruction to extending $g^*\bI$ from $T$ to $\overline T$ with $\det \bI$ fixed.

  By construction, a map $g: T\rar \cM^P$ can represented by a family of expanded degenerations $\cX_T\rar T$, with a closed substack $\cZ_T\subset \cX_T$, where $\cZ_T$ is the pull back of the universal family $\cZ\subset \cM^P\times_{\fC^P} \fX^P$. But since $\cZ$ and $\cZ_T$ are admissible, one can see that $g^*\bI$ is just the ideal sheaf of $\cZ_T\subset \cX_T$. Hence $\Phi^*\omega(g)$ is also the obstruction to extending $\cZ_T$, or equivalently $g$, from $T$ to $\overline T$, which proves that $\Phi$ is an obstruction theory.

  The proof that $\Phi$ is perfect is also the same as in Theorem \ref{obs_smooth}, with the following lemma in place of Lemma \ref{Ext12}.
\end{proof}

\begin{lem}
  Let $X\rar \A^1$ be a simple degeneration of relative dimension 3. Consider $X_0[k]$ for some $k\geq 0$ and let $Z\subset X_0[k]$ be a 1-dimensional admissible closed substack, with ideal sheaf $\I$. Then $\Ext^i_{X_0[k]}(\I,\I)_0=0$, for all $i\neq 1,2$.
\end{lem}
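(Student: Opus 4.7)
The plan is to mirror the proof of Lemma \ref{Ext12} with smoothness of $W$ replaced by the Gorenstein property of $X_0[k]$. The two ingredients smoothness provided there were (i) the codimension vanishing $\E xt^i(\Ou_Z, \Ou_W) = 0$ for $i \le 1$ and (ii) Serre duality relating $\Ext^3(\I, \I)$ with $\Hom(\I, \I \otimes \omega_W)$; both survive on $X_0[k]$ provided we work Gorenstein rather than smooth. Indeed, \'etale locally $X_0[k]$ is either smooth or modelled on the hypersurface $\Sp\C[x,y,\vec z]/(xy)$ inside a smooth DM stack, which is a complete intersection and hence Gorenstein, with invertible dualizing sheaf $\omega_{X_0[k]}$. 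Since $Z$ is at most $1$-dimensional, $\operatorname{codim}(Z, X_0[k]) \ge 2$ at every point, so by local duality on the Cohen-Macaulay stack $X_0[k]$ one obtains $\E xt^i_{X_0[k]}(\Ou_Z, \Ou_{X_0[k]}) = 0$ for $i = 0, 1$.

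With these vanishings in hand, the same exact-sequence chase as in Lemma \ref{Ext12} applies. Applying $R\cH om(-, \I)$ and $R\cH om(\Ou_Z, -)$ to the sequence $0 \to \I \to \Ou_{X_0[k]} \to \Ou_Z \to 0$ and using $\E xt^1(\Ou, \I) = 0$ yields $\cH om(\Ou_Z, \I) = 0$ and $\E xt^1(\Ou_Z, \I) \cong \cH om(\Ou_Z, \Ou_Z) = \Ou_Z$, and hence a short exact sequence
\begin{equation*}
0 \to \I \to \cH om(\I, \I) \to \Ou_Z \to 0.
\end{equation*}
The canonical multiplication map $\Ou_{X_0[k]} \to \cH om(\I, \I)$ fits into a morphism of short exact sequences with $0 \to \I \to \Ou \to \Ou_Z \to 0$ whose outer components are the identity, and the five lemma gives $\cH om(\I, \I) \cong \Ou_{X_0[k]}$. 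Thus $\Ext^0(\I, \I) = H^0(\Ou_{X_0[k]})$ and this identification coincides with the trace map, so $\Ext^0(\I, \I)_0 = 0$. For $\Ext^3(\I, \I)$, I invoke Serre duality for proper Gorenstein tame DM stacks \cite{Ni2}: since $\omega_{X_0[k]}$ is a line bundle,
\begin{equation*}
\Ext^3(\I, \I)^\vee \cong \Hom(\I, \I \otimes \omega_{X_0[k]}) = H^0(\cH om(\I, \I) \otimes \omega_{X_0[k]}) = H^0(\omega_{X_0[k]}),
\end{equation*}
and a second application of Serre duality identifies $H^0(\omega_{X_0[k]})^\vee \cong H^3(\Ou_{X_0[k]})$. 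Hence $\Ext^3(\I, \I) \cong H^3(\Ou_{X_0[k]})$, and the traceless part again vanishes.

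The main technical point is verifying the Gorenstein property of $X_0[k]$ and confirming the validity of coherent Serre duality in the DM stack setting, both of which are standard consequences of the \'etale local simple-normal-crossing model together with Nironi's work \cite{Ni2}. A subsidiary delicate step is checking that the induced right-hand vertical map $\Ou_Z \to \E xt^1(\Ou_Z, \I)$ in the five-lemma diagram is an isomorphism: this amounts to tracing through the connecting homomorphisms to see that the distinguished extension class of $0 \to \I \to \Ou \to \Ou_Z \to 0$ maps to a generating section of $\Ou_Z \cong \E xt^1(\Ou_Z, \I)$.
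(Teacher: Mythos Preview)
Your proof is correct but follows a genuinely different route from the paper's. The paper exploits the admissibility hypothesis to split $\I$ along the irreducible components: from the normalization exact sequence
\[
0 \rar \I \rar \I|_{Y_-}\oplus \I|_{Y_+} \rar \I|_D \rar 0
\]
(valid precisely because $\I$ is admissible, by Corollary~\ref{normality_split}), applying $\Hom(\I,-)$ embeds $\Hom(\I,\I)$ into $\Hom(\I|_{Y_-},\I|_{Y_-})\oplus\Hom(\I|_{Y_+},\I|_{Y_+})$ compatibly with trace, so the vanishing of $\Ext^0_0$ is inherited from the already-proved smooth case (Lemma~\ref{Ext12}) on $Y_\pm$; the $\Ext^3_0$ vanishing is obtained by tensoring with the dualizing sheaf and repeating. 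Thus the paper's argument is a reduction to the smooth components.

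By contrast, you work intrinsically on $X_0[k]$, replacing smoothness by the Gorenstein/Cohen--Macaulay property of the simple normal crossing stack and invoking the codimension vanishing $\E xt^{\le 1}(\Ou_Z,\Ou_{X_0[k]})=0$ together with Serre duality for proper Gorenstein DM stacks. This has the pleasant feature that it never actually uses admissibility of $Z$---only that $\dim Z\le 1$---so your argument is marginally more general. On the other hand, the paper's decomposition approach makes transparent exactly how the singular case inherits the result from the smooth pieces and avoids the somewhat fiddly five-lemma check that the induced map $\Ou_Z\to\E xt^1(\Ou_Z,\I)$ is an isomorphism (your ``subsidiary delicate step''). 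Both arguments are short; yours is closer in spirit to the original smooth proof, while the paper's highlights the normalization structure that is used repeatedly elsewhere in the degeneration formula.
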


\begin{proof}
  We just prove for $k=0$; the proof for general $k$ is exactly the same. Let $X_0=Y_-\cup_D Y_+$. Since $Z$ is admissible, the ideal sheaf $\I$ is also admissible, which fits in the exact sequence
  $$\xymatrix{
  0\ar[r] & \I \ar[r] & \I|_{Y_-}\oplus \I|_{Y_+} \ar[r] & \I|_D \ar[r] & 0.
  }$$
  Applying $\Hom(\I,-)$, we have
  $$\xymatrix{
  0\ar[r] & \Hom(\I,\I) \ar[r] & \Hom(\I|_{Y_-},\I|_{Y_-}) \oplus \Hom(\I|_{Y_+},\I|_{Y_+}) \ar[r] & \Hom(\I|_D,\I|_D).
  }$$
  Then $\Hom(\I,\I)_0=0$ follows from $\Hom(\I|_{Y_\pm},\I|_{Y_\pm})_0=0$, as the sequence respects the trace map. Same arguments applied to the sequence tensored with the dualizing sheaf lead to the vanishing of $\Ext^3(\I,\I)_0$.
\end{proof}

\begin{cor}
  There is a virtual fundamental class $[\cM^P]^{\vir} \in A_*(\cM^P)$.
\end{cor}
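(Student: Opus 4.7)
The plan is to deduce the virtual class directly from the perfect relative obstruction theory of the preceding theorem by invoking the Behrend-Fantechi construction, once one verifies that the base stack $\fC^P$ is a smooth Artin stack. So there is essentially nothing genuinely new to prove here; the work is in assembling the ingredients.

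First I would check smoothness of the base. By construction $\fC^P = \varinjlim\ [(\A^{k+1})^P/R^P_{\sim,\A^{k+1}}]$, where $(\A^{k+1})^P$ is a disjoint union of copies of $\A^{k+1}$ indexed by continuous weight assignments of total weight $P$, and the equivalence relation $R^P_{\sim,\A^{k+1}}$ is generated by the smooth action of $(\C^*)^k\rtimes S_k$ together with the étale discrete identifications $\tilde\tau_{I,I'}$ that respect the weight labelling. Since a smooth groupoid quotient of a smooth scheme is a smooth Artin stack, and the transition maps in the directed system are open immersions, $\fC^P$ is a smooth Artin stack, of pure dimension one over $\A^1$.

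With the smoothness of $\fC^P$ in hand, I would then apply the relative Behrend-Fantechi construction to the perfect two-term obstruction theory $\Phi: E^\bullet \to L^\bullet_{\cM^P/\fC^P}$ of amplitude $[-1,0]$ just established. Let $\mathfrak{E} := h^1/h^0(E^{\bullet\vee})$ denote the associated vector bundle stack on $\cM^P$, and let $\mathfrak{C}_{\cM^P/\fC^P}$ be the intrinsic relative normal cone. The dual of $\Phi$ realises $\mathfrak{C}_{\cM^P/\fC^P}$ as a closed substack of $\mathfrak{E}$, pure of dimension one inherited from the base. Intersecting with the zero section then yields
\[
[\cM^P]^{\vir} := 0^!_{\mathfrak{E}}\, [\mathfrak{C}_{\cM^P/\fC^P}] \in A_*(\cM^P).
\]
Equivalently, in the language of Manolache's virtual pullback, the obstruction theory produces a bivariant operator $\pi^!: A_*(\fC^P) \to A_*(\cM^P)$, and one sets $[\cM^P]^{\vir} := \pi^![\fC^P]$. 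The only point requiring any attention is the smoothness of $\fC^P$ verified above; once that is in hand, the rest of the construction is entirely formal and extends without modification to the setting of a Deligne-Mumford moduli stack over a smooth Artin base via the Chow theory of Kresch.
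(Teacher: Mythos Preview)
Your proposal is correct and is precisely the argument the paper leaves implicit: the corollary is stated without proof because once the preceding theorem supplies a perfect relative obstruction theory over the smooth Artin base $\fC^P$, the virtual class follows formally from Behrend--Fantechi. Your verification that $\fC^P$ is smooth and your invocation of the intrinsic normal cone construction are exactly what is needed to unpack this.
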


In the same way one can prove that there are perfect obstruction theories on $\cN_\pm^{\theta_\pm,\theta_0}\rar \fA^{\theta_\pm,\theta_0}$, for a given $\theta\in \Lambda_P^{spl}$. Again let $p$, $q$ be the projections of $\cN_\pm^{\theta_\pm,\theta_0} \times_{\fA^{\theta_\pm,\theta_0}} \fY_\pm^{\theta_\pm,\theta_0}$ to its two factors. The obstruction theory is given by
$$\Phi_\pm: E_\pm^\bullet:= Rp_* \left(R\cH om(\bI_\pm,\bI_\pm)_0 \otimes q^* \omega_{\fY_\pm^{\theta_\pm,\theta_0} / \fA^{\theta_\pm,\theta_0}}\right) [2] \rar L_{\cN_\pm^{\theta_\pm,\theta_0} / \fA^{\theta_\pm,\theta_0}}^\bullet,$$
where $\bI_\pm$ is the corresponding universal ideal sheaf. We have the virtual fundamental classes $[\cN_\pm^{\theta_\pm,\theta_0}]^{\vir} \in A_*(\cN_\pm^{\theta_\pm,\theta_0})$.

Also on $\cM^\theta\rar \fC^{\dagger,\theta}_0$, the Cartesian diagram
$$\xymatrix{
\cM^\theta \ar[r] \ar[d] & \cM^P \ar[d] \\
\fC_0^{\dagger,\theta} \ar[r]^{\iota_\theta} & \fC^P
}$$
and Proposition 7.2 of \cite{BF} implies that the restriction of everything from $\cM^P\rar \fC^P$ to the $\theta$-piece is a perfect obstruction theory, and $[\cM^\theta]^{\vir}= \iota_\theta^! [\cM^P]^{\vir}$.

\subsection{Degeneration formula -- cycle version}

Now everything is ready for a cycle-version degeneration formula.

By Proposition \ref{Phi_theta} we have the following Cartesian diagram
\begin{equation} \label{Delta_diag}
\xymatrix{
\cM^\theta & \cN_-^{\theta_-,\theta_0} \times_{\Hilb_D^{\theta_0}} \cN_+^{\theta_+,\theta_0} \ar[l]^-{\Phi_\theta}_-\cong \ar[r] \ar[d]^g & \cN_-^{\theta_-,\theta_0} \times \cN_+^{\theta_+,\theta_0} \ar[d] \\
& \Hilb_D^{\theta_0} \times \fC_0^{\dagger,\theta} \ar[r] \ar[d] & \Hilb_D^{\theta_0} \times \Hilb_D^{\theta_0} \times \fC_0^{\dagger,\theta} \ar[d] \\
 & \Hilb_D^{\theta_0} \ar[r]^-\Delta & \Hilb_D^{\theta_0} \times \Hilb_D^{\theta_0},
 }
\end{equation}
with vertical arrows in the upper row given by the natural forgetful maps $\cN_\pm^{\theta_\pm,\theta_0}\rar \Hilb_D^\theta$ and $\cN_\pm^{\theta_\pm,\theta_0}\rar \fA^{\theta_\pm,\theta_0}$.

For a family $\pi: X\rar \A^1$ and a point $c\in \A^1$, let $i_c: \{c\} \hookrightarrow \A^1$ be the inclusion. For $c\neq 0$, the fiber $X_c$ is smooth. The restriction to $X_c$ gives a perfect obstruction theory, with virtual fundamental class $[\Hilb_{X_c}^P]^{\vir}$. One has the following two Cartesian diagrams,
$$\xymatrix{
\Hilb_{X_c}^P \ar[r] \ar[d] & \cM^P \ar[d] & \cM^\theta \ar@{^(->}[r]^-{\iota_\theta} & \fQuot^{\Ou_X,P}_{\fX_0/\fC_0} \ar@{^(->}[r] \ar[d] & \cM^P \ar[d] \\
\{c\} \ar@{^(->}[r] & \A^1, & &  \{0\} \ar@{^(->}[r] & \A^1,
}$$
where the map $\cM^P\rar \A^1$ is the composition $\cM^P\rar \fC^P\rar \A^1$.

\begin{thm}[(Degeneration formula -- cycle version)]
  \begin{equation} \label{cycle_c}
  i_c^! [\cM^P]^{\vir} = [\Hilb_{X_c}^P]^{\vir},
  \end{equation}
  \begin{equation} \label{cycle_0}
  i_0^! [\cM^P]^{\vir} = \sum_{\theta\in \Lambda_P^{spl}} \iota_{\theta*} \Delta^! \left( [\cN_-^{\theta_-,\theta_0}]^{\vir} \times [\cN_+^{\theta_+,\theta_0}]^{\vir} \right),
  \end{equation}
  where the classes in the second row are viewed in $0\times_{\A^1} \cM^P$.
\end{thm}

\begin{proof}
Let $\cZ^\theta \subset \cM^\theta\times_{\fC_0^{\dagger,\theta}} \fX_0^{\dagger,\theta}$, $\cZ_\pm\subset \cN_\pm^{\theta_\pm,\theta_0}\times_{\fA^{\theta_\pm,\theta_0}} \fY_\pm^{\theta_\pm,\theta_0}$, and $\cZ_D\subset \Hilb_D^{\theta_0}\times D$ be the universal families of the classifying stacks. Let $\bI^\theta$, $\bI_\pm$ and $\bI_D$ be the corresponding ideal sheaves. By admissibility we have the gluing
$$\cZ^\theta \cong (\cZ_-\times \fA^{\theta_+,\theta_0}) \cup_{\fA^{\theta_-,\theta_0}\times D\times \fA^{\theta_+,\theta_0}} (\fA^{\theta_-,\theta_0}\times \cZ_+),$$
and the exact sequence
$$\xymatrix{
0\ar[r] & \bI^\theta \ar[r] & \bI_-\boxplus \bI_+ \ar[r] & j_{D*}\ev^*_\theta \bI_D\ar[r] & 0.
}$$
in the total spaces
$$\cM^\theta\times_{\fC_0^{\dagger,\theta}} \fX_0^{\dagger,\theta} \cong \cM^{\theta}\times_{\fC_0^{\dagger,\theta}} \left( (\fY_-^{\theta_-,\theta_0}\times \fA^{\theta_+,\theta_0}) \cup_{\fA^{\theta_-,\theta_0} \times D\times \fA^{\theta_+,\theta_0}} (\fA^{\theta_-,\theta_0} \times (\fY_+^\circ)^{\theta_+,\theta_0}) \right).$$
Here $\bI_-\boxplus \bI_+$ means $\Delta^*(p_-^*\bI_-\oplus p_+^*\bI_+)$, where $p_\pm$ stands for (base change to the total space of) the projection $\cN_-^{\theta_-,\theta_0} \times \cN_+^{\theta_+,\theta_0} \rar \cN_\pm^{\theta_\pm,\theta_0}$. $\ev_\theta: \cM^\theta \rar \Hilb^{\theta_0}_D$ is the evaluation map and also denotes its base change to the total space. $j_D$ is the inclusion of the universal distinguished divisor $\cM^\theta \times D \subset \cM^\theta\times_{\fC_0^{\dagger,\theta}} \fX_0^{\dagger,\theta}$.

Now $\Delta^*p_-^*\bI_-$ is the universal ideal sheaf on $\cM^\theta \times_{\fC_0^{\dagger,\theta}} (\fY_-^{\theta_-,\theta_0}\times \fA^{\theta_+,\theta_0})$ and similar for $\Delta^*p_+^*\bI_+$; $\Delta^*(\ev_-\times \ev_+)^*\bI_D = \ev_\theta^*\bI_D$ is the ideal sheaf of $\cM^\theta\times D$.

Let $p$ denote the projections from the universal families to classifying stacks $\cM^\theta \times_{\fC_0^{\dagger,\theta}} \fX_0^{\dagger,\theta} \rar \cM^\theta$, $\cN_\pm^{\theta_\pm,\theta_0}\times_{\fA^{\theta_\pm,\theta_0}} \fY_\pm^{\theta_\pm,\theta_0} \rar \cN_\pm^{\theta_\pm,\theta_0}$ and $\Hilb_D^{\theta_0}\times D \rar \Hilb_D^{\theta_0}$. Then applying $Rp_* R\cH om (-,\bI^\theta)$, we get the following diagram of distinguished triangles
$$\xymatrix{
Rp_*R\cH om(\bI_D,\bI_D)_0^\vee \ar[r]\ar[d] & Rp_*R\cH om(\bI_-,\bI_-)_0^\vee \boxplus Rp_*R\cH om(\bI_+,\bI_+)_0^\vee \ar[r]\ar[d] & R\pi_*R\cH om(\bI^\theta,\bI^\theta)_0^\vee \ar[d] \\
L^\bullet_{\cM^\theta/\cN_-^{\theta_-,\theta_0} \times \cN_+^{\theta_+,\theta_0}} [-1] \ar[r] & L^\bullet_{\cN_-^{\theta_-,\theta_0}/\fA^{\theta_-,\theta_0}} \boxplus L^\bullet_{\cN_+^{\theta_+,\theta_0}/\fA^{\theta_+,\theta_0}} \ar[r] & L^\bullet_{\cM^\theta/\fC_0^{\dagger,\theta}},
}$$
where the upper row is obtained from the exact sequence of ideal sheaves, and the lower row is the distinguished triangle for the map $\cM^\theta \rar \cN_-^{\theta_-,\theta_0} \times \cN_+^{\theta_+,\theta_0}$, relative to $\fC_0^{\dagger,\theta}$.

It is easy to check that $\Ext^2(I,I)_0=0$ for an ideal sheaf $I$ on a 2-dimensional smooth Deligne--Mumford stack. Thus we have
$$Rp_*R\cH om(\bI_D,\bI_D)_0^\vee \cong \Omega_{\Hilb_D^{\theta_0}} \cong L^\bullet_{\Hilb_D^{\theta_0}/\Hilb_D^{\theta_0}\times \Hilb_D^{\theta_0}} [-1],$$
and the first column in the diagram is the same as the canonical map
$$g^* L^\bullet_{\Hilb_D^{\theta_0}/\Hilb_D^{\theta_0}\times \Hilb_D^{\theta_0}} \rar L^\bullet_{\cM^\theta/\cN_-^{\theta_-,\theta_0} \times \cN_+^{\theta_+,\theta_0}},$$
from Diagram (\ref{Delta_diag}).

In other words, the perfect obstruction theories on $\cM^\theta$ and $\cN_-^{\theta_-,\theta_0}\times \cN_+^{\theta_+,\theta_-}$ form a compatibility datum in the sense of \cite{BF}, relative to $\fC_0^{\dagger,\theta}$. As a result,
$$[\cM^\theta]^{\vir} = \Delta^! ([\cN_-^{\theta_-,\theta_0}]^{\vir} \times [\cN_+^{\theta_+,\theta_-}]^{\vir}).$$

Now the conclusion follows from the following splitting result by Proposition \ref{L_theta},
$$i_0^! [\cM^P]^{\vir} = \sum_{\theta\in \Lambda_P^{spl}} \iota_{\theta*} [\cM^\theta]^{\vir}.$$
\end{proof}

\section{Degeneration formula -- numerical version}

\subsection{Relative orbifold Donaldson--Thomas theory}

We defined the orbifold Donaldson--Thomas invariant for a 3-dimensional smooth projective Deligne--Mumford stack in Section \ref{DT}. Now we can define the relative Donaldson--Thomas invariant for a smooth pair $(Y,D)$, where $Y$ is a 3-dimensional smooth projective Deligne--Mumford stack, and $D\subset Y$ is a smooth divisor. Fix a polarization $(\E,H)$ of $Y$ and $P$ as before.

Consider $K(Y):=K(Y)_\Q$. The pairing $\chi: K(Y)\times K(Y)\rar \Z$ is nondegenerate because of the projectivity. We can identify $\Hom(K(Y),\Q)$ with $K(Y)$ and view $P\in K(Y)$. The dimension condition is that $P\in F_1K(Y)$, where $F_\bullet$ is the natural topological filtration. For those$P$ represented by addmissible sheaves, $P_0=i^! P$ lies in $F_0 K(D)$ by admissibility, where $i:D\hookrightarrow X$ is the inclusion.

Consider the Hilbert stack $\cN^{P,P_0}:=\fHilb^{P,P_0}_{\fY/\fA} \rar \fA^{P,P_0}$ parameterizing the stable quotients on the classifying stacks of expanded pairs. As in Section \ref{Deg_cycle}, we have a perfect obstruction theory and thus a virtual fundamental class $[\cN^{P,P_0}]^{\vir} \in A_*(\cN^{P,P_0})$.

Let $p,q$ be the projection of $\cN^{P,P_0}\times_{\fA^{P,P_0}} \fY^{P,P_0}$ to its two factors and $\bI$ be the universal ideal sheaf. Given $\gamma\in A^l_\orb(Y)$, we still have the operator $\ch^\orb(\gamma): A_*(\cN^{P,P_0})\rar A_{*-k+1-l}(\cN^{P,P_0})$ defined via the following diagram,
$$\xymatrix{
\cN\times_\fA (I_\fA \fY) \ar[r]\ar[d] & I_\fA \fY \ar[r]\ar[d] & IY \ar[d] \\
\cN\times_\fA \fY \ar[r]^-q\ar[d]_p & \fY \ar[r]\ar[d] & Y \\
\cN \ar[r] & \fA, &
}$$
where for simplicity we have omitted the superscripts $P$, $P_0$, and $I_\fA \fY\rar \fY$ is the inertia stack of $\fY$ over $\fA$. Let $\ev: \cN^{P,P_0}\rar \Hilb^{P_0}(D)$ be the evaluation map.

\begin{defn}
  Given $\gamma_i\in A^*_\orb(Y)$, $1\leq i\leq r$, and $C\in A^*(\Hilb^{P_0}(D))$, define the relative Donaldson--Thomas invariant as
  $$\left\langle \prod_{i=1}^r \tau_{k_i}(\gamma_i) \ \middle| \ C \right\rangle_{X,D}^P := \deg \left[ \ev^*(C) \cdot \prod_{i=1}^r \ch_{k_i+2}^\orb(\gamma_i) \cdot [\cN^{P,P_0}]^{\vir} \right] _0$$
\end{defn}

\subsection{Degeneration formula}

Now let's consider the case of a simple degeneration. Let $\pi: X\rar \A^1$ be a family of smooth projective Deligne--Mumford stacks, which is a simple degeneration of relative dimension 3, with central fiber $X_0=Y_-\cup_D Y_+$. Take $0\neq c\in \A^1$ and $X_c=\pi^{-1}(c)$. Fix $P\in F_1 K(X_c)$. Let $P_0= i^!P \in F_0 K(D)$.

Let $\gamma\in A^*_\orb(X_c)$ be in the image of the restriction from $A^*_\orb(X)$, and $\gamma_\pm$, $\gamma_0$ be its restrictions to $Y_\pm$, $D$ respectively. We abuse these notations to also denote their pushforwards to $X$; therefore $\gamma = \gamma_- + \gamma_+ - \gamma_0$. Let $\{C_k\}$ be a basis of $A^*(\Hilb^{P_0}(D))$, with cup product
$$\int_{\Hilb^{P_0}(D)} C_k\cup C_l = g_{kl}.$$
Let $(g^{kl})$ be the inverse matrix.

We have the numerical version of the degeneration formula in the following.
\begin{thm}[(Degeneration formula -- numerical version)] \label{thm_deg_P}
  Given $P\in F_1 K(X_c)$, assume that $\gamma_{i,\pm}$ are disjoint with $D$. We have
  $$\left\langle \prod_{i=1}^r \tau_{k_i}(\gamma_i) \right\rangle_{X_c}^P =
  \sum_{\substack{\theta_-+\theta_+ - P_0 = P, \\
                S\subset \{1,\cdots,r\}, k,l}}
  \left\langle \prod_{i\in S} \tau_{k_i}(\gamma_{i, -}) \middle| C_k \right\rangle_{Y_-,D}^{\theta_-} g^{kl} \left\langle \prod_{i\not\in S} \tau_{k_i}(\gamma_{i, +}) \middle| C_l \right\rangle_{Y_+,D}^{\theta_+}, $$
  where $\theta_\pm \in F_1 K(Y_\pm)$ range over all configurations that satisfy $\theta_-+\theta_+ - P_0 = P$.
\end{thm}

\begin{proof}[Proof of Theorem \ref{thm_deg_P}]
  Use the cycle-version degeneration formula. Apply $\ch_{k_i+2}^\orb(\gamma_i)$ to (\ref{cycle_c}) and take the degree 0 part, one gets the LHS of the formula. For the RHS, we apply $\ch_{k_i+2}^\orb(\gamma_i)$ to (\ref{cycle_0}). Let $p,q$ be the projections from $\cM^\theta\times_{\fC^{\dagger,\theta}_0} \fX^{\dagger,\theta}_0$ to the two factors. Let $\cZ^\theta$ be the universal family and $\bI^\theta$ be the universal ideal sheaf. Consider the embedding $\cM^\theta \hookrightarrow \cN^{\theta_-,\theta_0}_- \times \cN^{\theta_+,\theta_0}_+$.

  Recall the sequence on $\cM^\theta \times_{\fC^{\dagger,\theta}_0} \fX^{\dagger,\theta}_0$,
  $$\xymatrix{
  0\ar[r] & \bI^\theta \ar[r] & \bI_-\boxplus \bI_+ \ar[r] & j_{D*}\ev^*_\theta \bI_D\ar[r] & 0,
  }$$
  where $j_D$ is the embedding $\cM^\theta\times D\subset \cM^\theta \times_{\fC^{\dagger,\theta}_0} \fX^{\dagger,\theta}_0$, and $\ev_\theta: \cM^\theta \rar \Hilb^{\theta_0}(D)$. Hence
  \begin{eqnarray*}
  \ch_{k+2}^\orb(\gamma)\cdot i_0^![\cM^P]^{\vir}
  &=& \sum_{\theta\in \Lambda^{spl}_P} p_* \left(q^*\gamma \cap \ch_{k+2}^\orb(\bI^\theta)\cdot p^*\iota_{\theta*} \Delta^! \left( [\cN_-^{\theta_-,\theta_0}]^{\vir} \times [\cN_+^{\theta_+,\theta_0}]^{\vir} \right) \right)\\
  &=& \sum_{\theta\in \Lambda^{spl}_P} p_* \iota_{\theta*} \Delta^! \left( \left( q^*\gamma_- \cap \ch_{k+2}^\orb(\bI_-) \cdot p^* [\cN_-^{\theta_-,\theta_0}]^{\vir} \right) \times p^* [\cN_+^{\theta_+,\theta_0}]^{\vir} \right)  \\
  && + \sum_{\theta\in \Lambda^{spl}_P} p_* \iota_{\theta*} \Delta^! \left( p^* [\cN_-^{\theta_-,\theta_0}]^{\vir} \times \left( q^* \gamma_+ \cap \ch_{k+2}^\orb(\bI_+) \cdot p^* [\cN_+^{\theta_+,\theta_0}]^{\vir} \right) \right) \\
  && - \sum_{\theta\in \Lambda^{spl}_P} p_* \left( q^*\gamma \cap \ch_{k+2}^\orb (j_{D*} \ev^* \bI_D) \cdot p^* [\cM^\theta]^{\vir} \right).
  \end{eqnarray*}
 Note that since $\gamma$ is disjoint from $D$, the last term actually vanishes.

  Therefore we have proved the identity
  \begin{eqnarray*}
  \ch_{k+2}^\orb(\gamma)\cdot i_0^![\cM^P]^{\vir}
  &=& \sum_{\theta\in \Lambda^{spl}_P} p_* \left( q^*\gamma \cap p^*\iota_{\theta*} \Delta^! \left( \left( \ch_{k+2}^\orb(\bI_-) \cdot [\cN_-^{\theta_-,\theta_0}]^{\vir} \right) \times [\cN_+^{\theta_+,\theta_0}]^{\vir} \right) \right) \\
  && + \sum_{\theta\in \Lambda^{spl}_P} p_* \left( q^*\gamma \cap p^*\iota_{\theta*} \Delta^! \left( [\cN_-^{\theta_-,\theta_0}]^{\vir} \times \left( \ch_{k+2}^\orb(\bI_+) \cdot [\cN_+^{\theta_+,\theta_0}]^{\vir} \right) \right) \right).
  \end{eqnarray*}
  Take $\theta_0=P_0$ and the data $\theta\in \Lambda_P^{spl}$ can be identified with $(\theta_-, \theta_+, P_0)$ satisfying the condition in the assumption.

  For a basis $\{C_k\}$ a $A^*(\Hilb^m(D))$, we have Kunneth decomposition of the diagonal
  $$[\Delta]= \sum_{k,l} g^{kl} C_k \otimes C_l.$$
  Apply this to the equality and the degeneration formula follows.
\end{proof}

We are particularly interested in a special type of curve classes. Let $(Y,D)$ be a smooth pair.

\begin{defn}
A class $P\in K(Y)$ is called \emph{multi-regular}, if it can be represented by some coherent sheaf, such that the associated representation of the stabilizer group at the generic point is a multiple of the regular representation.
\end{defn}

Denote by $F_1^{\mr}K(Y)\subset F_1K(Y)$ the subgroup generated by multi-regular classes. Let $(\beta,\varepsilon)\in F_1^{\mr}K(Y)/F_0 K(Y)\oplus F_0 K(Y)$ be the image of $P$ in the associated graded K-group. One can check that $P$ is multi-regular if and only if $\beta$ is a pull-back from a curve class in the coarse moduli space.

Let $F_0^{\mr}K(D)$ be the subgroup generated by 0-dimensional substacks whose associated representations are multi-regular. Then $F_0^{\mr} K(D)\cong F_0 K(\uD)\cong \Z$, where $\uD$ is the coarse moduli space. Let $P\in F_1^{\mr} K(Y)$ be represented by some admissible curve. Then $P_0=i^*P\in F_0^{\mr} K(D)$ only depends on $\beta$. Let $m$ be the number such that $\beta\cdot D=m [\Ou_x]$, where $x\in D$ is the preimage of a point in $\uD$.

Now for simple degeneration $\pi: X\rar \A^1$. Given classes $(\beta_1,\varepsilon_1)\in F_1^{\mr} K(Y_-)/F_0 K(Y_-) \oplus F_0 K(Y_-)$, $(\beta_2,\varepsilon_2)\in F_1^{\mr} K(Y_+)/F_0 K(Y_+) \oplus F_0 K(Y_+)$, they come from a splitting data if $\beta_1+\beta_2=\beta$, $\varepsilon_1+\varepsilon_2-m=\varepsilon$.

\begin{thm}[(Degeneration formula -- numerical version for multi-regular case)] \label{thm_deg}
  Given $\beta\in F_1^{\mr}K(X_c)/F_0 K(X_c)$, assume that $\gamma_{i,\pm}$ are disjoint with $D$. We have
  $$\left\langle \prod_{i=1}^r \tau_{k_i}(\gamma_i) \right\rangle_{X_c}^{\beta,\varepsilon} =
  \sum_{\substack{\beta_-+\beta_+=\beta, \\
                \varepsilon_-+\varepsilon_+= \varepsilon+m, \\
                S\subset \{1,\cdots,r\}, k,l}}
  \left\langle \prod_{i\in S} \tau_{k_i}(\gamma_{i,-}) \middle| C_k \right\rangle_{Y_-,D}^{\beta_-,\varepsilon_-} g^{kl} \left\langle \prod_{i\not\in S} \tau_{k_i}(\gamma_{i,+}) \middle| C_l \right\rangle_{Y_+,D}^{\beta_+,\varepsilon_+}, $$
  where $\beta_-\in F_1^{\mr}K(Y_-)/F_0 K(Y_-)$, $\beta_+\in F_1^{\mr}K(Y_+)/F_0 K(Y_+)$ range over all curve classes that coincide on $D$ and satisfy $\beta_-+\beta_+=\beta$.
\end{thm}

Define the descendent Donaldson-Thomas partition function of $X_c$ as
$$Z_\beta \left( X_c;q \ \middle| \ \prod_{i=1}^r \tau_{k_i}(\gamma_i) \right) := \sum_{\varepsilon\in F_0 K(X_c)} \left< \prod_{i=1}^r \tau_{k_i}(\gamma_i) \right>_{X_c}^{\beta,\varepsilon} q^\varepsilon.$$
Similarly for a pair $(Y,D)$, and $C\in A^*(\Hilb^{(\beta\cdot D)[\Ou_x]}(D))$, define the relative Donaldson-Thomas partition function as
$$Z_{\beta,C} \left( Y,D;q \ \middle| \ \prod_{i=1}^r \tau_{k_i}(\gamma_i) \right) := \sum_{\varepsilon\in F_0 K(Y)} \left< \prod_{i=1}^r \tau_{k_i}(\gamma_i) \ \middle| \ C \right>_{Y,D}^{\beta,\varepsilon} q^\varepsilon.$$

\begin{cor}
Given $\beta\in F_1^{\mr}K(X_c)/F_0 K(X_c)$, assume that $\gamma_{i,\pm}$ are disjoint with $D$. Then,
  \begin{eqnarray*}
  Z_\beta \left( X_c;q \ \middle| \ \prod_{i=1}^r \tau_{k_i}(\gamma_i) \right) &=& \sum_{\substack{\beta_- + \beta_+ =\beta \\
                            S\subset \{1,\cdots, r\},k,l }}
  \frac{g^{kl}}{q^m} Z_{\beta_-,C_k} \left( Y_-,D;q \ \middle| \ \prod_{i\in S} \tau_{k_i}(\gamma_{i,-}) \right) \\
  && \cdot Z_{\beta_+,C_l} \left( Y_+,D;q \ \middle| \ \prod_{i\not\in S} \tau_{k_i}(\gamma_{i,+}) \right).
  \end{eqnarray*}
\end{cor}

\begin{rem}
  In practice, one has to find a good basis $\{C_k\}$ for the cohomology of the Hilbert scheme of an orbifold surface. In the orbifold case, depending on the specific problem, there are usually natural choices of such choices. For example, when a torus action is involved, to work in the equivariant setting and take the fixed point basis is one such choice.

  Another important case is that the orbifold surface $D$ is of ADE type, i.e. $[\C^2/\Gamma]$ where $\Gamma$ is a finite subgroup of $SL(2,\C)$. Then $\Hilb(D)$ has a structure of Nakajima's quiver variety which provides natural basis of its cohomology.
\end{rem}

\bibliographystyle{amsalpha}
\bibliography{reference}

\end{document}